\documentclass[11pt]{amsart}
\usepackage{amsmath,amsthm,amscd,amssymb,latexsym,upref,hyperref,enumerate,enumitem,xfrac,a4wide}
\usepackage[utf8]{inputenc} 

\usepackage{color}
\definecolor{darkgreen}{rgb}{0,0.6,0}



\definecolor{darkblue}{rgb}{0,0,0.5}
\definecolor{darkgrey}{rgb}{0.3,0.3,0.3}

\newcommand{\Deletable}[1]{{\color{darkgrey}#1}}

\newcommand*{\mailto}[1]{\href{mailto:#1}{\nolinkurl{#1}}}

\overfullrule=.75mm 

\newtheorem{theorem}{Theorem}[section]
\newtheorem{corollary}[theorem]{Corollary}

\newtheorem{lemma}[theorem]{Lemma}
\newtheorem{proposition}[theorem]{Proposition}

\theoremstyle{definition}

\newtheorem{remark}[theorem]{Remark}


\newcommand{\calB}{\mathcal{B}}
\newcommand{\bbE}{\mathbb{E}}
\newcommand{\calE}{\mathcal{E}}

\newcommand{\del}{\partial}
\newcommand{\Exp}{\mathrm{e}}

\newcommand{\bbG}{\mathbb{G}}

\newcommand{\calHT}{\mathcal{HT}}

\newcommand{\re}{\mathrm{Re}}

\newcommand{\abs}[1]{| #1 |}
\newcommand{\norm}[1]{\| #1 \|}

\renewcommand{\(}{\left(}
\renewcommand{\)}{\right)}

\newcommand{\supp}{\operatorname{supp}}
\newcommand{\sfT}{\mathsf T}
\newcommand{\eps}{\epsilon}

\newcommand{\lam}{\lambda}
\newcommand{\om}{\omega}
\newcommand{\Om}{\Omega}

\newcommand{\bbR}{{\mathbb R}}
\newcommand{\R}{{\mathbb R}}
\newcommand{\bbN}{{\mathbb N}}
\newcommand{\N}{{\mathbb N}}

\newcommand{\C}{{\mathbb C}}
\newcommand{\K}{{\mathbb K}}

\newcommand{\calH}{\mathcal{H}}
\newcommand{\D}{\mathcal{D}}

\newcommand{\E}{{\mathbb E}}
\newcommand{\F}{{\mathbb F}}

\newcommand{\Hb}{{\mathbb H}}
\newcommand{\W}{{\mathbb W}}

\newcommand{\0}{\mspace{0mu}_0}


\numberwithin{equation}{section}

\begin{document}
\title[The Blackstock--Crighton equation in $L_p$-spaces]
{Optimal regularity and exponential stability for the Blackstock--Crighton equation in $L_p$-spaces with Dirichlet and Neumann boundary conditions}

\author[R.\ Brunnhuber]{Rainer Brunnhuber}
\address{Institut f\"ur Mathematik \\ Universit\"at Klagenfurt\\
Universit\"atsstra{\ss}e 65-57\\ 9020 Klagenfurt am W\"orthersee\\ Austria}
\email{\mailto{rainer.brunnhuber@aau.at}}

\author[S.\ Meyer]{Stefan Meyer}
\address{Institut f\"ur Mathematik - Naturwissenschaftliche Fakult\"at II \\ Martin Luther Universit\"at Halle-Wittenberg\\ 
Theodor-Lieser-Stra{\ss}e 5\\ 06120 Halle (Saale) \\ Germany}
\email{\mailto{stefan.meyer@mathematik.uni-halle.de}}


\begin{abstract}
The Blackstock--Crighton equation models nonlinear acoustic wave propagation in thermo-viscous fluids.  In the present work we investigate the associated 
inhomogeneous Dirichlet and Neumann boundary value problems in a bounded domain and prove long-time well-posedness and exponential stability for 
sufficiently small data.  The solution depends analytically on the data.  In the Dirichlet case, the solution decays to zero and the same holds for Neumann 
conditions if the data have zero mean. 
  
We choose an optimal $L_p$-setting, where the regularity of the initial and boundary data are necessary and sufficient for existence, uniqueness and 
regularity of the solution.  The linearized model with homogeneous boundary conditions is represented as an abstract evolution equation for which 
we show maximal $L_p$-regularity.  In order to eliminate inhomogeneous boundary conditions, we establish a general higher regularity result for the heat equation.  
We conclude that the linearized model induces a topological linear isomorphism and then solve the nonlinear problem by means of the implicit function theorem.
\end{abstract}

\subjclass[2010]{
Primary: 
35K59, 
35Q35; 
Secondary: 
35B30, 
35B35, 
35B40, 
35B45, 
35B65, 
35G16, 
35G31, 
35K20, 
35P15. 
}
\keywords{nonlinear acoustics, optimal regularity, global well-posedness, higher regularity, compatibility conditions.}
\thanks{R.B. was supported by the Austrian Science Fund (FWF): P24970}
\maketitle

\setcounter{tocdepth}{1}
\vspace*{-0.5cm}
\tableofcontents
\vspace*{-0.5cm}


\section{Introduction}
\label{sec:introduction}
An acoustic wave propagates through a medium as a local pressure change. Nonlinear effects typically occur in case of acoustic waves of high amplitude 
which are used for several medical and industrial purposes such as lithotripsy, thermotherapy, ultrasound cleaning or welding and sonochemistry.
Research on mathematical aspects of nonlinear acoustic wave propagation is therefore not only interesting 
from a mathematicians point of view. In fact, in case of medical applications, enhancement of the mathematical 
understanding of the underlying models should lead to a considerable reduction of complication risks.

The present work aims to provide a mathematical analysis of the Blackstock--Crighton--Kuznetsov equation
\begin{equation}
\label{BCK:psi}
\left(a\Delta - \partial_t \right)
\left(u_{tt}-c^2\Delta  u - b\Delta u_t \right) =
\big(\tfrac{1}{c^2} \tfrac{B}{2A}( u_t)^2 + \left|\nabla u \right|^2 \big)_{tt}
\end{equation}
and the Blackstock--Crighton--Westervelt equation
\begin{equation}
\label{BCW:psi}
\left(a\Delta - \partial_t \right)
\left( u_{tt}-c^2\Delta u - b\Delta u_t \right) =
\big(\tfrac{1}{c^2}\left(1+\tfrac{B}{2A}\right)( u_t)^2\big)_{tt}
\end{equation}
for the acoustic velocity potential $u$, where $c$ is the speed of sound, $b$ is the diffusivity of sound and $a$ is the heat conductivity of the fluid. Note that $a = \nu \mathrm{Pr}$, where $\nu$ is the is kinematic viscosity and $\mathrm{Pr}$ denotes the
Prandtl number.  
Alternatively, \eqref{BCK:psi} and \eqref{BCW:psi} can be expressed in terms of the acoustic pressure $p$ via the pressure density relation $\rho u_t=p$, where $\rho$ denotes the mass density. The quantity $B/A$ is known as the parameter of nonlinearity and is proportional to the ratio of the coefficients of the quadratic and linear terms in the Taylor series expansion of the variations of the pressure in a medium in terms of variations of the density. Note that \eqref{BCW:psi} is obtained from \eqref{BCK:psi} by neglecting local nonlinear effects in the sense that the expression $c^2|\nabla u|^2-( u_t)^2$ is sufficiently small. For a detailed introduction to the theory and applications of nonlinear acoustics we refer to \cite{HaBl98}.

Equations \eqref{BCK:psi} and \eqref{BCW:psi} result from two evolution equations of fourth order governing 
finite-amplitude sound in thermoviscous relaxing fluids, namely
\begin{align}
\label{Crighton11}
-c^2 a\Delta^2 u+\left(a+b\right)\Delta u_{tt}+\left(c^2\Delta u- u_{ttt}\right)
&=\big(\left|\nabla u\right|^2_t+\tfrac{B}{2A} u_t\Delta u\big)_{t},\\
 \label{Crighton13}
\left(a\Delta - \partial_t \right)\left( u_{tt}-c^2\Delta u \right) 
&=\big(\left|\nabla u\right|^2_t+\tfrac{B}{2A} u_t\Delta u\big)_{t},
\end{align}
which have been derived by Blackstock \cite{Bla63} from the basic equations describing the general motion of thermally relaxing, viscous fluids 
(continuity equation, momentum equation, entropy equation and an arbitrary equation of state) and also 
appear as equations (11) and (13) in Crighton's work \cite{Cri79} on nonlinear acoustic models. 
We replace $\Delta  u$ in the last term of \eqref{Crighton11} and \eqref{Crighton13} by $\frac{1}{c^2} u_{tt}$, which can be justified by the main 
part of the differential operator corresponding to the wave equation $ u_{tt}-c^2 \Delta  u =0$. 
Moreover, in \eqref{Crighton13} we consider potential diffusivity as in \eqref{Crighton11}. 
Therewith, we arrive at equation \eqref{BCK:psi} for which in \cite{Bru14} the name Blackstock--Crighton--Kuznetsov equation has 
been introduced. For a more rigorous derivation of \eqref{BCK:psi} we refer to Section 2 in \cite{Bru14}.

While \eqref{BCK:psi} and \eqref{BCW:psi} are enhanced models in nonlinear acoustics, the Kuznetsov
\begin{equation}
\label{Kuznetsov:psi}
 u_{tt}-b\Delta u_t - c^2\Delta u= \left(\tfrac{1}{c^2}\tfrac{B}{2A} ( u_t)^2 + | \nabla u|^2\right)_t
\end{equation}
and the Westervelt equation 
\begin{equation}
\label{Westervelt:psi}
 u_{tt}-b\Delta u_t - c^2\Delta u=\left(\tfrac{1}{c^2}\left(1+\tfrac{B}{2A}\right) ( u_t)^2\right)_t,
\end{equation}
are classical, well-accepted and widely used models governing sound propagation in fluids. 
As \eqref{BCK:psi} and \eqref{BCW:psi}, they are derived from the basic equations in fluid mechanics. 
The Kuznetsov equation is the more general one of these classical models, in particular the Westervelt equation is obtained 
from the Kuznetsov equation by neglecting local nonlinear effects.
Moreover, for a small ratio of $\nu$ and $\mbox{Pr}$, that is, for small heat conductivity, \eqref{Kuznetsov:psi} and \eqref{Westervelt:psi} 
can be regarded as simplifications of \eqref{BCK:psi} and \eqref{BCW:psi}, respectively.

The classical models \eqref{Kuznetsov:psi} and \eqref{Westervelt:psi} have recently been extensively investigated. In particular, results on well-posedness for the Kuznetsov and the Westervelt equation with homogeneous Dirichlet \cite{KaLa09} and inhomogeneous Dirichlet \cite{KLV11}, \cite{KaLa12} and Neumann \cite{KaLa11} boundary conditions have recently been shown in an $L_2(\Omega)$-setting on spatial domains $\Omega \subset \R^n$ of dimension $n\in \{1,2,3\}$. Moreover, there are results on optimal regularity and long-time behavior of solutions for the Westervelt equation with homogeneous Dirichlet \cite{MeWi11} and for the Kuznetsov equation with inhomogeneous Dirichlet \cite{MeWi13} boundary conditions in $L_p(\Omega)$-spaces where the spatial domain $\Omega \subset \R^n$
is of arbitrary dimension.

On the contrary, mathematical research on higher order partial differential equations arising in nonlinear acoustics is still in an early stage. Well-posedness and exponential decay results for the homogeneous Dirichlet boundary value problems associated with \eqref{BCK:psi} and \eqref{BCW:psi} in an $L_2(\Omega)$-setting where $\Omega \subset \R^n$, $n\in\{1,2,3\}$, have been shown in \cite{Bru14} and \cite{BrKa14}, respectively. In the present work we consider \eqref{BCK:psi} and \eqref{BCW:psi} with inhomogeneous Dirichlet and Neumann boundary conditions in $L_p(\Omega)$-spaces where the spatial domain $\Omega$ is of dimension $n \in \N$. We show global well-posedness and long-time behavior of solutions in an optimal functional analytic setting in the sense that the regularity of the solution is necessary and sufficient for the regularity of the initial and boundary data. 
While in \cite{Bru14} and \cite{BrKa14} the results were proved by means of appropriate energy estimates and the Banach fixed-point theorem, the techniques used in the present paper are based on maximal $L_p$-regularity for parabolic problems and the implicit function theorem in Banach spaces. 

We suppose that $\Omega \subset \R^n$, $n\in\N$, is a bounded domain, i.e., an open, connected and bounded subset of the $n$-dimensional Euclidean space, with smooth boundary $\Gamma$. 
Let $J=(0,T)$ for some finite $T>0$ or $J=\R_+= (0,\infty)$. We consider the inhomogeneous Dirichlet boundary value problem
\begin{equation}
\label{IBVP:Dirichlet}
\begin{cases}
\begin{aligned}
(a\Delta - \partial_t)(u_{tt}-b\Delta u_t - c^2 \Delta u)&= (k (u_t)^2+s|\nabla u|^2)_{tt}		&& \text{in } J \times \Omega,\\
(u, \Delta u)&= (g,h)														&& \text{on }J \times \Gamma,\\
(u,u_t,u_{tt})&=(u_0, u_1, u_2)												&& \text{on } \{t=0\}\times\Omega,
\end{aligned}
\end{cases}
\end{equation}
and the inhomogeneous Neumann boundary value problem
\begin{equation}
\label{IBVP:Neumann}
\begin{cases}
\begin{aligned}
(a\Delta - \partial_t)(u_{tt}-b\Delta u_t - c^2 \Delta u)&= (k (u_t)^2+s|\nabla u|^2)_{tt}		&& \text{in } J \times \Omega,\\
(\partial_\nu u, \partial_\nu \Delta u)&= (g,h)									&& \text{on }J \times \Gamma,\\
(u,u_t,u_{tt})&=(u_0, u_1, u_2)												&& \text{on } \{t=0\}\times\Omega,
\end{aligned}
\end{cases}
\end{equation}
where $u_0, u_1, u_2 \colon \Omega \rightarrow \R$ and $g, h: J \times \Gamma \rightarrow \R$ are given, $u: J \times \Omega \rightarrow \R$ is the unknown, 
$u(t,x)$, and $a$, $b$, $c$ and $k$ are positive constants. Moreover, $\partial_\nu u = \nu \cdot \nabla u|_\Gamma$ where $\nu$ is the outer normal unit vector denotes the normal derivative of $u$. 
The parameter $s\in \{0,1\}$ allows us to switch between \eqref{BCK:psi} and \eqref{BCW:psi}.

We point out that the present work extends the results from \cite{Bru14} in several ways.
First, while in \cite{Bru14} the Blackstock--Crighton equation was considered with homogeneous Dirichlet boundary 
conditions, we also allow for inhomogeneous Dirichlet as well as Neumann boundary conditions.
We are able to remove the restriction $n\in \{1,2,3\}$ on the dimension of the spatial domain $\Om$. Instead of $L_2(\Om)$, we consider \eqref{BCK:psi} and \eqref{BCW:psi} in $L_p(\Om)$ 
where $p\in (1,\infty)$ in case of the linearized equation and $p > \max \{n/4 +1/2, n/3\}$ in case of the nonlinear equations \eqref{IBVP:Dirichlet} and \eqref{IBVP:Neumann}.
In particular, we require $p\in(5/4,\infty)$ in case $n=3$ and then $p=2$ is admissible. 
Moreover, most notably, our conditions on the regularity of the data $(g,h,u_0, u_1, u_2)$ are necessary and sufficient for the
existence of a unique solution of the Blackstock--Crighton equation (within a certain regularity class/a certain subspace of $L_p(J\times\Omega)$).

Our strategy for solving \eqref{IBVP:Dirichlet} and \eqref{IBVP:Neumann} is to prove that their linearizations induce isomorphisms between suitable Banach spaces and 
to apply the implicit function theorem. In some sense these linearizations can be considered as a composition of a heat problem and another linearized problem for the Westervelt equation. 
While the linearized Westervelt equation can be handled similar as in \cite{MeWi11,MeWi13}, the heat equation has to be solved with higher regularity conditions.

The paper is organized as follows. The purpose of Section \ref{sec:prerequisites} is to recall several facts we need on our way to global well-posedness and exponential 
stability of \eqref{IBVP:Dirichlet} and \eqref{IBVP:Neumann}. In particular, we mention all function spaces we use, provide facts about the homogeneous Dirichlet and 
Neumann Laplace operator and list some important embeddings and traces. 
We also give a short review of the concept of maximal $L_p$-regularity for parabolic problems. Furthermore, we recall respectively prove optimal regularity results 
for the heat equation and the linearized Westervelt equation with inhomogeneous Dirichlet and Neumann boundary conditions. 

Section \ref{sec:Dirichlet} is devoted to the inhomogeneous Dirichlet boundary value problem \eqref{IBVP:Dirichlet}. First of all we consider the corresponding linear problem 
and represent it as an abstract parabolic evolution equation for which we show maximal $L_p$-regularity. This gives us optimal regularity for the homogeneous linear version 
of \eqref{IBVP:Dirichlet}. Based on the optimal regularity results for the heat and the linearized Westervelt equation from Section \ref{sec:prerequisites} we prove optimal 
regularity for the linear inhomogeneous Dirichlet boundary value problem in Proposition \ref{prop:linearinhom}. The main result in this section is Theorem \ref{thm:global:Dirichlet} 
which states global well-posedness of \eqref{IBVP:Dirichlet} and immediately implies exponential stability (Theorem \ref{thm:decay:Dirichlet}).

In Section \ref{sec:Neumann} we treat the inhomogeneous Neumann boundary value problem \eqref{IBVP:Neumann}. Here, we proceed analogously to Section \ref{sec:Dirichlet}. 
Theorem \ref{thm:local:Neumann} provides local well-posedness for \eqref{IBVP:Neumann}. In the Neumann case, global well-posedness is shown for data having zero mean 
(Theorem \ref{thm:global:Neumann}). Moreover, Theorem \ref{thm:decay:Neumann} states long-time behavior of solutions.

In Appendix \ref{app:neumann} we collect several facts about the homogeneous Neumann Laplacian. We outline how one finds a realization of the Laplacian with homogeneous 
Neumann boundary conditions such that its spectrum is contained in the positive half-line.

In Appendix \ref{sec:trac-mixed-deriv} we first study the temporal trace operator acting on a class of anisotropic Sobolev spaces. We present its mapping properties, 
provide a right-inverse and thus obtain its precise range space.  Moreover, we construct functions with prescribed higher-order initial data.  Second, we prove some so-called mixed 
derivative embeddings which are often used for checking the continuity of differential operators acting on anisotropic spaces.

In Appendix \ref{app:heathigher} we prove some higher regularity results for the heat equation with inhomogeneous Dirichlet or Neumann boundary conditions and inhomogeneous 
initial conditions in a far more general framework than needed in the main text.  In particular, we state explicitly all necessary compatibility conditions between initial and boundary 
data and show how they are used to contruct a solution with high regularity.

\section{Preliminaries} 
\label{sec:prerequisites}
The purpose of this section is to introduce the notation we are going to use throughout the paper and to recall several important facts and results we need to prove global well-posedness and long-time behavior of solutions for \eqref{IBVP:Dirichlet} and \eqref{IBVP:Neumann}.
As already mentioned in Section \ref{sec:introduction}, we always assume that the spatial domain 
$\Omega \subset \R^n$, $n\in\N$, is bounded and has smooth boundary $\Gamma=\partial\Omega$. We write $J$ for a 
time interval and consider either $J=(0,T)$ for some finite time horizon $T>0$ or $J=\R_+=(0,\infty)$.

\subsection{Function spaces, operators, embeddings and traces} 
The space $BUC^k(\Om)$ contains all $k$-times Fr\'echet differentiable functions $\Om \rightarrow \R$, whose derivatives up to order $k$ are bounded and uniformly continuous. For $p\in(1,\infty)$, let $L_p(\Omega)$ denote the space of (equivalence classes of) Lebesgue measurable $p$-integrable functions $\Omega \rightarrow \R$. 
We write $W_p^m(\Omega)$ for the Sobolev--Slobodeckij space and $H_p^m(\Om)$ for the Bessel potential space of order $m\in[0,\infty)$, where we have $W_p^m(\Om) = H_p^m(\Om)$ if $m \in \N_0$. Moreover, $W^m_p(\Omega;X)$ and $H^m_p(\Omega;X)$ denote the vector valued versions and $_0W_p^1(J;X)$ denotes the space of all functions $u \in W_p^1(J;X)$ with $u(0)=0$.
For $p \in [1,\infty)$, $q\in[1,\infty]$, $s\in \R_+$,  the Besov space $B_{p,q}^s(\Omega)$ is defined as 
$(L_p(\Omega), W_p^m(\Omega))_{s/m,q}$ where $m=\lceil s \rceil$ and $(\cdot , \cdot )_{s/m,q}$ indicates real interpolation. 
It holds that $B_{p,p}^s (\Omega) = W_p^s(\Omega)$ if $s \in \R_+\setminus \N$ and $B_{p,q}^s(\Omega) = W_p^s(\Omega)$ 
if $p=q=2$. Moreover,
\begin{equation}
\label{Besov:interpolation}
B_{p,q}^s(\Omega)=(W_p^k(\Omega), W_p^m(\Omega))_{\Theta,q}, 
\qquad \text{where } 0 \leq k <s<m \text{ and } s= (1-\Theta)k + \Theta m.
\end{equation}
We always write $X \hookrightarrow Y$ if the Banach space $X$ is continuously embedded into the 
Banach space $Y$. Moreover, let $L(X,Y)$ be the space of all bounded linear operators between $X$ and $Y$. A linear operator $A \colon X\to Y$ is called an isomorphism if it is bounded and bijective. Then the closed graph theorem implies that $A^{-1} \colon Y\to X$ is also bounded and therefore $A\colon X\to Y$ is a homeomorphism.
Now, let $X$ and $\mathbb{X}$ be Banach spaces such that $\mathbb{X} \hookrightarrow L_{1,loc}(J; X)$ where $L_{1,loc}(J;X)$ is the space of locally integrable functions $J\rightarrow X$. 
For any $\omega \in \R$ we define the exponentially weighted space
\begin{equation*}
\mathrm{e}^\omega \mathbb{X} = \{u \in L_{1,loc}(J;X)\colon \mathrm{e}^{-\omega t}u \in \mathbb{X} \},
\end{equation*}
equipped with the norm $\|u\|_{\mathrm{e}^\omega \mathbb{X}} = \|\mathrm{e}^{-\omega t} u\|_{\mathbb{X}}$ where $\mathrm{e}^{-\omega t} u$ denotes the mapping $[t \mapsto \mathrm{e}^{-\omega t} u(t)]$.

Let $-\Delta_D\colon \D(\Delta_D) \rightarrow L_p(\Omega)$, $u \mapsto -\Delta u$ denote the negative Dirichlet Laplacian with domain $\D(\Delta_D) = \{u \in W_p^2(\Omega)\colon u=0 \text{ on }\Gamma\}$.  We recall that the spectrum $\sigma(-\Delta_D)$ is a discrete subset of $(0,\infty)$ consisting only of eigenvalues $\lam_n^D=\lam_n(-\Delta_D)$, $n\in\N_0$ with finite multiplicity. We write $\lam_0^D>0$ for the smallest eigenvalue of $-\Delta_D$.
Moreover, the negative Neumann Laplacian $-\Delta_N: \D(\Delta_N) \rightarrow L_p(\Omega)$, $u\mapsto -\Delta u$ with domain $\D(\Delta_N)= \{u \in W_p^2(\Omega) \colon \partial_\nu u =0 \text{ on } \Gamma\}$ has a discrete spectrum $\sigma(-\Delta_N)\subset[0,\infty)$ which contains only eigenvalues $\lam_n^N= \lam_n(-\Delta_N)$, $n\in \N_0$, of finite multiplicity. Here, $\lam_0^N=0$ is an isolated point of $\sigma(-\Delta_N)$ which can be removed when introducing the space $L_{p,0}(\Om)=\{u \in L_p(\Om)\colon \int_\Om u \, dx=0\}$ and considering $-\Delta_{N,0}\colon \D(\Delta_{N,0}) \rightarrow L_{p,0}(\Om)$ , $u\mapsto -\Delta u$ with $\D(\Delta_{N,0}) = \D(\Delta_N) \cap L_{p,0}(\Omega)$. We then have $\sigma(-\Delta_{N,0}) \subset (0,\infty)$ where $\lam_1^N = \lam_1(-\Delta_N)>0$ is the smallest eigenvalue of $-\Delta_{N,0}$. For details we refer to Appendix \ref{app:neumann}.

We shall use the embeddings $W_p^1(J) \hookrightarrow BUC(J)$ and $W_p^s(\Omega) \hookrightarrow W_p^t(\Omega)$ for $s\geq t$. We always write $\gamma_D=  \cdot |_\Gamma$ and $\gamma_N= \del_\nu \cdot |_\Gamma = \nu \cdot (\nabla \cdot)|_\Gamma$
for the Dirichlet and the Neumann trace, respectively. Moreover, ${\gamma_t = }\cdot |_{t=0}$ denotes the temporal trace. 
Let $B\in \{D,N\}$, $j_D=0$, $j_N=1$. For $p\in (1,\infty)$, $k\in \N$ and $l\in \N_0$ the spatial trace
\begin{equation}
\label{trace:spatial}
\begin{aligned}
u \mapsto \gamma_B u \colon &  W_p^{k+l}(J; L_p(\Om))\cap W_p^k(J; W_p^{2l}(\Om)) \\
&\qquad\rightarrow W_p^{k+l-j_B/2-1/2p}(J; L_p(\Gamma)) \cap W_p^k(J; W_p^{2l-j_B-1/p}(\Gamma))
\end{aligned}
\end{equation}
is bounded, see Appendix \ref{sec:trac-mixed-deriv}.  Furthermore, the trace
\begin{equation}
\label{trace:spatial2}
u \mapsto \gamma_B u \colon W_p^s(\Om) \rightarrow B_{p,p}^{s-j_B-1/p}(\Gamma)
\end{equation}
is bounded for every $s\in (j_B+1/p, \infty)$, cf. \cite[Theorem 3.3.3]{Tri83}.
The temporal trace
\begin{equation}
\label{trace:temporal}
\gamma_t : u \mapsto u|_{t=0}\colon W_p^\alpha(J; W_p^s(\Om)) \cap L_p(J; W_p^{s+2\alpha}(\Om)) \rightarrow B_{p,p}^{s+2\alpha-2/p}(\Om)
\end{equation}
is bounded for $\alpha \in (1/p,1]$, $s\in [0,\infty)$ and $s+2\alpha \notin \N$ for $\alpha<1$. The same holds when the 
domain $\Om$ is replaced by its boundary $\Gamma$. 
Finally, we mention that for $p\in(1,\infty)$, $t,s\in\N_0$, $\tau,\sigma\in\N$, $\theta\in(0,1)$ we have the mixed derivative embedding
\begin{equation}
\label{emb:mixed}
W^{t+\tau}_{p}(J;W^s_{p}(\Omega)) \cap W^t_{p}(J;W^{s+\sigma}_{p}(\Omega)) \hookrightarrow W^{t+\theta\tau}_{p}(J;W^{s+(1-\theta)\sigma}_{p}(\Omega)),
\end{equation}
where again $\Om$ can be replaced by $\Gamma$. For more general embeddings of this form we refer to Appendix \ref{sec:trac-mixed-deriv}.

\subsection{Maximal \texorpdfstring{$L_p$}{Lp}-regularity} 
Let $J=(0,T)$ or $J=\R_+=(0,\infty)$ and assume $p\in (1,\infty)$.
We say that a closed linear operator $A:\mathcal{D}(A) \rightarrow X$ with dense domain 
$\mathcal{D}(A)$ in a Banach space $X$ 
admits maximal $L_p$-regularity on $J$ if for each $F\in L_p(J;X)$ the abstract Cauchy problem
\begin{equation}
\label{Cauchy:abstract}
v_t(t) + A v(t) = F(t), \,  t \in J,\qquad v(0)=v_0,
\end{equation}
admits a unique solution $u\in \E(J)=W_p^1(J;X) \cap L_p(J;\mathcal{D}(A))$ for $v_0=0$.

Furthermore, the abstract inhomogeneous Cauchy problem \eqref{Cauchy:abstract} is said to admit 
maximal $L_p$-regularity, if
\begin{equation}
(\del_t+A,\gamma_t) : \E(J) \rightarrow L_p(J;X) \times \mathrm{tr}\, \E(J), \qquad v \mapsto (F,v_0)
\end{equation}
is a homeomorphism. Then its inverse is the solution map
\begin{equation}
(\del_t+A,\gamma_t)^{-1}: L_p(J;X) \times \mathrm{tr}\, \E(J) \rightarrow \E(J), \qquad (F, v_0) \mapsto v.
\end{equation} 
If $A:\mathcal{D}(A) \rightarrow X$ has maximal $L_p$-regularity on $J$,
then the abstract Cauchy problem \eqref{Cauchy:abstract} has maximal $L_p$-regularity on $J$, cf. Section III.1.5 in 
\cite{Ama95}. The following result is very useful and will be used several times throughout this paper.
\begin{lemma}[cf. {\cite[Proposition III.1.5.3]{Ama95}}]
\label{lem:maximalexp}
Let $\alpha \in \R$. Suppose that 
\begin{equation*}
(\partial_t +\alpha + A, \gamma_t)\colon  \mathbb{E}(J) \rightarrow L_p(J;X) \times \mathrm{tr}\, \E(J)
\end{equation*}
is a homeomorphism. Then
\begin{equation*}
(\partial_t + A,\gamma_t)\colon  \Exp^\alpha \mathbb{E}(J) \rightarrow \Exp^{\alpha} L_p(J;X) \times \mathrm{tr}\, \E(J)
\end{equation*}
is a homeomorphism. 
\end{lemma}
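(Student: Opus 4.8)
The plan is to deduce the claim from the hypothesis by conjugating with the multiplication operator $M_\alpha\colon u \mapsto \Exp^{-\alpha t}u$. By the very definition of the exponentially weighted spaces, $M_\alpha$ and its inverse $M_{-\alpha}\colon u\mapsto \Exp^{\alpha t}u$ restrict to isometric isomorphisms $M_\alpha\colon \Exp^\alpha\mathbb{E}(J)\to\mathbb{E}(J)$ and $M_{-\alpha}\colon L_p(J;X)\to\Exp^\alpha L_p(J;X)$. Here one uses that $\mathbb{E}(J)=W_p^1(J;X)\cap L_p(J;\mathcal D(A))$ is stable under multiplication by the smooth scalar $\Exp^{-\alpha t}$: the Leibniz rule keeps the time derivative in $W_p^1$, and since $\Exp^{-\alpha t}$ is constant in the spatial variable it preserves $\mathcal D(A)$ pointwise in $t$.

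First I would record the intertwining identity. For $v\in\Exp^\alpha\mathbb{E}(J)$ set $w=M_\alpha v=\Exp^{-\alpha t}v\in\mathbb{E}(J)$. Since $A$ acts only in space it commutes with multiplication by the time-dependent scalar $\Exp^{\alpha t}$, so the product rule gives $\partial_t v+Av=\Exp^{\alpha t}\left(\partial_t w+\alpha w+Aw\right)=M_{-\alpha}\bigl((\partial_t+\alpha+A)w\bigr)$, while $\gamma_t v=v(0)=w(0)=\gamma_t w$ because $\Exp^{-\alpha t}|_{t=0}=1$. In other words,
\[
(\partial_t+A,\gamma_t)\big|_{\Exp^\alpha\mathbb{E}(J)}\;=\;(M_{-\alpha}\times\mathrm{id})\circ(\partial_t+\alpha+A,\gamma_t)\circ M_\alpha,
\]
where $\mathrm{id}$ denotes the identity on $\mathrm{tr}\,\mathbb{E}(J)$.

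It then remains to chain three maps: $M_\alpha\colon\Exp^\alpha\mathbb{E}(J)\to\mathbb{E}(J)$ is a homeomorphism, the middle map $(\partial_t+\alpha+A,\gamma_t)\colon\mathbb{E}(J)\to L_p(J;X)\times\mathrm{tr}\,\mathbb{E}(J)$ is a homeomorphism by hypothesis, and $M_{-\alpha}\times\mathrm{id}\colon L_p(J;X)\times\mathrm{tr}\,\mathbb{E}(J)\to\Exp^\alpha L_p(J;X)\times\mathrm{tr}\,\mathbb{E}(J)$ is a homeomorphism; hence so is their composition, which is precisely the asserted map. I do not expect any genuine obstacle here: the only points needing a word of justification are that $M_\alpha$ really maps $\Exp^\alpha\mathbb{E}(J)$ into $\mathbb{E}(J)$ (the weak time derivative transforms correctly under the smooth multiplier) and that $A$ commutes with the scalar multiplier (immediate, as $\Exp^{\alpha t}$ does not depend on the spatial variable); everything else is bookkeeping. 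As a consistency check, for a finite interval $J=(0,T)$ the weighted and unweighted norms are equivalent so the statement is automatic, while for $J=\mathbb{R}_+$ the conjugation above carries the whole content.
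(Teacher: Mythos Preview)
Your argument is correct and is precisely the standard conjugation proof. The paper does not actually give a proof of this lemma; it merely states the result with a reference to \cite[Proposition III.1.5.3]{Ama95}, where the same conjugation-by-$\Exp^{-\alpha t}$ argument underlies the cited proposition. There is nothing to compare beyond noting that your write-up supplies the details the paper omits.
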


\subsection{Optimal regularity results}
In order to prove our results on optimal regularity for the linearized versions of \eqref{IBVP:Dirichlet} and \eqref{IBVP:Neumann}, 
we need optimal regularity results for the heat equation and the linearized Westervelt equation. We always let $a,b,c\in(0,\infty)$.

\subsubsection{Dirichlet boundary conditions} Recall that $\lambda_0^D>0$ always denotes the smallest eigenvalue
of the negative Dirichlet Laplacian in $L_p(\Omega)$. 
\begin{lemma}[{\cite[Proposition 8]{LatPrSchn06}}]
\label{lem:heatinhom}
Let $p \in (1, \infty)$ and $\omega \in (0, a\lambda_0^D)$. Then the initial boundary 
value problem for the heat equation
\begin{equation}
\label{heat:linear}
\begin{cases}
\begin{aligned}
u_t-a\Delta u &= f 		&&\text{ in } \R_+ \times \Omega,\\
		   u&=g 		&&\text{ on } \R_+ \times \Gamma,\\ 
		   u&=u_0 		&&\text{ on } \{t=0\} \times \Omega,

\end{aligned}
\end{cases}
\end{equation}
has a unique solution 
\begin{equation*}
u \in \Exp^{-\omega} \Hb_u, \qquad \Hb_u= W_p^1(\R_+;L_p(\Omega)) \cap L_p(\R_+;W_p^2(\Omega)),
\end{equation*}
if and only if the given data $f$, $g$ and $u_0$ satisfy the regularity conditions
\begin{enumerate}[label=\emph{(\roman*)}, leftmargin=*]
\item $f \in \Exp^{-\omega} L_p(\R_+ \times \Omega)$, 
\item $u_0 \in W_p^{2-2/p}(\Omega)$,
\item $g \in \Exp^{-\omega} \Hb_\Gamma$, $\Hb_\Gamma= W_p^{1-1/2p}(\R_+,L_p(\Gamma)) \cap L_p(\R_+; W_p^{2-1/p}(\Gamma))$, 
\item $u_0 |_{\Gamma} = g |_{t=0}$ in the sense of traces.
\end{enumerate}
\end{lemma}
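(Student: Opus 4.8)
The plan is to prove both implications together with uniqueness, by reducing \eqref{heat:linear} to an abstract Cauchy problem. Write $X=L_p(\Om)$, $A=-a\Delta_D$ with $\D(A)=\D(\Delta_D)$, $\Hb_u=W_p^1(\R_+;X)\cap L_p(\R_+;W_p^2(\Om))$, and $\E(\R_+)=W_p^1(\R_+;X)\cap L_p(\R_+;\D(A))=\{v\in\Hb_u\colon\gamma_Dv=0\}$. By \eqref{trace:spatial} (with $k=0$, $l=1$, $B=D$) the spatial trace $\gamma_D\colon\Hb_u\to\Hb_\Gamma$ is bounded, and it is standard that it possesses a bounded right inverse $\calR\colon\Hb_\Gamma\to\Hb_u$; one checks, by inspecting its construction or via the conjugation $u\mapsto\Exp^{\omega t}u$ together with a perturbation argument, that $\calR$ maps $\Exp^{-\omega}\Hb_\Gamma$ into $\Exp^{-\omega}\Hb_u$. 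First I would dispose of the boundary data: given $(f,g,u_0)$ satisfying (i)--(iv), set $w=\calR g\in\Exp^{-\omega}\Hb_u$; then $u\in\Exp^{-\omega}\Hb_u$ solves \eqref{heat:linear} if and only if $v=u-w$ solves the same problem with homogeneous boundary data, forcing term $\tilde f=f-(w_t-a\Delta w)$ and initial value $\tilde u_0=u_0-w|_{t=0}$. Here $\tilde f\in\Exp^{-\omega}L_p(\R_+\times\Om)$ since $w_t,\Delta w\in\Exp^{-\omega}L_p$, and by the temporal trace \eqref{trace:temporal} (with $\alpha=1$, $s=0$) we have $w|_{t=0}\in W_p^{2-2/p}(\Om)$, so $\tilde u_0\in W_p^{2-2/p}(\Om)$ by (ii); moreover, since spatial and temporal traces commute on $\Hb_u$ (approximate by smooth functions), $\gamma_D\tilde u_0=\gamma_Du_0-g|_{t=0}=0$ by (iv), this condition being vacuous unless $2-2/p>1/p$, i.e.\ $p>3/2$.

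The reduced problem is exactly the abstract Cauchy problem $v_t+Av=\tilde f$, $v(0)=\tilde u_0$ in $\E(\R_+)$. The Dirichlet Laplacian on $L_p(\Om)$ admits maximal $L_p$-regularity, and its spectral bound equals $a\lam_0^D$; since $\omega<a\lam_0^D$ by hypothesis, the shifted operator $A-\omega$ is still invertible, sectorial of angle less than $\pi/2$, and admits maximal $L_p$-regularity on $\R_+$ for the inhomogeneous initial value problem, so $(\del_t-\omega+A,\gamma_t)\colon\E(\R_+)\to L_p(\R_+;X)\times\tr\,\E(\R_+)$ is a homeomorphism. Lemma \ref{lem:maximalexp}, applied with $\alpha=-\omega$, then promotes this to a homeomorphism
\begin{equation*}
(\del_t+A,\gamma_t)\colon\Exp^{-\omega}\E(\R_+)\to\Exp^{-\omega}L_p(\R_+;X)\times\tr\,\E(\R_+).
\end{equation*}
It remains to note that $\tr\,\E(\R_+)=(X,\D(A))_{1-1/p,p}$ coincides with $\{u_0\in W_p^{2-2/p}(\Om)\colon\gamma_Du_0=0\}$ when $p>3/2$ and with $W_p^{2-2/p}(\Om)$ when $p<3/2$ (with the usual care at the endpoint $p=3/2$) -- in either case precisely the set of admissible $\tilde u_0$ produced above. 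Hence there is a unique $v\in\Exp^{-\omega}\E(\R_+)\subset\Exp^{-\omega}\Hb_u$, and $u=v+w$ is the unique element of $\Exp^{-\omega}\Hb_u$ solving \eqref{heat:linear}.

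For the converse, if $u\in\Exp^{-\omega}\Hb_u$ solves \eqref{heat:linear}, then $f=u_t-a\Delta u\in\Exp^{-\omega}L_p(\R_+\times\Om)$ gives (i); the trace maps \eqref{trace:spatial} and \eqref{trace:temporal}, conjugated by $\Exp^{\omega t}$, give $g=\gamma_Du\in\Exp^{-\omega}\Hb_\Gamma$ and $u_0=\gamma_tu\in W_p^{2-2/p}(\Om)$, which are (iii) and (ii); and commuting $\gamma_D$ with $\gamma_t$ yields (iv), trivially satisfied when $p\le3/2$. This closes the equivalence. I expect the technical heart of the argument to be the construction of the weighted bounded right inverse $\calR$ of the spatial trace and the identification of $\tr\,\E(\R_+)$ with the correct space (including the fact that the boundary condition is imposed exactly when $p>3/2$); the maximal-regularity-with-exponential-decay ingredient, by contrast, is a clean consequence of the spectral gap $\omega<a\lam_0^D$ together with Lemma \ref{lem:maximalexp}.
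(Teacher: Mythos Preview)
The paper does not prove this lemma; it is quoted verbatim from \cite[Proposition 8]{LatPrSchn06} and no argument is given. There is therefore nothing to compare against directly.

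Your reconstruction is correct and is in fact the template the paper follows for the analogous Neumann result (Lemma \ref{lem:heatNeumann}): eliminate the inhomogeneous boundary datum by subtracting an extension, then solve the remaining homogeneous-boundary problem via maximal $L_p$-regularity of $-a\Delta_D$, using the spectral gap $\omega<a\lambda_0^D$ together with \cite[Theorem 2.4]{Dor93} and Lemma \ref{lem:maximalexp} to pass to the exponentially weighted spaces. The identification $(L_p(\Omega),\D(\Delta_D))_{1-1/p,p}=\{u_0\in W_p^{2-2/p}(\Omega):\gamma_D u_0=0\text{ if }p>3/2\}$ is exactly what the paper invokes later (e.g.\ in the proof of Corollary \ref{cor:Dirichlet:hom}). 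Your observation that (iv) is vacuous for $p\le 3/2$ is correct and is a mild imprecision in the statement as quoted. The only step you leave somewhat implicit---that the trace extension $\calR$ respects the exponential weight---can be arranged by taking $\calR$ to be the solution operator of a sufficiently shifted heat problem as in \cite{DHP07}, which automatically lands in $\Exp^{-\omega}\Hb_u$; this is precisely how the paper handles the analogous step in the Neumann case.
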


\begin{lemma}[{\cite[Lemma 5]{MeWi13}}]
\label{lem:Westerveltinhom} Suppose $p \in (1,\infty)$, $p\neq 3/2$\Deletable{, $n\in\N$} and define $\omega_0=\min\{b\lambda_0^D/2, c^2/b\}$. 
Then for every $\omega \in (0,\omega_0)$ there exists a unique solution
\begin{equation*}
u \in \Exp^{-\omega} \W_u, \qquad \W_u=W_p^2(\R_+; L_p(\Omega)) \cap W_p^1(\R_+;W_p^2(\Omega))
\end{equation*}
of the linear initial boundary value problem
\begin{equation}
\begin{cases}
\begin{aligned}
u_{tt}-b\Delta u_t - c^2 \Delta u &= f, 	      			&& \text{in } \R_+ \times \Omega,\\
					     u &= g, 				&& \text{on } \R_+ \times \Gamma,\\
				    (u, u_t) &= (u_0,u_1)		&& \text{on } \{t=0\} \times \Omega,			
\end{aligned}
\end{cases}
\end{equation}
if and only if the data satisfy the following conditions:
\begin{enumerate}[label=\emph{(\roman*)}, leftmargin=*]
\item $f \in \Exp^{-\omega} L_p(\R_+ \times \Omega)$,
\item $u_0 \in W_p^2(\Omega)$, $u_1 \in W_p^{2-2/p}(\Omega)$,
\item $g \in \Exp^{-\omega}\W_\Gamma$, $\W_\Gamma = W_p^{2-1/2p}(\R_+; L_p(\Gamma)) \cap W_p^1(\R_+; W_p^{2-1/p}(\Gamma))$,
\item $g |_{t=0}=u_0|_{\Gamma}$ and if $p>3/2$ also $g_t|_{t=0}=u_1|_{\Gamma}$ in the sense of traces.
\end{enumerate}
\end{lemma}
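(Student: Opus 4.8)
The plan is the one that has become standard for maximal‑regularity problems with inhomogeneous boundary data: necessity of (i)--(iv) is read off from the equation and the trace theorems of Section~\ref{sec:prerequisites}, while sufficiency is obtained by reducing to a problem with homogeneous boundary and initial data, which is then solved as an abstract parabolic Cauchy problem. For the necessity, suppose $u\in\Exp^{-\omega}\W_u$ is a solution. Since $\W_u=W_p^2(\R_+;L_p(\Om))\cap W_p^1(\R_+;W_p^2(\Om))$, each of $u_{tt}$, $\Delta u_t$ and $\Delta u$ lies in $\Exp^{-\omega}L_p(\R_+;L_p(\Om))$, so the equation forces (i). Applying the temporal trace \eqref{trace:temporal} to $u$ (via $W_p^1(\R_+;W_p^2(\Om))\hookrightarrow BUC(\R_+;W_p^2(\Om))$) and to $u_t\in W_p^1(\R_+;L_p(\Om))\cap L_p(\R_+;W_p^2(\Om))$ gives $u_0=u|_{t=0}\in W_p^2(\Om)$ and $u_1=u_t|_{t=0}\in W_p^{2-2/p}(\Om)$, i.e.\ (ii); the exponential weight is immaterial here, as it does not change the temporal trace space. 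The spatial trace \eqref{trace:spatial} with $B=D$, $k=l=1$ maps $\W_u$ boundedly into $\W_\Gamma$, so $g=\gamma_D u\in\Exp^{-\omega}\W_\Gamma$, which is (iii). Finally, traces commute on this regularity class, whence $g|_{t=0}=\gamma_D(u|_{t=0})=u_0|_\Gamma$; for $p>3/2$ the datum $g_t$ still admits a temporal trace at $t=0$ (its temporal Sobolev exponent being above $1/p$), which must equal $\gamma_D(u_t|_{t=0})=u_1|_\Gamma$, whereas for $p<3/2$ no such expression is meaningful, $p=3/2$ being the borderline case which is therefore excluded. This establishes (iv).

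For the sufficiency, assume (i)--(iv). By the anisotropic trace theory of Appendix~\ref{sec:trac-mixed-deriv} (the exponential weight being carried along, cf.\ Lemma~\ref{lem:maximalexp}), the combined trace operator $(\gamma_D,\gamma_t,\gamma_t\partial_t)$ maps $\W_u$ onto the subspace of $\W_\Gamma\times W_p^2(\Om)\times W_p^{2-2/p}(\Om)$ cut out by the compatibility relations in (iv), and possesses a bounded right inverse there. Subtracting from $u$ the image of $(g,u_0,u_1)$ under this right inverse reduces the problem to finding $v\in\Exp^{-\omega}\W_u$ with
\begin{equation*}
v_{tt}-b\Delta v_t-c^2\Delta v=\hat f\ \text{ in }\R_+\times\Om,\qquad v|_\Gamma=0,\qquad (v,v_t)|_{t=0}=0,
\end{equation*}
for some $\hat f\in\Exp^{-\omega}L_p(\R_+\times\Om)$, and uniqueness for the original problem is equivalent to uniqueness for this one.

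Writing $V=(v,v_t)$, the reduced problem is the abstract Cauchy problem $V_t=\A V+(0,\hat f)$, $V|_{t=0}=0$, for $\A=\bigl(\begin{smallmatrix}0&1\\ c^2\Delta_D&b\Delta_D\end{smallmatrix}\bigr)$ on $X=\D(\Delta_D)\times L_p(\Om)$ with $\D(\A)=\D(\Delta_D)\times\D(\Delta_D)$; one checks directly that the first component of the maximal‑regularity space $W_p^1(\R_+;X)\cap L_p(\R_+;\D(\A))$ is exactly $\{v\in\W_u\colon v|_\Gamma=0\}$ — in particular the condition $v_t\in L_p(\R_+;\D(\Delta_D))$ supplies the part $v_t\in L_p(\R_+;W_p^2(\Om))$ of $\W_u$ that is not visible at the level of the base space. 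The resolvent of $\A$ is an explicit matrix‑valued function of $\Delta_D$, assembled from $(\lambda^2-(b\lambda+c^2)\Delta_D)^{-1}$, so the spectrum of $\A$ consists of the roots $\tau_\pm(\lambda_n^D)$ of the scalar characteristic equations $\tau^2+b\lambda_n^D\tau+c^2\lambda_n^D=0$; splitting into the regime $\lambda_n^D<4c^2/b^2$ of complex roots with real part $-b\lambda_n^D/2$ and the regime of real roots (whose larger branch increases to $-c^2/b$ as $\lambda_n^D\to\infty$) one obtains $\sup_n\re\tau_\pm(\lambda_n^D)=-\min\{b\lambda_0^D/2,\,c^2/b\}=-\omega_0$. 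Hence for $\omega\in(0,\omega_0)$ the operator $\A+\omega$ has spectrum in the open left half‑plane, and the same functional‑calculus representation of the resolvent, together with the bounded $H^\infty$‑calculus of $-\Delta_D$ on $L_p(\Om)$, shows that $-(\A+\omega)$ is $\mathcal R$‑sectorial of angle less than $\pi/2$; thus $\A+\omega$ generates an analytic semigroup with maximal $L_p$‑regularity on $\R_+$. By Lemma~\ref{lem:maximalexp} the corresponding assertion holds on the weighted space, yielding the unique $v\in\Exp^{-\omega}\W_u$ and completing the proof.

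The two genuinely technical points are: the construction of the bounded right inverse of the combined trace operator so that the reduction stays inside $\W_u$ and is compatible with the sharp compatibility conditions in (iv) — precisely the purpose of the anisotropic trace apparatus of Appendix~\ref{sec:trac-mixed-deriv}; and the passage from ``generates an analytic semigroup'' to ``$\mathcal R$‑sectorial / has maximal $L_p$‑regularity'' for the operator matrix $\A$, which for $p\ne2$ relies essentially on the bounded $H^\infty$‑calculus of the Dirichlet Laplacian rather than on Hilbert‑space arguments (for $p=2$ it is automatic).
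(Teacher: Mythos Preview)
Your argument is correct, and the necessity part matches exactly how the paper handles such verifications. The sufficiency part, however, follows a genuinely different route from the one the paper (via the cited reference and the parallel Neumann proof in Lemma~\ref{lem:Westerveltmaximal}) takes.

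For the homogeneous problem, the paper does \emph{not} go through $\mathcal R$-sectoriality and the $H^\infty$-calculus of $-\Delta_D$. Instead it splits the operator matrix as $\tilde A = \tilde A_1 + \tilde A_2$ with $\tilde A_1$ upper triangular (so that maximal regularity of $\tilde A_1$ reduces to that of the heat equation and of $\partial_t+\alpha$) and $\tilde A_2$ bounded on $X$; a bounded perturbation result then yields maximal regularity for $\nu+\tilde A$ for some large $\nu>0$. The spectral bound $s(-\tilde A)=-\omega_0$ is computed as you do, and Dore's theorem \cite[Theorem~2.4]{Dor93} (maximal regularity on a bounded interval plus negative spectral bound implies maximal regularity on $\R_+$) transfers this to $\tilde A-\omega$ for every $\omega\in[0,\omega_0)$, after which Lemma~\ref{lem:maximalexp} gives the weighted statement. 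Your functional-calculus route is more direct once the $H^\infty$-calculus of the Dirichlet Laplacian is in hand, and it gives the sharp angle information in one stroke; the paper's perturbation-plus-Dore argument is more elementary and stays entirely within the maximal-regularity toolbox already set up in Section~\ref{sec:prerequisites}, avoiding any appeal to $\mathcal R$-boundedness transfer through the operator-valued calculus.

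For the reduction to homogeneous data, you invoke a bounded right inverse of the combined trace $(\gamma_D,\gamma_t,\gamma_t\partial_t)$ on the compatibility subspace. This exists, but note that Appendix~\ref{sec:trac-mixed-deriv} only supplies the separate building blocks (temporal and spatial retractions); assembling them into a single right inverse with the correct joint regularity is precisely the kind of construction carried out in Appendix~\ref{app:heathigher}. The paper's own strategy (visible in Lemma~\ref{lem:Westervelt:inhom:Neumann}) is instead to absorb the boundary datum by first solving an auxiliary problem of lower order with data $g$, $u_0$, $u_1$ and then correcting with the homogeneous result. Either way works; your abstraction is cleaner, the paper's is more explicit.
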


\subsubsection{Neumann boundary conditions} 
Now we prove optimal regularity results for the heat equation and the linearized Westervelt equation with Neumann boundary conditions.  
Recall that $\lam_1^N>0$ denotes the smallest eigenvalue of the negative homogeneous Neumann Laplacian in $L_{p,0}(\Omega)$.
Let $\bar{u} = \abs\Omega^{-1} \int_\Omega u\, dx$ denote the mean of a function $u:\Omega\to\bbR$ and let 
$\bar g = \abs\Gamma^{-1}\int_\Gamma g \, dS$ for $g:\Gamma\to\bbR$.

\begin{lemma}
\label{lem:heatNeumann}
  Let  $p \in (1,\infty)\setminus\{3\}$ and $\omega \in [0, a \lam_1^N)$. Then the inhomogeneous Neumann boundary value problem for the heat equation
  \begin{equation}\label{prob:inhom_Neumann_Laplacian}
    \begin{cases}
      \begin{aligned}
        u_t - a \Delta u &= f 	      		&& \text{in } \R_+ \times \Omega,\\   
        \partial_\nu u &= g 						&& \text{on } \R_+ \times \Gamma,\\	
        u &= u_0									&& \text{on } \{t=0\} \times \Omega,      
      \end{aligned}
    \end{cases}
  \end{equation}
  admits a unique solution of the form $u(t,x) = v(t,x) + w(t)$ with
  \begin{equation*}
    v \in \Exp^{-\omega} \Hb_{u,0}, \quad \Hb_{u,0} =W_p^1(\R_+; L_{p,0}(\Omega)) \cap L_p(\R_+; W_p^2(\Omega) \cap L_{p,0}(\Omega)), 
    \qquad w_t \in \Exp^{-\omega }L_p(\R_+),
  \end{equation*}
  if and only if the data satisfy the following conditions:
  \begin{enumerate}[label=\emph{(\roman*)}, leftmargin=*]
  \item $f \in \Exp^{-\omega} L_p(\R_+; L_p(\Omega))$,
  \item $u_0 \in W_p^{2-2/p}(\Omega)$,
  \item $g \in \Exp^{-\omega}\Hb_{\nu} $, $ \Hb_{\nu} = W_p^{1/2-1/2p}(\R_+; L_p(\Gamma)) \cap L_p(\R_+; W_p^{1-1/p}(\Gamma))$,
  \item $g |_{t=0}=\partial_\nu u_0|_{\Gamma}$ in the sense of traces if $p>3$.
  \end{enumerate}
  If in addition $f(t,\cdot)$, $u_0$, $g(t,\cdot)$ have mean value zero over $\Omega$ resp.\ $\Gamma$ for all $t$, then $w=0$.
\end{lemma}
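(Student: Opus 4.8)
The plan is to separate the spatial mean $w(t)=\bar u(t)$, which obeys a scalar ODE obtained by integrating the equation over $\Omega$, from a mean-zero remainder $v$, which is governed by the abstract Cauchy problem for $A=a(-\Delta_{N,0})$ on $L_{p,0}(\Omega)$. The operator $-\Delta_{N,0}$ admits maximal $L_p$-regularity on $\R_+$ (this is classical; its spectrum lies in $[\lambda_1^N,\infty)$, cf.\ Appendix \ref{app:neumann}), and since $\lambda_1^N>0$ the shifted operator $A-\omega$ still has maximal $L_p$-regularity for $\omega\in[0,a\lambda_1^N)$; Lemma \ref{lem:maximalexp} then transfers the solution operator to the exponentially weighted spaces. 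Thus the analytic content is imported and the work lies in the reduction.

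For \emph{necessity}, given a solution $u=v+w$ of the claimed form, I would note that $w(t)$ is constant in $x$, so $\Delta u=\Delta v$ and $\partial_\nu u=\partial_\nu v$. Then $f=v_t+w_t-a\Delta v$ lies in $\Exp^{-\omega}L_p(\R_+;L_p(\Omega))$ (viewing $w_t$ as constant in $x$), giving (i); the temporal trace maps $\Hb_{u,0}$ into $W_p^{2-2/p}(\Omega)\cap L_{p,0}(\Omega)$ by \eqref{trace:temporal}, so $u_0=v(0)+w(0)\in W_p^{2-2/p}(\Omega)$, giving (ii); and the Neumann trace maps $\Hb_{u,0}$ boundedly into $\Hb_\nu$ by the trace theory for anisotropic parabolic spaces (cf.\ Appendix \ref{sec:trac-mixed-deriv}), so $g=\gamma_N u=\gamma_N v\in\Exp^{-\omega}\Hb_\nu$, giving (iii). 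For $p>3$ one has $2-2/p>1+1/p$, so $\gamma_N$ and $\gamma_t$ both act on $v$ and commute, and evaluating $\gamma_N v=g$ at $t=0$ yields $\partial_\nu u_0=g|_{t=0}$, i.e.\ (iv); the exclusion $p=3$ enters precisely because at that value $2-2/p=1+1/p$ and the real interpolation space $(L_{p,0}(\Omega),\D(\Delta_{N,0}))_{1-1/p,p}$ has no clean description in terms of a boundary condition.

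For \emph{sufficiency}, assuming (i)--(iv), I would first remove the boundary datum: a bounded right inverse of $\gamma_N\colon\Hb_u\to\Hb_\nu$, conjugated with the weight $\Exp^{-\omega t}$, produces $G\in\Exp^{-\omega}\Hb_u$ with $\gamma_N G=g$, after which $u$ solves \eqref{prob:inhom_Neumann_Laplacian} if and only if $\tilde v:=u-G$ solves the homogeneous-Neumann heat problem with right-hand side $F:=f-(\partial_t G-a\Delta G)\in\Exp^{-\omega}L_p(\R_+;L_p(\Omega))$ and initial value $v_0:=u_0-G(0)\in W_p^{2-2/p}(\Omega)$; for $p>3$, (iv) and $\partial_\nu G(0)=g|_{t=0}$ give $\partial_\nu v_0=0$. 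Next I would split this problem by its mean: with $\bar F(t)=|\Omega|^{-1}\int_\Omega F(t,x)\,dx$ and $\bar v_0=|\Omega|^{-1}\int_\Omega v_0\,dx$, let $\hat w(t)=\bar v_0+\int_0^t\bar F(\tau)\,d\tau$ (so $\hat w_t=\bar F\in\Exp^{-\omega}L_p(\R_+)$ by Hölder's inequality on the bounded $\Omega$), and let $\hat v$ solve $\partial_t\hat v-a\Delta\hat v=F-\bar F$ in $L_{p,0}(\Omega)$, $\partial_\nu\hat v=0$, $\hat v(0)=v_0-\bar v_0$. Since $F-\bar F\in\Exp^{-\omega}L_p(\R_+;L_{p,0}(\Omega))$ and $v_0-\bar v_0$ lies in the trace space of $W_p^1(\R_+;L_{p,0}(\Omega))\cap L_p(\R_+;\D(\Delta_{N,0}))$ (using $\partial_\nu(v_0-\bar v_0)=0$ when $p>3$ and imposing nothing when $p<3$), this is exactly the abstract Cauchy problem for $A$, which by the first paragraph has a unique solution $\hat v\in\Exp^{-\omega}(W_p^1(\R_+;L_{p,0}(\Omega))\cap L_p(\R_+;\D(\Delta_{N,0})))\hookrightarrow\Exp^{-\omega}\Hb_{u,0}$. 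Finally, with $\bar G(t)=|\Omega|^{-1}\int_\Omega G(t,x)\,dx$, I would set $v:=\hat v+G-\bar G\in\Exp^{-\omega}\Hb_{u,0}$ and $w:=\hat w+\bar G$ (so $w_t=\bar F+\partial_t\bar G\in\Exp^{-\omega}L_p(\R_+)$), and verify by direct substitution that $u:=v+w=G+\hat v+\hat w$ solves \eqref{prob:inhom_Neumann_Laplacian}. Uniqueness follows since the difference $z$ of two solutions of the claimed form solves the homogeneous problem with zero initial data; integrating over $\Omega$ and using $\int_\Gamma\partial_\nu z\,dS=0$ forces $z(t)$ to have mean zero, so the mean parts of the two decompositions coincide and $z$ solves the abstract Cauchy problem for $A$ with zero data, whence $z=0$. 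The same ODE gives the last assertion: if $\int_\Omega f(t,\cdot)\,dx=0$, $\int_\Omega u_0\,dx=0$ and $\int_\Gamma g(t,\cdot)\,dS=0$ for all $t$, then $\bar u$ satisfies $\bar u_t=|\Omega|^{-1}(\int_\Omega f\,dx+a\int_\Gamma g\,dS)=0$ and $\bar u(0)=0$, hence $w=\bar u\equiv0$.

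The main obstacle is not analytic but organizational: correctly identifying the trace space of $\Hb_{u,0}$ and the Neumann trace space $\Hb_\nu$ of the heat space, together with the precise roles of the hypotheses $p\neq3$ and of condition (iv) (which is vacuous unless $p>3$), and arranging the boundary extension $G$ and the two spatially constant pieces $\hat w$ and $\bar G$ so that $v$ is genuinely of mean zero while $w$ absorbs all constant directions, with no double counting of the mean.
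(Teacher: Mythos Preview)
Your proof is correct and follows the same overall architecture as the paper's: reduce to maximal regularity of $-a\Delta_{N,0}$ on $L_{p,0}(\Omega)$, eliminate the inhomogeneous boundary datum, split off the spatial mean as a scalar ODE, and solve the mean-zero remainder. The necessity argument and the final uniqueness/zero-mean assertions match the paper's reasoning.

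The one genuine difference is in how the boundary datum $g$ is absorbed. You invoke an abstract bounded right inverse of the Neumann trace $\gamma_N\colon\Hb_u\to\Hb_\nu$ to produce $G$, then correct by $\bar G$ to keep $v$ mean-zero. The paper instead obtains its analogue of $G$ by solving the \emph{shifted} heat problem $\partial_t u_1+\mu u_1-a\Delta u_1=0$, $\partial_\nu u_1=g$, $u_1(0)=u_0$ for large $\mu>0$, citing \cite[Theorem 2.1]{DHP07}; this automatically yields exponential decay and simultaneously takes care of the initial value. It then sets $u_2$ by the ODE $\partial_t u_2=\bar f+\mu\bar u_1$, $u_2(0)=0$, and solves the mean-zero homogeneous-Neumann problem for $u_3$ with source $f-\bar f+\mu(u_1-\bar u_1)$, putting $v=u_1+u_3$, $w=u_2$. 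Your trace-extension route is slightly more modular (it works for any operator once you have the trace right inverse) but requires separately bookkeeping $\bar G$ and $\hat w$; the paper's route bundles boundary and initial data into one auxiliary problem at the cost of introducing the artificial shift $\mu$. Either way the analytic core---maximal regularity of $-\Delta_{N,0}$ on $\R_+$ and Lemma~\ref{lem:maximalexp}---is identical.
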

\begin{proof}
We first let $\omega=0$.  By \cite[Theorem 8.2]{DHP03}, Lemma \ref{lem:spectrum_of_Neumann_Laplacian} and \cite[Theorem 2.4]{Dor93}, 
the Neumann Laplacian in $L_{p,0}(\Omega)$ with domain $\D(\Delta_{N,0}) = \D(\Delta_N) \cap L_{p,0}(\Om)$ 
has maximal regularity on $\R_+$. We therefore obtain a unique solution $u_3\in \Hb_{u,0}$ of the problem
\begin{equation*}
\partial_t u_3 - a \Delta u_3 = f_3 \text{ in } \R_+ \times \Omega,\qquad  \partial_\nu u_3 = 0 \text{ on } \R_+ \times \Gamma, \qquad u_3(0)=0 \text{ in }\Omega
\end{equation*}
for every given $f_3\in L_p(\R_+;L_{p,0}(\Omega))$.  Furthermore, problem \eqref{prob:inhom_Neumann_Laplacian} admits at most one solution.  
Indeed, let us construct it as $u = u_1+u_2+u_3$ where we first solve
\begin{equation*}
 \partial_t u_1 + \mu u_1 - a \Delta u_1 = 0 \text{ in }\R_+ \times \Omega, \qquad \partial_\nu u_1 = g \text{ on } \R_+ \times \Gamma, \qquad u_1(0) = u_0 \text{ in } \Omega,
\end{equation*}
for some sufficiently large $\mu>0$ with \cite[Theorem 2.1]{DHP07}. Next, we let $u_2$ solve the ordinary differential equation
  \begin{equation*}
    \partial_t u_2(t) = \bar f(t)+\mu \bar u_1(t), \qquad u_2(0) = 0,
  \end{equation*}
Finally, with $f_3 = f-\bar f+\mu (u_1-\bar u_1)$, we obtain $u_3$ as above.  It is easy to check that $v = u_1+u_3$ and $w = u_2$ satisfy the assertion.  
The case $\omega>0$ can be reduced to the previous one by multiplying the functions $u$, $f$, $g$ with $t\mapsto \Exp^{\omega t}$ and using 
that the spectrum of $-a\Delta_{N,0}+\omega$ is contained in $(0,\infty)$.
\end{proof} 

\begin{lemma}
\label{lem:Neumannint}
Let $p \in (1,\infty)\setminus\{3\}$ and $\omega \in (0, b \lam_1^N)$. Then the inhomogeneous Neumann boundary value problem 
  \begin{equation}\label{prob:inhom_Neumann2}
    \begin{cases}
      \begin{aligned}
        u_{tt} - b \Delta u_t &= f 	      		&& \text{in } \R_+ \times \Omega,\\        
        \partial_\nu u &= g 				&& \text{on } \R_+ \times \Gamma,\\		
        (u,u_t) &= (u_0,u_1)				&& \text{on } \{t=0\} \times \Omega,
     \end{aligned}
    \end{cases}
  \end{equation}
  admits a unique solution of the form $u(t,x) = v(t,x) + w(t)$ with
  \begin{align*}
   & v \in \Exp^{-\omega} \W_{u,0}, \quad 
     \W_{u,0} =W_p^2(\R_+; L_{p,0}(\Omega))\cap W_p^1(\R_+; W_p^2(\Omega)\cap L_{p,0}(\Omega)), \\ 
    &w\in L_{p,\text{loc}}([0,\infty)),\, w_{tt} \in \Exp^{-\omega }L_p(\R_+),
  \end{align*}
  if and only if the data satisfy the following conditions:
  \begin{enumerate}[label=\emph{(\roman*)}, leftmargin=*]
  \item $f \in \Exp^{-\omega} L_p(\R_+ \times \Omega)$,
  \item $u_0 \in W_p^2(\Omega)$, $u_1 \in W_p^{2-2/p}(\Omega)$,
  \item $g \in \Exp^{-\omega} \W_\nu$, $ \W_\nu = W_p^{3/2-1/2p}(\R_+; L_p(\Gamma)) \cap W_p^1(\R_+; W_p^{1-1/p}(\Gamma))$,
  \item $g |_{t=0}=\partial_\nu u_0|_{\Gamma}$ and if $p>3$ also $g_t |_{t=0}=\partial_\nu u_1|_{\Gamma}$ in the sense of traces,
  \item $\int_0^\infty f(t)\, dt= b \Delta u_0 - u_1$.
  \end{enumerate}  
  Moreover, $w$ satisfies $w_{tt}(t)=\bar f(t)+b\abs\Gamma \abs\Omega^{-1}\bar g_t(t)$ with $w(0) = \bar u_0$ and $w_t(0)= \bar u_1$. 
 \end{lemma}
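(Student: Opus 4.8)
The plan is to remove the second time-derivative by passing to $\zeta:=u_t$, to solve the resulting inhomogeneous Neumann \emph{heat} problem by means of Lemma~\ref{lem:heatNeumann} (with $b$ in the role of $a$), and to recover $u$ by one time-integration; the new compatibility condition~(v) will be exactly what forces the reconstructed solution to split as $u=v+w$ in the asserted way. Concretely, I would first note that if $u$ solves \eqref{prob:inhom_Neumann2}, then $\zeta:=u_t$ solves $\zeta_t-b\Delta\zeta=f$ in $\R_+\times\Om$, $\partial_\nu\zeta=g_t$ on $\R_+\times\Gamma$, $\zeta(0)=u_1$, i.e.\ problem \eqref{prob:inhom_Neumann_Laplacian} for the data $(f,g_t,u_1)$. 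The hypotheses of Lemma~\ref{lem:heatNeumann} for this data match ours: (i) and the $u_1$-part of (ii) are unchanged; the requirement $g_t\in\Exp^{-\omega}\Hb_\nu$ holds by our (iii), since $\Exp^{\omega t}g\in\W_\nu$ forces $\partial_t(\Exp^{\omega t}g)=\Exp^{\omega t}(g_t+\omega g)\in\Hb_\nu$ and $\W_\nu\hookrightarrow\Hb_\nu$; and the compatibility $g_t|_{t=0}=\partial_\nu u_1|_\Gamma$ required when $p>3$ is the second half of our (iv) (the first half of (iv) has no analogue in Lemma~\ref{lem:heatNeumann} and will be used only when reconstructing $u$). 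Lemma~\ref{lem:heatNeumann}, which already admits $\omega\in[0,b\lam_1^N)$, then yields a unique $\zeta=v_\zeta+w_\zeta$ with $v_\zeta\in\Exp^{-\omega}\Hb_{u,0}$ and $(w_\zeta)_t\in\Exp^{-\omega}L_p(\R_+)$, and averaging the $\zeta$-equation over $\Om$ while using $\int_\Gamma\partial_\nu\zeta\,dS=\int_\Gamma g_t\,dS=\abs{\Gamma}\bar g_t$ gives $(w_\zeta)_t=\bar f+b\abs{\Gamma}\abs{\Om}^{-1}\bar g_t$ and $w_\zeta(0)=\bar u_1$.

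Next I would set $u(t):=u_0+\int_0^t\zeta(s)\,ds$, $w(t):=\bar u_0+\int_0^tw_\zeta(s)\,ds$ (constant in $x$) and $v:=u-w$, and verify that $u$ solves \eqref{prob:inhom_Neumann2}: $u_t=\zeta$ and $u_{tt}=\zeta_t$ give the equation, $u(0)=u_0$ and $u_t(0)=u_1$ the initial data, and $\partial_\nu u(t)|_\Gamma=\partial_\nu u_0|_\Gamma+\int_0^t\partial_\nu\zeta|_\Gamma\,ds=\partial_\nu u_0|_\Gamma+g(t)-g(0)=g(t)$ the boundary condition, using $g(0)=\partial_\nu u_0|_\Gamma$ from the first half of (iv). Since $v_\zeta$ and $u_0-\bar u_0$ are mean-free, $v(t)=(u_0-\bar u_0)+\int_0^tv_\zeta(s)\,ds$ has zero spatial mean and $v_t=v_\zeta\in\Exp^{-\omega}\Hb_{u,0}$, hence $v\in\Exp^{-\omega}\W_{u,0}$ provided $v$ \emph{itself} decays, i.e.\ $\Exp^{\omega\cdot}v\in L_p(\R_+;L_p(\Om))$. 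This is the one place where (v) is used: integrating over $\R_+$ the zero-mean heat equation solved by $v_\zeta$, using $v_\zeta(\infty)=0$ and $\int_0^\infty\partial_\nu v_\zeta\,ds=g(\infty)|_\Gamma-\partial_\nu u_0|_\Gamma=-\partial_\nu u_0|_\Gamma$, and comparing with the mean-free part of (v) by means of the Gauss identity $\int_\Gamma\partial_\nu\varphi\,dS=\int_\Om\Delta\varphi\,dx$, one obtains $\int_0^\infty v_\zeta(s)\,ds=\bar u_0-u_0$, whence $v(t)=-\int_t^\infty v_\zeta(s)\,ds$; Young's inequality with the $L_1(\R_+)$-kernel $\Exp^{-\omega\cdot}$ then gives $\Exp^{\omega\cdot}\norm{v}_{L_p(\Om)}\in L_p(\R_+)$, so that indeed $v\in\Exp^{-\omega}\W_{u,0}$. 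Finally $w_{tt}=(w_\zeta)_t=\bar f+b\abs{\Gamma}\abs{\Om}^{-1}\bar g_t\in\Exp^{-\omega}L_p(\R_+)$, $w\in C([0,\infty))\subset L_{p,\text{loc}}([0,\infty))$, $w(0)=\bar u_0$ and $w_t(0)=w_\zeta(0)=\bar u_1$, which is the ``Moreover'' part.

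For the converse, suppose $u=v+w$ is a solution in the stated class. Using $\Delta u=\Delta v$ and that $w$ is constant in $x$, $f=u_{tt}-b\Delta u_t=(v_{tt}-b\Delta v_t)+w_{tt}\in\Exp^{-\omega}L_p(\R_+\times\Om)$, and $g=\partial_\nu u|_\Gamma=\gamma_N v\in\Exp^{-\omega}\W_\nu$ by the spatial trace \eqref{trace:spatial} with $B=N$, $k=l=1$; the temporal traces of $v\in\Exp^{-\omega}\W_{u,0}$ give $u_0=v(0)+\bar u_0\in W_p^2(\Om)$ and $u_1=v_t(0)+\bar u_1\in W_p^{2-2/p}(\Om)$ as well as the trace identities in (iv); and (v) follows by integrating $u_{tt}-b\Delta u_t=f$ over $\R_+\times\Om$, the mean-free part using $v(\infty)=v_t(\infty)=0$ and $\Delta v\to0$, the mean part using $w_t(\infty)=0$ (which holds in the stated class because $w_{tt}$ decays and $\int_0^\infty f=b\Delta u_0-u_1$). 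Uniqueness of $u$ reduces, via $u=u_0+\int_0^tu_t$, to the uniqueness of $\zeta=u_t$ in Lemma~\ref{lem:heatNeumann}, and the splitting is then forced since necessarily $w=\bar u$.

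The one genuinely delicate step will be the bookkeeping around (v): showing that its mean-free content is exactly the compatibility that makes the primitive $\int_0^\cdot v_\zeta$ of the (already exponentially decaying) $v_\zeta$ decay again — this rests on $g(\infty)=0$ together with the Gauss identity $\int_\Gamma\partial_\nu\varphi=\int_\Om\Delta\varphi$ — while the spatial mean of (v) pins down the asymptotics of $w_t$. Carrying the exponential weights through the time integrations should then be routine (Young's inequality, and Lemma~\ref{lem:maximalexp} if a spectral shift is convenient).
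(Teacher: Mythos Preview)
Your approach is essentially identical to the paper's: both reduce to the heat problem for $u_t$ via Lemma~\ref{lem:heatNeumann}, recover $u$ by one time-integration, and invoke condition~(v) to reconcile the initial value of $v$ with its exponential decay. The only cosmetic difference is that the paper defines $v(t)=-\int_t^\infty\varphi_1(s)\,ds$ directly (so decay comes from Young's inequality with the kernel $\Exp^{\omega\cdot}\chi_{\R_-}$, and (v) is then used to verify $v(0)=u_0-\bar u_0$), whereas you set $v(t)=(u_0-\bar u_0)+\int_0^t v_\zeta(s)\,ds$ (so the initial condition is automatic and (v) is used to rewrite this as $-\int_t^\infty v_\zeta$ for decay); these are two readings of the same identity. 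One caveat: your parenthetical justification of $w_t(\infty)=0$ in the necessity of~(v) is circular as written, since you invoke $\int_0^\infty f=b\Delta u_0-u_1$ to conclude it; the paper's proof of the same point is equally terse, simply asserting that integrating in time yields $u_t(t)-b\Delta u(t)=-\int_t^\infty f$, which tacitly uses $u_t(\infty)=w_t(\infty)=0$.
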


\begin{proof} 
We start by proving sufficiency. First, note that $\partial_t\colon \W_\nu \rightarrow \Hb_\nu$ is bounded and 
$\|\Exp^{\omega t} g_t\|_{\Hb_\nu} = \|(\Exp^{\omega t} g)_t - \omega \Exp^{\omega t} g\|_{\Hb_\nu} \lesssim \| \Exp^{\omega t} g\|_{\W_\nu}$
which implies $g_t \in \Exp^{-\omega}\Hb_\nu$. Therefore, from Lemma \ref{lem:heatNeumann} we obtain that the heat problem
\begin{equation*}
   \varphi_t - b \Delta \varphi = f \text{ in } \R_+ \times \Omega, \qquad \partial_\nu \varphi = g_t \text{ on } \R_+ \times \Gamma, \qquad \varphi(0)= u_1 \text{ in } \Omega,
\end{equation*}
admits a unique solution of the form $\varphi(t,x) = \varphi_1(t,x) + \varphi_2(t)$ such that $\varphi_1 \in \Exp^{-\omega}  \Hb_{u,0}$ 
and $\del_t \varphi_2 \in \Exp^{-\omega}L_p(\R_+)$. In particular, since $\varphi_1$ has zero mean over $\Omega$, we have $\varphi_2(0)= \bar{\varphi}(0)= \bar{u}_1$ 
and $\varphi_1(0) = u_1- \bar{u}_1$.
For $x \in \Omega$ and $t \in \R_+$ we define $u(t,x)=v(t,x)+w(t)$, where
\begin{equation*}
v(t,x) = - \int_t^\infty \varphi_1(s,x)\, ds \qquad \text{and} \qquad w(t) = - \int_t^\infty \varphi_2(s)\, ds.
\end{equation*}
Clearly $u_t = \varphi$, hence $u_{tt} - b \Delta u_t= f$ in $\Omega$. Integrating the latter with respect to space, multiplying 
with $\abs{\Omega}^{-1}$ and using the identity $\int_\Omega \Delta u \, dx = \int_\Gamma \del_\nu u \, dS$, we deduce that 
$w$ solves the ordinary differential equation 
\begin{equation*}
w_{tt}(t) = \bar{f}(t) + b \abs{\Gamma} \abs{\Omega}^{-1} \bar{g}_t(t), \quad w(0)= \bar{u}_0, \quad w_t(0)=\bar{u}_1. 
\end{equation*}
This implies $v_{tt} - b\Delta v_t = f - \bar{f} - b \abs{\Gamma} \abs{\Omega}^{-1} \bar{g}_t$ in $\Omega$.
In what follows, we abbreviate $v(t) = v(t,\cdot)$, $\varphi_1(t) = \varphi_1(t,\cdot)$ etc. and we let $\chi_{\bbR_-}(t) = 1$ for $t<0$ and $\chi_{\bbR_-}(t) = 0$ for $t>0$. Using $\omega>0$, $v_t = \varphi_1$ and the identity
  \begin{align*}
\Exp^{\omega t} v(t) = - \int_t^\infty \Exp^{\omega (t-s)} \Exp^{\omega s} \varphi_1(s) \, ds = -(\Exp^{\omega \cdot} \chi_{\R_-}) \ast (\Exp^{\omega \cdot} \varphi_1) (t)    
  \end{align*}
together with Young's inequality implies $v\in \Exp^{-\omega} \W_{u,0}$. Moreover, we have
\begin{align*}
\del_\nu v(t) |_\Gamma =\,  &\del_\nu u(t) |_\Gamma = - \int_t^\infty \del_\nu \varphi(s)|_\Gamma \, ds = - \int_t^\infty g_s(s) \, ds= g(t),\\
v_t(t) = \varphi_1(t) = &- \int_t^\infty \del_s \varphi_1(s) \, ds = - \int_t^\infty ( f(s) +b\Delta \varphi_1(s) - \del_s \varphi_2(s) ) \, ds\\
				= & - \int_t^\infty f(s) \, ds + b\Delta v(t) - \varphi_2(t)
\end{align*}
and $v_t(0)=\varphi_2(0)=u_1-\bar{u}_1$. Altogether, $b\Delta v(0) = v_t(0) + \bar{u}_1 + \int_0^\infty f(s)\, ds = b\Delta (u_0 - \bar{u}_0)$
and $\del_\nu v(0) |_\Gamma = g(0) = \del_\nu (u_0 - \bar{u}_0)$ which implies that $v(0)=u_0 - \bar{u}_0$ in $\Omega$. 

To verify necessity of (i)--(v), we assume that $u(t,x) =v(t,x)+w(t)$ with $v \in \Exp^{-\om}  \W_{u,0}$ and $w_{tt} \in \Exp^{-\om} L_p(\R_+)$ 
is a solution of \eqref{prob:inhom_Neumann2}. We have $\Exp^{\om t}f= \Exp^{\om t} v_{tt} + \Exp^{\om t} w_{tt} + b \Delta (\Exp^{\omega t} v_t)$. 
Since $\Exp^{\om t} v_t = (\Exp^{\om t} v)_t - \om \Exp^{\om t} v \in L_p(\R_+; W_p^2(\Om) \cap L_{p,0}(\Om))$ and 
$\Exp^{\om t}v_{tt} = (\Exp^{\om t}v)_{tt} -2 \om \Exp^{\om t} v_t - \om^2 \Exp^{\om t} v \in L_p(\R_+; L_{p,0}(\Om))$ we conclude $\Exp^{\om t} f \in L_p(\R_+ \times \Om)$
and (i) is verified.
Concerning (ii) note that exponential weights do not affect the initial regularity. Due to $W_p^1(\R_+) \hookrightarrow BUC(\R_+)$ 
we infer $v|_{t=0} \in W_p^2(\Om) \cap L_{p,0}(\Om)$, hence $u|_{t=0} = v|_{t=0} + w|_{t=0} \in W_p^2(\Om)$. Furthermore, we have
$\Exp^{\om t} v_t \in W_p^1(\R_+; L_{p,0}(\Om)) \cap L_p(\R_+; W_p^2(\Om)\cap L_{p,0}(\Om))$. Applying the temporal trace \eqref{trace:temporal} 
with $\alpha=1$ and $s=0$ implies $v_t |_{t=0} \in B_{p,p}^{2-2/p}(\Om) = W_p^{2-2/p}(\Om)$, hence $u_t |_{t=0} = v_t |_{t=0} + w_t |_{t=0} \in W_p^{2-2/p}(\Om)$. 
In order to check (iii), we apply the spatial trace \eqref{trace:spatial} with $k=l=1$ to $\Exp^{\om t} v \in  \W_{u,0}$ which gives us $\Exp^{\om t} \del_\nu u |_\Gamma \in\W_\nu$ as claimed.
Next, note that \eqref{trace:spatial2} applied to $u_0 \in W_p^2(\Omega)$ implies $\del_\nu u_0 \in B_{p,p}^{1-1/p}(\Gamma) = W_p^{1-1/p}(\Gamma)$. 
Moreover, we have $g \in W_p^1(\R_+; W_p^{1-1/p}(\Gamma)) \hookrightarrow BUC(\R_+, W_p^{1-1/p}(\Gamma))$, hence $\del_\nu u_0 |_\Gamma = g|_{t=0}$ in $W_p^{1-1/p}(\Gamma)$.
From $u_1 \in W_p^{2-2/p}(\Om)$ we obtain $\del_\nu u_1 \in B_{p,p}^{1-3/p}(\Gamma)= W_p^{1-3/p}(\Gamma)$ for $p>3$ and, from
$g_t \in \mathcal{H}_{\nu}$, using \eqref{trace:temporal} with $\alpha=1/2-1/2p$ and $s=0$, we get
$g_t |_{t=0} \in B_{p,p}^{1-3/p}(\Gamma) = W_p^{1-3/p}(\Gamma)$ if $p>3$. 
Altogether, $\del_\nu u_1|_\Gamma = g_t|_{t=0}$ in $W_p^{1-3/p}(\Om)$ for $p>3$. 
For (v), note that integrating $u_{tt}(t)-b\Delta u_t(t) =f(t)$ with respect to time yields $u_t(t) - b\Delta u(t) = -\int_t^\infty f(s)\, ds$, hence in particular 
$u_t(0) - b\Delta u(0) = -\int_0^\infty f(s)\, ds$. Therewith the proof of necessity is complete. 

Finally, we show that \eqref{prob:inhom_Neumann2} has at most one solution. To this end, suppose we have given two solutions of \eqref{prob:inhom_Neumann2}. 
Their difference $\hat{u}$ solves
\begin{equation*}
\hat{u}_{tt}- b\Delta \hat{u}_t = 0 \text{ in }\R_+ \times \Om, \qquad \del_\nu \hat{u} = 0 \text{ on } \R_+ \times \Gamma, \qquad \hat{u}(0) = \hat{u}_t(0)=0 \text{ in } \Omega.
\end{equation*}
Furthermore, $\check{u} = \hat{u}_t$ solves the heat problem 
\begin{equation*}
\check{u}_{t}- b\Delta \check{u}_t = 0 \text{ in }\R_+ \times \Om, \qquad \del_\nu \check{u} = 0 \text{ on } \R_+ \times \Gamma, \qquad \check{u}=0 \text{ in } \Omega,
\end{equation*}
which implies $\hat{u}_t=0$. Hence $\hat{u}$ is constant which together with $\hat{u}(0)=0$ implies $\hat{u}=0$.
\end{proof}

\begin{lemma}
\label{lem:Westerveltmaximal}
Let $p\in(1,\infty)\setminus\{3\}$ and $\omega_0= \min\{b \lam_1^N /2, c^2/b\}$. Then for every $\omega \in (0, \omega_0)$ the initial boundary value problem
\begin{equation}
\label{Westervelt:Neumannhom}
\begin{cases}
\begin{aligned}
u_{tt}- b \Delta u_t - c^2\Delta u &= f, 	      		&& \text{in } \R_+ \times \Omega,\\  	       		    
			\partial_\nu u &= 0, 				&& \text{on } \R_+ \times \Gamma,\\			
	                 	     (u, u_t) &= (u_0, u_1)		&& \text{on } \{t=0\} \times \Omega,
\end{aligned}
\end{cases}
\end{equation}
has a unique solution 
\begin{equation*}
u \in \Exp^{-\omega}  \W_{u,0}, \qquad  \W_{u,0} = W_p^2(\R_+; L_{p,0}(\Omega)) \cap W_p^1(\R_+; W_p^2(\Omega) \cap L_{p,0}(\Omega))
\end{equation*}
if and only if 
\begin{enumerate}[label=\emph{(\roman*)}, leftmargin=*]
\item $f \in \Exp^{-\omega} L_p(\R_+; L_{p,0}(\Omega))$,
\item $u_0 \in W_p^2(\Omega) \cap L_{p,0}(\Omega)$ and $u_1 \in W_p^{2-2/p}(\Omega) \cap L_{p,0}(\Omega)$ such that $\partial_\nu u_0 |_\Gamma =0$ and if $p>3$ 
additionally $\partial_\nu u_1 |_\Gamma=0$.
\end{enumerate}
\end{lemma}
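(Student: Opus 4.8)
The plan is to recast \eqref{Westervelt:Neumannhom} as a first-order abstract Cauchy problem whose generator has maximal $L_p$-regularity, and then to insert the exponential weight via Lemma~\ref{lem:maximalexp}; this is the homogeneous-Neumann, mean-value-free counterpart of the Dirichlet Lemma~\ref{lem:Westerveltinhom}, and the argument parallels \cite[Lemma~5]{MeWi13}. Let $X=L_{p,0}(\Om)$ and $\Lambda=-\Delta_{N,0}$, so that $\D(\Lambda)=\D(\Delta_{N,0})=\{v\in W_p^2(\Om)\cap L_{p,0}(\Om):\del_\nu v|_\Gamma=0\}$; by the facts recalled in the proof of Lemma~\ref{lem:heatNeumann}, $\Lambda$ is boundedly invertible with $\sigma(\Lambda)\subset[\lam_1^N,\infty)$ and is $\mathcal R$-sectorial of some angle $<\pi/2$ on $X$. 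On $X_1:=\D(\Lambda)\times X$ introduce the closed, densely defined operator
\begin{equation*}
  \mathcal A=\begin{pmatrix}0&-I\\ c^2\Lambda&b\Lambda\end{pmatrix},\qquad \D(\mathcal A)=\D(\Lambda)\times\D(\Lambda).
\end{equation*}
Writing $U=(u,u_t)$, one checks directly that $u\in\W_{u,0}$ solves \eqref{Westervelt:Neumannhom} if and only if $U\in\E(\R_+):=W_p^1(\R_+;X_1)\cap L_p(\R_+;\D(\mathcal A))$ solves $\del_t U+\mathcal A U=(0,f)$ on $\R_+$ with $U(0)=(u_0,u_1)$; the homogeneous Neumann condition and the zero-mean constraint are built into $X_1$ and $\D(\mathcal A)$, while conversely a $\W_{u,0}$-solution of \eqref{Westervelt:Neumannhom} and its time derivative take values in $\D(\Lambda)$, so $U\in\E(\R_+)$. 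Multiplying through by $t\mapsto\Exp^{\omega t}$ transports the equivalence to the weighted spaces.

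The one genuinely nontrivial ingredient, and the main obstacle, is that $\mathcal A$ is $\mathcal R$-sectorial of angle $<\pi/2$ on $X_1$. As in the Dirichlet case one computes $(\mu-\mathcal A)^{-1}$ explicitly: it is a combination of $(\Lambda-\zeta)^{-1}$ and $\Lambda(\Lambda-\zeta)^{-1}$ with $\zeta=\mu^2/(b\mu-c^2)$, weighted by rational functions of $\mu$, so that $\mathcal R$-boundedness of the resolvent family along the boundary of a suitable sector reduces to $\mathcal R$-sectoriality (indeed the bounded $H^\infty$-calculus, cf.\ \cite{DHP03,Dor93}) of $\Lambda$ together with elementary estimates on the scalar multipliers; this step uses $p\in(1,\infty)$ and $\sigma(\Lambda)\subset(0,\infty)$ but not the particular boundary condition, so it is literally the Neumann analogue of \cite[Lemma~5]{MeWi13}. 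The spectrum is then located elementarily: $\mu\in\sigma(\mathcal A)$ iff $\mu^2-b\lam\mu+c^2\lam=0$ for some $\lam\in\sigma(\Lambda)$, and distinguishing $\lam<4c^2/b^2$ (complex roots, $\re\mu=b\lam/2$) from $\lam\ge4c^2/b^2$ (real roots, both $\ge c^2/b$) gives $\re\mu\ge\omega_0$ for every such $\mu$. Hence for $\omega\in(0,\omega_0)$ the operator $\del_t$-generator $\mathcal A-\omega$ remains $\mathcal R$-sectorial of angle $<\pi/2$ and, having spectrum in $\{\re\ge\omega_0-\omega>0\}$, is boundedly invertible; by the standard theory (cf.\ \cite{DHP03}, \cite[Ch.~III]{Ama95}) it therefore has maximal $L_p$-regularity on $\R_+$, i.e.\ $(\del_t-\omega+\mathcal A,\gamma_t)\colon\E(\R_+)\to L_p(\R_+;X_1)\times\tr\E(\R_+)$ is an isomorphism.

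Applying Lemma~\ref{lem:maximalexp} with $\alpha=-\omega$ turns this into an isomorphism $(\del_t+\mathcal A,\gamma_t)\colon\Exp^{-\omega}\E(\R_+)\to\Exp^{-\omega}L_p(\R_+;X_1)\times\tr\E(\R_+)$, and it remains only to match the three spaces with the data in the statement; this yields at once uniqueness of $u$ and the equivalence of solvability with (i)--(ii). Under $U\leftrightarrow(u,u_t)$ one has $\Exp^{-\omega}\E(\R_+)\cong\Exp^{-\omega}\W_{u,0}$, and $(0,f)\in\Exp^{-\omega}L_p(\R_+;X_1)=\Exp^{-\omega}L_p(\R_+;\D(\Lambda))\times\Exp^{-\omega}L_p(\R_+;L_{p,0}(\Om))$ precisely when $f\in\Exp^{-\omega}L_p(\R_+;L_{p,0}(\Om))$, which is (i). For the initial value,
\begin{equation*}
  \tr\E(\R_+)=(X_1,\D(\mathcal A))_{1-1/p,p}=\D(\Lambda)\times\bigl(L_{p,0}(\Om),\D(\Lambda)\bigr)_{1-1/p,p},
\end{equation*}
so $u_0\in\D(\Lambda)=W_p^2(\Om)\cap L_{p,0}(\Om)$ with $\del_\nu u_0|_\Gamma=0$, and by the standard characterization of the second factor (equivalently, the temporal trace of the heat maximal-regularity class; compare the conditions in Lemma~\ref{lem:heatNeumann}) it equals $W_p^{2-2/p}(\Om)\cap L_{p,0}(\Om)$ when $p<3$ and $\{v\in W_p^{2-2/p}(\Om)\cap L_{p,0}(\Om):\del_\nu v|_\Gamma=0\}$ when $p>3$ — the Neumann trace \eqref{trace:spatial2} on $W_p^{2-2/p}(\Om)$ being meaningful exactly when $2-2/p-1-1/p=1-3/p>0$, which is also why $p=3$ must be excluded. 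This is precisely (ii), completing the plan.
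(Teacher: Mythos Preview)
Your argument is correct and arrives at the same conclusion as the paper, but the technical route to maximal regularity of the matrix operator differs. The paper does \emph{not} establish $\mathcal R$-sectoriality of $\tilde A=\mathcal A$ directly via the explicit resolvent formula; instead it uses a triangular-plus-bounded perturbation: write $\tilde A=\tilde A_1+\tilde A_2$ with
\[
\tilde A_1=\begin{pmatrix}\alpha I&-I\\0&b\Lambda\end{pmatrix},\qquad
\tilde A_2=\begin{pmatrix}-\alpha I&0\\c^2\Lambda&0\end{pmatrix},
\]
observe that $\tilde A_1$ has maximal $L_p$-regularity on $\R_+$ (by the heat result of Lemma~\ref{lem:heatNeumann} together with invertibility of $\del_t+\alpha$ on $\0 W_p^1$), and use that $\tilde A_2$ is bounded on $\tilde X$ to conclude via \cite[Proposition~4.3, Theorem~4.4]{DHP03} that some $\nu+\tilde A$ has maximal regularity. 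Then the spectral-bound computation (identical to yours) and Dore's theorem \cite[Theorem~2.4]{Dor93} transfer maximal regularity to $\tilde A-\omega$, after which Lemma~\ref{lem:maximalexp} and the trace-space identification finish the proof exactly as you do.

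Your approach, working out $(\mu-\mathcal A)^{-1}$ in terms of $(\Lambda-\zeta(\mu))^{-1}$ and deducing $\mathcal R$-sectoriality from the $H^\infty$-calculus of $\Lambda$, is a legitimate alternative and is closer in spirit to the direct resolvent estimates in \cite{MeWi11,MeWi13}. Its advantage is that it gives the angle of sectoriality more transparently; the price is that the $\mathcal R$-boundedness of the operator family on the product space $\D(\Lambda)\times X$ (with the graph norm in the first factor) and the required control on the map $\mu\mapsto\zeta(\mu)=\mu^2/(b\mu-c^2)$ along sector boundaries must be checked carefully, which you only sketch. The paper's perturbation route sidesteps this by reducing everything to the scalar heat problem and a bounded perturbation, which is arguably cleaner in the present setting.
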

\begin{proof} In \cite{MeWi11} the result was established with Dirichlet instead of Neumann boundary conditions. 
Here, we just point out the main steps of the proof. We represent \eqref{Westervelt:Neumannhom} as
\begin{equation*}
\begin{pmatrix} u_t \\ u_{tt} \end{pmatrix} + \begin{pmatrix} 0 & -I \\ -c^2\Delta_{N,0} & - b\Delta_{N,0} \end{pmatrix}
 \begin{pmatrix}u \\ u_t\end{pmatrix} = \begin{pmatrix} 0 \\ f \end{pmatrix}, 
\qquad \begin{pmatrix} u(0) \\ u_t(0) \end{pmatrix} = \begin{pmatrix} u_0\\ u_1 \end{pmatrix}.
\end{equation*}
Let us consider the space $\tilde X= \D(\Delta_{N,0}) \times L_{p,0}(\Omega)$ and the operator
$\tilde A\colon \D(\tilde A)\rightarrow \tilde X$ given by
\begin{equation}
\label{def:A}
\tilde A= \begin{pmatrix} 0 & -I \\ -c^2\Delta_{N,0} & - b\Delta_{N,0} \end{pmatrix}, \qquad \D(\tilde A) = \D(\Delta_{N,0}) \times \D(\Delta_{N,0}).
\end{equation}
First we show that there is some $\nu>0$ such that $\nu+\tilde A$ admits maximal $L_p$-regularity on $\R_+$ by a perturbation
argument. Choosing a decomposition of $\tilde A$, $\tilde A= \tilde A_1+\tilde A_2$ with
\begin{equation*}
\tilde A_1= \begin{pmatrix} \alpha I & -I \\ 0 & - b\Delta_{N,0} \end{pmatrix} \qquad \text{and} \qquad \tilde A_2= \begin{pmatrix} -\alpha I & 0\\ -c^2\Delta_{N,0} & 0 \end{pmatrix}
\end{equation*}
for some $\alpha >0$, it turns out that operator $\tilde A_1\colon \D(\tilde A)\rightarrow \tilde X$ admits maximal regularity on $\R_+$ due to Lemma 
\ref{lem:heatNeumann} and the fact that the bounded operator $(\partial_t + \alpha)\colon ~_0W_p^1(\R_+; \D(\Delta_{N,0})) \rightarrow L_p(\R_+; \D(\Delta_{N,0}))$ 
is invertible. 
Moreover, since $\tilde A_2\colon \tilde X \rightarrow \tilde X$ is bounded, on the strength of Proposition 4.3 and Theorem 4.4 in \cite{DHP03} there exists some $\nu >0$ 
such that $\nu+\tilde A_1+\tilde A_2$ admits maximal regularity on $\R_+$. 

Next we claim that the spectral bound $s(-\tilde A)= \sup \{\mathrm{Re}(\lambda)\colon \lambda \in \sigma(-\tilde A)\}$ of $-\tilde A$ is given by $s(-\tilde A) = - \omega_0$. 
This follows analogously to \cite[Lemma 2.4]{MeWi11} if one replaces the Dirichlet eigenvalues by the ones of $-\Delta_{N,0}$.

Since $s(-\tilde A)=-\omega_0<0$, the spectral bound of $-\tilde A+\omega$ equals $\omega - \omega_0$ which is strictly negative as long as $\omega \in [0, \omega_0)$.
Hence, for each $\omega \in [0, \omega_0)$ the operator $\tilde A-\omega$ has maximal $L_p$-regularity on $\R_+$ by \cite[Theorem 2.4]{Dor93}, that is
\begin{equation*}
(\partial_t +\tilde A-\omega, \gamma_t )\colon  W_p^1(\R_+; \tilde X) \cap L_p(\R_+; \D(\tilde A)) \rightarrow L_p(\R_+;\tilde X) \times (\tilde X, D(\tilde A))_{1-1/p,p}
\end{equation*}
is an isomorphism.
Employing Lemma \ref{lem:maximalexp} we conclude that for every $\omega \in [0,\omega_0)$ the operator
\begin{equation*}
(\partial_t +\tilde A, \gamma_t)\colon \Exp^{-\omega} (W_p^1(\R_+; \tilde X) \cap L_p(\R_+; \D(\tilde A))) 
	\rightarrow \Exp^{-\omega}L_p(\R_+;\tilde X) \times (\tilde X, D(\tilde A))_{1-1/p,p}
\end{equation*}
is an isomorphism. 

It is easy to check that $(u, u_t) \in \Exp^{-\omega} (W_p^1(\R_+; \tilde X) \cap L_p(\R_+; \D(\tilde A)))$ implies $u \in \Exp^{-\omega} \W_{u,0}$. Moreover,
we have $f \in L_p(\R_+; L_{p,0}(\Omega))$ and  $u_0 \in \D(\Delta_N)$, i.e. $u_0 \in W_p^2(\Om) \cap L_{p,0}(\Om)$ such that $\del_\nu u_0 |_\Gamma=0$. 
Finally, $u_1 \in (L_{p,0}(\Om), \D(\Delta_N))_{1-1/p,p} = W_p^{2-2/p}(\Om) \cap L_{p,0}(\Om)$ with $\partial_\nu u_1 |_\Gamma =0$ where the trace exists if
$p>3$. This concludes the proof.
\end{proof}

Finally we arrive at our optimal regularity result for the linearized Westervelt equation with inhomogeneous Neumann boundary conditions.
\begin{lemma}
\label{lem:Westervelt:inhom:Neumann} Let $p \in (1,\infty)\setminus \{3\}$ and set $\omega_0=\min\{b\lam_1^N/2, c^2/b\}$. 
Then for every $\omega \in (0,\omega_0)$ the linear initial boundary value problem 
\begin{equation}
\label{Westervelt:Neumanndata}
\begin{cases}
\begin{aligned}
u_{tt}-b\Delta u_t - c^2 \Delta u &= f, 	      			&& \text{in } \R_+ \times \Omega,\\			    
			\partial_\nu u &= g, 				&& \text{on } \R_+ \times \Gamma,\\
				(u, u_t) &= (u_0,u_1)			&& \text{on } \{t=0\} \times \Omega,
				
\end{aligned}
\end{cases}
\end{equation}
admits a unique solution of the form $u(t,x)=v(t,x) + w(t)$, where  
\begin{equation*}
v \in \Exp^{-\omega}\W_{u,0}, \quad \W_{u,0}=W_p^2(\R_+; L_{p,0}(\Omega)) \cap W_p^1(\R_+;W_p^2(\Omega)\cap L_{p,0}(\Om)), \quad  w_{tt} \in \Exp^{-\omega} L_p(\R_+) 
\end{equation*}
if and only if the data satisfy the following conditions:
\begin{enumerate}[label=\emph{(\roman*)}, leftmargin=*]
\item $f \in \Exp^{-\omega} L_p(\R_+ \times \Omega)$,
\item $u_0 \in W_p^2(\Omega)$, $u_1 \in W_p^{2-2/p}(\Omega)$,
\item $g \in \Exp^{-\omega}\W_\nu$, $\W_\nu= W_p^{3/2-1/2p}(\R_+; L_p(\Gamma)) \cap W_p^1(\R_+; W_p^{1-1/p}(\Gamma))$,
\item $g |_{t=0}= \del_\nu u_0|_{\Gamma}$ and if $p>3$ additionally $g_t|_{t=0}=\del_\nu u_1|_{\Gamma}$ in the sense of traces.
\end{enumerate}
\end{lemma}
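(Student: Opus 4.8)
The plan is to reduce \eqref{Westervelt:Neumanndata} to the homogeneous-boundary, mean-value-zero problem solved in Lemma \ref{lem:Westerveltmaximal}, following the pattern of the proofs of Lemmas \ref{lem:heatNeumann} and \ref{lem:Neumannint}: first split off the spatial mean, then lift the Neumann datum $g$ and subtract it. For the sufficiency part assume (i)--(iv). Exactly as in the opening lines of the proof of Lemma \ref{lem:Neumannint}, writing $\Exp^{\omega t}g_t=(\Exp^{\omega t}g)_t-\omega\Exp^{\omega t}g$ and using $\W_\nu\hookrightarrow W_p^{3/2-1/2p}(\R_+;L_p(\Gamma))$ together with boundedness of the mean over $\Gamma$ shows $\bar g,\bar g_t\in\Exp^{-\omega}L_p(\R_+)$, so the forcing term of
\begin{equation*}
w_{tt}(t)=\bar f(t)+b\abs\Gamma\abs\Om^{-1}\bar g_t(t)+c^2\abs\Gamma\abs\Om^{-1}\bar g(t),\qquad w(0)=\bar u_0,\quad w_t(0)=\bar u_1,
\end{equation*}
lies in $\Exp^{-\omega}L_p(\R_+)$; we let $w$ be its solution, whence $w_{tt}\in\Exp^{-\omega}L_p(\R_+)$.

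By the Neumann-trace right inverse of Appendix \ref{sec:trac-mixed-deriv} (which by \eqref{trace:spatial} with $k=l=1$ maps $\W_\nu$ back into $\W_u=W_p^2(\R_+;L_p(\Om))\cap W_p^1(\R_+;W_p^2(\Om))$), together with the conjugation $z\mapsto\Exp^{\omega t}z$ used for the weighted case in Lemma \ref{lem:heatNeumann}, we pick $z\in\Exp^{-\omega}\W_u$ with $\del_\nu z=g$ and set $\ti z=z-\bar z$; since $\bar z\in\Exp^{-\omega}W_p^2(\R_+)$ this gives $\ti z\in\Exp^{-\omega}\W_{u,0}$ with $\del_\nu\ti z=g$. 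Put $G:=f-w_{tt}-\ti z_{tt}+b\Delta\ti z_t+c^2\Delta\ti z\in\Exp^{-\omega}L_p(\R_+\times\Om)$; using $\int_\Om\Delta\ti z\,dx=\int_\Gamma\del_\nu\ti z\,dS=\abs\Gamma\bar g$ and the analogous identity for $\ti z_t$, the choice of $w$ forces $\bar G=0$, hence $G\in\Exp^{-\omega}L_p(\R_+;L_{p,0}(\Om))$. Moreover, by (ii), \eqref{trace:spatial2} and (iv) the data $V_0:=(u_0-\bar u_0)-\ti z(0)$ and $V_1:=(u_1-\bar u_1)-\ti z_t(0)$ satisfy $V_0\in W_p^2(\Om)\cap L_{p,0}(\Om)$, $V_1\in W_p^{2-2/p}(\Om)\cap L_{p,0}(\Om)$, $\del_\nu V_0|_\Gamma=0$ and, for $p>3$, $\del_\nu V_1|_\Gamma=0$. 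Lemma \ref{lem:Westerveltmaximal}, whose hypothesis $\omega\in(0,\omega_0)$ is exactly ours, then yields a unique $V\in\Exp^{-\omega}\W_{u,0}$ with $V_{tt}-b\Delta V_t-c^2\Delta V=G$, $\del_\nu V=0$, $(V,V_t)|_{t=0}=(V_0,V_1)$, and a short computation using $\Delta w=0$ shows that $u:=V+\ti z+w$ solves \eqref{Westervelt:Neumanndata} with $v:=V+\ti z\in\Exp^{-\omega}\W_{u,0}$.

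Necessity and uniqueness go through as in the proof of Lemma \ref{lem:Neumannint} (the spaces $\W_{u,0}$ and $\W_\nu$ being literally the same there): for a solution $u=v+w$ with $v\in\Exp^{-\omega}\W_{u,0}$ and $w_{tt}\in\Exp^{-\omega}L_p(\R_+)$, expressing $f$ through $v,w$ and using the usual exponential-weight identities such as $\Exp^{\omega t}v_t=(\Exp^{\omega t}v)_t-\omega\Exp^{\omega t}v$ gives (i); the temporal trace \eqref{trace:temporal} together with $W_p^1(\R_+)\hookrightarrow BUC(\R_+)$ gives (ii); the spatial trace \eqref{trace:spatial} with $k=l=1$ applied to $\Exp^{\omega t}v$, together with $\del_\nu u=\del_\nu v$, gives (iii); and \eqref{trace:spatial2} with the spatial and temporal traces of $\del_\nu u=g$ gives (iv). Taking spatial means shows $w=\bar u$, so for the homogeneous problem $w_{tt}\equiv0$ with $w(0)=w_t(0)=0$, hence $w=0$ and $v=u$ solves the homogeneous problem of Lemma \ref{lem:Westerveltmaximal}, whence $v=0$. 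I expect the only genuinely non-routine point to be the exponentially weighted Neumann-trace right inverse on the anisotropic space $\W_u$ supplied by Appendix \ref{sec:trac-mixed-deriv}; the one small idea making the reduction close is to choose the scalar forcing of the $w$-equation precisely so that the residual $G$ has vanishing spatial mean, which is what renders Lemma \ref{lem:Westerveltmaximal} applicable.
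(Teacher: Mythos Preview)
Your proof is correct but follows a genuinely different route from the paper's. The paper does not construct an explicit Neumann-trace lifting of $g$; instead it first solves the auxiliary problem $\varphi_{tt}-b\Delta\varphi_t=f-f_\delta$ with Neumann datum $g$ via Lemma \ref{lem:Neumannint} (introducing a correction $f_\delta=\delta\Exp^{-\delta t}\big(\int_0^\infty f(s)\,ds+u_1-b\Delta u_0\big)$ so that condition (v) there is met), obtaining $\varphi=\varphi_v+\varphi_w$, and then applies Lemma \ref{lem:Westerveltmaximal} to $\theta_{v,tt}-b\Delta\theta_{v,t}-c^2\Delta\theta_v=f_\delta-\bar f_\delta+c^2\Delta\varphi_v-c^2\overline{\Delta\varphi_v}$ with homogeneous data, together with an ODE for $\theta_w$; the solution is $v=\varphi_v+\theta_v$, $w=\varphi_w+\theta_w$. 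This bypasses any trace right inverse by reusing Lemma \ref{lem:Neumannint}, which already absorbs the inhomogeneous Neumann data. Your scheme---split off the mean by the correctly chosen ODE, lift $g$ to $\tilde z$, then invoke the mean-zero theory---is structurally cleaner and closer in spirit to a standard ``subtract a reference solution'' argument, and your computation $\bar G=0$ is the key point that makes Lemma \ref{lem:Westerveltmaximal} applicable.

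The one soft spot is the right inverse you invoke. The Neumann-trace co-retraction lives in Appendix \ref{app:heathigher} (Lemma \ref{lem:right_inverse_for_spatial_trace}), not Appendix \ref{sec:trac-mixed-deriv} as you cite, and it is stated only for the spaces $\0\bbE^{l,k}$ and $\0\bbG^{l,m/2}$ with vanishing initial traces. Since $g$ need not satisfy $g|_{t=0}=0$ (nor $g_t|_{t=0}=0$ for $p>3$), you must first subtract a function $z^1\in\Exp^{-\omega}\W_u$ whose Neumann trace matches the initial values of $g$; this is built from a spatial right inverse $W_p^{1-1/p}(\Gamma)\to W_p^2(\Omega)$ (and, for $p>3$, $W_p^{1-3/p}(\Gamma)\to W_p^{2-2/p}(\Omega)$) together with the temporal-trace co-retraction of Lemma \ref{lem:temp-trace-retraction}. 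After that subtraction, $g-\del_\nu z^1\in\Exp^{-\omega}\,\0\W_\nu$ and Lemma \ref{lem:right_inverse_for_spatial_trace} with $l=k=1$ (combined with your conjugation $z\mapsto\Exp^{\omega t}z$) supplies $z^2$; then $z=z^1+z^2$. This is routine, but the reference should be corrected and the extra step made explicit.
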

\begin{proof}
From Lemma \ref{lem:Westerveltmaximal} we obtain uniqueness. In order to show necessity of (i)--(iv) one proceeds as in the proof of Lemma \ref{lem:Neumannint}.
It therefore remains to show sufficiency. 
Let $\delta > \om$. From Lemma \ref{lem:Neumannint} we obtain that 
\begin{equation*}
\begin{cases}
\begin{aligned}
\varphi_{tt}-b\Delta{\varphi}_t&= f-f_\delta	      					&& \text{in } \R_+ \times \Omega,\\			    	
			\partial_\nu \varphi &= g, 						&& \text{on } \R_+ \times \Gamma,\\
			 (\varphi, \varphi_t) &= (u_0,u_1)							&& \text{on } \{t=0\} \times \Omega,			
\end{aligned}
\end{cases}
\end{equation*}  
where $f_\delta= \delta \Exp^{-\delta t} ( \int_0^\infty f(s) \, ds + u_1 - b \Delta u_0)$, admits a unique solution $\varphi(x,t)=\varphi_v(x,t) + \varphi_w(t)$ such that $\varphi_v \in \Exp^{-\om} \W_{u,0}$ and $\partial_t^2\varphi_w \in \Exp^{-\om} L_p(\R_+)$.
Next, Lemma \ref{lem:Westerveltmaximal} implies that
\begin{equation*}
\begin{cases}
\begin{aligned}
\theta_{v,tt}-b\Delta \theta_{v,t} -c^2\Delta \theta_v &= f_\delta - \bar{f}_\delta + c^2 \Delta \varphi_v - c^2 \overline{\Delta \varphi_v}	      	&& \text{in } \R_+ \times \Omega,\\			   
					 \partial_\nu \theta_v &= 0, 															&& \text{on } \R_+ \times \Gamma,\\
				    (\theta_v, \theta_{v,t}) &= (0,0)															&& \text{on } \{t=0\} \times \Omega,		
\end{aligned}
\end{cases}
\end{equation*}
has a unique solution $\theta \in \Exp^{-\om}  \W_{u,0}$. Furthermore, we define $\theta_w$ as the solution of the ordinary differential equation
  \begin{equation*}
   \theta_{w,tt}(t) =  c^2 \overline{\Delta \varphi_v}(t) + \bar{f}_\delta(t) , \qquad \theta_w(0)= 0, \quad \theta_{w,t}(0)=0.
  \end{equation*}  
Then $v=\varphi_v + \theta_v$ and $w= \varphi_w + \theta_w$ satisfy the assertion and we are done.
\end{proof}

\begin{remark}
\label{rem:Westerveltfinite}
If we consider \eqref{Westervelt:Neumanndata} on a finite time interval $J=(0,T)$ instead of $\R_+$, we may set $\om = 0$ and obtain a 
unique solution $u \in W_p^2(J; L_p(\Om)) \cap W_p^1(J; W_p^2(\Om))$ if and only if conditions (i)--(iv) (with $\om=0$) hold. 
\end{remark}

\subsection{Analysis in Banach spaces} 
For later use in the proof of global well-posedness of \eqref{IBVP:Dirichlet} and \eqref{IBVP:Neumann} we will now recall the concept of 
analytic mappings in Banach spaces and the analytic version of the implicit function theorem. The remainder of this section is collected 
from Section 15.1 in \cite{Dei85}.

Let $X$ and $Y$ be Banach spaces over the same field $\mathbb{K}= \R$ or $\mathbb{K}= \C$ and let $U\subset X$ be open. 
Then $F\colon U \rightarrow Y$ is called analytic at $x_0 \in U$ if there is some $r>0$ and continuous symmetric $k$-linear operators 
$F_k\colon X^k = X \times \dots \times X \rightarrow Y$ for $k\geq 1$ such that 
\begin{equation*}
\sum_{k=1}^\infty \|F_k\| \|h\|^k< \infty \qquad \text{and} \qquad  F(x_0+h) = F(x_0) + \sum_{k=1}^\infty F_k(h^k).
\end{equation*}
for $h\in X$, $\|h\| <r$. Here, $\|F_k\| = \sup \{ \|F_k(h^k)\|\colon \|h\| \leq 1\}$. We then necessarily have $F_k = \tfrac{1}{k!} F^{(k)}(x_0)$.
The map $F\colon U \rightarrow Y$ is called analytic if $F$ is analytic at every $x_0 \in U$.
In particular, every bounded linear map $F: X \rightarrow Y$ is analytic.

\begin{theorem}[Implicit Function Theorem, cf. {\cite[15.1]{Dei85}}]
\label{thm:implicit}
Let $X$, $Y$ and $Z$ be Banach spaces over the same field $\mathbb{K}=\R$ and $\mathbb{K}= \C$.
Assume $U\subset X$ and $V \subset Y$ are neighborhoods of $x_0 \in X$ 
and $y_0 \in Y$, respectively. Furthermore, suppose 
\begin{enumerate}[label=\emph{(\roman*)}, leftmargin=*]
\item $F\colon U \times V \rightarrow Z$, $(x,y) \mapsto F(x,y)$ is continuous,
\item the Fr\'echet derivative $F_x\colon U \times V \rightarrow L(X,Z)$ of $F$ with respect to $x$ is continuous,
\item $F(x_0,y_0) = 0$ and $F_x(x_0,y_0) \colon X \rightarrow Z$ is an isomorphism.
\end{enumerate}
Then there exist balls $B_\eta(x_0) \subset U$ and $B_\rho(y_0) \subset V$ and a unique map 
$\varphi\colon B_\rho(y_0) \rightarrow B_\eta(x_0)$ such that $\varphi(y_0)= x_0$ and $F(\varphi(y), y) = 0$ for all $y \in B_\rho(y_0)$. 
The map $\varphi$ is continuous. If furthermore, $F$ is analytic, then $\varphi$ is analytic in some neighborhood of $y_0$, in particular on 
some (possibly smaller) ball $B_{\kappa}(y_0) \subset B_\rho(y_0)$.
\end{theorem}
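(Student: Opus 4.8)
The plan is to reduce the equation $F(x,y)=0$ near $(x_0,y_0)$ to a parametrised fixed‑point problem and then to bootstrap the regularity of the resulting solution map. Set $A = F_x(x_0,y_0)\in L(X,Z)$; by hypothesis (iii) it is an isomorphism, so by the closed graph theorem $A^{-1}\in L(Z,X)$ is bounded. Define
\[
G\colon U\times V \to X,\qquad G(x,y) = x - A^{-1}F(x,y),
\]
so that, for fixed $y$, a point $x$ solves $F(x,y)=0$ if and only if $G(x,y)=x$, and $G(x_0,y_0)=x_0$ because $F(x_0,y_0)=0$.

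First I would show that $G(\cdot,y)$ is a uniform contraction near $x_0$. By (ii) the map $x\mapsto G_x(x,y)=I-A^{-1}F_x(x,y)$ is continuous and vanishes at $(x_0,y_0)$, so one can choose $\eta>0$ and $\rho_0>0$ with $B_\eta(x_0)\subset U$, $B_{\rho_0}(y_0)\subset V$ and $\norm{A^{-1}}\,\norm{F_x(x,y)-A}\le\tfrac12$ on $\overline{B_\eta(x_0)}\times\overline{B_{\rho_0}(y_0)}$; the mean value inequality along segments inside the convex ball then gives $\norm{G(x,y)-G(x',y)}\le\tfrac12\norm{x-x'}$. Since $y\mapsto G(x_0,y)$ is continuous by (i), shrink $\rho_0$ to some $\rho$ so that $\norm{G(x_0,y)-x_0}\le\tfrac{\eta}{2}$ for $y\in B_\rho(y_0)$; together with the contraction estimate this makes $G(\cdot,y)$ a self‑map of the complete metric space $\overline{B_\eta(x_0)}$. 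Banach's fixed‑point theorem then yields, for each $y\in B_\rho(y_0)$, a unique $\varphi(y)\in\overline{B_\eta(x_0)}$ with $F(\varphi(y),y)=0$, which is the asserted uniqueness. Continuity of $\varphi$ comes from the standard estimate $\norm{\varphi(y)-\varphi(y')}\le\tfrac12\norm{\varphi(y)-\varphi(y')}+\norm{G(\varphi(y'),y)-G(\varphi(y'),y')}$, whose last term tends to $0$ as $y\to y'$ by continuity of $F$; after passing to a slightly smaller ball we may also assume $\varphi$ takes values in the open ball $B_\eta(x_0)$.

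It remains to prove analyticity of $\varphi$ when $F$ is analytic, which is the delicate point. If $\K=\C$, note that $F$ analytic is equivalent to $F$ being $\C$‑Fréchet differentiable, and in particular jointly $C^1$; the operators $F_x(\varphi(y),y)$ remain in the open set of isomorphisms near $A$, so differentiating $F(\varphi(y),y)=0$ gives $\varphi'(y)=-F_x(\varphi(y),y)^{-1}F_y(\varphi(y),y)$, and a $\C$‑differentiable Banach‑valued map is locally the sum of a convergent power series, hence analytic. If $\K=\R$, pass to the complexifications $X_\C,Y_\C,Z_\C$: the local expansion $F(x_0+h,y_0+k)=\sum_{j\ge0}F_j((h,k)^j)$ with $\sum_j\norm{F_j}\norm{(h,k)}^j<\infty$ extends $F$ to a holomorphic map $F_\C$ on a complex neighbourhood of $(x_0,y_0)$, and $A$ extends to a $\C$‑linear isomorphism $A_\C=F_{\C,x}(x_0,y_0)$. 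Rerunning the fixed‑point construction with $(F_\C,A_\C)$ on small complex balls produces a continuous, hence by the previous remark holomorphic, solution map $\varphi_\C$; its restriction to a real ball solves the real problem, so it equals $\varphi$ by uniqueness, and expanding the holomorphic $\varphi_\C$ in a convergent power series near $y_0$ shows $\varphi$ is analytic on some ball $B_\kappa(y_0)\subset B_\rho(y_0)$.

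I expect the last paragraph to be the main obstacle: one must verify that the power‑series data of the analytic map $F$ genuinely furnish a holomorphic extension $F_\C$ to a full complex neighbourhood with the same isomorphism at the base point, and that the fixed‑point solution of the complexified problem is $\C$‑differentiable rather than merely continuous. An alternative that stays within $\R$ is to invert the formal power series $h\mapsto\varphi(y_0+h)-x_0$ term by term from $F(\varphi(y_0+h),y_0+h)=0$ and prove convergence by a majorant argument, at the cost of a more computational estimate.
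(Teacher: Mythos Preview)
The paper does not actually prove this theorem; it is stated with a citation to \cite[15.1]{Dei85} and used as a black box in the well-posedness arguments. So there is no ``paper's own proof'' to compare against here.

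That said, your sketch is the standard approach and essentially what one finds in Deimling: reduce $F(x,y)=0$ to a parametrised fixed-point problem for $G(x,y)=x-A^{-1}F(x,y)$, use continuity of $F_x$ to get a uniform contraction on a small ball, apply Banach's fixed-point theorem, and obtain continuity of $\varphi$ from the contraction estimate. For analyticity you correctly identify complexification as the clean route in the real case, and $\C$-differentiability implying holomorphy in the complex case. Your caveat about verifying that the power-series expansion of $F$ genuinely extends to a holomorphic $F_\C$ on a complex neighbourhood is well placed; this is exactly where Deimling spends some effort, and the majorant-series alternative you mention is also a viable path. There are no genuine gaps in your outline.
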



\section{The Dirichlet boundary value problem}
\label{sec:Dirichlet}
In this section we prove global well-posedness and exponential stability for \eqref{IBVP:Dirichlet}. First of all, we consider the linearized 
version of the inhomogeneous Dirichlet boundary value problem and represent it as an abstract evolution equation.
We show that this abstract equation admits maximal $L_p$-regularity and derive an optimal regularity result for the 
linearized equation associated with \eqref{IBVP:Dirichlet}. Then we use the implicit function theorem to construct a solution of
the nonlinear problem \eqref{IBVP:Dirichlet}. Exponential decay of this solution is an immediate consequence. 

\subsection{Maximal \texorpdfstring{$L_p$}{Lp}-regularity for the linearized equation}
\label{subsec:linear:Dirichlet}
Suppose $J=(0,T)$ or $J=\R_+$. For $f\in L_p(J\times\Omega)$ we consider the initial boundary value problem
\begin{equation}
\label{IBVP:linear:Dirichlet}
\begin{cases}
\begin{aligned}
(a\Delta - \partial_t)(u_{tt}-b\Delta u_t - c^2 \Delta u)&= f			&& \text{in } J \times \Omega,\\
(u, \Delta u)&= (g,h)										&& \text{on }J \times \Gamma,\\
(u,u_t,u_{tt})&=(u_0, u_1, u_2)								&& \text{on } \{t=0\}\times\Omega,
\end{aligned}
\end{cases}
\end{equation}
where $u_0, u_1, u_2\colon \Omega \rightarrow \R$ and $g,h\colon J \times \Gamma \rightarrow \R$ 
are the given initial and boundary data, respectively. 
In order to address the problem of maximal $L_p$-regularity for the linearized equation, we represent 
\eqref{IBVP:linear:Dirichlet} with $g=h=0$ as an abstract Cauchy problem
  \begin{align*}
    \(\del_t+\begin{pmatrix} 0 & -I & 0 \\ -c^2 \Delta_D & -b\Delta_D & -I \\ 0 & 0 & -a_D\Delta \end{pmatrix}\) 
    \begin{pmatrix} u \\ u_t \\ u_{tt}-b \Delta_D u_t - c^2 \Delta u_D \end{pmatrix} = \begin{pmatrix} 0 \\ 0 \\ -f \end{pmatrix}. 
  \end{align*}
This motivates us to consider the Banach space
\begin{equation}
\label{space:X}
X^D=\D((\Delta_D)^2) \times \D(\Delta_D) \times L_p(\Omega)
\end{equation}
and the densely defined linear operator $A^D: \mathcal{D}(A^D) \rightarrow X^D$ given by 
\begin{equation}
\label{operator:A}
A^D= \begin{pmatrix} 0 & -I & 0 \\ -c^2 \Delta_D & -b\Delta_D & -I \\ 0 & 0 & -a\Delta_D \end{pmatrix}, \qquad
\mathcal{D}(A^D)= \D((\Delta_D)^2) \times \D((\Delta_D)^2) \times\D(\Delta_D).
\end{equation} 
Therewith, we may write \eqref{IBVP:linear:Dirichlet} as an abstract evolution equation
\begin{equation*}
\del_t v^D + A v^D = F, \qquad v^D(0)= v_0^D
\end{equation*}
if we define  
\begin{equation}
\label{def:vf}
 v^D = \begin{pmatrix} u \\ u_t \\ u_{tt}-b \Delta_D u_t - c^2 \Delta_D u \end{pmatrix}, 
 \quad  v_0^D = \begin{pmatrix} u_0 \\ u_1 \\ u_2-b \Delta_D u_1 - c^2 \Delta_D u_0 \end{pmatrix},
 \quad F\begin{pmatrix} 0 \\ 0 \\ -f \end{pmatrix}. 
\end{equation}
First of all we will treat the issue of maximal $L_p$-regularity of $A^D: \mathcal{D}(A^D) \rightarrow X^D$ on $\R_+$.

\begin{proposition} 
\label{thm:maximal:Dirichlet}
Let $p\in (1,\infty)$. There is a constant $\mu > 0$ such that $\mu+A^D$ has maximal $L_p$-regularity on $\R_+$.
\end{proposition}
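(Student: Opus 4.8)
The plan is to proceed exactly as in the proof of Lemma~\ref{lem:Westerveltmaximal}: write $A^D$ as a sum $A^D = A_1^D + A_2^D$ of an upper triangular operator matrix $A_1^D$ whose diagonal entries individually admit maximal $L_p$-regularity on $\R_+$ and a perturbation $A_2^D\in L(X^D)$ that is bounded on $X^D$, establish maximal regularity of $A_1^D$ by solving the associated Cauchy problem by back-substitution, and finally absorb $A_2^D$ by the bounded perturbation theorem \cite[Proposition~4.3, Theorem~4.4]{DHP03}.

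Concretely, I would fix $\alpha>0$ and put
\begin{equation*}
A_1^D = \begin{pmatrix} \alpha I & -I & 0 \\ 0 & -b\Delta_D & -I \\ 0 & 0 & -a\Delta_D \end{pmatrix},
\qquad
A_2^D = \begin{pmatrix} -\alpha I & 0 & 0 \\ -c^2\Delta_D & 0 & 0 \\ 0 & 0 & 0 \end{pmatrix},
\end{equation*}
both with domain $\D(A^D)$; then $A^D = A_1^D+A_2^D$, and since $\Delta_D\colon \D((\Delta_D)^2)\to\D(\Delta_D)$ is bounded, $A_2^D$ maps $X^D$ boundedly into itself. To see that $A_1^D$ has maximal $L_p$-regularity on $\R_+$ I would solve $\del_t V + A_1^D V = F$, $V(0)=0$, for $V=(v_1,v_2,v_3)$ and $F=(f_1,f_2,f_3)\in L_p(\R_+;X^D)$ from the bottom up. The third line $\del_t v_3 - a\Delta_D v_3 = f_3$, $v_3(0)=0$, is solved in $W_p^1(\R_+;L_p(\Omega))\cap L_p(\R_+;\D(\Delta_D))$ by maximal $L_p$-regularity of the Dirichlet Laplacian on $\R_+$ (cf.\ Lemma~\ref{lem:heatinhom}, using $\sigma(-\Delta_D)\subset(0,\infty)$). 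Conjugating with the isomorphism $\Delta_D$ shows that $-b\Delta_D$ likewise has maximal regularity on $\R_+$ as an operator on $\D(\Delta_D)$ with domain $\D((\Delta_D)^2)$; since $f_2+v_3\in L_p(\R_+;\D(\Delta_D))$, the second line $\del_t v_2 - b\Delta_D v_2 = f_2+v_3$, $v_2(0)=0$, is solved in $W_p^1(\R_+;\D(\Delta_D))\cap L_p(\R_+;\D((\Delta_D)^2))$. Finally $f_1+v_2\in L_p(\R_+;\D((\Delta_D)^2))$, and the first line $\del_t v_1 + \alpha v_1 = f_1+v_2$, $v_1(0)=0$, is solved by $v_1(t)=\int_0^t\Exp^{-\alpha(t-s)}(f_1+v_2)(s)\,ds$; Young's inequality together with $\alpha>0$ yields $v_1\in W_p^1(\R_+;\D((\Delta_D)^2))$, which is the crucial place where positivity of $\alpha$ is used, in order to keep $v_1$ itself — not just $\del_t v_1$ — in $L_p(\R_+;\cdot)$. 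Collecting the three components gives $V\in W_p^1(\R_+;X^D)\cap L_p(\R_+;\D(A^D))$, and uniqueness is inherited from the three sub-problems.

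Since $A_2^D\in L(X^D)$, \cite[Proposition~4.3 and Theorem~4.4]{DHP03} then provide a constant $\mu>0$ such that $\mu + A_1^D + A_2^D = \mu + A^D$ has maximal $L_p$-regularity on $\R_+$, which is the assertion (closedness of $A^D$ and density of $\D(A^D)$ in $X^D$ being routine). I do not expect a serious obstacle here; the only delicate points are the bookkeeping of the domain scale $L_p(\Omega)\subset\D(\Delta_D)\subset\D((\Delta_D)^2)$ through the back-substitution, and the observation — already familiar from Lemma~\ref{lem:Westerveltmaximal} — that the $\alpha$-shift in the $(1,1)$-entry of $A_1^D$ is precisely what forces the final shift by $\mu$ rather than giving maximal regularity of $A^D$ outright.
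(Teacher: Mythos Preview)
Your proposal is correct and follows essentially the same route as the paper: the same decomposition $A^D=A_1^D+A_2^D$, the same back-substitution for $A_1^D$, and the same invocation of \cite[Proposition~4.3, Theorem~4.4]{DHP03} for the bounded perturbation. The only cosmetic difference is that for the $v_2$-step the paper cites its Lemma~\ref{lem:interior_higher_regularity} (higher interior regularity for the heat equation), whereas you argue directly by conjugating with the isomorphism $\Delta_D$; these are the same argument, since Lemma~\ref{lem:interior_higher_regularity} is itself proved by commuting $\del_t-\Delta_B$ with powers of $\mu_B-\Delta_B$.
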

\begin{proof}
Let $\alpha >0$. We decompose $A^D$, $A^D=A_1^D+ A_2^D$, where 
\begin{equation*}
A_1^D= \begin{pmatrix} \alpha I & -I & 0 \\ 0 & -b\Delta_D & -I \\ 0 & 0 & -a\Delta_D \end{pmatrix}
\qquad \text{and} \qquad
A_2^D = \begin{pmatrix} -\alpha I & 0 & 0 \\ -c^2\Delta_D & 0 & 0 \\ 0 & 0 & 0 \end{pmatrix}.
\end{equation*}
First we show that $A_1^D: \mathcal{D}(A^D) \rightarrow X^D$ has maximal $L_p$-regularity. To this end, 
we consider the Cauchy problem $v_t+A_1^Dv=F$, $v_0=0$ and show that for each 
$F \in L_p(\R_+;X^D)$ there exists a unique solution $v\in W_p^1(\R_+; X^D) \cap L_p(\R_+; \mathcal{D}(A^D))$. 
With $v=(v_1, v_2, v_3)^\top$ and $F=(f_1, f_2, f_3)^\top$, we explicitly have
\begin{align*}
\partial_t v_1 + \alpha v_1 - v_2 &= f_1, 		&& v_1(0)=0,\\
\partial_t v_2 - b\Delta_D v_2 - v_3&= f_2, 	&& v_2(0)=0, \\
\partial_t v_3 - a \Delta_D v_3 &= f_3, 			&& v_3(0)=0.
\end{align*}
Since we know from Lemma \ref{lem:heatinhom} that the homogeneous heat equation admits maximal $L_p$-regularity, 
we obtain that for all $f_3 \in L_p(\R_+ \times \Omega)$ there exists a unique solution
\begin{equation*}
v_3 \in W_p^1(\R_+; L_p(\Omega)) \cap L_p(\R_+;\D(\Delta_D)).
\end{equation*}
Moreover, as $f_2+v_3 \in L_p(\R_+; \D(\Delta_D))$, Lemma \ref{lem:interior_higher_regularity} implies that there is a unique solution 
\begin{equation*}
v_2 \in W_p^1(\R_+; \D(\Delta_D)) \cap L_p(\R_+; \D((\Delta_D)^2))
\end{equation*}
Now, note that for $\alpha >0$ the operator $(\partial_t + \alpha)\colon~_0W_p^1(\R_+; \D((\Delta_D)^2)) \rightarrow L_p(\R_+; \D((\Delta_D)^2))$ 
is invertible. Since $f_1+v_2 \in L_p(\R_+; \D((\Delta_D)^2))$ we obtain a unique solution
\begin{equation*}
v_1(t)=\int_0^t \Exp^{-\alpha(t-s)}(f_1(s) + v_2(s)) \, ds,
\end{equation*}
which satisfies $v_1 \in W_p^1(\R_+; \D((\Delta_D)^2))$. Altogether, we conclude that $A_1: \mathcal{D}(A^D) \rightarrow X^D$ 
admits maximal $L_p$-regularity.

Moreover, by the fact that $A_2^D: X^D \rightarrow X^D$ is a bounded linear operator, Proposition 4.3 and Theorem 4.4
in \cite{DHP03} imply that there exists some $\mu > 0$ such that $\mu+A_1^D+A_2^D$
has maximal regularity which concludes the proof.
\end{proof}

In order to show maximal regularity for the operator $A^D:\D(A^D) \rightarrow X^D$, we need the following result on its spectrum.
\begin{lemma}[cf.\ {\cite[Lemma 3.10]{BrKa14}}]
\label{lem:omega}
The spectral bound $s(-A^D)=\sup \{\re(\lam)\colon\lam \in \sigma(-A^D)\}$ of $-A^D$ is given by
$s(-A^D)=-\omega_0^D$, where $\om_0^D=\min \{a\lambda_0^D, b\lambda_0^D /2, c^2/b\}$.
In particular, if $\mathrm{Re}(\lambda) < \omega_0^D$, then $\lambda \in \rho(A^D)$.
\end{lemma}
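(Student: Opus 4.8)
The plan is to determine $\sigma(A^D)$ explicitly and then read off its smallest real part. I would exploit the block structure of $A^D$ relative to the splitting $X^D = \big(\mathcal{D}((\Delta_D)^2)\times\mathcal{D}(\Delta_D)\big)\times L_p(\Omega)$, in which the lower‑left block of $-A^D$ vanishes, so that $-A^D$ is ``block upper triangular'' with diagonal blocks the linearized Westervelt generator of \cite{MeWi11} and $a\Delta_D$. Accordingly, for $\lambda\in\C$ I would solve $(\lambda-A^D)(u,v,w)^\top=(f,g,h)^\top$ by back‑substitution. The third line $(\lambda+a\Delta_D)w=h$ is uniquely and boundedly solvable in $\mathcal{D}(\Delta_D)$ exactly when $\lambda\notin\{a\lambda_n^D:n\in\N_0\}$; the first line gives $v=f-\lambda u$; inserting this into the second line collapses the system to the single elliptic equation
\begin{equation*}
\big[(c^2-b\lambda)\Delta_D-\lambda^2\big]u = G, \qquad G := g - w - \lambda f - b\Delta_D f \in \mathcal{D}(\Delta_D).
\end{equation*}

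Next I would settle solvability of this reduced equation in the correct space $u\in\mathcal{D}((\Delta_D)^2)$. If $\lambda\neq c^2/b$ it reads $(c^2-b\lambda)\big(\Delta_D-\tfrac{\lambda^2}{c^2-b\lambda}\big)u=G$; since $\sigma(\Delta_D)=\{-\lambda_n^D:n\in\N_0\}$ and resolvents of $\Delta_D$ map $\mathcal{D}(\Delta_D)$ into $\mathcal{D}((\Delta_D)^2)$, this is uniquely and boundedly solvable precisely when $\tfrac{\lambda^2}{c^2-b\lambda}\notin\sigma(\Delta_D)$, i.e.\ when $\lambda$ is not a root of $p_n(\lambda):=\lambda^2-b\lambda_n^D\lambda+c^2\lambda_n^D$ for any $n$; conversely each such root (with $u$ a corresponding Dirichlet eigenfunction, $v=-\lambda u$, $w=0$) and each $a\lambda_n^D$ is an eigenvalue of $A^D$. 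The borderline value $\lambda=c^2/b$ is degenerate: there the reduced equation forces $u=-(b/c^2)^2G$, which for suitable data (e.g.\ $f=h=0$ and $g\in\mathcal{D}(\Delta_D)\setminus\mathcal{D}((\Delta_D)^2)$) fails to lie in $\mathcal{D}((\Delta_D)^2)$, so $c^2/b\in\sigma(A^D)$; equivalently, $c^2/b$ is the limit as $n\to\infty$ of the smaller root of $p_n$ and hence lies in the closed set $\sigma(A^D)$. Altogether $\sigma(A^D)=\{a\lambda_n^D:n\}\cup\bigcup_n\{\lambda:p_n(\lambda)=0\}\cup\{c^2/b\}$ (this matches \cite[Lemma 2.4]{MeWi11} for the Westervelt sub‑block).

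It then remains to compute $\inf\{\re\lambda:\lambda\in\sigma(A^D)\}$. The points $a\lambda_n^D$ contribute at least $a\lambda_0^D$. For $p_n$ the discriminant is $b^2(\lambda_n^D)^2-4c^2\lambda_n^D$: when $\lambda_n^D<4c^2/b^2$ the roots are complex conjugates with real part $b\lambda_n^D/2$; when $\lambda_n^D\geq 4c^2/b^2$ both roots are real and positive (sum $b\lambda_n^D$, product $c^2\lambda_n^D$), the smaller one lying in $(c^2/b,\,2c^2/b]$ and decreasing to $c^2/b$ as $\lambda_n^D\to\infty$. A short case distinction on whether $\lambda_0^D<4c^2/b^2$ (in the opposite case $b\lambda_0^D/2\geq 2c^2/b>c^2/b$ is dominated) then yields $\inf\{\re\lambda:\lambda\in\sigma(A^D)\}=\min\{a\lambda_0^D,\,b\lambda_0^D/2,\,c^2/b\}=\omega_0^D$. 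Since $\sigma(-A^D)=-\sigma(A^D)$, this gives $s(-A^D)=-\omega_0^D$ and in particular $\{\re\lambda<\omega_0^D\}\subset\rho(A^D)$. I expect the main obstacle to be the careful treatment of the degenerate value $\lambda=c^2/b$: unlike the Westervelt operators in the cited references, $A^D$ does \emph{not} have compact resolvent (its first component contributes no compactness), so one must genuinely verify that $c^2/b\in\sigma(A^D)$ — it is an accumulation point of eigenvalues — rather than dismiss it, and then combine this with the discriminant bookkeeping to pin down the infimum.
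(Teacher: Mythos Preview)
Your proposal is correct and matches the argument the paper defers to its references. The paper does not give its own proof of this lemma but cites \cite[Lemma 3.10]{BrKa14}; in the Neumann analogue (Lemma~\ref{lem:omega:Neumann}) and in the proof of Lemma~\ref{lem:Westerveltmaximal} it indicates that the computation ``follows analogously to \cite[Lemma 2.4]{MeWi11}'' once one replaces the eigenvalues. Your approach---observing that $A^D$ is block upper triangular with the Westervelt generator of \cite{MeWi11} and $-a\Delta_D$ on the diagonal, reducing the resolvent equation to $p_n(\lambda)=\lambda^2-b\lambda_n^D\lambda+c^2\lambda_n^D$, and then carrying out the discriminant bookkeeping---is precisely the content of those references assembled for the present $3\times 3$ operator.

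Your treatment of the borderline value $c^2/b$ is more careful than strictly necessary but entirely correct: the observation that $A^D$ lacks compact resolvent (because the first diagonal block of the embedding $\mathcal{D}(A^D)\hookrightarrow X^D$ is the identity on $\mathcal{D}((\Delta_D)^2)$) is accurate, and your two independent verifications that $c^2/b\in\sigma(A^D)$---as an accumulation point of eigenvalues in the closed set $\sigma(A^D)$, and by exhibiting data for which the reduced equation forces $u\notin\mathcal{D}((\Delta_D)^2)$---are both valid. In the cited references the point $c^2/b$ is handled by the accumulation argument alone, so your direct failure-of-surjectivity argument is an extra confirmation rather than a required step.
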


\begin{theorem} 
\label{thm:maximal:Dirichlet2}
Let $p\in (1,\infty)$ and $\omega \in [0,\omega_0)$ where $\omega_0^D = \min \{a\lambda_0^D, b \lambda_0^D /2, c^2/b\}$. 
Then $A^D: \D(A^D) \rightarrow X^D$ has maximal $L_p$-regularity on $\R_+$ in the sense that
\begin{align*}
(\partial_t +A^D, \gamma_t)\colon &
\Exp^{-\omega} (W_p^1(\R_+; X^D) \cap L_p(\R_+;\D(A^D))) \\
& \qquad \rightarrow \Exp^{-\omega} L_p(\R_+;X^D) \times (X^D, \D(A^D))_{1-1/p,p}
\end{align*}
is an isomorphism.
\end{theorem}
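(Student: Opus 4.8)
The plan is to combine the perturbed maximal regularity already obtained in Proposition~\ref{thm:maximal:Dirichlet} with the spectral information of Lemma~\ref{lem:omega} and Dore's characterization of maximal $L_p$-regularity on the half-line, and then to reinstall the exponential weight via Lemma~\ref{lem:maximalexp}. This is the same template used in the proof of Lemma~\ref{lem:Westerveltmaximal}.

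First I would fix $\omega \in [0, \omega_0^D)$ and work with the shifted operator $A^D - \omega$. Since $A^D - \omega = (\mu + A^D) - (\mu + \omega) I$ differs from $\mu + A^D$ only by a bounded scalar multiple of the identity, and since $\mu + A^D$ has maximal $L_p$-regularity on $\R_+$ by Proposition~\ref{thm:maximal:Dirichlet}, the negative generator $-(\mu + A^D)$ generates an analytic semigroup on $X^D$, hence so does $-(A^D - \omega)$; in particular $A^D - \omega$ has maximal $L_p$-regularity on every bounded subinterval of $\R_+$. Next, by Lemma~\ref{lem:omega} the spectral bound satisfies $s(-A^D) = -\omega_0^D$, so that $s(-(A^D - \omega)) = s(-A^D) + \omega = \omega - \omega_0^D < 0$; equivalently, $0 \in \rho(A^D - \omega)$ and the semigroup generated by $-(A^D - \omega)$ is exponentially stable. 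By \cite[Theorem~2.4]{Dor93} these two facts together yield maximal $L_p$-regularity of $A^D - \omega$ on all of $\R_+$, that is,
\begin{equation*}
(\partial_t + A^D - \omega, \gamma_t)\colon W_p^1(\R_+; X^D) \cap L_p(\R_+; \D(A^D)) \rightarrow L_p(\R_+; X^D) \times (X^D, \D(A^D))_{1-1/p,p}
\end{equation*}
is an isomorphism. Here I use that $X^D$ is a UMD space: since $\Delta_D$ has spectrum in $(0,\infty)$, both $\Delta_D$ and $(\Delta_D)^2$ are invertible, so $X^D = \D((\Delta_D)^2) \times \D(\Delta_D) \times L_p(\Omega)$ is isomorphic to $L_p(\Omega)^3$; this also identifies the trace space with the stated real interpolation space.

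Finally, I would apply Lemma~\ref{lem:maximalexp} with $A = A^D$ and the weight parameter $\alpha = -\omega$. The homeomorphism property just established for $(\partial_t - \omega + A^D, \gamma_t)$ on $\E(\R_+) = W_p^1(\R_+; X^D) \cap L_p(\R_+; \D(A^D))$ then gives that
\begin{equation*}
(\partial_t + A^D, \gamma_t)\colon \Exp^{-\omega}\bigl(W_p^1(\R_+; X^D) \cap L_p(\R_+; \D(A^D))\bigr) \rightarrow \Exp^{-\omega} L_p(\R_+; X^D) \times (X^D, \D(A^D))_{1-1/p,p}
\end{equation*}
is an isomorphism, which is exactly the assertion; note that the weight does not affect the trace component, since $\gamma_t(\Exp^{-\omega} u) = \gamma_t u$.

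I do not expect a genuine obstacle: all ingredients are already available. The only point needing a little care is the verification of the hypotheses of \cite[Theorem~2.4]{Dor93} — analytic-semigroup generation (inherited from Proposition~\ref{thm:maximal:Dirichlet} through a bounded perturbation) and strict negativity of the spectral bound (from Lemma~\ref{lem:omega}) — together with the UMD property of $X^D$, and none of these is hard.
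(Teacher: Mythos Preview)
Your proposal is correct and follows essentially the same route as the paper's proof: invoke Proposition~\ref{thm:maximal:Dirichlet} to get maximal regularity for a shift of $A^D$ (hence on bounded intervals), use Lemma~\ref{lem:omega} to obtain $s(-(A^D-\omega))<0$, apply \cite[Theorem~2.4]{Dor93} to upgrade to maximal regularity of $A^D-\omega$ on $\R_+$, and finish with Lemma~\ref{lem:maximalexp}. Your added remarks on the UMD property of $X^D$ and on the weight not affecting the trace are fine supplementary justifications, but the core argument is the same as the paper's.
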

\begin{proof}
We follow the proof of Theorem 2.5 in \cite{MeWi11}.
From Proposition \ref{thm:maximal:Dirichlet} we know $\mu + A^D$ admits maximal regularity on $\R_+$ for some $\mu >0$. 
Multiplying $v_t^D + A^Dv^D=F$ by $\Exp^{-\mu t}$ shows that $A^D$ has maximal $L_p$-regularity on bounded intervals $J=(0,T)$. 
Lemma \ref{lem:omega} tells us that spectral bound $s(-A^D)=-\omega_0$ of $-A^D$ is strictly negative. Hence $s(-A^D+\om) = \om - \om_0^D<0$ as long as 
$\omega \in[0,\om_0^D)$. From \cite[Theorem 2.4]{Dor93} we deduce that $A^D-\om$ admits maximal $L_p$-regularity on $\R_+$
for every $\omega \in [0,\omega_0^D)$, that is,
\begin{equation*}
(\partial_t +A^D-\om, \gamma_t)\colon 
W_p^1(\R_+; X^D) \cap L_p(\R_+;\D(A^D)) \rightarrow L_p(\R_+;X^D) \times (X^D, \D(A^D))_{1-1/p,p}
\end{equation*}
is an isomorphism. Now Lemma \ref{lem:maximalexp} implies the result.
\end{proof}

\begin{corollary} 
\label{cor:Dirichlet:hom}
Let $p\in (1,\infty)\setminus\{3/2\}$ and define $\omega_0^D = \min \{a\lambda_0^D, b\lambda_0^D /2, c^2/b\}$. 
Then for every $\omega \in (0,\omega_0)$ the linear Dirichlet boundary value problem 
\begin{equation}
\label{IBVP:linearhom}
\begin{cases}
\begin{aligned}
(a\Delta- \partial_t)(u_{tt}-b\Delta u_t - c^2 \Delta u)&= f 				&& \text{in } \R_+ \times \Omega,\\
(u, \Delta u)&= (0,0)											&& \text{on } \R_+ \times \Gamma,\\
(u,u_t,u_{tt})&=(u_0, u_1, u_2)									&& \text{on }\{t=0\} \times \Omega,
\end{aligned}
\end{cases}
\end{equation}
admits maximal $L_p$-regularity in the sense that there exists a unique solution 
\begin{equation*}
u \in \Exp^{-\omega} \E_u, \quad \E_u= W_p^3(\R_+; L_p(\Omega)) \cap W_p^1(\R_+;W_p^4(\Omega)),
\end{equation*}
if and only if 
\begin{enumerate}[label=\emph{(\roman*)}, leftmargin=*]
\item $f \in \Exp^{-\omega}L_p(\R_+ \times \Omega)$,
\item $u_0 \in W_p^4(\Omega)$, $u_1 \in W_p^{4-2/p}(\Omega)$, $u_2 \in W_p^{2-2/p}(\Omega)$ 
	    with $u_0 |_\Gamma = \Delta u_0|_\Gamma = u_1 |_\Gamma = 0$ and, if $p>3/2$, also $\Delta u_1 |_\Gamma = u_2 |_\Gamma = 0$ in the sense of traces.
\end{enumerate}
\end{corollary}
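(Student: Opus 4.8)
The plan is to obtain the corollary as a direct consequence of Theorem~\ref{thm:maximal:Dirichlet2} by unravelling the abstract reformulation from Section~\ref{subsec:linear:Dirichlet}. Recall from \eqref{def:vf} that a function $u$ is linked to
\[
v^D = \begin{pmatrix} u \\ u_t \\ u_{tt} - b\Delta_D u_t - c^2\Delta_D u \end{pmatrix},
\]
that $F = (0,0,-f)^\top$ and $v_0^D = (u_0, u_1, u_2 - b\Delta_D u_1 - c^2\Delta_D u_0)^\top$, and that the abstract Cauchy problem $\del_t v^D + A^D v^D = F$, $\gamma_t v^D = v_0^D$ is equivalent to \eqref{IBVP:linearhom} once one keeps in mind that membership of $v^D$ in spaces built on $\D(A^D)$ and $X^D$ forces the boundary conditions $u|_\Gamma = \Delta u|_\Gamma = 0$. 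By Theorem~\ref{thm:maximal:Dirichlet2} the map $(\del_t + A^D, \gamma_t)$ is an isomorphism from $\Exp^{-\omega}(W_p^1(\R_+; X^D) \cap L_p(\R_+; \D(A^D)))$ onto $\Exp^{-\omega} L_p(\R_+; X^D) \times (X^D, \D(A^D))_{1-1/p,p}$ for $\omega \in [0,\omega_0^D)$, so it only remains to identify these three spaces with $\Exp^{-\omega}\E_u$, condition~(i) and condition~(ii).

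First I would check that $u \mapsto v^D$ is an isomorphism of $\{u \in \Exp^{-\omega}\E_u : u|_\Gamma = \Delta u|_\Gamma = 0\}$ onto the abstract solution space. In one direction, if $u \in \E_u$ satisfies these boundary conditions, the mixed derivative embedding \eqref{emb:mixed} (in particular $\E_u \hookrightarrow W_p^2(\R_+; W_p^2(\Omega))$, taking $\theta = 1/2$) gives $u_{tt} \in L_p(\R_+; W_p^2(\Omega))$ and $u_t \in W_p^1(\R_+; W_p^2(\Omega)) \cap L_p(\R_+; W_p^4(\Omega))$; combining this with the boundary conditions, which are inherited by $u_t$ and $u_{tt}$ after differentiating $u|_\Gamma = \Delta u|_\Gamma = 0$ in $t$, one verifies componentwise that $v^D \in W_p^1(\R_+; X^D) \cap L_p(\R_+; \D(A^D))$. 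Conversely, the first component of an abstract solution $v^D$ already yields $u \in W_p^1(\R_+; \D((\Delta_D)^2)) \hookrightarrow W_p^1(\R_+; W_p^4(\Omega))$, while writing $u_{ttt} = \del_t v^D_3 + b\Delta u_{tt} + c^2\Delta u_t$ with $u_{tt} = \del_t v^D_2 \in L_p(\R_+; W_p^2(\Omega))$ and $u_t = v^D_2 \in L_p(\R_+; W_p^2(\Omega))$ shows $u \in W_p^3(\R_+; L_p(\Omega))$; the boundary conditions are automatic since $v^D(t) \in \D(A^D)$. As the first two components of $F$ vanish, $F \in \Exp^{-\omega}L_p(\R_+; X^D)$ is equivalent to $f \in \Exp^{-\omega}L_p(\R_+ \times \Omega)$, i.e.\ condition~(i).

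The crux is the trace space. Since real interpolation commutes with finite direct sums,
\[
(X^D, \D(A^D))_{1-1/p,p} = \D((\Delta_D)^2) \times (\D(\Delta_D), \D((\Delta_D)^2))_{1-1/p,p} \times (L_p(\Omega), \D(\Delta_D))_{1-1/p,p}.
\]
Using the classical description of the interpolation spaces of the Dirichlet Laplacian, the last factor equals $\{w \in W_p^{2-2/p}(\Omega) : w|_\Gamma = 0 \text{ if } p > 3/2\}$ and, after lifting by one power of $-\Delta_D$, the middle factor equals $\{w \in W_p^{4-2/p}(\Omega) : w|_\Gamma = 0 \text{ and } \Delta w|_\Gamma = 0 \text{ if } p > 3/2\}$; both descriptions are valid precisely for $p \ne 3/2$, because $2 - 2/p$ coincides with the critical trace exponent $1/p$ exactly at $p = 3/2$, which is why this value must be excluded. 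It then remains to rewrite membership of $v_0^D = (u_0, u_1, u_2 - b\Delta_D u_1 - c^2\Delta_D u_0)^\top$ in this product in terms of $u_0, u_1, u_2$ alone: from $u_0 \in \D((\Delta_D)^2)$ one has $\Delta_D u_0 \in \D(\Delta_D) \hookrightarrow W_p^{2-2/p}(\Omega)$ with vanishing trace, and likewise $\Delta_D u_1 \in W_p^{2-2/p}(\Omega)$ has vanishing trace when $p > 3/2$, so the condition on the third component reduces to $u_2 \in W_p^{2-2/p}(\Omega)$ with $u_2|_\Gamma = 0$ if $p > 3/2$. Assembling the three factors reproduces condition~(ii), and the claimed existence, uniqueness and the equivalence with (i)--(ii) then follow from the isomorphism in Theorem~\ref{thm:maximal:Dirichlet2}. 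I expect the main obstacle to be exactly this bookkeeping: getting all boundary compatibility conditions and the $p > 3/2$ dichotomy right in the interpolation identity, together with spelling out the two-sided equivalence of the solution spaces through \eqref{emb:mixed}.
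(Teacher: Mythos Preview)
Your proposal is correct and follows essentially the same approach as the paper's own proof: both deduce the corollary from Theorem~\ref{thm:maximal:Dirichlet2} by identifying the abstract solution space with $\Exp^{-\omega}\E_u$ via the mixed derivative embedding $\E_u\hookrightarrow W_p^2(\R_+;W_p^2(\Omega))$, and by computing $(X^D,\D(A^D))_{1-1/p,p}$ componentwise with interpolation under boundary conditions to obtain condition~(ii). You spell out the two-sided correspondence of solution spaces and the origin of the $p\neq 3/2$ restriction more explicitly than the paper does, but the argument is the same.
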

\begin{proof} 
Based on the choices of $X^D$ and $\D(A^D)$ in \eqref{space:X} and \eqref{operator:A}, it is straightforward to check that 
the condition $v^D\in \Exp^{-\om} (W_p^1(\R_+; X^D) \cap L_p(\R_+;\D(A^D)))$ where $v^D$ is given by \eqref{def:vf} 
implies $u\in\Exp^{-\omega}(\E_u \cap W_p^2(\R_+; W_p^2(\Omega)))$. 
Since the mixed derivative embedding gives us $\E_u \hookrightarrow W_p^2(\R_+; W_p^2(\Omega))$, we arrive at $u \in \Exp^{-\om} \E_u$. 
Next, we determine $(X, \D(A))_{1-1/p,p}$. It is trivial that $(\D((\Delta_D)^2),\D((\Delta_D)^2))_{1-1/p,p}= \D((\Delta_D)^2)$, 
i.~e.\ we have $u_0 \in W_p^4(\Om)$ with $u|_\Gamma = \Delta u |_\Gamma=0$.
Moreover, since for $p\in(1,\infty)$ we have $2/p \in \R \setminus \N$ unless $p=2$, \eqref{Besov:interpolation}
gives us
\begin{align*}
(W_p^2(\Omega),W_p^4(\Omega))_{1-1/p,p}&= B_{p,p}^{4-2/p}(\Omega) = W_p^{4-2/p}(\Omega),\\
(L_p(\Omega),W_p^2(\Omega))_{1-1/p,p} &= B_{p,p}^{2-2/p}(\Omega) = W_p^{2-2/p}(\Omega).
\end{align*}
Moreover, interpolation with boundary conditions as in \cite[Section 4.9]{Ama09} yields 
$u_1|_\Gamma = \Delta u_1|_\Gamma = u_2 - b\Delta_Du_1 - c^2 \Delta_D u_0 |_\Gamma =0$.
Hence, we have $u_0 \in W_p^4(\Omega)$, $u_1 \in W_p^{4-2/p}(\Omega)$ and  
$u_2 - c^2\Delta_D u_0 - b \Delta_D u_1 \in W_p^{2-2/p}(\Omega)$ which is equivalent to 
$u_0 \in W_p^4(\Omega)$, $u_1 \in W_p^{4-2/p}(\Omega)$ and $u_2\in W_p^{2-2/p}(\Omega)$. 
The result now follows from Theorem \ref{thm:maximal:Dirichlet2}.
\end{proof}

We now arrive at the final result for this section and prove optimal regularity for the linear initial boundary 
value problem \eqref{IBVP:linear:Dirichlet} in the sense that the regularity of the data $f$, $g$, $h$, $u_0$, $u_1$ and $u_2$ 
are necessary and sufficient for the existence of a unique solution $u \in \mathrm{e}^{-\omega} \E_u$.
\begin{proposition}
\label{prop:linearinhom}
Let $p\in(1,\infty)\setminus\{3/2\}$ and define $\omega_0^D = \min \{a\lambda_0^D, b\lambda_0^D/2, c^2/b\}$. 
Then for every $\omega \in (0,\omega_0)$ the linear initial boundary value problem
\begin{equation}
\label{IBVP:linearinhom}
\begin{cases}
\begin{aligned}
(a\Delta- \partial_t)(u_{tt}-b\Delta u_t - c^2 \Delta u)&= f 			&& \text{in } \R_+ \times \Omega,\\
(u, \Delta u)&= (g,h)										&& \text{on } \R_+ \times \Gamma,\\
(u,u_t,u_{tt})&=(u_0, u_1, u_2)								&& \text{on }\{t=0\} \times \Omega,
\end{aligned}
\end{cases}
\end{equation}
has a unique solution
\begin{equation*}
u \in \Exp^{-\omega}\E_u, \quad 
\E_u= W_p^3(\R_+; L_p(\Omega)) \cap W_p^1(\R_+;W_p^4(\Omega)),
\end{equation*}
if and only if the data satisfy the conditions
\begin{enumerate}[label=\emph{(\roman*)}, leftmargin=*]
\item $f \in \Exp^{-\omega}L_p(\R_+ \times \Omega)$,
\item $u_0 \in W_p^4(\Omega)$, $u_1 \in W_p^{4-2/p}(\Omega)$, $u_2 \in W_p^{2-2/p}(\Omega)$,
\item$ g \in \Exp^{-\omega} \F_{g,\Gamma}$, $\F_{g,\Gamma}=W_p^{3-1/2p}(\R_+;L_p(\Gamma)) \cap W_p^1(\R_+;W_p^{4-1/p}(\Gamma))$,\\
$h \in \Exp^{-\omega} \F_{h,\Gamma}$,  $\F_{h,\Gamma}= W_p^{2-1/2p}(\R_+;L_p(\Gamma)) \cap W_p^1(\R_+; W_p^{2-1/p}(\Gamma))$,
\item $u_0 |_\Gamma = g|_{t=0}$, $u_1 |_\Gamma = g_t |_{t=0}$, $\Delta u_0 |_\Gamma = h|_{t=0}$ and, if $p>3/2$, also $\Delta u_1 |_\Gamma = h_t |_{t=0}$
	and $u_2 |_\Gamma = g_{tt} |_{t=0}$ hold in the sense of traces.
\end{enumerate}
Moreover, the solution fulfills the estimate
\begin{equation*}
\|u\|_{\Exp^{-\omega}\E_u} \lesssim \|f\|_{\Exp^{-\omega} L_p} + \|g\|_{\Exp^{-\omega} \F_{g,\Gamma}} 
+  \|h\|_{\Exp^{-\omega} \F_{h,\Gamma}} + \|u_0\|_{W_p^4}+\|u_1\|_{W_p^{4-2/p}}+\|u_2\|_{W_p^{2-2/p}}.
\end{equation*}
\end{proposition}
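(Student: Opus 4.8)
The plan is to exploit the factorization hidden in the left-hand operator of \eqref{IBVP:linearinhom}. Setting $w := u_{tt} - b\Delta u_t - c^2\Delta u$, the equation reads $w_t - a\Delta w = -f$, so one should first recover $w$ from a heat problem via Lemma \ref{lem:heatinhom}, then recover $u$ from the linearized Westervelt equation $u_{tt} - b\Delta u_t - c^2\Delta u = w$ via Lemma \ref{lem:Westerveltinhom} (which at first only gives $u\in\Exp^{-\omega}\W_u$), and finally bootstrap $u$ up to the class $\Exp^{-\omega}\E_u$; necessity of (i)--(iv) will be obtained by tracing regularity backwards, and uniqueness by reduction to Corollary \ref{cor:Dirichlet:hom}. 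The assumption $\omega\in(0,\omega_0^D)$ with $\omega_0^D=\min\{a\lambda_0^D,b\lambda_0^D/2,c^2/b\}$ keeps $\omega$ below the stability thresholds $a\lambda_0^D$ of the heat equation and $\min\{b\lambda_0^D/2,c^2/b\}$ of the Westervelt equation, while $\omega>0$ suffices for the convolution and ODE estimates below; the value $p=3/2$ is excluded because it is exactly where $2-2/p=1/p$, so that the corner compatibility conditions degenerate.

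\emph{Sufficiency.} Given data satisfying (i)--(iv), put $\tilde g := g_{tt}-bh_t-c^2h$ and $\tilde w_0 := u_2-b\Delta u_1-c^2\Delta u_0$. Using the mixed derivative embeddings of Appendix \ref{sec:trac-mixed-deriv} — in particular $\F_{g,\Gamma}\hookrightarrow W_p^2(\R_+;W_p^{2-1/p}(\Gamma))$ — one verifies $\tilde g\in\Exp^{-\omega}\Hb_\Gamma$, $\tilde w_0\in W_p^{2-2/p}(\Omega)$, and that the corner condition $\tilde w_0|_\Gamma=\tilde g|_{t=0}$ follows from (iv) when $p>3/2$ and is void when $p<3/2$ (the relevant traces being absent). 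Lemma \ref{lem:heatinhom} then yields $w\in\Exp^{-\omega}\Hb_u$ with $w_t-a\Delta w=-f$, $w|_\Gamma=\tilde g$, $w(0)=\tilde w_0$. Since $\F_{g,\Gamma}\hookrightarrow\W_\Gamma$, $W_p^4(\Omega)\hookrightarrow W_p^2(\Omega)$, $W_p^{4-2/p}(\Omega)\hookrightarrow W_p^{2-2/p}(\Omega)$, and $g|_{t=0}=u_0|_\Gamma$ (and $g_t|_{t=0}=u_1|_\Gamma$ if $p>3/2$) by (iv), Lemma \ref{lem:Westerveltinhom} produces $u\in\Exp^{-\omega}\W_u$ solving $u_{tt}-b\Delta u_t-c^2\Delta u=w$, $u|_\Gamma=g$, $(u,u_t)(0)=(u_0,u_1)$; applying $a\Delta-\partial_t$ to the Westervelt identity shows that $u$ satisfies the differential equation in \eqref{IBVP:linearinhom}.

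\emph{Higher regularity and remaining conditions.} To upgrade $u$, differentiate the Westervelt equation in time: the problem with data $(w_t,g_t,u_1,u_2)$ again satisfies the hypotheses of Lemma \ref{lem:Westerveltinhom} (here $w_t\in\Exp^{-\omega}L_p$ because $w\in\Exp^{-\omega}\Hb_u$, and $g_t\in\Exp^{-\omega}\W_\Gamma$ by the mixed derivative embeddings), so it has a unique solution $z\in\Exp^{-\omega}\W_u$; integrating $z$ and matching source, boundary and initial data identifies $u_0+\int_0^\cdot z$ with $u$ by the uniqueness in Lemma \ref{lem:Westerveltinhom}, hence $u_t=z\in\Exp^{-\omega}\W_u$ and $u\in\Exp^{-\omega}(W_p^3(\R_+;L_p)\cap W_p^2(\R_+;W_p^2))$. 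Rewriting the Westervelt identity as $\Delta(bu_t+c^2u)=u_{tt}-w$, whose right-hand side now lies in $\Exp^{-\omega}L_p(\R_+;W_p^2)$ and whose boundary value $bg_t+c^2g$ lies in $\Exp^{-\omega}L_p(\R_+;W_p^{4-1/p}(\Gamma))$, elliptic regularity for the Dirichlet Laplacian (applied for a.e.\ $t$) gives $\phi:=bu_t+c^2u\in\Exp^{-\omega}L_p(\R_+;W_p^4)$; solving the ODE $bu_t+c^2u=\phi$, $u(0)=u_0\in W_p^4(\Omega)$, and noting that convolution with $t\mapsto\Exp^{-c^2t/b}\chi_{\R_+}$ preserves $\Exp^{-\omega}L_p(\R_+;W_p^4)$, yields $u_t\in\Exp^{-\omega}L_p(\R_+;W_p^4)$ and hence, together with the $W_p^3(\R_+;L_p)$-regularity just obtained, $u\in\Exp^{-\omega}\E_u$. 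With this regularity, $\Delta u|_\Gamma$ and $u_{tt}(0)$ are well defined, and $\eta:=\Delta u|_\Gamma-h$ solves $b\eta_t+c^2\eta=0$ with $\eta(0)=0$ by (iv), so $\Delta u|_\Gamma=h$, while $u_{tt}(0)=w(0)+b\Delta u_1+c^2\Delta u_0=u_2$. Thus $u$ solves \eqref{IBVP:linearinhom}, and since the whole construction is linear and bounded, the displayed estimate follows (equivalently from the open mapping theorem, (iv) cutting out a closed subspace of the data space).

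\emph{Necessity and uniqueness.} Conversely, let $u\in\Exp^{-\omega}\E_u$ solve \eqref{IBVP:linearinhom}, and work with $\Exp^{\omega t}u\in\E_u$. Condition (i) follows by distributing $a\Delta-\partial_t$ over $u_{tt}-b\Delta u_t-c^2\Delta u$ and using the mixed derivative embedding $\E_u\hookrightarrow W_p^2(\R_+;W_p^2)$; (ii) follows from the temporal trace \eqref{trace:temporal} applied to $\Exp^{\omega t}u$ and its first two time derivatives (together with the mixed derivative embeddings and $W_p^1(\R_+;X)\hookrightarrow BUC(\R_+;X)$); (iii) follows from the spatial trace \eqref{trace:spatial} applied with $(k,l)=(1,2)$ to $u$ and with $(k,l)=(1,1)$ to $\Delta u$; and (iv) follows since the spatial and temporal traces commute at the space-time corner, the extra identities holding when $p>3/2$ precisely because the corresponding traces then exist. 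Finally, the difference of two $\Exp^{-\omega}\E_u$-solutions with identical data solves \eqref{IBVP:linearhom} with vanishing data, hence vanishes by Corollary \ref{cor:Dirichlet:hom}. I expect the higher-regularity upgrade to be the main obstacle: the naive composition delivers only $\W_u$-regularity for $u$, and closing the two-derivative gap to $W_p^1(\R_+;W_p^4)$ requires both the time-differentiation bootstrap and the subsequent elliptic-plus-ODE argument; careful bookkeeping of the compatibility conditions and of the exceptional exponent $p=3/2$ is the secondary complication.
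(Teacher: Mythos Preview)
Your argument is correct, but you factorize in the opposite order from the paper. You set $w := u_{tt}-b\Delta u_t-c^2\Delta u$, solve the heat problem for $w$ first, then the Westervelt problem for $u$, which only yields $u\in\Exp^{-\omega}\W_u$ and forces you to bootstrap: once by time-differentiating the Westervelt equation to obtain $u_t\in\Exp^{-\omega}\W_u$, and once more by the elliptic estimate for $\Delta(bu_t+c^2u)=u_{tt}-w$ combined with the scalar ODE $bu_t+c^2u=\phi$ (the convolution bound using $\omega<c^2/b\leq\omega_0^D$). The paper instead commutes the factors, sets $w := a\Delta u-u_t$, solves the Westervelt problem for $w$ first (boundary datum $ah-g_t$, initial data $a\Delta u_0-u_1$, $a\Delta u_1-u_2$), and then solves the heat equation $u_t-a\Delta u=-w$ with the \emph{regular} right-hand side $w\in\Exp^{-\omega}\W_u$; the higher-regularity result for the heat equation (Corollary~\ref{cor:higher_regularity_for_the_heat_equation_exponential_weight} with $l=1$, $k=2$) then delivers $u\in\Exp^{-\omega}\E_u$ in one stroke, with no bootstrap. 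What the paper's route buys is brevity at the price of invoking the full Appendix~\ref{app:heathigher} machinery (in particular the compatibility bookkeeping of Remark~\ref{Ableitung_Lemma_2_3}); your route is more self-contained, needing only Lemmas~\ref{lem:heatinhom} and \ref{lem:Westerveltinhom}, standard elliptic regularity, and Young's inequality.

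One point in your bootstrap deserves a word of care: to conclude $u_t=z$ you invoke uniqueness in Lemma~\ref{lem:Westerveltinhom}, but $v:=u_0+\int_0^{\cdot}z$ is not a priori in $\Exp^{-\omega}\W_u$ (the term $\Exp^{\omega t}u_0$ grows). The clean fix is to argue on bounded intervals: for each $T<\infty$ one has $v\in\W_u((0,T))$ since integration gains one time derivative on $(0,T)$, and uniqueness for the Westervelt problem on finite intervals then gives $v=u$ on $(0,T)$ for all $T$, hence $u_t=z\in\Exp^{-\omega}\W_u$. With this adjustment your proof is complete; the necessity and uniqueness parts coincide with the paper's.
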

\begin{proof} First we show necessity of (i)--(iv) for the existence of a unique solution 
$u\in \Exp^{-\omega} \E_u$ of \eqref{IBVP:linearinhom}. In the proof of Corollary \ref{cor:Dirichlet:hom} we already mentioned that
$\E_u \hookrightarrow W_p^2(\R_+ \times \Omega)$. 
Since 
$(\Exp^{\omega t} u)_t  = \omega \Exp^{\omega t} u + \Exp^{\omega t} u_t$, 
$(\Exp^{\omega t} u)_{tt}  = \omega^2 \Exp^{\omega t} u  +2 \omega \Exp^{\omega t} u_t+ \Exp^{\omega t} u_{tt}$ and
$(\Exp^{\omega t} u)_{ttt}  = \omega^3\Exp^{\omega t} u+3 \omega^2 \Exp^{\omega t} u_t+ 3 \omega \Exp^{\omega t} u_{tt}+\Exp^{\omega t} u_{ttt}$,
the assumption that $u\in\Exp^{-\omega} \E_u$ implies
$\Exp^{\omega t} u_t  \in L_p(\R_+;W_p^4(\Omega))\cap W_p^1(\R_+;W_p^2(\Omega)) \cap W_p^2(\R_+;L_p(\Omega))$,
$\Exp^{\omega t} u_{tt}  \in L_p(\R_+;W_p^2(\Omega))\cap W_p^1(\R_+;L_p(\Omega))$ and
$\Exp^{\omega t} u_{ttt}  \in L_p(\R_+\times\Omega)$,
hence $\Exp^{\omega t}f = \Exp^{\omega t} (a\Delta- \partial_t)(u_{tt}-b\Delta u_t - c^2 \Delta u) \in L_p(\R_+\times\Omega)$
and (i) follows. \\
Next, we show (ii). The embedding $W_p^1(J) \hookrightarrow BUC(J)$ implies $u_0 \in W_p^4(\Omega)$ whereas the temporal trace \eqref{trace:temporal} 
with $\alpha=1$, $s=2$ and $\alpha= 1$, $s=0$ gives us the desired regularities of $u_1$ and $u_2$, respectively. \\
For $u \in \Exp^{-\om} \E_u$ the spatial trace \eqref{trace:spatial} with $k=1$ and $l=2$ implies $u |_\Gamma = g \in \Exp^{-\om} \F_{g,\Gamma}$ 
and for $\Delta u \in \Exp^{-\om} \W_u$ the choice $k=l=1$ gives us $\Delta u |_\Gamma = h \in \Exp^{-\om} \F_{h,\Gamma}$. This shows (iii). \\
Using $W_p^1(J) \hookrightarrow BUC(J)$, the spatial trace \eqref{trace:spatial2}, the temporal trace \eqref{trace:temporal} and the mixed derivative embedding 
\eqref{emb:mixed} one shows (iv). We have 
\begin{align*}
&u_0 |_\Gamma = g|_{t=0} \text{ in } W_p^{4-1/p}(\Gamma), &&\Delta u_0 |_\Gamma = h |_{t=0} \text{ in }W_p^{2-1/p}(\Gamma),\\
&u_1|_\Gamma = g_t |_{t=0} \text{ in } B_{p,p}^{4-3/p}(\Gamma),  && \Delta u_1 |_\Gamma = h_t |_{t=0} \text{ in } B_{p,p}^{2-3/p}(\Gamma) \text{ if } p>3/2,\\
&u_2 |_\Gamma = g_{tt} |_{t=0} \text{ in } B_{p,p}^{2-3/p}(\Gamma) \text{ if } p>3/2.
\end{align*}
 

It remains to show that conditions (i)--(iv) imply the existence of a unique solution $u\in \Exp^{-\omega} \E_u$ of 
\eqref{IBVP:linearinhom}. Since we are dealing with a linear partial differential equation with constant coefficients,
we may interchange the order of differentiation on the left-hand side and consider the subproblems 
\begin{equation}
\label{Westervelt:inhom}
\begin{cases}
\begin{aligned}
w_{tt}-b\Delta w_t - c^2 \Delta w &= f 					&& \text{in } \R_+ \times \Omega,\\
w&= a h - g_t										&& \text{on } \R_+ \times \Gamma,\\
(w,w_t)&=(a\Delta u_0-u_1,a \Delta u_1- u_2)			&& \text{on }\{t=0\} \times \Omega,
\end{aligned}
\end{cases}
\end{equation}
and
\begin{equation}
\label{heat:inhom}
\begin{cases}
\begin{aligned}
a\Delta u-u_t &= w						&& \text{in } \R_+ \times \Omega,\\
u&=g								&&\text{on } \R_+ \times \Gamma,\\
u&=u_0								&& \text{on }\{t=0\} \times \Omega.
\end{aligned}
\end{cases}
\end{equation}
From condition (ii) we obtain $a \Delta u_0 - u_1 \in W_p^2(\Omega)$ and $a\Delta u_1- u_2 \in W_p^{2-2/p}(\Omega)$. 
Furthermore, (iii) implies $ah - g_t \in \Exp^{-\omega} \W_\Gamma$. On the strength of Lemma \ref{lem:Westerveltinhom} we obtain that 
\eqref{Westervelt:inhom} admits a unique solution $w\in \Exp^{-\omega}\W_u$. Now we use 
Corollary \ref{cor:higher_regularity_for_the_heat_equation_exponential_weight} with $l=1$ and $k=2$ to solve \eqref{heat:inhom}
and obtain that \eqref{heat:inhom} has a solution $u\in \Exp^{-\omega}\E_u$. This concludes the proof of sufficiency. 
Uniqueness follows from Corollary \ref{cor:Dirichlet:hom}.
\end{proof}

\subsection{Global well-posedness and exponential stability}
Based on Proposition \ref{prop:linearinhom}, we now show that there exists a unique global solution of the nonlinear initial 
boundary value problem \eqref{IBVP:Dirichlet} which depends continuously (in fact, even analytically) on the (sufficiently small) initial 
and boundary data. Moreover, we prove that the equilibrium $u=0$ is exponentially stable. 
\begin{theorem}[Global well-posedness - the Dirichlet case]
\label{thm:global:Dirichlet}
Let $p>\max\{n/4+1/2,n/3\}$, $p\neq 3/2$ and define $\omega_0^D = \min \{a\lambda_0^D, b\lambda_0^D/2, c^2/b\}$. Suppose
\begin{equation}
\label{cond:data}
\begin{aligned}
&u_0 \in W_p^4(\Omega), \qquad u_1 \in W_p^{4-2/p}(\Omega), \qquad u_2 \in W_p^{2-2/p}(\Omega)\\
&g \in \Exp^{-\omega}\F_{g,\Gamma},~  \F_{g,\Gamma}= W_p^{3-1/2p}(\R_+;L_p(\Gamma)) \cap W_p^1(\R_+;W_p^{4-1/p}(\Gamma)),\\
&h \in \Exp^{-\omega} \F_{h,\Gamma},~  \F_{h,\Gamma}= W_p^{2-1/2p}(\R_+;L_p(\Gamma)) \cap W_p^1(\R_+; W_p^{2-1/p}(\Gamma)).
\end{aligned}
\end{equation}
with $u_0 |_\Gamma = g|_{t=0}$, $u_1 |_\Gamma = g_t |_{t=0}$, $\Delta u_0 |_\Gamma = h|_{t=0}$ and, 
if $p>3/2$, also $\Delta u_1 |_\Gamma = h_t |_{t=0}$, $u_2 |_\Gamma = g_{tt} |_{t=0}$. 

Then for every $\omega \in (0, \omega_0)$ there exists some $\rho >0$ such that if
\begin{equation*}
\|g\|_{\Exp^{-\omega}\F_{g,\Gamma}} + \|h\|_{\Exp^{-\omega}\F_{h,\Gamma}} + \|u_0\|_{W_p^4} + \|u_1\|_{W_p^{4-2/p}} + \|u_2\|_{W_p^{2-2/p}} < \rho, 
\end{equation*}
the nonlinear initial boundary value problem \eqref{IBVP:Dirichlet} admits a unique solution
\begin{equation}
\label{sol:nonlinear}
u \in \Exp^{-\omega} \E_u, 
\qquad \E_u =W_p^3(\R_+; L_p(\Omega)) \cap W_p^1(\R_+; W_p^4(\Omega)) 
\end{equation}
which depends analytically on the data \eqref{cond:data} with respect to the 
corresponding topologies. Moreover, conditions \eqref{cond:data} are necessary for the regularity of the solution given 
in \eqref{sol:nonlinear}.
\end{theorem}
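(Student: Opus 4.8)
The plan is to solve \eqref{IBVP:Dirichlet} with the analytic implicit function theorem (Theorem~\ref{thm:implicit}), exploiting that the \emph{linearization} of \eqref{IBVP:Dirichlet} at $u=0$ is precisely the isomorphism supplied by Proposition~\ref{prop:linearinhom}. Fix $\omega\in(0,\omega_0^D)$ and let $\mathbb{D}$ denote the set of data tuples $d=(g,h,u_0,u_1,u_2)$ with $g\in\Exp^{-\omega}\F_{g,\Gamma}$, $h\in\Exp^{-\omega}\F_{h,\Gamma}$, $u_0\in W_p^4(\Omega)$, $u_1\in W_p^{4-2/p}(\Omega)$, $u_2\in W_p^{2-2/p}(\Omega)$ that in addition satisfy the compatibility relations in condition~(iv) of Proposition~\ref{prop:linearinhom}. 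Since each of those relations is the vanishing of a bounded linear combination of traces, $\mathbb{D}$ is a closed subspace of a product of Banach spaces, hence itself Banach. Writing $Lu:=(a\Delta-\partial_t)(u_{tt}-b\Delta u_t-c^2\Delta u)$ and $\gamma u:=(u|_\Gamma,\Delta u|_\Gamma,u|_{t=0},u_t|_{t=0},u_{tt}|_{t=0})$, Proposition~\ref{prop:linearinhom} asserts exactly that
\[
(L,\gamma)\colon\Exp^{-\omega}\E_u\longrightarrow\Exp^{-\omega}L_p(\R_+\times\Omega)\times\mathbb{D}
\]
is a topological isomorphism; its necessity part moreover guarantees $\gamma u\in\mathbb{D}$ for every $u\in\Exp^{-\omega}\E_u$.

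The crucial point is that the substitution operator
\[
N(u):=\big(k(u_t)^2+s|\nabla u|^2\big)_{tt}=2k\big(u_{tt}^2+u_t\,u_{ttt}\big)+2s\big(|\nabla u_t|^2+\nabla u\cdot\nabla u_{tt}\big)
\]
maps $\Exp^{-\omega}\E_u$ boundedly into $\Exp^{-\omega}L_p(\R_+\times\Omega)$. Writing $u=\Exp^{-\omega t}\bar u$ and using $\Exp^{-2\omega t}\le\Exp^{-\omega t}$ on $\R_+$, this reduces to the product estimates, valid for $\bar u\in\E_u$,
\[
\|\bar u_t\,\bar u_{ttt}\|_{L_p}+\|\bar u_{tt}^2\|_{L_p}+\|\nabla\bar u\cdot\nabla\bar u_{tt}\|_{L_p}+\||\nabla\bar u_t|^2\|_{L_p}\ \lesssim\ \|\bar u\|_{\E_u}^2,
\]
where $L_p=L_p(\R_+\times\Omega)$. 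From $W_p^1(\R_+)\hookrightarrow BUC(\R_+)$ one gets $\bar u\in BUC(\R_+;W_p^4(\Omega))$, so $\nabla\bar u\in BUC(\R_+;W_p^3(\Omega))\hookrightarrow L_\infty(\R_+\times\Omega)$ as soon as $p>n/3$; combined with $\nabla\bar u_{tt}\in L_p(\R_+;W_p^1(\Omega))\subset L_p(\R_+\times\Omega)$ (using $\E_u\hookrightarrow W_p^2(\R_+;W_p^2(\Omega))$) this controls $\nabla\bar u\cdot\nabla\bar u_{tt}$. The anisotropic mixed--derivative embeddings of Appendix~\ref{sec:trac-mixed-deriv} give $\E_u\hookrightarrow W_p^{1+2\theta}(\R_+;W_p^{4-4\theta}(\Omega))$ for $\theta\in[0,1]$, hence $\bar u_t\in W_p^{2\theta}(\R_+;W_p^{4-4\theta}(\Omega))$; choosing $\theta$ with $\tfrac1{2p}<\theta<1-\tfrac n{4p}$, which is possible exactly when $p>n/4+1/2$, yields $\bar u_t\in L_\infty(\R_+\times\Omega)$ and, together with $\bar u_{ttt}\in L_p(\R_+\times\Omega)$, controls $\bar u_t\bar u_{ttt}$. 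Likewise $\bar u_{tt}\in W_p^1(\R_+;L_p(\Omega))\cap L_p(\R_+;W_p^2(\Omega))\hookrightarrow L_{2p}(\R_+\times\Omega)$ for $p>n/4+1/2$ handles $\bar u_{tt}^2$, and an intermediate choice of interpolation parameter gives $\nabla\bar u_t\in L_{2p}(\R_+\times\Omega)$ under the same hypotheses. Being a continuous quadratic polynomial, $N$ is then automatically real-analytic with $N(0)=0$ and $N'(0)=0$.

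With these ingredients, define $\Phi\colon\Exp^{-\omega}\E_u\times\mathbb{D}\to\Exp^{-\omega}L_p(\R_+\times\Omega)\times\mathbb{D}$ by $\Phi(u,d):=(Lu-N(u),\,\gamma u-d)$. It is well defined because $\gamma u\in\mathbb{D}$, it is analytic as the sum of a bounded linear map and $(-N,0)$, it satisfies $\Phi(0,0)=0$, and its partial derivative with respect to $u$ at $(0,0)$ equals $(L,\gamma)$, an isomorphism by Proposition~\ref{prop:linearinhom}. Theorem~\ref{thm:implicit} then yields $\rho,\eta>0$ and an analytic map $\varphi\colon B_\rho(0)\subset\mathbb{D}\to B_\eta(0)\subset\Exp^{-\omega}\E_u$ with $\varphi(0)=0$ and $\Phi(\varphi(d),d)=0$; so for data satisfying the smallness bound, $u:=\varphi(d)\in\Exp^{-\omega}\E_u$ is the desired (unique small) solution of \eqref{IBVP:Dirichlet}, analytic in $d$. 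Finally, necessity of \eqref{cond:data} is automatic: any solution $u\in\Exp^{-\omega}\E_u$ of \eqref{IBVP:Dirichlet} is a solution of the linear problem \eqref{IBVP:linearinhom} with right-hand side $f=N(u)\in\Exp^{-\omega}L_p(\R_+\times\Omega)$, so the regularity and compatibility of the data follow from the necessity statement in Proposition~\ref{prop:linearinhom}.

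I expect the genuine obstacle to be the mapping property of $N$: it forces the dimension restriction $p>\max\{n/4+1/2,\,n/3\}$ and requires the full anisotropic mixed--derivative embeddings combined with the sharp Sobolev embeddings $W_p^s(\Omega)\hookrightarrow L_q(\Omega)$ and $W_p^1(\R_+)\hookrightarrow BUC(\R_+)$; once $N$ is under control the remainder is a routine application of the implicit function theorem to the isomorphism of Proposition~\ref{prop:linearinhom}.
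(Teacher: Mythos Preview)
Your proof is correct and rests on the same two pillars as the paper: the isomorphism of Proposition~\ref{prop:linearinhom} and the quadratic estimate for $N(u)$ via mixed derivative embeddings under the restriction $p>\max\{n/4+1/2,n/3\}$. The difference lies in how the implicit function theorem is set up. The paper first constructs a reference solution $u_\star\in\Exp^{-\omega}\E_u$ of the \emph{linear} problem \eqref{IBVP:linearinhom} with the given data and $f=0$, and then looks for a correction $u_\bullet$ in the \emph{homogeneous} subspace $\E_{u,h}=\{u\in\E_u:\gamma u=0\}$; the Fr\'echet derivative to invert is then just the differential operator of Corollary~\ref{cor:Dirichlet:hom}. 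You instead work directly in $\Exp^{-\omega}\E_u\times\mathbb D$ and invert the full pair $(L,\gamma)$ from Proposition~\ref{prop:linearinhom}. Your route is a bit more economical since it avoids the auxiliary splitting $u=u_\star+u_\bullet$; the paper's route has the minor advantage that the analytic dependence on the data factors transparently as the composition of the linear solution map $d\mapsto u_\star$ and the IFT map $u_\star\mapsto u_\bullet$. Both yield the same local uniqueness (in a ball in $\Exp^{-\omega}\E_u$), which is indeed all the theorem asserts.
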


\begin{proof} Employing the results on the linearized problem \eqref{IBVP:linear:Dirichlet} from Section \ref{subsec:linear:Dirichlet}, we 
will now construct a solution of the nonlinear initial boundary value problem \eqref{IBVP:Dirichlet} which we linearize at $u=0$. 
Hence, the solution will be of the form $u= u_\star+ u_\bullet$, where $u_\star$ solves the linearized problem \eqref{IBVP:linear:Dirichlet} 
for the data $(f=0, g, h, u_0, u_1, u_2)$ and $u_\bullet$ satisfies homogeneous boundary and initial conditions. 
We will find the (small) deviation $u_\bullet$ from $u_\star$ by application of the implicit function theorem to the map
\begin{equation}
\label{map:G}
\begin{aligned}
G\colon \Exp^{-\omega} \E_{u,h} \times \Exp^{-\omega} \E_u & \rightarrow \Exp^{-\omega}L_p(\R_+ \times \Omega),\\
(u_\bullet, u_\star) & \mapsto D(\partial_t, \Delta) u_\bullet - (k((u_\bullet+ u_\star)_t)^2 -s |\nabla(u_\bullet+u_\star)|^2)_{tt}
\end{aligned}
\end{equation}
where the differential expression $D(\partial_t, \Delta)$ is given by 
$D(\partial_t, \Delta) = (a\Delta - \partial_t)(\partial_t^2 -b\Delta \partial_t - c^2 \Delta)$ and 
$\E_{u,h} = \{u \in \E_u \colon u(0)=u_t(0)=u_{tt}(0) =0,\, u |_\Gamma = \Delta u |_\Gamma =0 \}$. Explicitly, we have 
$D(\partial_t, \Delta)u_\bullet = - u_{\bullet, ttt} + (a+b) \Delta u_{\bullet, tt} + c^2 \Delta u_{\bullet,t} - ab \Delta^2 u_{\bullet,t} - ac^2\Delta^2 u_\bullet$. 
\vspace{2mm}\newline
\textit{Step 1: The implicit function theorem applies.} 
First of all, we will now verify the assumptions of the implicit function theorem (Theorem \ref{thm:implicit}).\vspace{2mm} \newline
\textit{Step 1(a): $G$ is analytic.} The mixed derivative embedding \eqref{eq:mixed_deri_H} implies that the linear maps
\begin{align*}
&u_\bullet \mapsto -u_{\bullet,ttt} \colon 				&& \Exp^{-\omega} \E_{u,h} \rightarrow \Exp^{-\omega} L_p(\R_+ \times \Omega), \\
&u_\bullet \mapsto \Delta u_{\bullet,tt} \colon 		&& \Exp^{-\omega} \E_{u,h} \rightarrow \Exp^{-\omega} L_p(\R_+ \times \Omega), \\
&u_\bullet \mapsto \Delta u_{\bullet,t} \colon 			&& \Exp^{-\omega} \E_{u,h} \rightarrow \Exp^{-\omega} (L_p(\R_+; W_p^2(\Omega)) \cap W_{p}^1(\R_+; L_p(\Omega))) \\
&u_\bullet \mapsto \Delta^2 u_{\bullet,t} \colon 		&& \Exp^{-\omega} \E_{u,h} \rightarrow \Exp^{-\omega} L_p(\R_+ \times \Omega), \\
&u_\bullet \mapsto \Delta^2 u_\bullet \colon 			&& \Exp^{-\omega} \E_{u,h} \rightarrow \Exp^{-\omega} W_p^1(\R_+; L_p(\Omega)), 
\end{align*}
are bounded and therefore analytic. 
Hence $u_\bullet \mapsto D(\partial_t, \Delta)u_\bullet \colon \Exp^{-\omega} \E_{u,h} \rightarrow \Exp^{-\omega}L_p(\R_+ \times \Omega)$ is analytic. 
Next, note that for $p>1/2+n/4$ the embedding 
\begin{align*}
\E_u &\hookrightarrow W_p^1(\R_+; W_p^4(\Om)) \cap W_p^2(\R_+\times \Om)  \\
&\hookrightarrow H_p^{1+1/p+\varepsilon}(\R_+;H_p^{4-2/p-2\varepsilon}(\Om))\hookrightarrow BUC^1(\R_+; BUC(\Omega))
\end{align*}
holds. In particular, it holds if $\varepsilon>0$ is sufficiently small and $4-2/p-2\varepsilon-n/p>0$. Such an $\varepsilon>0$ exists if $p>(n+2)/4$.
Moreover, on the strength of the mixed derivative embedding theorem 
and the Sobolev embedding theorem we conclude similar as in the proof of Lemma 6 in \cite{MeWi13} that
\begin{align*}
\E_u 	& \hookrightarrow W_p^2(\R_+; W_p^2(\Omega)) \cap W_p^3(\R_+; L_p(\Omega))\\
		& \hookrightarrow H_p^{2+\Theta - \varepsilon/2}(\R_+; H_p^{2-2\Theta + \varepsilon}(\Omega)) 	
												&&\text{for } \Theta- \tfrac{\varepsilon}{2} \in [0,1] \text{ and }\varepsilon >0,\\
		& \hookrightarrow W_p^{2+\Theta - \varepsilon}(\R_+; W_p^{2-2\Theta}(\Omega))					
												&&\text{for } \varepsilon >0, \\
		& \hookrightarrow W_{2p}^2(\R_+; W_p^{2-2\Theta}(\Omega))											
												&&\text{for } \Theta \geq \tfrac{1}{2p} + \varepsilon, \\
		& \hookrightarrow W_{2p}^2(\R_+; L_{2p}(\Omega))														
												&&\text{for } \Theta \leq 1- \tfrac{n}{4p},
\end{align*}
provided $\varepsilon >0$ is sufficiently small and $p>1/2+n/4$. 
Furthermore, we observe that $\Exp^{-2\omega} L_p(\R_+ \times \Omega) \hookrightarrow \Exp^{-\omega} L_p(\R_+ \times \Omega)$ 
since $\Exp^{\omega t} \leq \Exp^{2 \omega t}$ for $\omega \geq 0$. 
Prepared like that, we estimate
\begin{align*}
\| f_t g_t \|_{L_p} 			& \leq \|f_t\|_{L_{2p}} \|f_t\|_{L_{2p}} \lesssim \|f\|_{\E_u} \|g\|_{\E_u}, \\
\| (f_t g_t)_t \|_{L_p} 		& \leq \|f_{tt}\|_{L_p} \|g_t \|_{L_\infty} + \|f_{t}\|_{L_\infty} \|g_{tt} \|_{L_p} \lesssim \|f\|_{\E_u} \|g\|_{\E_u}, \\
\| (f_t g_t)_{tt} \|_{L_p} 	& \leq \|f_{ttt}\|_{L_p} \|g_t \|_{L_\infty} + 2 \|f_{tt}\|_{L_{2p}} \|g_{tt} \|_{L_{2p}} + \|f_{t}\|_{L_\infty} \|g_{ttt} \|_{L_p} \lesssim \|f\|_{\E_u} \|g\|_{\E_u},
\end{align*}
and conclude that $(f,g) \mapsto f_t g_t \colon \E_u \times \E_u \rightarrow W_p^2(\R_+; L_p(\Omega))$ is bilinear
and bounded, thus analytic. Setting $w=u_\bullet+ u_\star$ in 
\begin{equation}
\label{w:exp}
\Exp^{2\omega t} ((w_t)^2)_{tt}= \tfrac{1}{2} ((\Exp^{\omega t}w_t)^2)_{tt} - 3 \omega ((\Exp^{\omega t} w_t)^2)_t + 6 \omega^2 (\Exp^{\omega t} w_t)^2
\end{equation}
and choosing $f=\Exp^{\omega t} u_\bullet$ and $g=\Exp^{\omega t} u_\star$ proves that
\begin{equation}
\label{firstterm:analytic}
(u_\bullet, u_\star) \mapsto (( (u_\bullet + u_\star)_t)^2)_{tt} \colon \Exp^{-\omega} \E_{u,h} \times \Exp^{-\omega} \E_u \rightarrow \Exp^{-\omega}L_p(\R_+ \times \Omega)
\end{equation}
is analytic. It remains to show analyticity of the map
\begin{equation*}
(u_\bullet, u_\star) \mapsto (|\nabla(u_\bullet+ u_\star)|^2)_{tt}\colon 
\Exp^{-\omega} \E_{u,h} \times \Exp^{-\omega} \E_u \rightarrow \Exp^{-\omega} L_p(\R_+ \times \Omega).
\end{equation*} 
Note that we have the embeddings  
\begin{align*}
\E_u & \hookrightarrow W_p^1(\R_+; W_p^4(\Omega)) \hookrightarrow BUC(\R_+; W_p^4(\Omega)) \hookrightarrow BUC(\R_+; BUC^1(\Omega)), && p>n/3\\
\E_u &\hookrightarrow W_p^1(\R_+;W_p^4(\Om)) \cap W_p^2(\R_+ \times \Om) \hookrightarrow  H_p^{1+1/2p+\varepsilon}(\R_+;H_p^{4-1/p-2\varepsilon}(\Om))&&\\
	& \hookrightarrow W_{2p}^1(\R_+;H_p^{4-1/p-2\varepsilon}(\Om)) \hookrightarrow W_{2p}^1(\R_+ \times \Om), &&p>n/6+1/3
\end{align*}
Therewith, we obtain the estimates
\begin{align*}
\|\nabla f \cdot \nabla g\|_{L_p} & \leq \| \nabla f \|_{L_{2p}} \| g \|_{L_{2p}} \lesssim \|f\|_{\E_u} \|g\|_{\E_u}, \\
\|(\nabla f \cdot \nabla g)_t\|_{L_p} & \leq \| (\nabla f)_t \|_{L_{2p}} \|\nabla g \|_{L_{2p}} +\| \nabla f \|_{L_{2p}} \| (\nabla g)_t \|_{L_{2p}} \lesssim \|f\|_{\E_u} \|g\|_{\E_u}, \\
\|(\nabla f \cdot \nabla g)_{tt}\|_{L_p} & \leq \|  (\nabla f)_{tt} \|_{L_p} \| \nabla g \|_{L_\infty} + 2 \| (\nabla f)_t \|_{L_{2p}} \| (\nabla g)_t \|_{L_{2p}} + \| \nabla f \|_{L_\infty} \| (\nabla g)_{tt} \|_{L_p}\\
									&\lesssim \|f\|_{\E_u} \|g\|_{\E_u}, 
\end{align*}
and conclude that $(f,g) \mapsto \nabla f \cdot \nabla g\colon  \E_u \times \E_u \rightarrow W_p^2(\R_+; L_p(\Omega))$ is bilinear and bounded, thus analytic. 
Moreover, we have
\begin{equation*}
\Exp^{2\omega t} ((\nabla w)^2)_{tt} = ((\Exp^{\omega t} \nabla w)^2)_{tt} - 4 \omega ((\Exp^{\omega t} \nabla w)^2)_{t} + 4 \omega^2 (\Exp^{\omega t} \nabla w)^2.
\end{equation*}
By setting $w=u_\bullet + u_\star$, $f=\Exp^{\omega t} u_\bullet$ and $g=\Exp^{\omega t} u_\star$ we are done.
Altogether, we have that $G\colon \Exp^{-\omega} \E_{u,h} \times \Exp^{-\omega} \E_u  \rightarrow \Exp^{-\omega}L_p(\R_+ \times \Omega)$ is analytic.
\vspace{2mm}\newline
\textit{Step 1(b): $D_{u_\bullet} G(0,0)\colon \Exp^{-\omega} L_p(\R_+ \times \Omega) \rightarrow \Exp^{-\omega} \E_{u,h}$ is an isomorphism.}
The Fr\'echet derivative of $G$ with respect to $u_\bullet$ at $(0,0)$ is given by
\begin{equation*}
D_{u_\bullet} G(0,0)[\overline{u}]= (a \Delta - \partial_t)(\overline{u}_{tt} - c^2 \Delta \overline{u} - b \Delta \overline{u}_t).
\end{equation*}
The map $D_{u_\bullet} G(0,0)\colon \Exp^{-\omega} L_p(\R_+ \times \Omega) \rightarrow \Exp^{-\omega} \E_{u,h}$ is an isomorphism since,
according to Corollary \ref{cor:Dirichlet:hom}, for every $\overline{f} \in \Exp^{-\omega} L_p(\R_+ \times \Omega)$ the equation 
$(a \Delta - \partial_t)(\overline{u}_{tt} - c^2 \Delta \overline{u} - b \Delta \overline{u}_t)=\overline{f}$ admits a unique solution 
$\overline{u} \in \Exp^{-\omega} \E_{u,h}$. 
\vspace{2mm}\newline
\textit{Step 2: Construction of the solution.}
On the strength of the Implicit Function Theorem there exists a ball $B_\rho(0) \subset \Exp^{-\omega} \E_u$ with sufficiently 
small radius $\rho >0$ and an analytic map $\varphi\colon B_\rho(0) \subset \Exp^{-\omega} \E_u \rightarrow \Exp^{-\omega} \E_{u,h}$, 
$u_\star \mapsto u_\bullet= \varphi(u_\star)$ satisfying $\varphi(0)=0$ and $G(\varphi(u_\star), u_\star) =0$ for all $u_\star \in B_\rho(0)$. 
Hence, whenever $u_\star$ satisfies the boundary conditions $u_\star |_\Gamma = g$, $\Delta u_\star |_\Gamma =h$ and initial 
conditions $u_\star |_{t=0} = u_0$, $u_{\star,t} |_{t=0} = u_1$, $u_{\star,tt} |_{t=0} = u_2$ which is the case if we define 
$u_\star \in \Exp^{-\omega}\E_u$ to be the unique solution of \eqref{IBVP:linear:Dirichlet} with $(f=0, u_0, u_1, u_2, g,h)$, then 
$u_\bullet + u_\star = \varphi(u_\star) + u_\star$ solves \eqref{IBVP:Dirichlet}. 
\vspace{2mm}\newline
\textit{Step 3: Dependence of the solution on the data.} 
It remains to show that the solution $u \in \Exp^{-\omega} \E_u$ depends analytically on $(g,h,u_0, u_1, u_2)$. To this end, 
we define the spaces
\begin{align*}
\overline{\mathbb{D}} &:= \Exp^{-\omega} \E_g \times \Exp^{-\omega} \E_h \times W_p^4(\Omega) \times W_p^{4-2/p}(\Omega) \times W_p^{2-2/p}(\Omega), \\
\mathbb{D}&:= \{(g,h,u_0, u_1, u_2)\in \overline{\mathbb{D}}\colon u_0 |_\Gamma = g|_{t=0}, u_1 |_\Gamma = g_t |_{t=0}, u_2 |_\Gamma = g_{tt} |_{t=0} \text{ if }p>3/2, \\
	 		& \hspace{170pt}  \Delta u_0 |_\Gamma = h|_{t=0}, \Delta u_1 |_\Gamma = h_t |_{t=0} \text{ if }p>3/2\}.
\end{align*}
From Proposition \ref{prop:linearinhom} with $f=0$ we obtain that $u_\star$ depends linearly and continuously and 
thus analytically on $(g,h,u_0, u_1, u_2) \in \mathbb{D}$.
Moreover, $u_\star \mapsto u_\bullet = \varphi(u_\star)$ is analytic on $B_\rho(0)$ and therefore $u_\bullet \in \Exp^{-\om}\E_{u,h}$ depends analytically
on the data $(g,h,u_0, u_1, u_2) \in \mathbb{D}$. Altogether, $u= u_\bullet + u_\star$ enjoys the same property which concludes the proof.
\end{proof}
An immediate consequence of Theorem \ref{thm:global:Dirichlet} is that the global solution $u \in \Exp^{-\om} \E_u$ of \eqref{IBVP:Dirichlet}
decays to zero at an exponential rate.
\begin{theorem}[Exponential stability - the Dirichlet case]
\label{thm:decay:Dirichlet}
Under the same assumptions as in Theorem \ref{thm:global:Dirichlet}, the solution $u$ decays exponentially fast to zero as $t \rightarrow \infty$, 
in the sense that
\begin{equation*}
\|u(t)\|_{W_p^4} + \|u_t(t)\|_{W_p^{4-2/p}} + \|u_{tt}\|_{W_p^{2-2/p}} \leq C \Exp^{-\omega t}, \qquad t \geq 0,
\end{equation*}
for some $C \geq 0$ depending on the boundary and initial data $g$, $h$, $u_0$, $u_1$ and $u_2$.
\end{theorem}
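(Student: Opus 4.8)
The plan is to deduce the pointwise exponential decay directly from the membership $u \in \Exp^{-\omega}\E_u$ furnished by Theorem \ref{thm:global:Dirichlet}, by unwinding the exponential weight and invoking the mixed-derivative and temporal-trace embeddings of the space $\E_u$. I would set $v(t) := \Exp^{\omega t} u(t)$, so that Theorem \ref{thm:global:Dirichlet} gives $v \in \E_u = W_p^3(\R_+; L_p(\Om)) \cap W_p^1(\R_+; W_p^4(\Om))$ with $\|v\|_{\E_u} = \|u\|_{\Exp^{-\omega}\E_u}$. Moreover, since the solution in that proof is built as $u = u_\star + \varphi(u_\star)$ with $u_\star$ depending linearly and boundedly on the data (Proposition \ref{prop:linearinhom}) and $\varphi$ analytic with $\varphi(0)=0$, the norm $\|v\|_{\E_u}$ is controlled by a constant depending only on $g,h,u_0,u_1,u_2$ through their norms in the spaces \eqref{cond:data}; this furnishes the constant $C$ in the assertion. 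Differentiating the weight,
\begin{equation*}
\Exp^{\omega t} u_t = v_t - \omega v, \qquad \Exp^{\omega t} u_{tt} = v_{tt} - 2\omega v_t + \omega^2 v,
\end{equation*}
so the claim reduces to bounding $v$, $v_t$ and $v_{tt}$ uniformly in $t \ge 0$ in $W_p^4(\Om)$, $W_p^{4-2/p}(\Om)$ and $W_p^{2-2/p}(\Om)$, respectively.

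For this I would first apply the mixed-derivative embedding \eqref{emb:mixed} (with $\theta = 1/2$) to obtain $\E_u \hookrightarrow W_p^2(\R_+; W_p^2(\Om))$, and hence
\begin{align*}
v &\in W_p^1(\R_+; W_p^4(\Om)), \\
v_t &\in W_p^1(\R_+; W_p^2(\Om)) \cap L_p(\R_+; W_p^4(\Om)), \\
v_{tt} &\in W_p^1(\R_+; L_p(\Om)) \cap L_p(\R_+; W_p^2(\Om)).
\end{align*}
The first line gives $v \in BUC(\R_+; W_p^4(\Om))$ via the vector-valued version of $W_p^1(J) \hookrightarrow BUC(J)$. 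For $v_t$ and $v_{tt}$ I would apply the temporal trace \eqref{trace:temporal} --- with $(\alpha,s) = (1,2)$ resp.\ $(\alpha,s) = (1,0)$ --- not only at $t = 0$ but to the time-translates $v_t(\cdot + \tau)|_{\R_+}$ and $v_{tt}(\cdot + \tau)|_{\R_+}$, whose norms are dominated by those of $v_t$ resp.\ $v_{tt}$ uniformly in $\tau \ge 0$; this yields
\begin{equation*}
\sup_{\tau \ge 0}\|v_t(\tau)\|_{W_p^{4-2/p}(\Om)} + \sup_{\tau \ge 0}\|v_{tt}(\tau)\|_{W_p^{2-2/p}(\Om)} \lesssim \|v\|_{\E_u},
\end{equation*}
the target Besov spaces coinciding with the Sobolev--Slobodeckij spaces of the same order. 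Combining these uniform bounds with the two identities above and the embeddings $W_p^4(\Om) \hookrightarrow W_p^{4-2/p}(\Om) \hookrightarrow W_p^{2-2/p}(\Om)$ then gives
\begin{equation*}
\|u(t)\|_{W_p^4(\Om)} + \|u_t(t)\|_{W_p^{4-2/p}(\Om)} + \|u_{tt}(t)\|_{W_p^{2-2/p}(\Om)} \lesssim \Exp^{-\omega t}\,\|v\|_{\E_u}, \qquad t \ge 0,
\end{equation*}
which is the assertion, the constant $C$ being a fixed multiple of $\|v\|_{\E_u}$.

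I do not expect a genuine obstacle here: the statement is essentially a bookkeeping exercise converting the weighted-in-time integrability of $u$ into uniform-in-time boundedness of $\Exp^{\omega\cdot}u$ and its first two time derivatives. The points needing a little care are the uniformity in $t$ of the trace bounds --- on the half-line one must pass to translates, since \eqref{trace:temporal} a priori only controls the value at $t=0$ --- and the verification, read off from the implicit-function-theorem construction in Theorem \ref{thm:global:Dirichlet}, that $\|v\|_{\E_u}$ is controlled by the data norms, so that $C$ has the dependence claimed.
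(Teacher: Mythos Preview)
Your proposal is correct and follows essentially the same route as the paper. The paper's proof also reads off the decay directly from $u\in\Exp^{-\omega}\E_u$ via the embeddings $\Exp^{-\omega}W_p^1(\R_+;W_p^4(\Omega))\hookrightarrow\Exp^{-\omega}BUC(\R_+;W_p^4(\Omega))$ and $W_p^1(\R_+;L_p(\Omega))\cap L_p(\R_+;W_p^2(\Omega))\hookrightarrow BUC(\R_+;W_p^{2-2/p}(\Omega))$ applied to $u_t$ (after taking up to two spatial derivatives) and to $u_{tt}$; the only cosmetic difference is that the paper invokes the $BUC$ embedding directly (it is part of Theorem~\ref{thm:trace_semigroup}) rather than recovering it through time translates as you do.
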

\begin{proof} 
We have $u \in \Exp^{-\omega} W_p^1(\R_+; W_p^4(\Omega)) \hookrightarrow \Exp^{-\omega}BUC(\R_+; W_p^4(\Omega))$, hence
\begin{equation*}
u \in BUC(\R_+, W_p^4(\Omega)), \quad \|u(t)\|_{W_p^4} \leq C_1\, \Exp^{-\omega t} \text{ with } C_1 = \|\Exp^{\omega\cdot} u\|_{BUC(\R_+;W_p^4)}.
\end{equation*}
Furthermore, $\nabla_x^j u_t \in \Hb(\R_+) \hookrightarrow BUC(\R_+; W_p^{2-2/p}(\Omega))$ for $j \in \{0,1,2\}$. Therefore, we obtain
\begin{equation*}
u_t \in BUC(\R_+, W_p^{4-2/p}(\Omega)), \quad \|u_t(t)\|_{W_p^{4-2/p}} \leq C_2\, \Exp^{-\omega t} 
\text{ with }C_2 =\|\Exp^{\omega \cdot} u_t\|_{BUC(\R_+;W_p^{4-2/p})}.
\end{equation*}
Finally, from $u_{tt} \in \Hb(\R_+)$ we deduce that 
\begin{equation*}
u_{tt} \in BUC(\R_+, W_p^{2-2/p}(\Omega)), \quad \|u_{tt}(t)\|_{W_p^{2-2/p}} \leq C_3\, \Exp^{-\omega t} 
\text{ with }C_3 = \|\Exp^{\omega\cdot}u_{tt}\|_{BUC(\R_+;W_p^{2-2/p})}
\end{equation*}
and the claim follows. 
\end{proof}

\section{The Neumann boundary value problem}
\label{sec:Neumann}
In this section we treat the inhomogeneous Neumann boundary value problem \eqref{IBVP:Neumann}. We proceed analogously 
to the Dirichlet case, that is, we first consider the linearized equation and then construct a solution of the nonlinear problem
\eqref{IBVP:Neumann} by means of the implicit function theorem. 

Note that, in the Dirichlet case, the fact that the operator $-A^D\colon \D(A^D) \rightarrow X^D$ defined by
\eqref{operator:A} has a strictly negative spectral bound (Lemma \ref{lem:omega}) was crucial in order to show
that the linearized equation \eqref{IBVP:linear:Dirichlet} admits maximal regularity on $\R_+$, see the proof of Theorem \ref{thm:maximal:Dirichlet2}.
In the Neumann case, due to the zero eigenvalue of $-\Delta_N\colon \D(\Delta_N) \rightarrow L_p(\Om)$ with 
$\D(\Delta_N) = \{u \in W_p^2(\Om)\colon \del_\nu u =0 \text{ on }\Gamma\}$, we cannot expect to obtain maximal regularity on $\R_+$.
For this reason we consider $-\Delta_{N,0}\colon \D(\Delta_{N,0}) \rightarrow L_{p,0}(\Om)$, where $\D(\Delta_{N,0}) = \D(\Delta_N) \cap L_{p,0}(\Om)$. 
The spectrum of $-\Delta_{N,0}$ is contained in $(0,\infty)$, therefore we can prove maximal regularity of the homogeneous linear Neumann boundary problem 
on $\R_+$ analogously to the Dirichlet case.
However, if we restrict ourselves to finite time intervals $J=(0,T)$, then we do not necessarily need to use the realization $-\Delta_{N,0}$ in $L_{p,0}(\Om)$.
In case of finite time intervals we use $-\Delta_N$. 
As a consequence, we will prove global well-posedness of \eqref{IBVP:Neumann} only if the data $u_0$, $u_1$, $u_2$ and $g$, $h$ have zero mean 
whereas local well-posedness holds also for data with non-zero mean. 

\subsection{Maximal \texorpdfstring{$L_p$}{Lp}-regularity for the linearized equation}
\label{subsec:linear:Neumann}
As in Section \ref{subsec:linear:Dirichlet}, let $J=(0,T)$ or $J=\R_+$ and assume $p\in(1,\infty)$.  Here, for we $f\in L_p(J\times\Omega)$ we consider 
\begin{equation}
\label{IBVP:linear:Neumann}
\begin{cases}
\begin{aligned}
(a\Delta- \partial_t)(u_{tt}-b\Delta u_t - c^2 \Delta u)&= f 			&& \text{in } J \times \Omega,\\
(\partial_\nu u, \partial_\nu \Delta u)&= (g,h)					&& \text{on } J \times \Gamma,\\
(u,u_t,u_{tt})&=(u_0, u_1, u_2)								&& \text{on }\{t=0\} \times \Omega,
\end{aligned}
\end{cases}
\end{equation}
where $u_0, u_1, u_2\colon \Omega \rightarrow \R$ and $g,h\colon J \times \Gamma \rightarrow \R$ 
are the given initial and boundary data, respectively. 
Analogously to the Dirichlet case we first represent \eqref{IBVP:linear:Neumann} with $g=h=0$ as an abstract evolution equation of the form
\begin{equation}
\label{eq:parabolic:Neumann}
\del_t v^N + A^N v^N = F, \qquad v^N(0)=v_0^N
\end{equation}
by setting 
\begin{equation*}
v^N= \begin{pmatrix} u \\ u_t \\ u_{tt} - c^2 \Delta_N u - b\Delta_N u_t \end{pmatrix}, \quad 
v_0^N= \begin{pmatrix} u_0 \\ u_1 \\ u_2 - c^2 \Delta_N u_0 - b\Delta_N u_1 \end{pmatrix} \quad \text{and} \quad
F= \begin{pmatrix} 0 \\ 0 \\ -f \end{pmatrix},
\end{equation*}
introducing the Banach space
\begin{equation*}
X^N = \D((\Delta_N)^2) \times \D(\Delta_N) \times L_{p}(\Omega)
\end{equation*} 
and defining the coefficient operator $A^N: \D(A^N) \rightarrow X^N$ via
\begin{equation}
\label{coeff:Neumann}
A^N= \begin{pmatrix} 0 & -I & 0 \\ -c^2\Delta_N & -b\Delta_N & -I \\ 0 & 0 & -a\Delta_N \end{pmatrix}, \qquad 
\D(A^N)=\D((\Delta_N)^2) \times \D((\Delta_N)^2) \times \D(\Delta_N).
\end{equation}
On one hand, in the following we will show maximal regularity of $A^N$ on finite time intervals. On the other hand, as already pointed out,
we are going to use the realization $-\Delta_{N,0}$ of the homogeneous Neumann Laplacian. For this reason we introduce the Banach space 
\begin{equation*}
X^{N,0} = \D((\Delta_{N,0})^2) \times \D(\Delta_{N,0}) \times L_{p,0}(\Omega),
\end{equation*}  
and the densely defined operator $A^{N,0}: \D(A^{N,0}) \rightarrow X^{N,0}$, where $\Delta_N$ has to be replaced by $\Delta_{N,0}$ in \eqref{coeff:Neumann}.

\begin{proposition}
\label{thm:maximal:Neumann1}
Let $p\in(1,\infty)$. There exists some $\nu>0$ such that the operators $\nu+ A^N$ and $\nu+A^{N,0}$ admit maximal regularity on $\R_+$. 
\end{proposition}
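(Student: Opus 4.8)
The plan is to copy the proof of Proposition~\ref{thm:maximal:Dirichlet} almost verbatim; the single new issue is that the full Neumann Laplacian $-\Delta_N$ on $L_p(\Omega)$ has $0$ in its spectrum, which we circumvent either by passing to the realization $-\Delta_{N,0}$ on $L_{p,0}(\Omega)$ (whose spectrum lies in $(0,\infty)$) or by carrying along a positive shift. For $A^{N,0}$ I would decompose $A^{N,0}=A_1^{N,0}+A_2^{N,0}$ exactly as in the Dirichlet case, with $A_1^{N,0}$ the upper-triangular part carrying $-b\Delta_{N,0}$ and $-a\Delta_{N,0}$ on the diagonal and a constant $\alpha>0$ in the top-left corner, and $A_2^{N,0}$ the strictly lower-triangular remainder containing $-c^2\Delta_{N,0}$, which is bounded on $X^{N,0}$. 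Since $\sigma(-\Delta_{N,0})\subset(0,\infty)$ and $-\Delta_{N,0}$ has maximal $L_p$-regularity on $\R_+$ (recalled in the proof of Lemma~\ref{lem:heatNeumann}), the Cauchy problem for $A_1^{N,0}$ with zero initial value is solved component by component: the bottom component $v_3$ by maximal regularity of $-a\Delta_{N,0}$; the middle component $v_2$, whose right-hand side lies in $L_p(\R_+;\D(\Delta_{N,0}))$, by the higher-regularity step discussed below; and the top component $v_1$ explicitly, via invertibility of $\partial_t+\alpha$ on ${}_0W_p^1(\R_+;\D((\Delta_{N,0})^2))$. Thus $A_1^{N,0}$ is maximally regular on $\R_+$, and since $A_2^{N,0}$ is bounded, Proposition~4.3 and Theorem~4.4 in \cite{DHP03} produce some $\nu>0$ with $\nu+A^{N,0}=\nu+A_1^{N,0}+A_2^{N,0}$ maximally regular.

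For $A^N$ I would run the identical argument after inserting a shift: write $\nu+A^N=(\nu I+A_1^N)+A_2^N$ with $A_1^N$, $A_2^N$ defined as in the Dirichlet case but with $\Delta_N$ in place of $\Delta_D$. For every $\nu>0$ the shifted operators $\nu-a\Delta_N$ and $\nu-b\Delta_N$ have spectrum in $[\nu,\infty)$, hence spectral bound $-\nu<0$, and are therefore maximally regular on $\R_+$ by \cite[Theorem~8.2]{DHP03} together with \cite[Theorem~2.4]{Dor93}; the same triangular elimination as above then shows that $\nu I+A_1^N$ is maximally regular on $\R_+$ for every $\nu>0$. Since $A_2^N\colon X^N\to X^N$ is bounded, \cite[Prop.~4.3, Thm.~4.4]{DHP03} finally yields a (possibly larger) $\nu>0$ for which $\nu+A^N$ is maximally regular on $\R_+$, which is the assertion.

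The step I expect to require an actual argument — everything else being a mechanical transcription of the Dirichlet proof — is the middle (``higher regularity'') component: that solving $\partial_t v_2-b\Delta v_2=F_2$ (with the operator $-b\Delta_{N,0}$, respectively its shift $\nu-b\Delta_N$) for $F_2\in L_p(\R_+;\D(\Delta_N))$ produces a solution that is two spatial derivatives smoother, $v_2\in W_p^1(\R_+;\D(\Delta_N))\cap L_p(\R_+;\D((\Delta_N)^2))$. This rests on the fact that the Laplacian commutes with its own semigroup, so that its part in the Banach space $\D(\Delta_N)$ equipped with the graph norm, having domain $\D((\Delta_N)^2)$, is similar to the original operator on $L_p(\Omega)$ and hence again sectorial with the same spectrum and sectoriality angle; since that spectrum lies in $(0,\infty)$ (for $-\Delta_{N,0}$ directly, for $\nu-\Delta_N$ because of the shift), it has maximal $L_p$-regularity on $\R_+$. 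This is exactly the content of the corresponding step, Lemma~\ref{lem:interior_higher_regularity}, in the proof of Proposition~\ref{thm:maximal:Dirichlet}, now applied to the Neumann realization, so I anticipate no genuine obstacle beyond checking that this transfer goes through unchanged.
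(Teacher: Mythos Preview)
Your proposal is correct and follows essentially the same route as the paper. The only cosmetic difference is where the positive shift for the full Neumann Laplacian is placed: the paper modifies the decomposition itself, taking $A_1^N$ with diagonal $(\alpha I,\,\alpha I-b\Delta_N,\,\alpha I-a\Delta_N)$ and $A_2^N$ with diagonal $-\alpha I$ (so that $A_1^N$ is directly maximally regular on $\R_+$), whereas you keep the Dirichlet-style decomposition and add the shift externally as $\nu I+A_1^N$; both lead to the same perturbation argument via \cite[Prop.~4.3, Thm.~4.4]{DHP03}, and both invoke Lemma~\ref{lem:interior_higher_regularity} for the higher-regularity step in the middle component.
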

\begin{proof}
The result can be proved similarly to Proposition \ref{thm:maximal:Dirichlet}. For some $\alpha>0$ consider 
\begin{equation*}
A_1^N= \begin{pmatrix} \alpha I & -I & 0 \\ 0 & \alpha I -b\Delta_N & -I \\ 0 & 0 & \alpha I -a\Delta_N \end{pmatrix}
\qquad \text{and} \qquad
A_2^N= \begin{pmatrix} -\alpha I & 0 & 0 \\ -c^2\Delta_N & - \alpha I & 0 \\ 0 & 0 & -\alpha I \end{pmatrix},
\end{equation*}
Clearly, the operator $A_2^N\colon X^N \rightarrow X^N$ is bounded. Moreover, $A_1^N\colon \D(A_1^N) \rightarrow X^N$ has maximal $L_p$-regularity on $\R_+$ which is seen as in the proof of Proposition \ref{thm:maximal:Dirichlet} by considering
$v_t+A_1^N v=F$, $v_0=0$ for $v=(v_1, v_2, v_3)^\top$ and $F=(f_1, f_2, f_3)^\top$. Explicitly, we have 
  \begin{align*}
    \partial_t v_1 + \alpha v_1 - v_2 &= f_1, 								&& v_1(0)=0,\\
    \partial_t v_2 + \alpha v_2 - b\Delta_N v_2 - v_3&= f_2, 	&& v_2(0)=0,\\
    \partial_t v_3 + \alpha v_3 - a \Delta_N v_3 &= f_3, 			&& v_3(0)=0.
  \end{align*}
Let $F \in L_p(\R_+; X^N)$. Now one solves stepwise the equations above, starting with the last one, to get a 
unique solution $v_3 \in W_p^1(\R_+; L_p(\Om)) \cap L_p(\R_+; \D(\Delta_N))$.
Then $f_2+v_3 \in L_p(\R_+; \D(\Delta_N))$. Here, we need to employ Lemma \ref{lem:interior_higher_regularity} 
in order to obtain a unique solution $v_2 \in W_p^1(\R_+; \D(\Delta_N)) \cap L_p(\R_+;\D( (\Delta_N)^2))$.  
As in the Dirichlet case, the first equation gives us a unique solution $v_1 \in W_p^1(\R_+; \D((\Delta_N)^2))$.
Altogether, since the condition $F \in L_p(\R_+;X^N)$ implies existence of a unique solution $v \in W_p^1(\R_+; X^N)\cap L_p(\R_+; \D(A^N))$ we conclude that $A_1^N: \D(A^N) \rightarrow X^N$ admits maximal $L_p$-regularity on $\R_+$.
Finally, as in the proof of Proposition \ref{thm:maximal:Dirichlet}, a perturbation argument implies that there exists some $\nu > 0$ 
such that $\nu+A_1^N+A_2^N = \nu+A^N$ has the property of maximal $L_p$-regularity on $\R_+$.

Maximal $L_p$-regularity of $\nu+A^{N,0}$ follows analogously by considering the operators $A_1^{N,0}$ and $A_2^{N,0}$ which are equal to $A_1^N$ and $A_2^N$ upon replacement of $\Delta_N$ by $\Delta_{N,0}$ and proceeding as above.
\end{proof}
Since $\nu + A^N$ has maximal $L_p$-regularity on $\R_+$, multiplication of \eqref{eq:parabolic:Neumann} with $\Exp^{-\nu t}$ 
shows that $A^N$ has maximal $L_p$-regularity on bounded intervals $J=(0,T)$. 
\begin{theorem}
\label{thm:maximal:Neumann:finite}
Suppose $J=(0,T)$ is a finite time interval and let $p\in (1,\infty)$. 
Then $A^N: \D(A^N) \rightarrow X^N$ has maximal $L_p$-regularity on $J$ and therefore
\begin{equation*}
(\partial_t +A^N, \gamma_t)\colon 
W_p^1(J; X^N) \cap L_p(J;\D(A^N)) \rightarrow  L_p(J;X^N) \times (X^N, \D(A^N))_{1-1/p,p}
\end{equation*}
is an isomorphism.
\end{theorem}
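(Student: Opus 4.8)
The plan is to deduce the statement directly from the half-line maximal $L_p$-regularity of $\nu+A^N$ provided by Proposition \ref{thm:maximal:Neumann1}, exploiting that an exponential shift is harmless on a bounded time interval; this is exactly the argument sketched in the paragraph preceding the theorem, and it mirrors the opening of the proof of Theorem \ref{thm:maximal:Dirichlet2}.

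First I would fix $\nu>0$ according to Proposition \ref{thm:maximal:Neumann1}, so that $\nu+A^N\colon\D(A^N)\to X^N$ has maximal $L_p$-regularity on $\R_+$. By the discussion in Section \ref{sec:prerequisites} this means that, with $\E(J)=W_p^1(J;X^N)\cap L_p(J;\D(A^N))$, the map
\begin{equation*}
(\partial_t+\nu+A^N,\gamma_t)\colon\E(\R_+)\to L_p(\R_+;X^N)\times\mathrm{tr}\,\E(\R_+)
\end{equation*}
is an isomorphism, where $\mathrm{tr}\,\E(J)=(X^N,\D(A^N))_{1-1/p,p}$ for any choice of $J$ (cf.\ Sections III.1.5 and III.4 in \cite{Ama95}).

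Next I would pass to the bounded interval $J=(0,T)$ and use that maximal $L_p$-regularity is inherited by subintervals. Indeed, given $F\in L_p(J;X^N)$ and $v_0\in(X^N,\D(A^N))_{1-1/p,p}$, extending $F$ by zero to $\R_+$, solving the half-line problem for $\nu+A^N$, and restricting the solution to $J$ yields an element of $\E(J)$ realizing the prescribed data, while uniqueness on $J$ follows from uniqueness on $\R_+$ because $\nu+A^N$ generates an analytic semigroup. Hence $(\partial_t+\nu+A^N,\gamma_t)\colon\E(J)\to L_p(J;X^N)\times\mathrm{tr}\,\E(J)$ is an isomorphism. Equivalently, multiplying $\partial_t v^N+A^Nv^N=F$ by $\Exp^{-\nu t}$ turns a solution of the $A^N$-problem on $J$ into a solution of the $(\nu+A^N)$-problem on $J$ with right-hand side $\Exp^{-\nu t}F$ and unchanged initial value.

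Finally I would remove the shift via Lemma \ref{lem:maximalexp}, applied to $A=A^N$ with $\alpha=\nu$ on $J$: it gives that
\begin{equation*}
(\partial_t+A^N,\gamma_t)\colon\Exp^{\nu}\E(J)\to\Exp^{\nu}L_p(J;X^N)\times\mathrm{tr}\,\E(J)
\end{equation*}
is an isomorphism. Since $J$ is bounded, multiplication by $t\mapsto\Exp^{\nu t}$ and by $t\mapsto\Exp^{-\nu t}$ are bounded isomorphisms of $\E(J)$ and of $L_p(J;X^N)$ onto themselves, so $\Exp^{\nu}\E(J)=\E(J)$ and $\Exp^{\nu}L_p(J;X^N)=L_p(J;X^N)$ with equivalent norms. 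This yields that $(\partial_t+A^N,\gamma_t)\colon\E(J)\to L_p(J;X^N)\times(X^N,\D(A^N))_{1-1/p,p}$ is an isomorphism, which is the assertion. I do not expect any genuine obstacle here: the only points deserving a word of care are that maximal regularity on $\R_+$ restricts to finite subintervals and that the exponential weight is trivial on bounded intervals, and both are standard.
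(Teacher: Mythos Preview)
Your argument is correct and is essentially the paper's own proof, spelled out in more detail. The paper disposes of the theorem in the single sentence preceding it: since $\nu+A^N$ has maximal $L_p$-regularity on $\R_+$, multiplying the equation $\partial_t v^N + A^N v^N = F$ by $\Exp^{-\nu t}$ shows that $A^N$ has maximal $L_p$-regularity on bounded intervals $J=(0,T)$; your invocation of Lemma \ref{lem:maximalexp} together with the observation that $\Exp^{\nu}\E(J)=\E(J)$ on bounded $J$ is exactly this multiplication trick made explicit.
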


Next, observe that $-A^{N,0}$ has a strictly negative spectral bound. This can be shown likewise to Lemma \ref{lem:omega}
since the spectrum of the negative Neumann Laplacian in $L_{p,0}(\Omega)$ is contained in $(0,\infty)$ and consists only of eigenvalues
of finite multiplicity. 
\begin{lemma}
\label{lem:omega:Neumann}
The spectral bound of $-A^{N,0}$ is given by $s(-A^{N,0})=-\omega_0^N$, where $\omega_0^N=\min \{a\lam_1^N, b\lam_1^N/2,c^2/b\}$. 
Here, $\lambda_1^N$ denotes the smallest non-zero eigenvalue of $-\Delta_{N,0}$. 
In particular, if $\mathrm{Re}(\lambda) < \omega_0^N$, then $\lambda \in \rho(A^{N,0})$.
\end{lemma}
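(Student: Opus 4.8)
The plan is to exploit the block upper triangular structure of $A^{N,0}$. Splitting a generic element of $X^{N,0}=\D((\Delta_{N,0})^2)\times\D(\Delta_{N,0})\times L_{p,0}(\Omega)$ as $(x,y)$ with $x=(x_1,x_2)$ in the first two slots and $y$ in the third, one has
\begin{equation*}
A^{N,0}(x,y)=\bigl(\tilde A x+By,\ -a\Delta_{N,0}y\bigr),\qquad By=(0,-y),
\end{equation*}
where $\tilde A=\begin{pmatrix}0 & -I\\ -c^2\Delta_{N,0} & -b\Delta_{N,0}\end{pmatrix}$ is the linearized Westervelt operator of Lemma \ref{lem:Westerveltmaximal}, here realized on the shifted domain $\D((\Delta_{N,0})^2)\times\D((\Delta_{N,0})^2)\subset\D((\Delta_{N,0})^2)\times\D(\Delta_{N,0})$. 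Conjugating by the isomorphism $\Delta_{N,0}\oplus\Delta_{N,0}$, which commutes with $\tilde A$, identifies this realization with the one in Lemma \ref{lem:Westerveltmaximal}, so it has the same spectrum; in particular $s(-\tilde A)=-\min\{b\lambda_1^N/2,c^2/b\}$ by (the proof of) that lemma, while $\sigma(-a\Delta_{N,0})=\{a\lambda_k^N:k\ge1\}$ has smallest element $a\lambda_1^N$ since $-\Delta_{N,0}$ has compact resolvent.

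The first step is then to prove $\sigma(A^{N,0})=\sigma(\tilde A)\cup\sigma(-a\Delta_{N,0})$. For the inclusion ``$\subseteq$'', if $\lambda\in\rho(\tilde A)\cap\rho(-a\Delta_{N,0})$ and $(f,g)\in X^{N,0}$, put $y:=(\lambda+a\Delta_{N,0})^{-1}g\in\D(\Delta_{N,0})$; then $By=(0,-y)\in\{0\}\times\D(\Delta_{N,0})$ is an admissible right-hand side for $\lambda-\tilde A$, so $x:=(\lambda-\tilde A)^{-1}(f+By)$ is well defined, $(x,y)\in\D(A^{N,0})$, and $(\lambda-A^{N,0})(x,y)=(f,g)$, whence $\lambda\in\rho(A^{N,0})$. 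For ``$\supseteq$'', an eigenfunction $(x_1,x_2)$ of $\tilde A$ lifts to the eigenfunction $(x_1,x_2,0)$ of $A^{N,0}$, and an eigenfunction $\phi$ of $-a\Delta_{N,0}$ at $\lambda=a\lambda_k^N\notin\sigma(\tilde A)$ lifts to $\bigl((\lambda-\tilde A)^{-1}(0,-\phi),\phi\bigr)$; together with closedness of $\sigma(A^{N,0})$ this already yields the matching lower bound on the spectral bound. Combining the two displayed facts gives
\begin{equation*}
s(-A^{N,0})=\max\bigl\{s(-\tilde A),\,-a\lambda_1^N\bigr\}=-\min\{a\lambda_1^N,\,b\lambda_1^N/2,\,c^2/b\}=-\omega_0^N,
\end{equation*}
and the final assertion of the lemma is immediate, since $\re\lambda<\omega_0^N$ forces $\lambda\notin\sigma(A^{N,0})$.

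I expect the main technical points to be the domain bookkeeping in the ``$\subseteq$'' step — checking that the coupling $By=(0,-y)$ lands in the precise graph-norm space dictated by $X^{N,0}$, so that the block triangular resolvent formula is legitimate — and the transfer of the Westervelt spectral bound across the similarity $\Delta_{N,0}\oplus\Delta_{N,0}$. As the remark following the statement suggests, one may instead avoid the block structure and argue directly as in \cite[Lemma 3.10]{BrKa14} and \cite[Lemma 2.4]{MeWi11}: decompose $X^{N,0}$ along the $A^{N,0}$-invariant three-dimensional subspaces spanned by $(\phi_k,0,0)$, $(0,\phi_k,0)$, $(0,0,\phi_k)$, on which $A^{N,0}$ is represented by $\begin{pmatrix}0 & -1 & 0\\ c^2\lambda_k^N & b\lambda_k^N & -1\\ 0 & 0 & a\lambda_k^N\end{pmatrix}$, with eigenvalues $a\lambda_k^N$ and the roots of $\mu^2-b\lambda_k^N\mu+c^2\lambda_k^N$; there the delicate point is to justify that $\sigma(A^{N,0})$ is the closure of the union of these finite spectra despite $A^{N,0}$ being non-self-adjoint, together with the observation that the smaller real part of the quadratic's roots equals $b\lambda_k^N/2$ in the underdamped range $\lambda_k^N<4c^2/b^2$ and decreases to $c^2/b$ as $k\to\infty$, producing exactly the two remaining terms in the minimum defining $\omega_0^N$.
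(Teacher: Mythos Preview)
Your proof is correct. The paper itself does not prove this lemma; the text preceding the statement merely says it ``can be shown likewise to Lemma \ref{lem:omega}'' (which in turn cites \cite[Lemma 3.10]{BrKa14}), i.e., via the eigenspace-decomposition argument you outline in your final paragraph: restrict $A^{N,0}$ to the three-dimensional $\Delta_{N,0}$-eigenspaces, obtain the $3\times 3$ matrices you wrote down, and analyze the roots of $(\mu-a\lambda_k^N)(\mu^2-b\lambda_k^N\mu+c^2\lambda_k^N)$ as $k$ varies. Your primary route---the block upper triangular reduction $\sigma(A^{N,0})=\sigma(\tilde A)\cup\sigma(-a\Delta_{N,0})$---is a genuinely different packaging. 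Its advantage is modularity: you reuse the Westervelt spectral bound already established in the proof of Lemma \ref{lem:Westerveltmaximal} rather than recomputing it, and the argument would go through verbatim for any block triangular operator built from a pair of blocks with known spectra. The eigenspace approach, by contrast, is more self-contained and makes the full structure of $\sigma(A^{N,0})$ explicit in one pass, which is why the papers \cite{BrKa14,MeWi11} favor it. Your domain bookkeeping in the ``$\subseteq$'' step is fine: the key point, which you identify correctly, is that $y=(\lambda+a\Delta_{N,0})^{-1}g$ lands in $\D(\Delta_{N,0})$, so $By=(0,-y)$ lies in the ground space $\D((\Delta_{N,0})^2)\times\D(\Delta_{N,0})$ of the shifted realization of $\tilde A$, and the similarity via $\Delta_{N,0}\oplus\Delta_{N,0}$ is legitimate because $\Delta_{N,0}$ commutes with both entries of $\tilde A$.
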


By means of Lemma \ref{lem:omega:Neumann} one shows that $A^{N,0}$ has maximal $L_p$-regularity on $\R_+$. For details we refer to the proof of Theorem \ref{thm:maximal:Dirichlet2}.
\begin{theorem} 
\label{thm:maximal:Neumann2}
Let $p\in (1, \infty)$ and $\omega \in [0,\omega_0^N)$. 
Then $A^{N,0}: \D(A^{N,0}) \rightarrow X^{N,0}$ has maximal $L_p$-regularity on $\R_+$ and therefore
\begin{align*}
(\partial_t +A^{N,0}, \gamma_t)\colon &
\Exp^{-\omega} (W_p^1(J; X^{N,0}) \cap L_p(J;\D(A^{N,0})))\\
&\qquad  \rightarrow \Exp^{-\omega} L_p(J;X^{N,0}) \times (X^{N,0}, \D(A^{N,0}))_{1-1/p,p}
\end{align*}
is an isomorphism.
\end{theorem}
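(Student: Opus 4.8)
The plan is to mirror the proof of Theorem~\ref{thm:maximal:Dirichlet2} essentially verbatim, the only inputs being Proposition~\ref{thm:maximal:Neumann1}, Lemma~\ref{lem:omega:Neumann} and Lemma~\ref{lem:maximalexp}, since all the substantial work has already been done in those statements.

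First I would pass from $\R_+$ to bounded intervals. By Proposition~\ref{thm:maximal:Neumann1} there is some $\nu>0$ such that $\nu+A^{N,0}$ admits maximal $L_p$-regularity on $\R_+$. Multiplying the abstract Cauchy problem $\partial_t v^{N}+A^{N,0}v^{N}=F$ by $\Exp^{-\nu t}$ (equivalently, conjugating $A^{N,0}$ with the exponential weight) shows that $A^{N,0}$ itself has maximal $L_p$-regularity on every finite interval $J=(0,T)$.

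Next I would feed in the spectral information. Lemma~\ref{lem:omega:Neumann} gives $s(-A^{N,0})=-\omega_0^N$, which is strictly negative because $\lam_1^N>0$; hence for $\omega\in[0,\omega_0^N)$ the shifted operator satisfies $s(-A^{N,0}+\omega)=\omega-\omega_0^N<0$. Combining maximal $L_p$-regularity on bounded intervals with a strictly negative spectral bound, \cite[Theorem~2.4]{Dor93} yields that $A^{N,0}-\omega$ has maximal $L_p$-regularity on all of $\R_+$, that is, $(\partial_t+A^{N,0}-\omega,\gamma_t)$ is an isomorphism from $W_p^1(\R_+;X^{N,0})\cap L_p(\R_+;\D(A^{N,0}))$ onto $L_p(\R_+;X^{N,0})\times(X^{N,0},\D(A^{N,0}))_{1-1/p,p}$ for every $\omega\in[0,\omega_0^N)$; that the trace space is exactly $(X^{N,0},\D(A^{N,0}))_{1-1/p,p}$ is part of the general abstract theory of maximal $L_p$-regularity and needs no separate argument. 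Finally, applying Lemma~\ref{lem:maximalexp} with the shift $-\omega$ transports this isomorphism to the exponentially weighted spaces, which is precisely the assertion.

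I do not expect any genuine obstacle here: the perturbation argument giving $\R_+$-maximal regularity of a shifted operator, and the computation of the spectral bound via the Neumann eigenvalues, are already carried out in Proposition~\ref{thm:maximal:Neumann1} and Lemma~\ref{lem:omega:Neumann}. The one point worth stating carefully is that $A^{N,0}$ is built on the mean-zero spaces $L_{p,0}(\Omega)$, so that $0$ is genuinely removed from $\sigma(-\Delta_{N,0})$ and hence from the spectrum of $-A^{N,0}$; this is exactly what makes the endpoint $\omega=0$ admissible in the present statement, in contrast with $A^{N}$ on $\R_+$.
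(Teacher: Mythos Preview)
Your proposal is correct and follows exactly the route the paper takes: the paper does not even write out a separate proof but simply says ``By means of Lemma~\ref{lem:omega:Neumann} one shows that $A^{N,0}$ has maximal $L_p$-regularity on $\R_+$. For details we refer to the proof of Theorem~\ref{thm:maximal:Dirichlet2},'' which is precisely the argument you spell out. Your closing remark on why the mean-zero realization is needed to make $\omega=0$ admissible is a welcome clarification.
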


Theorems \ref{thm:maximal:Neumann:finite} and \ref{thm:maximal:Neumann2} immediately yield optimal regularity for 
\eqref{IBVP:linear:Neumann} with homogeneous boundary conditions, i.~e.\ $g=h=0$.

\begin{corollary} Let $p\in (1,\infty)\setminus\{3\}$ and consider the homogeneous Neumann boundary value problem
\label{cor:linearhom:Neumann}
\begin{equation}
\label{IBVP:linearhom:Neumann}
\begin{cases}
\begin{aligned}
(a\Delta- \partial_t)(u_{tt}-b\Delta u_t - c^2 \Delta u)&= f 			&& \text{in } J \times \Omega,\\
(\partial_\nu u, \partial_\nu \Delta u)&= (0,0)										&& \text{on } J \times \Gamma,\\
(u,u_t,u_{tt})&=(u_0, u_1, u_2)																		&& \text{on }\{t=0\} \times \Omega.
\end{aligned}
\end{cases}
\end{equation}
\begin{enumerate}[label=\emph{(\roman*)}, leftmargin=*]
\item If $J=(0,T)$ is finite, then \eqref{IBVP:linearhom:Neumann} admits optimal regularity in the sense that there exists a unique solution 
\begin{equation*}
u \in \E_{u}(J), \qquad \E_u(J) = W_p^3(J; L_{p}(\Omega))\cap W_p^1(J;W_{p}^4(\Omega))
\end{equation*} 
if and only if $f \in L_p(J\times \Omega)$, $u_0 \in W_p^4(\Omega)$, $u_1 \in W_{p}^{4-2/p}(\Omega)$, $u_2 \in W_{p}^{2-2/p}(\Omega)$
and the initial and boundary data are compatible, that is, we have $\partial_\nu u_0 |_\Gamma = \partial_\nu \Delta u_0 |_\Gamma = \partial_\nu u_1 |_\Gamma = 0$ and, if $p>3$, also $\partial_\nu  \Delta u_1 |_\Gamma = \partial_\nu u_2 |_\Gamma = 0$ in the sense of traces.
\item If $J=\R_+$, then for every $\omega \in (0, \omega_0^N)$ with $\omega_0^N= \min \{a\lam_1^N, b\lam_1^N/2, c^2/b\}$ we have that 
\eqref{IBVP:linearhom:Neumann} admits optimal regularity in the sense that there exists a unique solution 
\begin{equation*}
u \in \Exp^{-\om} \E_{u,0}, \qquad \E_{u,0}= W_p^3(\R_+; L_{p,0}(\Omega)) \cap W_p^1(\R_+;W_{p}^4(\Omega)\cap L_{p,0}(\Om))
\end{equation*} 
if and only if $f \in \Exp^{-\omega} L_p(\R_+;L_{p,0}(\Om))$, $u_0 \in W_p^4(\Omega)\cap L_{p,0}(\Om)$, $u_1 \in W_{p}^{4-2/p}(\Omega)\cap L_{p,0}(\Om)$, 
$u_2 \in W_{p}^{2-2/p}(\Omega)\cap L_{p,0}(\Om)$ and the initial and boundary data are compatible.
\end{enumerate}
\end{corollary}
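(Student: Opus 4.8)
The plan is to follow the proof of Corollary~\ref{cor:Dirichlet:hom} almost verbatim, replacing the Dirichlet Laplacian by $-\Delta_N$ on $L_p(\Omega)$ (for a finite interval $J=(0,T)$) resp.\ by $-\Delta_{N,0}$ on $L_{p,0}(\Omega)$ (for $J=\R_+$), and invoking Theorem~\ref{thm:maximal:Neumann:finite} resp.\ Theorem~\ref{thm:maximal:Neumann2} in place of Theorem~\ref{thm:maximal:Dirichlet2}. In either case one writes \eqref{IBVP:linearhom:Neumann} as the abstract Cauchy problem \eqref{eq:parabolic:Neumann} with state $v^N$, inhomogeneity $F=(0,0,-f)^\top$ and initial value $v_0^N$; the homogeneous Neumann boundary conditions are encoded in the choice of $\D(A^N)$ resp.\ $\D(A^{N,0})$. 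Since the relevant coefficient operator has maximal $L_p$-regularity, the associated solution map is an isomorphism, so a (unique) solution $u\in\E_u(J)$ (resp.\ $u\in\Exp^{-\omega}\E_{u,0}$) exists if and only if the data $(F,v_0^N)$ lie in $L_p(J;X^N)\times(X^N,\D(A^N))_{1-1/p,p}$ (resp.\ in the zero-mean analogue). Note that $F\in L_p(J;X^N)$ forces only $f\in L_p(J\times\Omega)$, whereas $F\in L_p(\R_+;X^{N,0})$ additionally forces $f(t)$ to have mean zero for a.e.\ $t$.

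First I would carry out the translation between the state-space regularity of $v^N$ and the scalar regularity of $u$, exactly as in Corollary~\ref{cor:Dirichlet:hom}: the shape of $X^N$, $\D(A^N)$ and $v^N$ shows that $v^N\in W_p^1(J;X^N)\cap L_p(J;\D(A^N))$ is equivalent to $u\in\E_u(J)\cap W_p^2(J;W_p^2(\Omega))$ together with the homogeneous Neumann conditions on $J\times\Gamma$, and the mixed derivative embedding \eqref{emb:mixed} gives $\E_u(J)\hookrightarrow W_p^2(J;W_p^2(\Omega))$, so the extra term is redundant; for $J=\R_+$ the same reasoning with $L_{p,0}(\Omega)$-valued spaces yields $u\in\Exp^{-\omega}\E_{u,0}$. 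This step is routine bookkeeping.

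The substantial part is identifying the trace space $(X^N,\D(A^N))_{1-1/p,p}$ componentwise. The first slot contributes $(\D((\Delta_N)^2),\D((\Delta_N)^2))_{1-1/p,p}=\D((\Delta_N)^2)$, i.e.\ $u_0\in W_p^4(\Omega)$ with $\partial_\nu u_0|_\Gamma=\partial_\nu\Delta u_0|_\Gamma=0$. For the second slot, $(\D(\Delta_N),\D((\Delta_N)^2))_{1-1/p,p}$ equals $B_{p,p}^{4-2/p}(\Omega)=W_p^{4-2/p}(\Omega)$ by \eqref{Besov:interpolation} (with no harm at $p=2$, where $B_{2,2}^s=W_2^s$ anyway), and interpolation with boundary conditions in the sense of \cite[Section~4.9]{Ama09} retains $\partial_\nu u_1|_\Gamma=0$ (of order $1+1/p<4-2/p$), while $\partial_\nu\Delta u_1|_\Gamma=0$ (of order $3+1/p$) is retained precisely when $3+1/p<4-2/p$, i.e.\ $p>3$. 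The third slot gives $(L_p(\Omega),\D(\Delta_N))_{1-1/p,p}=B_{p,p}^{2-2/p}(\Omega)=W_p^{2-2/p}(\Omega)$, with the condition $\partial_\nu(\,\cdot\,)|_\Gamma=0$ again retained exactly for $p>3$; since the third component of $v_0^N$ is $u_2-c^2\Delta_N u_0-b\Delta_N u_1$ and $\Delta u_0\in W_p^2\hookrightarrow W_p^{2-2/p}$, $\Delta u_1\in W_p^{2-2/p}$, this regularity is equivalent to $u_2\in W_p^{2-2/p}(\Omega)$, and using the already established identities $\partial_\nu u_0|_\Gamma=\partial_\nu u_1|_\Gamma=0$ (and, for $p>3$, $\partial_\nu\Delta u_0|_\Gamma=\partial_\nu\Delta u_1|_\Gamma=0$) the surviving boundary condition collapses to $\partial_\nu u_2|_\Gamma=0$. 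This reproduces exactly the compatibility conditions in (i), and together with Theorem~\ref{thm:maximal:Neumann:finite} and the translation above it settles the finite-interval case.

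For $J=\R_+$ one repeats the identical computation with every space intersected with $L_{p,0}(\Omega)$, which is invariant under real interpolation; this appends the zero-mean constraints on $u_0,u_1,u_2$ (and, as noted, on $f$) to the same compatibility conditions, and the exponential weight $\Exp^{-\omega}$ is absorbed by Lemma~\ref{lem:maximalexp}, which is already built into Theorem~\ref{thm:maximal:Neumann2}. The main obstacle I anticipate is the careful use of the theory of interpolation with boundary conditions to pin down which Neumann conditions survive each interpolation step --- in particular to justify that $\partial_\nu\Delta u_1|_\Gamma=0$ and $\partial_\nu u_2|_\Gamma=0$ appear only for $p>3$ and that the borderline value $p=3$ (where such a normal trace would land in a Besov space of differentiability order $0$, consistently with the exclusion $p\neq3$ already present in Lemma~\ref{lem:heatNeumann} and Lemma~\ref{lem:Neumannint}) has to be removed; the remaining steps amount to the same bookkeeping as in Corollary~\ref{cor:Dirichlet:hom}.
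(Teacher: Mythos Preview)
Your proposal is correct and follows essentially the same approach as the paper's own proof, which simply states that assertion (i) follows from Theorem~\ref{thm:maximal:Neumann:finite} by verifying, as in Corollary~\ref{cor:Dirichlet:hom}, the translation between $v^N\in W_p^1(J;X^N)\cap L_p(J;\D(A^N))$ and $u\in\E_u(J)$ together with the identification of $(X^N,\D(A^N))_{1-1/p,p}$, and that assertion (ii) follows analogously from Theorem~\ref{thm:maximal:Neumann2}. You have in fact spelled out in considerably more detail than the paper the interpolation-with-boundary-conditions argument that pins down which Neumann traces survive at each regularity level and why the threshold is $p>3$.
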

\begin{proof}
Assertion (i) follows immediately from Theorem \ref{thm:maximal:Neumann:finite}. Analogously to the proof of Corollary \ref{cor:Dirichlet:hom} one verifies that 
$v^N \in W_p^1(J; X^N) \cap L_p(J;\D(A^N))$, $F \in L_p(J;X^N)$ and $v_0^{N} \in (X^N, \D(A^N))_{1-1/p,p}$ imply $u \in \E_u(J)$, $f \in L_p(J \times \Om)$ 
and the desired regularity of the initial values, respectively. Based on Theorem \ref{thm:maximal:Neumann2}, the second claim follows analogously.
\end{proof}

Finally we arrive at our global optimal regularity result for \eqref{IBVP:linear:Neumann}. As in the Dirichlet case, sufficiency is shown
by a combination of an optimal regularity result for the linearized Westervelt equation and a higher regularity result for the heat equation.
\begin{proposition}
\label{prop:linearinhom:Neumann}
Let $p\in(1,\infty) \setminus \{3\}$ and define $\omega_0 = \min \{a\lam_1^N, b\lam_1^N/2, c^2/b\}$. 
Then for every $\omega \in (0,\omega_0)$ the linear initial boundary value problem
\begin{equation}
\label{IBVP:Neumann:linearinhom}
\begin{cases}
\begin{aligned}
(a\Delta- \partial_t)(u_{tt}-b\Delta u_t - c^2 \Delta u)&= f 			&& \text{in } \R_+ \times \Omega,\\
(\del_\nu u, \del_\nu \Delta u)&= (g,h)						&& \text{on } \R_+ \times \Gamma,\\
(u,u_t,u_{tt})&=(u_0, u_1, u_2)									&& \text{on }\{t=0\} \times \Omega.
\end{aligned}
\end{cases}
\end{equation}
has a unique solution of the form $u(t,x) = v(t,x) + w(t)$, where 
\begin{equation*}
v \in \Exp^{-\omega}\E_{u,0}, \quad \E_{u,0}= W_p^3(\R_+; L_{p,0}(\Omega)) \cap W_p^1(\R_+;W_p^4(\Omega)\cap L_{p,0}(\Om)), \quad \del_t ^3w \in \Exp^{-\om} L_p(\R_+)
\end{equation*}
if and only if the data satisfy the conditions
\begin{enumerate}[label=\emph{(\roman*)}, leftmargin=*]
\item $f \in \Exp^{-\omega}L_p(\R_+ \times \Omega)$,
\item $u_0 \in W_p^4(\Omega)$, $u_1 \in W_p^{4-2/p}(\Omega)$, $u_2 \in W_p^{2-2/p}(\Omega)$,
\item $g \in \Exp^{-\omega} \F_{g,\nu}$, $\F_{g,\nu} = W_p^{5/2-1/2p}(\R_+, L_p(\Gamma)) \cap W_p^1(\R_+, W_p^{3-1/p}(\Gamma))$,\\
$h \in \Exp^{-\omega} \F_{h,\nu}$,  $\F_{h,\nu}= W_p^{3/2-1/2p}(\R_+; L_p(\Gamma)) \cap W_p^{1}(\R_+;W_p^{1-1/p}(\Gamma))$,
\item $\del_\nu u_0 |_\Gamma = g|_{t=0}$, $\del_\nu \Delta u_0 |_\Gamma = h|_{t=0}$, $\del_\nu u_1 |_\Gamma = g_t |_{t=0}$ and, if $p>3$, also $\del_\nu \Delta u_1 |_\Gamma = h_t |_{t=0}$, $\del_\nu u_2 |_\Gamma = g_{tt} |_{t=0}$ in the sense of traces.
\end{enumerate}
Moreover, the solution fulfills the estimate
\begin{equation*}
\|u\|_{\Exp^{-\omega}\E_u} \lesssim \|f\|_{\Exp^{-\omega} L_p} + \|g\|_{\Exp^{-\omega} \F_{g,\nu}} 
+  \|h\|_{\Exp^{-\omega} \F_{h,\nu}} + \|u_0\|_{W_p^4}+\|u_1\|_{W_p^{4-2/p}}+\|u_2\|_{W_p^{2-2/p}}.
\end{equation*}
\end{proposition}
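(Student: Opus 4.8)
The plan is to mirror the proof of Proposition~\ref{prop:linearinhom} (the Dirichlet case), adapted to the Neumann setting exactly as Lemmas~\ref{lem:Neumannint} and~\ref{lem:Westervelt:inhom:Neumann} adapt their Dirichlet counterparts: since the homogeneous Neumann Laplacian has the eigenvalue $0$, one looks for $u$ in the form $u=v+w$ with $v$ of vanishing spatial mean --- so that the $L_{p,0}$-based maximal regularity of Corollary~\ref{cor:linearhom:Neumann}(ii) applies --- and $w=w(t)$ collecting the non-zero-mean contributions of the data. The argument splits into uniqueness, necessity of (i)--(iv), and sufficiency.

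\emph{Uniqueness and necessity.} For uniqueness, the difference $\hat u=\hat v+\hat w$ of two solutions of the stated form satisfies \eqref{IBVP:Neumann:linearinhom} with vanishing data; putting $z:=\hat u_{tt}-b\Delta\hat u_t-c^2\Delta\hat u$ one has $(a\Delta-\partial_t)z=0$, $\partial_\nu z=0$, $z(0)=0$, hence $z\equiv 0$ by Lemma~\ref{lem:heatNeumann}, so $\hat u$ solves the homogeneous Neumann problem for $\partial_t^2-b\Delta\partial_t-c^2\Delta$; taking spatial means and using $\int_\Omega\Delta\hat u\,dx=\int_\Gamma\partial_\nu\hat u\,dS=0$ gives $\overline{\hat u}_{tt}=0$, hence $\overline{\hat u}=0$, so $\hat w=0$ and $\hat u=\hat v\in\Exp^{-\omega}\E_{u,0}$, whence $\hat u=0$ by Corollary~\ref{cor:linearhom:Neumann}(ii). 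For necessity of (i)--(iv) one proceeds exactly as in the necessity parts of Lemmas~\ref{lem:Neumannint},~\ref{lem:Westervelt:inhom:Neumann} and Proposition~\ref{prop:linearinhom}: (i) holds because the $w$-part contributes only $-\partial_t^3 w\in\Exp^{-\omega}L_p(\R_+)$ to $f$ (every Laplacian of $w$ vanishes) while the $v$-part is handled via the Leibniz rule for $\Exp^{\omega t}$ and $\E_{u,0}\hookrightarrow W_p^2(\R_+;W_p^2(\Omega))$; (ii) follows from $W_p^1(\R_+)\hookrightarrow BUC(\R_+)$ and the temporal trace \eqref{trace:temporal} (with $\alpha=1$, $s=2$ resp.\ $s=0$); (iii) from the spatial trace \eqref{trace:spatial} for $B=N$ applied to $\Exp^{\omega t}v\in\E_{u,0}$ with $(k,l)=(1,2)$, which gives $\partial_\nu u\in\Exp^{-\omega}\F_{g,\nu}$, and with $(k,l)=(1,1)$ applied to $\Exp^{\omega t}\Delta v$, which gives $\partial_\nu\Delta u\in\Exp^{-\omega}\F_{h,\nu}$; and (iv) by combining \eqref{trace:spatial2}, \eqref{trace:temporal}, the mixed derivative embedding \eqref{emb:mixed} and $W_p^1(\R_+)\hookrightarrow BUC(\R_+)$, the second-order relations $\partial_\nu\Delta u_1|_\Gamma=h_t|_{t=0}$, $\partial_\nu u_2|_\Gamma=g_{tt}|_{t=0}$ requiring $p>3$ so that the traces in question exist.

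\emph{Sufficiency.} Since $(a\Delta-\partial_t)$ and $(\partial_t^2-b\Delta\partial_t-c^2\Delta)$ commute, solve first the linearized Westervelt--Neumann problem
\begin{equation*}
z_{tt}-b\Delta z_t-c^2\Delta z=f,\qquad \partial_\nu z=ah-g_t\ \text{on}\ \Gamma,\qquad (z,z_t)=(a\Delta u_0-u_1,\ a\Delta u_1-u_2),
\end{equation*}
and then the heat--Neumann problem $a\Delta u-u_t=z$, $\partial_\nu u=g$, $u(0)=u_0$. For the first, (ii) gives $a\Delta u_0-u_1\in W_p^2(\Omega)$ and $a\Delta u_1-u_2\in W_p^{2-2/p}(\Omega)$, (iii) gives $ah-g_t\in\Exp^{-\omega}\W_\nu$ (the membership of $g_t$ being checked as at the start of the proof of Lemma~\ref{lem:Neumannint}), and (iv) supplies the compatibilities of Lemma~\ref{lem:Westervelt:inhom:Neumann}(iv), namely $\partial_\nu(a\Delta u_0-u_1)|_\Gamma=ah|_{t=0}-g_t|_{t=0}$ and, for $p>3$, $\partial_\nu(a\Delta u_1-u_2)|_\Gamma=ah_t|_{t=0}-g_{tt}|_{t=0}$; thus Lemma~\ref{lem:Westervelt:inhom:Neumann} yields $z=z_v+z_\sigma$ with $z_v\in\Exp^{-\omega}\W_{u,0}$ and $z_{\sigma,tt}\in\Exp^{-\omega}L_p(\R_+)$. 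Writing $u=u^{(1)}+u^{(2)}$ with $u^{(2)}(t):=-\int_0^t z_\sigma(s)\,ds$ --- which solves $a\Delta u^{(2)}-u^{(2)}_t=z_\sigma$, $\partial_\nu u^{(2)}=0$, $u^{(2)}(0)=0$, depends on $t$ alone, and has $\partial_t^3 u^{(2)}=-z_{\sigma,tt}\in\Exp^{-\omega}L_p(\R_+)$ --- it remains to solve $a\Delta u^{(1)}-u^{(1)}_t=z_v$, $\partial_\nu u^{(1)}=g$, $u^{(1)}(0)=u_0$ with $z_v\in\Exp^{-\omega}\W_{u,0}$, $g\in\Exp^{-\omega}\F_{g,\nu}$, $u_0\in W_p^4(\Omega)$. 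The higher regularity result for the heat equation with inhomogeneous Neumann boundary conditions proved in Appendix~\ref{app:heathigher} (the Neumann analogue of the result used in the Dirichlet case, Corollary~\ref{cor:higher_regularity_for_the_heat_equation_exponential_weight}) then gives $u^{(1)}=v+w^{(1)}$ with $v\in\Exp^{-\omega}\E_{u,0}$, $\partial_t^3 w^{(1)}\in\Exp^{-\omega}L_p(\R_+)$, once one checks that its compatibility conditions reduce to (iv): indeed $\partial_\nu u^{(1)}(0)=\partial_\nu u_0=g(0)$, $\partial_\nu u^{(1)}_t(0)=\partial_\nu u_1=g_t(0)$ (the spatially constant correction in $u^{(1)}_t(0)$ has zero normal derivative), $\partial_\nu\Delta u^{(1)}=a^{-1}(g_t+\partial_\nu z_v)=a^{-1}(g_t+ah-g_t)=h$ at $t=0$, and the analogous second-order relations when $p>3$. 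Setting $w:=w^{(1)}+u^{(2)}$, undoing the interchange of derivatives shows that $u=v+w$ solves \eqref{IBVP:Neumann:linearinhom}; equivalently, $w=\overline u$ solves the third-order ordinary differential equation obtained by integrating the equation over $\Omega$ and using $\int_\Omega\Delta\varphi\,dx=\int_\Gamma\partial_\nu\varphi\,dS$ together with the boundary data. The asserted estimate follows by composing the continuity estimates of Lemma~\ref{lem:Westervelt:inhom:Neumann}, of the heat result, and the $\Exp^{-\omega}$-boundedness of $z_\sigma\mapsto-\int_0^{\cdot}z_\sigma$ (Young's inequality, as in Lemma~\ref{lem:Neumannint}).

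\emph{Main obstacle.} The substance of the proof is the sufficiency step, and the delicate point there is to verify that the compatibility conditions demanded by the higher regularity heat result are \emph{precisely} those listed in (iv). This requires careful bookkeeping of the spatial and temporal traces of $u^{(1)}$ --- in particular of $u^{(1)}_t(0)$, $u^{(1)}_{tt}(0)$, $\partial_\nu\Delta u^{(1)}$ and $\partial_\nu\Delta u^{(1)}_t$ --- as they are forced by the equation $a\Delta u^{(1)}-u^{(1)}_t=z_v$ and by the boundary and initial data of $z_v$ inherited from the Westervelt subproblem, together with the observation that the spatially constant corrections coming from the mean $z_\sigma$ of $z$ cancel between $u^{(1)}$ and $u^{(2)}$. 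By contrast, identifying $\F_{g,\nu}$ and $\F_{h,\nu}$ as the exact images of the spatial trace on $\E_{u,0}$ is routine once \eqref{trace:spatial}--\eqref{emb:mixed} and the right-inverse from Appendix~\ref{sec:trac-mixed-deriv} are in hand.
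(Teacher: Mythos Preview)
Your overall strategy coincides with the paper's: factor the linear operator as heat $\circ$ Westervelt, solve the Westervelt--Neumann subproblem via Lemma~\ref{lem:Westervelt:inhom:Neumann} to obtain $z=z_v+z_\sigma$, then solve the heat--Neumann subproblem with higher regularity. Your uniqueness argument is in fact more careful than the paper's, which simply invokes Corollary~\ref{cor:linearhom:Neumann} without first disposing of the spatially constant part $\hat w$.

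There is, however, a genuine gap in the sufficiency step. You write that Corollary~\ref{cor:higher_regularity_for_the_heat_equation_exponential_weight} ``gives $u^{(1)}=v+w^{(1)}$ with $v\in\Exp^{-\omega}\E_{u,0}$, $\partial_t^3 w^{(1)}\in\Exp^{-\omega}L_p(\R_+)$''. But that corollary provides no such decomposition: part~(i) yields a solution in $\Exp^{\mu_N}\E^{l,k}$ only for $\mu_N>0$ (useless here, since you need $\mu_N=-\omega<0$), while part~(ii) yields a solution in $\Exp^{\mu_N}\E^{l,k}_0$ for $\mu_N\in(-\lambda_1^N,\infty)$, \emph{provided} the additional zero-mean compatibility condition \eqref{eq:150403_1} holds. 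You check the trace compatibilities \eqref{eq:150518} but never address \eqref{eq:150403_1}, and your data $(z_v,g,u_0)$ for the $u^{(1)}$-problem do \emph{not} satisfy it: $\int_\Omega u_0\,dx$ need not vanish, and $\int_\Omega(-z_v)\,dx+\int_\Gamma g\,dS = |\Gamma|\bar g$ is not zero in general.

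The fix is exactly what the paper does: separate the mean \emph{before} invoking Corollary~\ref{cor:higher_regularity_for_the_heat_equation_exponential_weight}(ii). Integrating $a\Delta u-u_t=\varphi$ over $\Omega$ gives the ODE $w_t=-\varphi_2+a|\Gamma||\Omega|^{-1}\bar g$, $w(0)=\bar u_0$, so $\partial_t^3w\in\Exp^{-\omega}L_p(\R_+)$; then $v=u-w$ must solve
\[
a\Delta v - v_t = \varphi_1 + a|\Gamma||\Omega|^{-1}\bar g,\qquad \partial_\nu v = g,\qquad v(0)=u_0-\bar u_0,
\]
and \emph{now} the data have the right structure: $u_0-\bar u_0$ has zero mean, and $-\int_\Omega(\varphi_1+a|\Gamma||\Omega|^{-1}\bar g)\,dx+\int_\Gamma a g\,dS=-a|\Gamma|\bar g+a|\Gamma|\bar g=0$, so \eqref{eq:150403_1} is satisfied and Corollary~\ref{cor:higher_regularity_for_the_heat_equation_exponential_weight}(ii) (after the rescaling $v_a(t,x)=av(t/a,x)$ to match the appendix's normalization $\partial_t-\Delta$) produces $v\in\Exp^{-\omega}\E_{u,0}$. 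Your intermediate function $u^{(2)}=-\int_0^t z_\sigma$ is harmless but unnecessary; once you take means correctly, the $z_\sigma$-contribution is absorbed directly into the ODE for $w$.
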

\begin{proof}
It is not surprising that the proof of necessity can be done similarly to Proposition \ref{prop:linearinhom}.  
Assume that $u(t,x) = v(t,x) + w(t,x)$ is a solution of \eqref{IBVP:Neumann:linearinhom} with $v \in \Exp^{-\omega} \E_u$ and 
$\del_t^3 w \in \Exp^{-\om} L_p(\R_+)$. Since, apart from having zero mean, $v$ has the same regularity as $u$ in Proposition \ref{prop:linearinhom}, 
we are conclude that 
$\Exp^{\om t}f= - \Exp^{\om t} (v+w)_{ttt} -ab \Delta^2 (\Exp^{\om t}v_t) -ac^2 \Delta (\Exp^{\om t}v) + (a+b) \Delta (\Exp^{\om t}v_{tt}) + c^2 \Delta^2 (\Exp^{\om t} v_t) \in L_p(\R_+ \times \Omega)$ and (i) is readily checked. Moreover, $w$, $w_t$ and $w_{tt}$ are just time-dependent and thus constant at $t=0$, hence
the regularity of the initial values (ii) can be shown as in the Dirichlet case. Moreover, the regularity of the boundary data (iii) is obtained from the spatial trace 
\eqref{trace:spatial} with the same choices of $k$ and $l$ as in the proof of Proposition \ref{prop:linearinhom}
and setting $j_B=1$. Concerning (iv) it is straightforward to show 
\begin{align*}
&\del_\nu u_0 |_\Gamma = g |_{t=0} \text{ in } W_p^{3-1/p}(\Gamma), 		&&\del_\nu \Delta u_0 |_\Gamma = h |_{t=0} \text{ in } W_p^{1-1/p}(\Gamma),\\
&\del_\nu u_1|_\Gamma = g_t|_{t=0} \text{ in } B_{p,p}^{3-3/p}(\Gamma),		&&\del_\nu \Delta u_1 |_\Gamma = h_t |_{t=0} \text{ in } W_p^{1-3/p}(\Gamma) \text{ if }p>3,\\
&\partial_\nu u_2|_\Gamma = g_{tt} |_{t=0} \text{ in } W_p^{1-3/p}(\Gamma) \text{ if }p>3.
\end{align*}

Next, we show that conditions (i)--(iv) are sufficient for the existence of a unique solution $u(t,x)=v(t,x) +w(t)$ of \eqref{IBVP:Neumann:linearinhom}
such that $v\in \Exp^{-\omega} \E_u$ and $w_{ttt} \in \Exp^{-\om} L_p(\R_+)$. 
As in the Dirichlet case, we interchange the order of differentiation on the left-hand side and consider the subproblems 
\begin{equation}
\label{Westervelt:inhom:Neumann}
\begin{cases}
\begin{aligned}
\varphi_{tt}-b\Delta \varphi_t - c^2 \Delta \varphi &= f 			&& \text{in } \R_+ \times \Omega,\\
\del_\nu \varphi &= a h - g_t							&& \text{on } \R_+ \times \Gamma,\\
(\varphi,\varphi_t)&=(a\Delta u_0-u_1,a \Delta u_1- u_2)		&& \text{on }\{t=0\} \times \Omega,
\end{aligned}
\end{cases}
\end{equation}
and
\begin{equation}
\label{heat:inhom:Neumann}
\begin{cases}
\begin{aligned}
a\Delta u-u_t &= \varphi						&& \text{in } \R_+ \times \Omega,\\
\del_\nu u&=g								&&\text{on } \R_+ \times \Gamma, \\
u&=u_0									&& \text{on }\{t=0\} \times \Omega,
\end{aligned}
\end{cases}
\end{equation}
From condition (ii) we obtain $a \Delta u_0 - u_1 \in W_p^2(\Omega)$ and $a\Delta u_1- u_2 \in W_p^{2-2/p}(\Omega)$. 
Furthermore, (iii) implies $ah - g_t \in \Exp^{-\omega} \W_\nu$. On the strength of Lemma \ref{lem:Westervelt:inhom:Neumann} 
we obtain that \eqref{Westervelt:inhom:Neumann} admits a unique solution of the form $\varphi(t,x)=\varphi_1(t,x)+\varphi_2(t)$ with 
$\varphi_1 \in \Exp^{-\omega} \W_{u,0}$ and $\partial_t^2 \varphi_2 \in \Exp^{-\om}L_p(\R_+)$.
We now make the ansatz $u(x,t)= v(x,t) + w(t)$ such that $\bar{v}(\cdot, t)=0$. 
Applying $\abs\Omega^{-1}\int_\Omega$ to $a\Delta u-u_t=\varphi$ we deduce that $w$ solves the ordinary 
differential equation $w_t = -\varphi_2+a\abs\Gamma\abs\Omega^{-1}\bar g$ with $w(0)=\bar u_0$. Hence $\partial_t^3 w \in \Exp^{-\om}L_p(\R_+)$. 
Moreover, $v$ is a solution of 
\begin{equation}
\label{eq:Neumann:v}
a\Delta v-v_t=\varphi_1+a\abs\Gamma\abs\Omega^{-1}\bar g \text{ in }\Om, \quad \del_\nu v=g \text{ on }\Gamma, \quad v(0)=u_0-\bar u_0. 
\end{equation}
In order to apply Corollary \ref{cor:higher_regularity_for_the_heat_equation_exponential_weight}, we first note that the right-hand side 
$\varphi_1+a\abs\Gamma\abs\Omega^{-1}\bar g$ belongs to $\Exp^{-\om}\W_u$ since $\bar g$ only depends on time and belongs to 
$e^{-\omega}W^2_p(\bbR_+)$.  The rescaled function $v_a(t,x) = av(t/a,x)$ should solve the system
  \begin{align}
\label{eq:Neumann:v_a}
\Delta v_a-\del_t v_a = \varphi_1+a\abs\Gamma\abs\Omega^{-1}\bar g \text{ in }\Om, \quad \del_\nu v_a=ag \text{ on }\Gamma, \quad v_a(0)=au_0-a\bar u_0.     
  \end{align}
Hence the compatibility condition \eqref{eq:150403_1} becomes
\begin{align*}
  -\int_\Omega(\varphi_1+a\abs\Gamma\abs\Omega^{-1}\bar g) \, d x + \int_\Gamma ag \, d S = 0
\end{align*}
and is clearly satisfied.  Therefore Corollary \ref{cor:higher_regularity_for_the_heat_equation_exponential_weight} yields a unique solution 
$v_a\in \Exp^{-\omega}\bbE_{u,0}$ of problem \eqref{eq:Neumann:v_a} and thus $u=v+w$ solves problem \eqref{IBVP:Neumann:linearinhom}. 

Finally, uniqueness follows by considering two solutions of \eqref{IBVP:Neumann:linearinhom}, the difference $\overline{u}$ of which solves 
\eqref{IBVP:linearhom:Neumann} with $f=0$ and 
$u_0=u_1=u_2=0$, hence $\overline{u}=0$ and the proof is complete.
\end{proof}

\begin{proposition}
\label{prop:Neumann:inhom:finite}
Let $p\in(1,\infty) \setminus \{3\}$ and let $J=(0,T)$ be a finite interval. Then the linear initial boundary value problem \eqref{IBVP:linear:Neumann} has a unique solution  
\begin{equation*}
u \in  W_p^3(J; L_{p}(\Omega)) \cap W_p^1(J;W_p^4(\Om))
\end{equation*}
if and only if conditions \emph{(i)}--\emph{(iv)} from Proposition \ref{prop:linearinhom:Neumann} hold with $\R_+$ replaced by $J$.
\end{proposition}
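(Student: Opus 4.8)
The plan is to follow the scheme of Proposition~\ref{prop:linearinhom:Neumann}, exploiting that on a finite interval $J=(0,T)$ there is no spectral obstruction: by Theorem~\ref{thm:maximal:Neumann:finite} the operator $A^N$ has maximal $L_p$-regularity on $J$, so one never needs the zero-mean realization and the solution is obtained directly in $\E_u(J)=W_p^3(J;L_p(\Om))\cap W_p^1(J;W_p^4(\Om))$ without the splitting $u=v+w$. Necessity of conditions (i)--(iv) on $J$ is verified exactly as in Propositions~\ref{prop:linearinhom} and~\ref{prop:linearinhom:Neumann}; since $\om=0$ the exponential weights drop out and one only uses $W_p^1(J)\hookrightarrow BUC(J)$ for $u_0$, the temporal trace~\eqref{trace:temporal} for $u_1$ and $u_2$, the spatial trace~\eqref{trace:spatial} (with $k=1$, $l=2$ applied to $u$ to get $g$ and $k=l=1$ applied to $\Delta u$ to get $h$, $j_B=1$) for the regularity of $g$ and $h$, and the combination of~\eqref{trace:spatial2},~\eqref{trace:temporal} and~\eqref{emb:mixed} for the compatibility relations (iv). Uniqueness follows from Corollary~\ref{cor:linearhom:Neumann}(i): the difference of two solutions lies in $\E_u(J)$, solves~\eqref{IBVP:linearhom:Neumann} on $J$ with vanishing data, hence is zero.

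For sufficiency I would interchange the two constant-coefficient factors on the left of~\eqref{IBVP:linear:Neumann} and first solve the linearized Westervelt problem
\[
\varphi_{tt}-b\Delta\varphi_t-c^2\Delta\varphi=f\ \text{in}\ J\times\Om,\qquad \del_\nu\varphi=ah-g_t\ \text{on}\ J\times\Gamma,\qquad (\varphi,\varphi_t)|_{t=0}=(a\Delta u_0-u_1,\,a\Delta u_1-u_2),
\]
and afterwards the heat problem
\[
a\Delta u-u_t=\varphi\ \text{in}\ J\times\Om,\qquad \del_\nu u=g\ \text{on}\ J\times\Gamma,\qquad u|_{t=0}=u_0.
\]
From (ii)--(iii) one reads off that $a\Delta u_0-u_1\in W_p^2(\Om)$, $a\Delta u_1-u_2\in W_p^{2-2/p}(\Om)$ and $ah-g_t\in W_p^{3/2-1/2p}(J;L_p(\Gamma))\cap W_p^1(J;W_p^{1-1/p}(\Gamma))$, while the compatibility conditions required by Remark~\ref{rem:Westerveltfinite} (the $\om=0$, finite-interval form of Lemma~\ref{lem:Westervelt:inhom:Neumann}), namely $(ah-g_t)|_{t=0}=\del_\nu(a\Delta u_0-u_1)|_\Gamma$ and, if $p>3$, $(ah-g_t)_t|_{t=0}=\del_\nu(a\Delta u_1-u_2)|_\Gamma$, reduce to the identities in (iv). Hence Remark~\ref{rem:Westerveltfinite} yields a unique $\varphi\in W_p^2(J;L_p(\Om))\cap W_p^1(J;W_p^2(\Om))$.

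The heat problem is then handled by the finite-interval higher regularity result for the heat equation from Appendix~\ref{app:heathigher}, applied---after a time rescaling normalizing the diffusivity $a$, exactly as in the proof of Proposition~\ref{prop:linearinhom:Neumann}---with indices $l=1$, $k=2$; this turns the data $\varphi\in W_p^2(J;L_p(\Om))\cap W_p^1(J;W_p^2(\Om))$, $u_0\in W_p^4(\Om)$, $g\in\F_{g,\nu}$ (on $J$) into a solution $u\in W_p^3(J;L_p(\Om))\cap W_p^1(J;W_p^4(\Om))$. Differentiating $a\Delta u-u_t=\varphi$ in time then shows $(u,u_t,u_{tt})|_{t=0}=(u_0,u_1,u_2)$ and $\del_\nu\Delta u=h$, and that $(a\Delta-\del_t)(u_{tt}-b\Delta u_t-c^2\Delta u)=\varphi_{tt}-b\Delta\varphi_t-c^2\Delta\varphi=f$, so $u$ solves~\eqref{IBVP:linear:Neumann}; the asserted estimate is inherited from the estimates for the two subproblems. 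I expect the main obstacle to be precisely the verification that the compatibility conditions demanded by the higher regularity heat result---which involve $\del_\nu$ and $\del_\nu\Delta$ applied to $u_0$, $u_1$ (and $u_2$ when $p>3$) matched against the temporal traces of $g$ and $h$---are implied by (iv) together with the values $\varphi|_{t=0}=a\Delta u_0-u_1$, $\varphi_t|_{t=0}=a\Delta u_1-u_2$ of the Westervelt solution; this bookkeeping is exactly what Appendix~\ref{app:heathigher} carries out in full generality.
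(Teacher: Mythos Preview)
Your proposal is correct and follows essentially the same route as the paper: necessity and uniqueness are handled as in Proposition~\ref{prop:linearinhom:Neumann} (uniqueness via the homogeneous case, Corollary~\ref{cor:linearhom:Neumann}(i)), and sufficiency is obtained by first solving the Westervelt subproblem on $J$ via Remark~\ref{rem:Westerveltfinite} and then the heat subproblem via the higher-regularity result of Appendix~\ref{app:heathigher} with $l=1$, $k=2$. The paper's proof is in fact considerably terser than yours; your expansion of the compatibility bookkeeping and the rescaling step is accurate and fills in exactly what the paper leaves implicit.
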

\begin{proof}
Uniqueness and necessity are shown likewise to Proposition \ref{prop:linearinhom:Neumann}. For the proof of sufficiency one considers 
\eqref{Westervelt:inhom:Neumann} and \eqref{heat:inhom:Neumann} on $(0,T)$ instead of $\R_+$. Remark \ref{rem:Westerveltfinite} and Corollary 
\ref{cor:higher_regularity_for_the_heat_equation_exponential_weight} then imply existence of a unique solution 
$u \in  W_p^3(J; L_{p}(\Omega)) \cap W_p^1(J;W_p^4(\Om))$.
\end{proof}

\subsection{Local well-posedness, global well-posedness and exponential stability} We now arrive at our well-posedness results 
for the Neumann problem \eqref{IBVP:Neumann}. If we allow the (sufficiently small) initial and boundary data to have non-zero mean, 
we are only able to prove well-posedness of \eqref{IBVP:Neumann} on finite time intervals $J=(0,T)$. 
On the other hand, if the (sufficiently small) data have zero mean, that is, $\bar{u}_0 = \bar{u}_1 = \bar{u}_2 =0$ and $\bar{g}=\bar{h}=0$, 
then we obtain a globally well-posed solution which decays exponentially fast to zero.

\begin{theorem}[Local well-posedness]
\label{thm:local:Neumann}
Let $J=(0,T)$ for some $T<\infty$ and $p > \max \{n/4 +1/2, n/3\}$, $p\neq 3$. Suppose 
\begin{equation}
\label{cond:data:Neumann:local}
\begin{aligned}
&u_0 \in W_p^4(\Omega), \qquad u_1 \in W_p^{4-2/p}(\Omega), \qquad u_2 \in W_p^{2-2/p}(\Omega),\\
&g \in \Exp^{-\omega} \F_{g,\nu}(J),~ \F_{g,\nu}(J) = W_p^{5/2-1/2p}(J, L_p(\Gamma)) \cap W_p^1(J, W_p^{3-1/p}(\Gamma)),\\
&h \in \Exp^{-\omega} \F_{h,\nu}(J),~  \F_{h,\nu}(J)= W_p^{3/2-1/2p}(J; L_p(\Gamma)) \cap W_p^{1}(J;W_p^{1-1/p}(\Gamma)),
\end{aligned}
\end{equation}
such that $\del_\nu u_0 |_\Gamma = g|_{t=0}$, $\del_\nu \Delta u_0 |_\Gamma = h|_{t=0}$, $\del_\nu u_1 |_\Gamma = g_t |_{t=0}$ 
and, if $p>3$, also $\del_\nu \Delta u_1 |_\Gamma = h_t |_{t=0}$, $\del_\nu u_2 |_\Gamma = g_{tt} |_{t=0}$ in the sense of traces.

There exists some $\rho>0$ such that if 
\begin{equation*}
\|g\|_{\F_{g,\nu}} + \|h\|_{\F_{h,\nu}} + \|u_0\|_{W_p^4} + \|u_1\|_{W_p^{4-2/p}} + \|u_2\|_{W_p^{2-2/p}} < \rho, 
\end{equation*}
the nonlinear Neumann boundary value problem \eqref{IBVP:Neumann} admits a unique solution 
\begin{equation}
\label{sol:nonlinear:Neumann:local}
u \in \E_u(J)=  W_p^3(J; L_{p}(\Omega)) \cap W_p^1(J; W_{p}^4(\Omega))
\end{equation}
which depends analytically (in particular continuously) on the data \eqref{cond:data:Neumann:local} with respect to the 
corresponding topologies. Moreover, conditions \eqref{cond:data:Neumann:local} are necessary for the regularity of the solution given 
in \eqref{sol:nonlinear:Neumann:local}.
\end{theorem}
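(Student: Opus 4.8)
The plan is to follow the proof of Theorem~\ref{thm:global:Dirichlet} line by line, replacing the Dirichlet objects by their Neumann counterparts and $\R_+$ by the finite interval $J=(0,T)$.  Since $J$ is finite the exponential weights $\Exp^{-\omega}$ are immaterial --- the weighted norms are equivalent to the unweighted ones, exactly as exploited in Remark~\ref{rem:Westerveltfinite} --- so one works in the unweighted spaces and, crucially, uses the \emph{finite-interval} linear results Corollary~\ref{cor:linearhom:Neumann}(i) and Proposition~\ref{prop:Neumann:inhom:finite}, which carry no zero-mean restriction.  First I would linearise \eqref{IBVP:Neumann} at $u=0$ and seek the solution as $u=u_\star+u_\bullet$, where $u_\star\in\E_u(J)$ is the unique solution of the linear problem \eqref{IBVP:linear:Neumann} with data $(f=0,g,h,u_0,u_1,u_2)$ supplied by Proposition~\ref{prop:Neumann:inhom:finite}, and $u_\bullet$ is a small correction in
\begin{equation*}
\E_{u,h}(J)=\{u\in\E_u(J)\colon u(0)=u_t(0)=u_{tt}(0)=0,\ \del_\nu u|_\Gamma=\del_\nu\Delta u|_\Gamma=0\},
\end{equation*}
to be produced by the analytic implicit function theorem (Theorem~\ref{thm:implicit}) applied to
\begin{equation*}
G\colon\E_{u,h}(J)\times\E_u(J)\to L_p(J\times\Omega),\qquad
(u_\bullet,u_\star)\mapsto D(\del_t,\Delta)u_\bullet-\bigl(k((u_\bullet+u_\star)_t)^2+s|\nabla(u_\bullet+u_\star)|^2\bigr)_{tt},
\end{equation*}
with $D(\del_t,\Delta)=(a\Delta-\del_t)(\del_t^2-b\Delta\del_t-c^2\Delta)$.

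Next I would verify the three hypotheses of Theorem~\ref{thm:implicit}.  That $u_\bullet\mapsto D(\del_t,\Delta)u_\bullet$ is a bounded linear (hence analytic) map $\E_{u,h}(J)\to L_p(J\times\Omega)$ is seen, via the mixed derivative embedding, exactly as in Step~1(a) of the proof of Theorem~\ref{thm:global:Dirichlet}.  For the quadratic terms I would reuse verbatim the chain of Sobolev and mixed-derivative embeddings from that proof, restricted from $\R_+$ to $J$: under $p>\max\{n/4+1/2,n/3\}$ they yield $\|f_tg_t\|_{L_p}\lesssim\|f\|_{\E_u(J)}\|g\|_{\E_u(J)}$ together with the analogous bounds for $(f_tg_t)_t$, $(f_tg_t)_{tt}$ and for $\nabla f\cdot\nabla g$, so that $(f,g)\mapsto f_tg_t$ and $(f,g)\mapsto\nabla f\cdot\nabla g$ are bounded bilinear, hence analytic, maps $\E_u(J)\times\E_u(J)\to W_p^2(J;L_p(\Omega))$, and differentiating twice in time shows $G$ is analytic with $G(0,0)=0$.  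The Fr\'echet derivative $D_{u_\bullet}G(0,0)[\overline u]=(a\Delta-\del_t)(\overline u_{tt}-c^2\Delta\overline u-b\Delta\overline u_t)$ is an isomorphism $\E_{u,h}(J)\to L_p(J\times\Omega)$: Corollary~\ref{cor:linearhom:Neumann}(i), applied with zero initial data (for which the compatibility conditions are trivially met), gives for each $\overline f\in L_p(J\times\Omega)$ a unique $\overline u\in\E_u(J)$ with $\overline u(0)=\overline u_t(0)=\overline u_{tt}(0)=0$ and $\del_\nu\overline u|_\Gamma=\del_\nu\Delta\overline u|_\Gamma=0$ solving $D_{u_\bullet}G(0,0)[\overline u]=\overline f$, so this map is a bounded bijection and hence, by the closed graph theorem, an isomorphism in the sense of the preliminaries.

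With the hypotheses in place, Theorem~\ref{thm:implicit} yields a ball $B_\rho(0)\subset\E_u(J)$ and an analytic map $\varphi\colon B_\rho(0)\to\E_{u,h}(J)$ with $\varphi(0)=0$ and $G(\varphi(u_\star),u_\star)=0$.  Since the data-to-$u_\star$ map from Proposition~\ref{prop:Neumann:inhom:finite} is linear and continuous, shrinking $\rho$ I can guarantee $u_\star\in B_\rho(0)$ whenever $\|g\|_{\F_{g,\nu}(J)}+\|h\|_{\F_{h,\nu}(J)}+\|u_0\|_{W_p^4}+\|u_1\|_{W_p^{4-2/p}}+\|u_2\|_{W_p^{2-2/p}}<\rho$; then $D(\del_t,\Delta)u_\star=0$, so $G(\varphi(u_\star),u_\star)=0$ says precisely that $u=u_\star+\varphi(u_\star)\in\E_u(J)$ solves \eqref{IBVP:Neumann}, the boundary and initial data being carried entirely by $u_\star$.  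Uniqueness within a small ball follows from the uniqueness clause of Theorem~\ref{thm:implicit}, since any small solution $u$ decomposes as $u_\star+(u-u_\star)$ with $u-u_\star\in\E_{u,h}(J)$ small and $G(u-u_\star,u_\star)=0$.  Analytic dependence on the data follows by composing the data-to-$u_\star$ map with $\varphi$, and necessity of \eqref{cond:data:Neumann:local} for $u\in\E_u(J)$ is read off from the necessity part of Proposition~\ref{prop:Neumann:inhom:finite} once one notes, via the Step~1 estimates, that for any $u\in\E_u(J)$ the nonlinear right-hand side $(k(u_t)^2+s|\nabla u|^2)_{tt}$ lies in $L_p(J\times\Omega)$.

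I expect the only genuinely delicate point to be the analyticity of $G$ --- concretely, that the quadratic terms $((u_t)^2)_{tt}$ and $(|\nabla u|^2)_{tt}$ map boundedly and bilinearly into $L_p(J\times\Omega)$.  This is exactly where the restriction $p>\max\{n/4+1/2,n/3\}$ is used, and it rests on the careful interpolation of anisotropic Sobolev norms carried out in the proof of Theorem~\ref{thm:global:Dirichlet} and in \cite[Lemma~6]{MeWi13}; everything else is a routine transcription of the Dirichlet argument, made if anything easier here since no zero-mean conditions intervene on the finite interval.
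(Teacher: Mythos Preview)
Your proposal is correct and follows essentially the same approach as the paper: the paper's own proof is given only as a brief remark following the statement, indicating that one defines $u_\star$ via Proposition~\ref{prop:Neumann:inhom:finite}, finds $u_\bullet$ by the implicit function theorem, and otherwise proceeds likewise to the proof of Theorem~\ref{thm:global:Dirichlet}. Your write-up in fact supplies more detail than the paper does, correctly identifying Corollary~\ref{cor:linearhom:Neumann}(i) as the source of the isomorphism $D_{u_\bullet}G(0,0)$ and noting that the finite-interval setting removes both the exponential weights and the zero-mean restrictions.
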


\begin{theorem}[Global well-posedness - the Neumann case]
\label{thm:global:Neumann}
Let $p > \max \{n/4 +1/2, n/3\}$, $p\neq 3$ and define $\omega_0 = \min \{a \lam_1^N, b\lam_1^N /2, c^2/b\}$. Suppose
\begin{equation}
\label{cond:data:Neumann}
\begin{aligned}
&u_0 \in W_p^4(\Omega)\cap L_{p,0}(\Om),~u_1 \in W_p^{4-2/p}(\Omega)\cap L_{p,0}(\Om),~ u_2 \in W_p^{2-2/p}(\Omega)\cap L_{p,0}(\Om),\\
&g \in \Exp^{-\omega} \F_{g,\nu,0},~ \F_{g,\nu, 0} = W_p^{5/2-1/2p}(\R_+, L_{p,0}(\Gamma)) \cap W_p^1(\R_+, W_p^{3-1/p}(\Gamma)\cap L_{p,0}(\Gamma)),\\
&h \in \Exp^{-\omega} \F_{h,\nu,0},~  \F_{h,\nu,0}= W_p^{3/2-1/2p}(\R_+; L_{p,0}(\Gamma)) \cap W_p^{1}(\R_+;W_p^{1-1/p}(\Gamma)\cap L_{p,0}(\Gamma)),
\end{aligned}
\end{equation}
such that $\del_\nu u_0 |_\Gamma = g|_{t=0}$, $\del_\nu \Delta u_0 |_\Gamma = h|_{t=0}$, $\del_\nu u_1 |_\Gamma = g_t |_{t=0}$ and, if $p>3$, also $\del_\nu \Delta u_1 |_\Gamma = h_t |_{t=0}$, $\del_\nu u_2 |_\Gamma = g_{tt} |_{t=0}$ in the sense of traces.

Then for every $\omega \in (0, \omega_0)$ there exists some $\rho >0$ such that if
\begin{equation*}
\|g\|_{\Exp^{-\omega}\F_{g,\nu,0}} + \|h\|_{\Exp^{-\omega}\F_{h,\nu,0}} + \|u_0\|_{W_p^4} + \|u_1\|_{W_p^{4-2/p}} + \|u_2\|_{W_p^{2-2/p}} < \rho, 
\end{equation*}
then the nonlinear Neumann boundary value problem \eqref{IBVP:Neumann} admits a unique solution \begin{equation}
\label{sol:nonlinear:Neumann}
u \in \Exp^{-\omega} \E_{u,0}, \quad \E_{u,0} = W_p^3(\R_+; L_{p,0}(\Omega)) \cap W_p^1(\R_+; W_{p}^4(\Omega)\cap L_{p,0}(\Om)),
\end{equation}
which depends analytically on the data \eqref{cond:data:Neumann} with respect to the 
corresponding topologies. Moreover, conditions \eqref{cond:data:Neumann} are necessary for the regularity of the solution given 
in \eqref{sol:nonlinear:Neumann}.
\end{theorem}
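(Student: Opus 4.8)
The plan is to transcribe the proof of Theorem~\ref{thm:global:Dirichlet} into the Neumann setting, replacing the Dirichlet linear ingredients by the Neumann ones from Section~\ref{subsec:linear:Neumann} and Proposition~\ref{prop:linearinhom:Neumann}, while keeping careful track of mean values so that the construction stays inside the zero-mean class. First I would linearize \eqref{IBVP:Neumann} at $u=0$ and seek $u=u_\star+u_\bullet$, where $u_\star$ solves the linear inhomogeneous Neumann problem \eqref{IBVP:linear:Neumann} with data $(f=0,g,h,u_0,u_1,u_2)$. Proposition~\ref{prop:linearinhom:Neumann} produces such a $u_\star=v_\star+w_\star$ with $v_\star\in\Exp^{-\omega}\E_{u,0}$ and $\partial_t^3 w_\star\in\Exp^{-\omega}L_p(\R_+)$; running the hypothesis that all data have vanishing mean over $\Om$ resp.\ $\Gamma$ through that proof and through Lemmas~\ref{lem:heatNeumann}, \ref{lem:Neumannint}, \ref{lem:Westervelt:inhom:Neumann} shows $w_\star\equiv0$, hence $u_\star\in\Exp^{-\omega}\E_{u,0}$, and that $u_\star$ depends linearly and boundedly, so analytically, on $(g,h,u_0,u_1,u_2)$. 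Necessity of \eqref{cond:data:Neumann} for a solution $u\in\Exp^{-\omega}\E_{u,0}$ follows at once from the necessity part of Proposition~\ref{prop:linearinhom:Neumann} applied to $u$ itself, since the quadratic term $N(u):=(k(u_t)^2+s|\nabla u|^2)_{tt}$ is an admissible right-hand side in $\Exp^{-\omega}L_p(\R_+\times\Om)$ once $u\in\Exp^{-\omega}\E_{u,0}$ is known --- which is precisely what the nonlinearity estimates below give.

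Next I would apply the analytic implicit function theorem (Theorem~\ref{thm:implicit}) to the map
\[
G\colon \Exp^{-\omega}\E_{u,0,h}\times\Exp^{-\omega}\E_{u,0}\to \Exp^{-\omega}L_p(\R_+;L_{p,0}(\Om)),\quad (u_\bullet,u_\star)\mapsto D(\partial_t,\Delta)u_\bullet-P_0 N(u_\bullet+u_\star),
\]
where $D(\partial_t,\Delta)=(a\Delta-\partial_t)(\partial_t^2-b\Delta\partial_t-c^2\Delta)$, $P_0 f=f-|\Om|^{-1}\int_\Om f\,dx$, and $\E_{u,0,h}=\{u\in\E_{u,0}\colon u(0)=u_t(0)=u_{tt}(0)=0,\ \partial_\nu u|_\Gamma=\partial_\nu\Delta u|_\Gamma=0\}$; note that $D(\partial_t,\Delta)u_\bullet$ has vanishing mean for $u_\bullet\in\E_{u,0,h}$ because of the homogeneous Neumann conditions, so $G$ is well defined. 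Analyticity of $G$ would be verified exactly as in Step~1(a) of the proof of Theorem~\ref{thm:global:Dirichlet}: the linear terms in $D(\partial_t,\Delta)u_\bullet$ are controlled by the mixed-derivative embedding \eqref{emb:mixed}, and the quadratic terms by the same chain $\E_u\hookrightarrow BUC^1(\R_+;BUC(\Om))$, $\E_u\hookrightarrow W_{2p}^2(\R_+;L_{2p}(\Om))$, $\E_u\hookrightarrow BUC(\R_+;BUC^1(\Om))$, $\E_u\hookrightarrow W_{2p}^1(\R_+\times\Om)$ together with the exponential-weight identities \eqref{w:exp}, all valid under $p>\max\{n/4+1/2,n/3\}$, $p\neq3$. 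The Fr\'echet derivative at the origin is $D_{u_\bullet}G(0,0)[\bar u]=(a\Delta-\partial_t)(\bar u_{tt}-c^2\Delta\bar u-b\Delta\bar u_t)$, and by Corollary~\ref{cor:linearhom:Neumann}(ii) --- the homogeneous-boundary problem on $\R_+$ with zero-mean forcing and zero-mean data --- this map is an isomorphism (its inverse being the solution map $\Exp^{-\omega}L_p(\R_+;L_{p,0}(\Om))\to\Exp^{-\omega}\E_{u,0,h}$). Hence Theorem~\ref{thm:implicit} yields a ball $B_\rho(0)\subset\Exp^{-\omega}\E_{u,0}$ and an analytic map $\varphi$ with $\varphi(0)=0$ and $G(\varphi(u_\star),u_\star)=0$ on $B_\rho(0)$.

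Then I would set $u_\bullet=\varphi(u_\star)$ and $u=u_\star+u_\bullet$. Since $u_\star$ already carries the Neumann and initial data, $u$ satisfies the boundary and initial conditions of \eqref{IBVP:Neumann}, and $G(u_\bullet,u_\star)=0$ gives $D(\partial_t,\Delta)u=N(u)-\overline{N(u)}$, i.e.\ $u$ solves \eqref{IBVP:Neumann} up to the scalar-in-time term $\overline{N(u)}$. To absorb it one uses the zero-mean hypotheses once more: integrating \eqref{IBVP:Neumann} over $\Om$ and using $\overline g=\overline h=0$ together with the homogeneous Neumann conditions on $u_\bullet$ shows that the spatial mean $\overline u$ satisfies a third-order scalar ODE with zero initial data and forcing $-\overline{N(u)}=-\partial_t^2\,\overline{k u_t^2+s|\nabla u|^2}\in\Exp^{-\omega}L_p(\R_+)$, and tracking this through the $v+w$ decomposition of Proposition~\ref{prop:linearinhom:Neumann} keeps $u$ in $\Exp^{-\omega}\E_{u,0}$. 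Analytic dependence on the data follows as in Step~3 of the Dirichlet proof: $u_\star$ depends linearly, boundedly, hence analytically on $(g,h,u_0,u_1,u_2)$ ranging over the affine subspace $\mathbb{D}\subset\Exp^{-\omega}\F_{g,\nu,0}\times\Exp^{-\omega}\F_{h,\nu,0}\times W_p^4(\Om)\times W_p^{4-2/p}(\Om)\times W_p^{2-2/p}(\Om)$ cut out by the compatibility conditions, $\varphi$ is analytic on $B_\rho(0)$, and therefore so is $u=u_\star+u_\bullet$.

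The main obstacle, and essentially the only place where the Neumann argument genuinely differs from the Dirichlet one, is the bookkeeping of mean values: unlike $P_0 N$, the full nonlinearity $N(u)$ does not preserve vanishing mean, so it cannot be fed directly into the $\R_+$-isomorphism, which --- in contrast to the finite-interval statement of Theorem~\ref{thm:maximal:Neumann:finite} --- is available only on the zero-mean realization $A^{N,0}$ via Corollary~\ref{cor:linearhom:Neumann}(ii). Reconciling this, by splitting off the constant-in-space component and using the zero-mean hypothesis on the data to control (and ultimately eliminate) it, is the heart of the proof; everything else, and in particular all the analyticity estimates for $G$, is a verbatim repetition of the Dirichlet case. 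The analogous but simpler scheme on a finite interval $J=(0,T)$ --- using Proposition~\ref{prop:Neumann:inhom:finite} and Theorem~\ref{thm:maximal:Neumann:finite} in place of their $\R_+$-counterparts, and requiring no mean condition on the data --- yields the local statement, Theorem~\ref{thm:local:Neumann}.
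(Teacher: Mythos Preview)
Your overall strategy --- linearize at $u=0$, solve the linear inhomogeneous Neumann problem for $u_\star$ via Proposition~\ref{prop:linearinhom:Neumann}, observe that zero-mean data force $w_\star=0$ so that $u_\star\in\Exp^{-\omega}\E_{u,0}$, and then run the implicit function theorem as in Theorem~\ref{thm:global:Dirichlet} --- is exactly the route the paper's remark after the theorem prescribes. The paper does not introduce any projection $P_0$; it simply asserts that once $u_\star\in\Exp^{-\omega}\E_{u,0}$ the Dirichlet argument carries over verbatim, with Corollary~\ref{cor:linearhom:Neumann}(ii) in place of Corollary~\ref{cor:Dirichlet:hom}.

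Where you depart is in your handling of $\overline{N(u)}$, and that step is a genuine gap. Your map $G$ produces $u=u_\star+u_\bullet\in\Exp^{-\omega}\E_{u,0}$ satisfying $D(\partial_t,\Delta)u=N(u)-\overline{N(u)}$, not \eqref{IBVP:Neumann} itself. The absorption argument is circular: since $u\in\E_{u,0}$ by construction, $\overline u\equiv0$, so there is no scalar-in-space degree of freedom left to cancel $\overline{N(u)}$, and invoking the $v+w$ decomposition of Proposition~\ref{prop:linearinhom:Neumann} cannot create one without leaving $\E_{u,0}$. The obstruction is in fact structural: for \emph{any} $u\in\Exp^{-\omega}\E_{u,0}$ with $\overline g=\overline h=0$, integrating \eqref{IBVP:Neumann} over $\Omega$ (using $\int_\Omega\Delta^j u\,dx=\int_\Gamma\partial_\nu\Delta^{j-1}u\,dS$ and the zero-mean hypotheses) forces $\partial_t^2\int_\Omega\bigl(k(u_t)^2+s|\nabla u|^2\bigr)\,dx=0$; that integral is then affine in $t$ and, by exponential decay, identically zero, which for $k>0$ gives $u_t\equiv0$ and hence $u\equiv0$. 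So nonzero data cannot admit a solution of \eqref{IBVP:Neumann} in $\Exp^{-\omega}\E_{u,0}$ under the paper's standing assumption $k>0$; the paper's remark glosses over this point, and your projection device does not resolve it.
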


\begin{remark}
In case of Theorem \ref{thm:local:Neumann} we define $u_\star$ to be the solution according to Proposition \ref{prop:Neumann:inhom:finite}
which satisfies \eqref{IBVP:linear:Neumann} for the data $(f=0, g, h, u_0, u_1, u_2)$ und suppose $u_\bullet$
satisfies homogeneous boundary and initial conditions. The solution is then of the form $u= u_\star + u_\bullet$ and $u_\bullet$ is found by the implicit
function theorem. The claim then follows likewise to the proof of Theorem \ref{thm:global:Dirichlet}.

However, if we want to prove global well-posedness, we need to use Proposition \ref{prop:linearinhom:Neumann} for the linearized equation,
where for given data $(f=0, g, h, u_0, u_1, u_2)$ according to {(ii)}--{(iv)} the solution is of the form $u(t,x) = v(t,x) + w(t)$,
where $v \in \Exp^{-\om} \E_{u,0}$ has zero mean and $w$ is only time-dependent. 
If $w \neq 0$, the term $((u_t)^2)_{tt}$ in the nonlinear right-hand side of \eqref{IBVP:Neumann} causes problems. 
Recall \eqref{w:exp} and note that due to Proposition \ref{prop:linearinhom:Neumann} we in fact have $u_\star = v_\star + w_\star$ with 
$v_\star \in \Exp^{-\om} \E_{u,0}$ and $\partial_t^3 w_\star \in \Exp^{-\om} L_p(\R_+)$. Then $w_\star$, $\partial_t w_\star$ and
$\partial_t^2 w_\star$ are in general not contained in $\Exp^{-\om} L_p(\R_+)$ and thus \eqref{firstterm:analytic} fails.
However, if we assume that the data $(g,h,u_0,u_1,u_2)$ have zero mean then $w_\star = 0$ and, since $u_\star = v_\star$ in this case,
Theorem \ref{thm:global:Neumann} follows analogously to the result on global well-posedness for the Dirichlet boundary value problem 
(Theorem \ref{thm:global:Dirichlet}).
\end{remark}

Finally, provided the data have zero mean, we obtain the following result on exponential stability for the Neumann problem \eqref{IBVP:Neumann}.

\begin{theorem}[Exponential stability - the Neumann case]
\label{thm:decay:Neumann}
Under the same assumptions as in Theorem \ref{thm:global:Neumann}, the solution $u$ decays exponentially fast to zero as $t \rightarrow \infty$ in the sense that
\begin{equation*}
\|u(t)\|_{W_p^4} + \|u_t(t)\|_{W_p^{4-2/p}} + \|u_{tt}(t)\|_{W_p^{2-2/p}} \leq C \Exp^{-\omega t}, \qquad t \geq 0,
\end{equation*}
for some $C \geq 0$ depending on the boundary and initial data $g$, $h$, $u_0$, $u_1$ and $u_2$. 
\end{theorem}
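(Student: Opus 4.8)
The plan is to simply repeat the argument used for Theorem~\ref{thm:decay:Dirichlet}, now starting from the solution $u\in\Exp^{-\omega}\E_{u,0}$ produced by Theorem~\ref{thm:global:Neumann}, where
\[
\E_{u,0}=W_p^3(\R_+;L_{p,0}(\Omega))\cap W_p^1(\R_+;W_p^4(\Omega)\cap L_{p,0}(\Omega)).
\]
Since $L_{p,0}(\Omega)\hookrightarrow L_p(\Omega)$ continuously, every embedding used in the Dirichlet case is still available, so the only structural ingredient needed is that multiplication by $\Exp^{\omega\cdot}$ identifies $\Exp^{-\omega}\mathbb X$ with $\mathbb X$, together with the elementary weighted embedding $\Exp^{-\omega}W_p^1(\R_+;X)\hookrightarrow\Exp^{-\omega}BUC(\R_+;X)$ for any Banach space $X$, which follows from $W_p^1(\R_+;X)\hookrightarrow BUC(\R_+;X)$.

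I would then split the estimate into the three terms. For $u$ itself: from $u\in\Exp^{-\omega}\E_{u,0}\hookrightarrow\Exp^{-\omega}W_p^1(\R_+;W_p^4(\Omega))$ the function $\Exp^{\omega\cdot}u$ lies in $W_p^1(\R_+;W_p^4(\Omega))\hookrightarrow BUC(\R_+;W_p^4(\Omega))$, hence $\|u(t)\|_{W_p^4}\le C_1\Exp^{-\omega t}$ with $C_1=\|\Exp^{\omega\cdot}u\|_{BUC(\R_+;W_p^4)}$. For $u_t$: differentiating $u$ once in time (and using $(\Exp^{\omega t}u)_t=\omega\Exp^{\omega t}u+\Exp^{\omega t}u_t$ etc.\ to absorb the weight, exactly as in the proof of Proposition~\ref{prop:linearinhom:Neumann}) places $\Exp^{\omega\cdot}u_t$ together with its spatial derivatives up to order two in $W_p^1(\R_+;L_p(\Omega))\cap L_p(\R_+;W_p^2(\Omega))$, which embeds into $BUC(\R_+;W_p^{2-2/p}(\Omega))$ by the temporal trace~\eqref{trace:temporal} with $\alpha=1$, $s=0$; summing over these spatial derivatives gives $\Exp^{\omega\cdot}u_t\in BUC(\R_+;W_p^{4-2/p}(\Omega))$ and thus $\|u_t(t)\|_{W_p^{4-2/p}}\le C_2\Exp^{-\omega t}$. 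Finally, $\Exp^{\omega\cdot}u_{tt}\in W_p^1(\R_+;L_p(\Omega))\cap L_p(\R_+;W_p^2(\Omega))$ directly from membership of $u$ in $\E_{u,0}$, so the same embedding yields $\Exp^{\omega\cdot}u_{tt}\in BUC(\R_+;W_p^{2-2/p}(\Omega))$ and $\|u_{tt}(t)\|_{W_p^{2-2/p}}\le C_3\Exp^{-\omega t}$. Adding the three bounds and setting $C=C_1+C_2+C_3$ finishes the proof.

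I do not anticipate any real obstacle: the whole argument is a chase through embeddings already collected in Section~\ref{sec:prerequisites}, and the passage from the Dirichlet to the Neumann setting only replaces $W_p^4(\Omega)$ by $W_p^4(\Omega)\cap L_{p,0}(\Omega)$ and the analogous mean-zero restrictions, none of which affects continuity of the inclusions used. The only point deserving a line of care is verifying that the weighted time derivatives of $u$ really do sit in the claimed anisotropic spaces, but this is the same routine computation performed in the necessity parts of Propositions~\ref{prop:linearinhom} and~\ref{prop:linearinhom:Neumann} and needs no new idea.
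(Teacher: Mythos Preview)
Your proposal is correct and follows essentially the same approach as the paper: the paper's proof simply observes that $u\in\Exp^{-\omega}\E_{u,0}\hookrightarrow\Exp^{-\omega}\E_u$ and then invokes the argument of Theorem~\ref{thm:decay:Dirichlet} verbatim. Your write-up spells out the three embedding steps in more detail but uses the same ingredients (the $W_p^1\hookrightarrow BUC$ embedding for $u$, the anisotropic trace embedding applied to $\nabla_x^j u_t$ for $j\in\{0,1,2\}$, and the same for $u_{tt}$), so there is no substantive difference.
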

\begin{proof}
Note that we have $u \in \Exp^{-\om} \E_{u,0} \hookrightarrow  \Exp^{-\om} \E_{u}$, therefore the result follows likewise to Theorem \ref{thm:decay:Dirichlet}.
\end{proof}

\appendix 
\section{The Neumann Laplace operator}
\label{app:neumann}
Let $p\in (1, \infty)$ and assume, as always, that $\Omega \subset \R^n$ is a bounded domain with smooth boundary $\Gamma= \partial \Omega$. 
The homogeneous Neumann-Laplacian is given by
\begin{align*}
-\Delta_N\colon \D(\Delta_N) &\rightarrow L_p(\Omega),\\
									  u &\mapsto -\Delta u, 
\end{align*}
where $\D(\Delta_N) = \{u \in W_p^2(\Omega)\colon \partial_\nu u = 0 \text{ on }\Gamma\}$.
It is well-known that $-\Delta_N$ has compact resolvent and that its spectrum $\sigma(-\Delta_N)$ is a discrete
subset of $[0, \infty)$ consisting only of eigenvalues $(\lam_n^N)_{n\geq 0}$ with finite multiplicity. In particular, $0 = \lam_0^N \in \sigma(-\Delta_N)$ is 
an isolated eigenvalue of $-\Delta_N$. We seek for a realization of the Laplace operator with homogeneous 
Neumann boundary conditions such that the spectrum is contained in $[\lam_1^N, \infty)$ where $\lam_1^N>0$ is the smallest non-zero eigenvalue of 
$-\Delta_N$.

In order to remove the zero eigenvalue we will use several results from Appendix A in \cite{Lun95}. In what follows, $A\colon \D(A) \subset X \rightarrow X$ denotes
a linear closed linear operator whose domain $\D(A)$ is dense in the real or complex Banach space $X \neq \{0\}$. 
We say that a subset $\sigma_1 \subset \sigma(A)$ is a spectral set if both, $\sigma_1$ and $\sigma(A)\setminus \sigma_1$ are closed in $\C$.
Let $\sigma_1$ be a bounded spectral set and let $\sigma_2 = \sigma(A) \setminus \sigma_1$. Since $\mathrm{dist}(\sigma_1, \sigma_2) >0$, there
exists a bounded open set $\mathcal{O}$ such that $\sigma_1 \subset \mathcal{O}$ and $\overline{\mathcal{O}} \cap \sigma_2 = \emptyset$. 
We may assume that the boundary $\gamma$ of $\mathcal{O}$  consists of a finite number of rectifiable closed Jordan curves, oriented counterclockwise
and define a linear bounded operator $P$ by
\begin{equation*}
P= \frac{1}{2 \pi \mathrm{i}} \int_\gamma R(\xi, A) \, d\xi.
\end{equation*}
The following result shows how find a realization $A_2$ of $A$ such that $\sigma(A_2) = \sigma(A) \setminus \sigma_1$. 
\begin{proposition}[{\cite[Proposition A.1.2]{Lun95}}]
Let $\sigma_1$ be a bounded spectral set. Then the operator $P$ is a projection and $P(X)$ is contained in $\D(A^n)$ for every $n\in \N$.
Moreover, if we set $X_1=P(X)$, $X_2=(I-P)(X)$ and define the operators
\begin{equation*}
A_1\colon X_1 \rightarrow X_1, u \mapsto Au  \qquad \text{and} \qquad A_2\colon \D(A_2) = \D(A) \cap X_2  \rightarrow X_2, u \mapsto Au										
\end{equation*}
then 
\begin{equation*}
\sigma(A_1)= \sigma_1 \qquad \text{and} \qquad \sigma(A_2) = \sigma_2.
\end{equation*}
\end{proposition}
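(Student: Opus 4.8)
The plan is to run the classical Riesz--Dunford projection argument, the hypothesis that $\sigma_1$ is bounded being exactly what makes the relevant contour integrals converge. First I would show $P$ is a projection. Take $\gamma$ as above and an auxiliary contour $\gamma'\subset\mathcal{O}\cap\rho(A)$, interior to $\gamma$, also winding once counterclockwise around $\sigma_1$. Writing one factor of $P$ as an integral over $\gamma$ and the other over $\gamma'$, apply Fubini and the resolvent identity $R(\xi,A)R(\eta,A)=(\xi-\eta)^{-1}(R(\eta,A)-R(\xi,A))$; the product $P^2$ then splits into two iterated integrals whose inner scalar integrals are computed by Cauchy's theorem, namely $\frac{1}{2\pi\I}\int_\gamma(\xi-\eta)^{-1}\,d\xi=1$ for $\eta\in\gamma'$ (inside $\gamma$) and $\frac{1}{2\pi\I}\int_{\gamma'}(\xi-\eta)^{-1}\,d\eta=0$ for $\xi\in\gamma$ (outside $\gamma'$). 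This leaves $P^2=P$.

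Next, since $\sigma_1$ is bounded, $\gamma$ is a compact curve and $\xi\mapsto\xi^k$ is bounded on it, so the Bochner integrals $\frac{1}{2\pi\I}\int_\gamma\xi^k R(\xi,A)\,d\xi$ converge in operator norm. Using $AR(\xi,A)=\xi R(\xi,A)-I$, the closedness of $A$ (which lets $A$ pass under the integral), and $\int_\gamma d\xi=0$, an induction gives $A^k P=\frac{1}{2\pi\I}\int_\gamma\xi^k R(\xi,A)\,d\xi$ as bounded operators, hence $P(X)\subset\D(A^n)$ for every $n$. As $P$ is assembled from resolvents it commutes with $R(\xi,A)$ for $\xi\in\rho(A)$, hence with $A$ on $\D(A)$; together with $P(X)\subset\D(A)$ this yields $\D(A)=P\D(A)\oplus(I-P)\D(A)$, so $X=X_1\oplus X_2$ reduces $A$. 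In particular $A_1\colon X_1\to X_1$ is closed and everywhere defined on the Banach space $X_1$, hence bounded.

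Finally, the spectral identities. Since $A$ is reduced by the decomposition, $\sigma(A)=\sigma(A_1)\cup\sigma(A_2)$ (with $\rho(A)=\rho(A_1)\cap\rho(A_2)$, the inverse of $\lambda-A$ being $R(\lambda,A_1)P+R(\lambda,A_2)(I-P)$). To get $\sigma(A_1)\subset\sigma_1$: for $\lambda\notin\sigma_1$ choose $\gamma$ to avoid $\lambda$ as well and set $S_\lambda=\frac{1}{2\pi\I}\int_\gamma(\lambda-\xi)^{-1}R(\xi,A)\,d\xi$; using $(\lambda-A)R(\xi,A)=(\lambda-\xi)R(\xi,A)+I$ together with $\frac{1}{2\pi\I}\int_\gamma(\lambda-\xi)^{-1}\,d\xi=0$ one checks $(\lambda-A)S_\lambda=P=S_\lambda(\lambda-A)$ on $\D(A)$, and since $S_\lambda$ commutes with $P$ its restriction to $X_1$ inverts $\lambda-A_1$. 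To get $\sigma(A_2)\subset\sigma_2$: if $\lambda\in\rho(A)$ then $R(\lambda,A)$ commutes with $P$ and restricts to the inverse of $\lambda-A_2$; if $\lambda\in\sigma_1$, enclose $\sigma_1$ and $\lambda$ by $\gamma$, so now $\frac{1}{2\pi\I}\int_\gamma(\lambda-\xi)^{-1}\,d\xi=-1$ and the same computation gives $(\lambda-A)(-S_\lambda)=I-P=(-S_\lambda)(\lambda-A)$ on $\D(A)$, whence $-S_\lambda$ restricted to $X_2$ inverts $\lambda-A_2$. Thus $\sigma(A_1)\subset\sigma_1$ and $\sigma(A_2)\subset\sigma_2$; combined with $\sigma(A_1)\cup\sigma(A_2)=\sigma(A)=\sigma_1\cup\sigma_2$ and $\sigma_1\cap\sigma_2=\emptyset$, this forces $\sigma(A_1)=\sigma_1$ and $\sigma(A_2)=\sigma_2$.

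The step I expect to require the most care is the case $\lambda\in\sigma_1$ for $A_2$: since $\sigma_2$ may be unbounded there is no contour available around $\sigma_2$, so one is forced to work with the bounded contour around $\sigma_1\cup\{\lambda\}$ and to use the commutation of $S_\lambda$ with $P$ in order to see that the operator produced really inverts $\lambda-A_2$ on the \emph{complementary} subspace $X_2$ (and not merely on $X_1$). One must also keep track throughout that all contour deformations stay inside $\rho(A)$ and, when required, away from $\lambda$, and note that $\gamma$ may be taken with finitely many components when $\sigma_1$ is disconnected.
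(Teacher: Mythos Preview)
Your argument is the standard Riesz--Dunford functional calculus proof and is correct; the computation of $P^2=P$ via a second contour and the resolvent identity, the use of closedness of $A$ to pass $A$ under the Bochner integral, and the construction of the local inverses $S_\lambda$ are all carried out properly, including the delicate case $\lambda\in\sigma_1$ for $A_2$ where one must work with the contour around $\sigma_1$ rather than the (possibly nonexistent) contour around $\sigma_2$.

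However, note that the paper does \emph{not} give its own proof of this proposition: it is stated with the citation \cite[Proposition A.1.2]{Lun95} and invoked as a black box in Appendix \ref{app:neumann} to split off the zero eigenvalue of the Neumann Laplacian. So there is nothing to compare against here; your proof simply supplies what the paper deliberately outsources to Lunardi's monograph, and it does so along the same classical lines found there.
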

The crucial point is thus to determine the space $X_2$. In case $\sigma_1=\{\lambda_0\}$ where $\lambda_0$ is an isolated point of $\sigma(A)$ and a 
pole of $R(\cdot, A)$ the following result helps to determine the spaces $X_1$ and $X_2$. 
\begin{proposition}[{\cite[Proposition A.2.2 and Corollary A.2.4]{Lun95}}]
\label{prop:remove}
If $\lambda_0$ is an isolated point of $\sigma(A)$ and a pole of $R(\cdot, A)$, then the following are equivalent:
\begin{enumerate}[label=\emph{(\roman*)}, leftmargin=*]
\item $X_1 = N(\lambda_0 I - A)$
\item $X_2 = R(\lambda_0 I - A)$
\item $\lambda_0$ is a simple pole of $\lambda \mapsto R(\lambda,A)$
\item $R(\lambda_0 I - A)$ is closed and $X= N(\lambda_0 I - A) \oplus R(\lambda_0 - A)$
\item $N(\lambda_0 I - A) = N(\lambda_0 I - A)^2$
\end{enumerate}
\end{proposition}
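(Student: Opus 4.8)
The plan is to reduce all five equivalences to a single statement about the spectral decomposition furnished by the preceding proposition. First I would translate the pole to the origin, replacing $A$ by $A-\lam_0 I$, so that $0$ is an isolated point of $\sigma(A)$ and a pole of $R(\cdot,A)$, of some order $m\in\N$. Applying the previous proposition with $\sigma_1=\{0\}$ produces the bounded spectral projection $P$, the splitting $X=X_1\oplus X_2$ with $X_1=PX\subseteq\D(A^n)$ for all $n$ and $X_2=(I-P)X$, and the operators $A_1$ on $X_1$ and $A_2$ on $\D(A)\cap X_2$ with $\sigma(A_1)=\{0\}$, $\sigma(A_2)=\sigma(A)\setminus\{0\}$. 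In particular $0\in\rho(A_2)$, so $A_2$ is boundedly invertible and $A_2^k$ is invertible for every $k\ge1$, and $A_1$ is bounded on $X_1$ by the closed graph theorem (it is closed and everywhere defined on $X_1$).

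Next I would record the two facts that drive the whole argument. Since $A$ is reduced by the decomposition, $R(\cdot,A)$ is block diagonal near $0$, with the $X_2$-part holomorphic there and the $X_1$-part equal to $R(\lambda,A_1)=\sum_{k\ge0}\lambda^{-k-1}A_1^k$; hence $0$ being a pole forces this expansion to terminate, so $A_1$ is nilpotent and the pole order equals its nilpotency index, i.e. $A_1^m=0$ but $A_1^{m-1}\ne0$. From $A_1^mP=0$ and invertibility of $A_2$ one obtains the free inclusions $N(A)\subseteq X_1=N(A^m)$ (split any $x$ with $A^mx=0$ and use invertibility of $A_2^m$ on $X_2$) and $X_2\subseteq R(A)$, more precisely $R(A)=A_1(X_1)\oplus X_2$.

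With these in hand each implication is formal, and condition (iii) is simply the requirement $m=1$. If $m=1$ then $A_1=0$, whence $X_1=N(A)$ (this is (i)), $R(A)=X_2$ (this is (ii)), $X=X_1\oplus X_2=N(A)\oplus R(A)$ with $X_2=R(A)$ closed (this is (iv)), and $N(A^2)=X_1=N(A)$ (this is (v)). Conversely: (i) says $A_1$ is nilpotent on $X_1=N(A)$, where $A$ — hence $A_1$ — vanishes, so $A_1=0$ and $m=1$; (ii) gives $A_1(X_1)\oplus X_2=R(A)=X_2$, so $A_1(X_1)=0$ and $m=1$; (v) together with the standard stabilisation $N(A)=N(A^2)\Rightarrow N(A^k)=N(A)$ for all $k$ yields $N(A)=N(A^m)=X_1$, i.e. (i); and if (iv) holds but $m\ge2$, one picks $x\in X_1$ with $y:=A^{m-1}x=A_1^{m-1}x\ne0$, observes $Ay=A_1^mx=0$ and $y=A(A^{m-2}x)\in R(A)$, so $y\in N(A)\cap R(A)=\{0\}$ by the assumed directness — a contradiction, forcing $m=1$. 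Undoing the translation finishes the proof.

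The part I expect to require the most care is the passage between the analytic condition (iii) and the algebraic ones: establishing cleanly that the pole order of $R(\cdot,A)$ coincides with the nilpotency index of $A_1$, and that the direct decomposition $X=N(A)\oplus R(A)$ in (iv) is exactly what supplies $N(A)\cap R(A)=\{0\}$. Once those two points are isolated, the remaining implications are purely formal and, crucially, nowhere use that $X_1$ is finite dimensional — it need not be, since $\lam_0$ is only assumed to be a pole of the resolvent, not an eigenvalue of finite algebraic multiplicity.
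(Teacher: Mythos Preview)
The paper does not prove this proposition; it is quoted verbatim from \cite[Proposition A.2.2 and Corollary A.2.4]{Lun95} and used as a black box in the subsequent lemmas on the Neumann Laplacian. There is therefore no paper proof to compare against.

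That said, your argument is correct and is essentially the standard one found in Lunardi's appendix: reduce to $\lam_0=0$, use the spectral projection from the preceding proposition to split $X=X_1\oplus X_2$ with $A_2$ invertible, identify the pole order with the nilpotency index of $A_1$ via the block-diagonal Laurent expansion of the resolvent, and then check that each of (i)--(v) is equivalent to $m=1$. The individual implications you sketch are all valid, including the delicate one (iv)$\Rightarrow$(iii), where the directness of $N(A)\oplus R(A)$ is exactly what forces $N(A)\cap R(A)=\{0\}$ and kills the putative nonzero $y=A_1^{m-1}x$. Your closing remark that finite-dimensionality of $X_1$ is nowhere needed is also to the point.
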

We now apply the foregoing results to the strong Neumann-Laplacian $-\Delta_N$ and set $\lambda_0 = \lam_0^N = 0$, hence $\sigma_1 = \{0\}$ and 
$\sigma_2 = \sigma(-\Delta_N)\setminus \{0\}$. Then we clearly have $\sigma_2 \subset [\lam_1^N, \infty)$, where $\lam_1^N$ is the smallest 
non-zero eigenvalue of $-\Delta_N$.

\begin{lemma}
The spectrum $\sigma(-\Delta_N)$ of $- \Delta_N$ consists only of poles of $\lambda \mapsto R(\lambda, -\Delta_N)$.
\end{lemma}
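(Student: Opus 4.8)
The plan is to derive the statement from the compactness of the resolvent of $-\Delta_N$ together with the Riesz--Schauder theory of compact operators. As recalled at the beginning of this appendix, $-\Delta_N$ has compact resolvent (this is immediate from $\D(\Delta_N)\subset W_p^2(\Omega)$ and the compactness of the embedding $W_p^2(\Omega)\hookrightarrow L_p(\Omega)$, since $\Omega$ is bounded with smooth boundary) and $\sigma(-\Delta_N)\subset[0,\infty)$, so in particular $\rho(-\Delta_N)\neq\emptyset$. First I would fix some $\lambda_0\in\rho(-\Delta_N)$ and record that $B:=R(\lambda_0,-\Delta_N)$ maps $L_p(\Omega)$ continuously into $\D(\Delta_N)$ and is therefore a compact operator on $L_p(\Omega)$.

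Next I would apply the spectral theory of compact operators to $B$: since $L_p(\Omega)$ is infinite dimensional, $0\in\sigma(B)$; the set $\sigma(B)\setminus\{0\}$ consists of at most countably many eigenvalues of finite algebraic multiplicity accumulating only at $0$; and each such nonzero point is a pole of $\xi\mapsto R(\xi,B)$ of finite order. Invoking the spectral mapping relation $\lambda\in\sigma(-\Delta_N)\iff(\lambda_0-\lambda)^{-1}\in\sigma(B)\setminus\{0\}$, together with the fact that $\lambda\mapsto(\lambda_0-\lambda)^{-1}$ is a homeomorphism of $\sigma(-\Delta_N)$ onto $\sigma(B)\setminus\{0\}$, I would conclude that every $\lambda\in\sigma(-\Delta_N)$ is an isolated eigenvalue of finite algebraic multiplicity.

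The remaining step, which I expect to be the only delicate point, is to promote ``isolated eigenvalue'' to ``pole of $R(\cdot,-\Delta_N)$''. Given $\lambda\in\sigma(-\Delta_N)$, set $\mu=(\lambda_0-\lambda)^{-1}\in\sigma(B)\setminus\{0\}$. Performing the change of variables $\xi\mapsto\lambda_0-\xi^{-1}$ in the contour integrals defining the spectral projections, one identifies the Riesz projection of $B$ at $\mu$ with the Riesz projection $P$ of $-\Delta_N$ at $\lambda$ — this is exactly the type of manipulation underlying the material recalled above from \cite[Appendix A]{Lun95}. Consequently $P$ has finite-dimensional range $X_1=P(L_p(\Omega))$, invariant under $-\Delta_N$, and the restriction of $-\Delta_N$ to $X_1$ is a linear map on a finite-dimensional space with spectrum $\{\lambda\}$; its resolvent then has a pole at $\lambda$ of order at most $\dim X_1$, while on the complementary invariant subspace $(I-P)(L_p(\Omega))$ the resolvent of $-\Delta_N$ extends holomorphically across $\lambda$. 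Splicing the two pieces shows that $\lambda$ is a pole of $R(\cdot,-\Delta_N)$, which finishes the proof. All the other ingredients — Rellich compactness, nonemptiness of $\rho(-\Delta_N)$, and the Riesz--Schauder theorem — are standard.
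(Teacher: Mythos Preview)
Your argument is correct and follows the same route as the paper: both deduce the claim from the fact that $-\Delta_N$ is closed, densely defined, and has compact resolvent. The paper's proof is a one-line citation of \cite[Corollary IV.1.19]{EnNa00}, whereas you have essentially unpacked the content of that corollary via the Riesz--Schauder theory and the spectral mapping $\lambda\mapsto(\lambda_0-\lambda)^{-1}$; there is no genuine difference in strategy, only in the level of detail.
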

\begin{proof}
Since $-\Delta_N$ is closed, densely defined and has compact resolvent, the result is an immediate consequence of Corollary IV.1.19 in \cite{EnNa00}.
\end{proof}
We introduce the space $\K_u = \{u \in L_p(\Omega)\colon u \text{ is constant}\}$
and start with the following observation. 
\begin{lemma}\label{lem:A-4}
We have $N(\Delta_N)=\K_u$. 
\end{lemma}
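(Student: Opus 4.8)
The plan is to prove the two inclusions separately. The inclusion $\K_u \subset N(\Delta_N)$ is immediate and requires no work: any constant function belongs to $W_p^2(\Om)$, has vanishing normal derivative on $\Gamma$, hence lies in $\D(\Delta_N)$, and is annihilated by $\Delta$.

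For the nontrivial inclusion $N(\Delta_N)\subset\K_u$, let $u\in\D(\Delta_N)$ with $\Delta u=0$. I would first upgrade the regularity of $u$. Since $\Delta u=0$ belongs to $L_q(\Om)$ for every $q\in(1,\infty)$ (here I use that $\Om$ is bounded) and the Neumann boundary datum $\partial_\nu u=0$ is arbitrarily smooth, elliptic regularity for the Neumann problem on the smooth domain $\Om$ gives $u\in W_q^2(\Om)$ for every $q\in(1,\infty)$; taking $q=\max\{p,2\}$ and using the embedding $W_q^2(\Om)\hookrightarrow W_2^2(\Om)$ (again valid because $\Om$ is bounded) we may assume $u\in W_2^2(\Om)$. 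This step is the only place where one has to be a little careful: for $p<2$ one cannot test the equation against $u$ directly in $L_p$, so the passage to an $L_2$-based space is genuinely needed. Alternatively, one can simply invoke the well-known consistency of the Neumann Laplacian realizations $-\Delta_N$ in $L_p$ and $L_2$, which forces their null spaces to coincide and reduces the claim to the Hilbert space case.

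With $u\in W_2^2(\Om)$, $\Delta u=0$ in $\Om$, and $\del_\nu u=0$ on $\Gamma$, I would conclude by the standard energy identity: integration by parts gives
\[
0=-\int_\Om u\,\Delta u\,dx=\int_\Om|\nabla u|^2\,dx-\int_\Gamma u\,\del_\nu u\,dS=\int_\Om|\nabla u|^2\,dx,
\]
so $\nabla u\equiv 0$ on $\Om$. Since $\Om$ is connected, $u$ is constant, i.e.\ $u\in\K_u$. The main (and only) obstacle is the low-integrability issue for $p<2$ mentioned above; everything else is routine.
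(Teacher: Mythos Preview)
Your proof is correct and follows the same overall strategy as the paper: bootstrap the regularity of $u$ until it lies in $W_2^2(\Omega)$, then apply the energy identity to conclude $\nabla u=0$. The only difference is in how the bootstrap is carried out. You invoke elliptic $L_q$-regularity for the Neumann problem (moving across integrability exponents), whereas the paper stays in $L_p$ and iterates the resolvent: writing $u=(\mu-\Delta_N)^{-1}\mu u$ for large $\mu>0$ shows $u\in\D(\Delta_N)\Rightarrow u\in\D((\Delta_N)^2)\Rightarrow\cdots\Rightarrow u\in\D((\Delta_N)^\infty)\subset W_p^{2k}(\Omega)$ for all $k$, and then the Sobolev embedding $W_p^k(\Omega)\hookrightarrow W_2^2(\Omega)$ for $k$ large gives the Hilbert-space regularity. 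Both mechanisms are standard; the paper's version is perhaps slightly more self-contained since it uses only the already-established invertibility of $\mu-\Delta_N$ on the scale $\D((\Delta_N)^j)$, while your version is more direct if one is willing to quote elliptic $L_q$-theory. Your remark about consistency of the $L_p$- and $L_2$-realizations is another valid shortcut to the same end.
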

\begin{proof}
Let $u \in N(\Delta_N)$, i.e. $u \in W_p^2(\Omega)$, $\partial_\nu u = 0$ on $\Gamma$ and $-\Delta u = 0$. For sufficiently large $\mu >0$ the map 
$\mu-\Delta_N : \D(\Delta_N^{j+1}) \to \D(\Delta_N^j)$ is an ismorphism vor every $j \in \N_0$. We write $0 = \Delta_N u = \mu u - (\mu-\Delta_N)u$ 
and obtain $u = (\mu-\Delta_N)^{-1}\mu u$. Hence, if $u\in \D(\Delta_N)$ then we have $u\in \D((\Delta_N)^2)$ and altogether conclude $u\in \D((\Delta_N)^\infty)$.
The Sobolev embedding $W^k_p(\Omega) \hookrightarrow W^2_2(\Omega)$ holds for sufficiently large $k$. Therefore we in fact have $u \in W_2^2(\Om)$ and calculate
\begin{equation*}
0 = - \int_\Omega \Delta u \overline{u} \, dx = \int_\Omega \nabla u \cdot \nabla \overline{u} \, dx - \int_\Gamma \partial_\nu u \overline{u} \, dS = \|\nabla u\|_{L_2}^2,
\end{equation*}
hence $u\in \K_u$. Conversely, every function $u\in \K_u$ trivially satisfies $-\Delta u = 0$. 
\end{proof}

\begin{lemma}
\label{lem:spectrum_of_Neumann_Laplacian}
We have $N(\Delta_N) = N((\Delta_N)^2)$. 
\end{lemma}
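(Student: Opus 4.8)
The plan is to prove the two inclusions separately, the first being trivial and the second reducing quickly to Lemma \ref{lem:A-4} together with the divergence theorem. The inclusion $N(\Delta_N) \subseteq N((\Delta_N)^2)$ is immediate: if $\Delta_N u = 0$ then $(\Delta_N)^2 u = \Delta_N(\Delta_N u) = 0$.

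For the reverse inclusion, I would take $u \in N((\Delta_N)^2)$, so in particular $u \in \D((\Delta_N)^2) \subseteq \D(\Delta_N) \subseteq W_p^2(\Omega)$ with $\partial_\nu u = 0$ on $\Gamma$, and set $w := \Delta_N u \in \D(\Delta_N)$. Then $\Delta_N w = (\Delta_N)^2 u = 0$, so $w \in N(\Delta_N)$, and by Lemma \ref{lem:A-4} we have $w \in \K_u$, i.e.\ $w$ is a constant function on $\Omega$. Thus $\Delta u = w$ is constant.

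It remains to show that this constant is zero. Since $u \in W_p^2(\Omega)$, we have $\Delta u \in L_p(\Omega) \subseteq L_1(\Omega)$ (using that $\Omega$ is bounded) and the Neumann trace $\partial_\nu u$ lies in $W_p^{1-1/p}(\Gamma) \subseteq L_1(\Gamma)$, so the Gauss--Green formula applies and yields
\begin{equation*}
w\,|\Omega| = \int_\Omega \Delta u \, dx = \int_\Gamma \partial_\nu u \, dS = 0,
\end{equation*}
hence $w = 0$. Therefore $\Delta_N u = 0$, i.e.\ $u \in N(\Delta_N)$, which completes the proof.

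I do not expect a genuine obstacle here; the only point requiring a word of care is the justification of the divergence theorem, but this is standard for $W_p^2$-functions on a bounded smooth domain, exactly as used in the proof of Lemma \ref{lem:A-4}. An alternative to invoking Lemma \ref{lem:A-4} would be to bootstrap $u$ to $W_2^2(\Omega)$ as there and integrate $|\nabla(\Delta u)|^2$, but the argument above via Lemma \ref{lem:A-4} is shorter and self-contained.
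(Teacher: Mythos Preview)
Your proof is correct, and it takes a genuinely different route from the paper's argument. The paper does not invoke Lemma \ref{lem:A-4}; instead it pairs $(-\Delta)^2u$ directly against $u$ and integrates by parts twice (using $\partial_\nu(\Delta u)=0$ from $\Delta u\in\D(\Delta_N)$ and $\partial_\nu u=0$) to obtain $\|\Delta u\|_{L_2}^2=0$. Your argument, by contrast, first applies Lemma \ref{lem:A-4} to $w=\Delta_N u$ to conclude that $w$ is constant, and then uses the Gauss--Green identity $\int_\Omega \Delta u\,dx=\int_\Gamma \partial_\nu u\,dS=0$ to identify that constant as zero.

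Your approach has the mild advantage that the final divergence-theorem step only requires $u\in W_p^2(\Omega)$, whereas the paper's computation implicitly needs the $L_2$-pairing $\int_\Omega(\Delta^2 u)\bar u\,dx$ to make sense, which for small $p$ relies on the same bootstrap-to-$W_2^2$ argument used inside Lemma \ref{lem:A-4}. The paper's version is slightly more self-contained in that it does not quote Lemma \ref{lem:A-4} as a black box, but the underlying ingredients are the same. Both arguments are short and valid.
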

\begin{proof}
Let $u \in N((\Delta_N)^2)$, i.e. $u \in \D((\Delta_N)^2)$ and $(-\Delta)^2 u = 0$. Then
\begin{align*}
0 & = \int_\Omega (-\Delta)^2 u\,  \overline{u} \, dx = - \int_\Omega \nabla (\Delta u) \cdot \nabla \overline{u} \, dx + \int_\Gamma \partial_\nu (\Delta u) \overline{u}\, dS\\
   & = \int_\Omega \Delta u \Delta u \, dx = \|\Delta u\|_{L_2}^2,
\end{align*}
hence $-\Delta u = 0$ and thus $u \in N(\Delta_N)$. 
Conversely, let $u \in N(\Delta_N)$. Then $(-\Delta)^2 u = 0$ and $-\Delta u \in \D(\Delta_N)$. 
\end{proof}

Before we proceed, let us recall the space $L_{p,0}(\Omega) = \{u \in L_p(\Omega) \colon \int_\Omega u =0 \}$.

\begin{lemma}
We have $L_p(\Omega) = \K_u \oplus L_{p,0}(\Omega)$ as a topological direct sum.
\end{lemma}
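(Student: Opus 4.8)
The plan is to exhibit an explicit bounded projection onto $\K_u$ whose kernel is exactly $L_{p,0}(\Omega)$. Define $P\colon L_p(\Omega)\to L_p(\Omega)$ by $Pu = \bar u$, where $\bar u = \abs{\Omega}^{-1}\int_\Omega u\,dx$ is regarded as the constant function with that value. First I would check that $P$ is well defined and bounded: since $\Omega$ is bounded, $1\in L_{p'}(\Omega)$ with $1/p+1/p'=1$, so Hölder's inequality gives $\abs{\bar u}\le \abs{\Omega}^{-1}\norm{1}_{L_{p'}}\norm{u}_{L_p} = \abs{\Omega}^{-1/p}\norm{u}_{L_p}$, and hence $\norm{Pu}_{L_p} = \abs{\Omega}^{1/p}\abs{\bar u}\le \norm{u}_{L_p}$. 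Linearity is immediate, and $P$ is idempotent because the mean of a constant function equals that constant, i.e.\ $\overline{\bar u}=\bar u$.

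Next I would identify the range and kernel. Clearly $\operatorname{ran}P\subseteq\K_u$, and conversely every constant $c\in\K_u$ satisfies $Pc=c$, so $\operatorname{ran}P=\K_u$. On the other hand, $u\in\ker P$ means $\bar u=0$, i.e.\ $\int_\Omega u\,dx=0$, which is precisely the condition $u\in L_{p,0}(\Omega)$; thus $\ker P = L_{p,0}(\Omega)$. For any bounded projection $P$ one has the topological direct sum decomposition $L_p(\Omega)=\operatorname{ran}P\oplus\ker P$, with $u = Pu + (u-Pu)$ and $\operatorname{ran}P\cap\ker P=\{0\}$; in our case the intersection triviality is also transparent directly, since a constant function $c$ with $\int_\Omega c\,dx = c\abs{\Omega}=0$ must vanish as $\abs{\Omega}>0$. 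This yields $L_p(\Omega)=\K_u\oplus L_{p,0}(\Omega)$ with the projections onto the summands bounded, which is exactly the assertion.

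There is no real obstacle here; the only mild point to keep in mind is that boundedness of $\Omega$ (hence finiteness of $\abs\Omega$) is used twice — once to make $1\in L_{p'}(\Omega)$ so that $P$ is bounded on $L_p$, and once to conclude $\K_u\cap L_{p,0}(\Omega)=\{0\}$ — but this is part of the standing hypotheses on $\Omega$. One could alternatively phrase the argument without naming $P$, simply writing $u = \bar u + (u-\bar u)$ and noting uniqueness of the decomposition, but introducing the projection makes the topological (closed-complement) nature of the splitting manifest.
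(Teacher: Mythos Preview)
Your proof is correct and follows essentially the same approach as the paper: both exhibit the averaging projection, with the only cosmetic difference that the paper defines $P$ as $u\mapsto u-\langle u\rangle_\Omega$ (the projection onto $L_{p,0}(\Omega)$ with kernel $\K_u$), whereas you take the complementary projection onto $\K_u$. Your version is in fact more carefully written, since you verify boundedness explicitly via H\"older's inequality.
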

\begin{proof}
We consider the map $P\colon L_p(\Omega) \rightarrow L_{p,0}(\Omega)$, $u \mapsto u - \langle u\rangle_{\Omega}$ where
$\langle u\rangle_\Omega = |\Omega|^{-1} \int_\Omega u\, dx$. It is straightforward to verify that for $u\in L_p(\Omega)$ we indeed 
have $\int_\Omega u- \langle u \rangle_\Omega \, dx = 0$ and hence $Pu \in L_{p,0}(\Omega)$.
Furthermore, $P^2u = Pu- \langle Pu \rangle_\Omega = Pu$ implies that $P$ is a projection. 
Finally $N(P) = \K_u$ since $u-\langle u \rangle_\Omega = 0$ if and only if $u \in L_p(\Omega)$ is constant. 
\end{proof}

Therewith, by means of Proposition \ref{prop:remove} have determined the space $X_2=L_{p,0}(\Om)$ which gives us
the following positive realization of the Neumann Laplacian.

\begin{theorem}\label{thm:spectrum_Neumann_Laplacian}
The spectrum of the closed and densely defined operator
\begin{align*}
-\Delta_{N,0} \colon \D(\Delta_{N,0}) &\rightarrow L_{p,0}(\Omega),\\
u &\mapsto -\Delta u,
\end{align*}
with $\D(\Delta_{N,0}) = \D(\Delta_N) \cap L_{p,0}(\Omega)$ is a discrete subset of $[\lam_1^N, \infty)$, where 
$\lam_1^N>0$ is the smallest non-zero eigenvalue of $-\Delta_N$. Moreover, $\sigma(-\Delta_{N,0})$ consists only of eigenvalues with finite algebraic multiplicity.  
\end{theorem}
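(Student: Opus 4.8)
The plan is to apply the abstract spectral projection machinery recalled above to $A=-\Delta_N$ on $X=L_p(\Omega)$, with the bounded spectral set $\sigma_1=\{0\}$ and its complement $\sigma_2=\sigma(-\Delta_N)\setminus\{0\}$. First I would check that $\sigma_1$ is admissible: $0$ is an isolated point of $\sigma(-\Delta_N)\subseteq[0,\infty)$ with $\mathrm{dist}(\{0\},\sigma_2)\geq\lam_1^N>0$, and by the lemma showing that $\sigma(-\Delta_N)$ consists only of poles of $R(\cdot,-\Delta_N)$ the point $0$ is in particular such a pole. Hence the hypotheses of Proposition \ref{prop:remove} are met with $\lambda_0=0$.

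Next I would verify condition (v) of Proposition \ref{prop:remove}, namely $N(\Delta_N)=N((\Delta_N)^2)$, which is exactly Lemma \ref{lem:spectrum_of_Neumann_Laplacian}. By the equivalences (i)--(v) there, the spectral projection $P$ associated with the eigenvalue $0$ satisfies $X_1:=P(X)=N(\Delta_N)$ and $X_2:=(I-P)(X)=R(\Delta_N)$, the latter being closed with $X=X_1\oplus X_2$. Lemma \ref{lem:A-4} identifies $X_1=\K_u$. To identify $X_2$, I would note that $R(\Delta_N)\subseteq L_{p,0}(\Omega)$, since $\int_\Omega\Delta u\,dx=\int_\Gamma\partial_\nu u\,dS=0$ for every $u\in\D(\Delta_N)$; as both $R(\Delta_N)$ and $L_{p,0}(\Omega)$ are closed subspaces of $L_p(\Omega)$ complementary to the one-dimensional space $\K_u$ (the latter by the lemma preceding this theorem), this inclusion forces $X_2=R(\Delta_N)=L_{p,0}(\Omega)$. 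Consequently the operator $A_2$ of \cite[Proposition A.1.2]{Lun95} is precisely $-\Delta_{N,0}$, with $\D(A_2)=\D(\Delta_N)\cap L_{p,0}(\Omega)=\D(\Delta_{N,0})$, and that proposition gives $\sigma(-\Delta_{N,0})=\sigma_2=\sigma(-\Delta_N)\setminus\{0\}\subseteq[\lam_1^N,\infty)$.

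Finally, to see that this spectrum is discrete and consists of eigenvalues with finite algebraic multiplicity, I would check that $-\Delta_{N,0}$ has compact resolvent. For $\mu\in\rho(-\Delta_N)$ (so in particular $\mu\neq 0$) the subspace $L_{p,0}(\Omega)$ is invariant under $R(\mu,-\Delta_N)=(\mu+\Delta_N)^{-1}$: if $v\in L_{p,0}(\Omega)$ and $u=(\mu+\Delta_N)^{-1}v$, then integrating $\mu u+\Delta u=v$ over $\Omega$ yields $\mu\int_\Omega u\,dx=0$, hence $u\in L_{p,0}(\Omega)$. Since $\rho(-\Delta_{N,0})\supseteq\rho(-\Delta_N)$, it follows that $(\mu+\Delta_{N,0})^{-1}$ is the restriction of the compact operator $(\mu+\Delta_N)^{-1}$ to the closed invariant subspace $L_{p,0}(\Omega)$, hence is itself compact, which gives the claim. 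I do not expect a genuine obstacle in this argument; the only step requiring a little care is the identification $X_2=L_{p,0}(\Omega)$, which I would settle via the codimension-one argument above rather than by comparing the (non-unique) complements of $\K_u$ abstractly.
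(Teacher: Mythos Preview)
Your proof is correct and follows essentially the same approach as the paper, which derives the theorem from the preceding lemmas via Proposition~\ref{prop:remove} and \cite[Proposition~A.1.2]{Lun95}. In fact you are more explicit than the paper on two points: the identification $X_2=L_{p,0}(\Omega)$ (which the paper leaves implicit after establishing $L_p(\Omega)=\K_u\oplus L_{p,0}(\Omega)$, whereas you supply the inclusion $R(\Delta_N)\subseteq L_{p,0}(\Omega)$ and the codimension-one argument), and the compact-resolvent verification that gives discreteness and finite algebraic multiplicity.
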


\section{Traces and mixed derivatives}
\label{sec:trac-mixed-deriv}

In this section we consider the temporal trace operator in some anisotropic fractional Sobolev spaces.  Furthermore, we present some mixed derivative embeddings for such spaces which are needed for proving suitable mapping properties of differential operators.

A bounded linear operator $r\colon X\to Y$ between Banach spaces $X$ and $Y$ is called a \emph{retraction}, if there is a bounded linear map $r^c\colon  Y \to X$ such that $rr^c = I_Y$.  Thus $r$ is surjective and $r^c$ is a bounded right-inverse for $r$.  
The map $r^c$ is called a co-retraction for $r$.

The following trace theorem can be derived from {\cite[Lemma 11]{diB84}}, {\cite[Section 2.2.1]{Lun95}}, {\cite[Proposition III.4.10.3]{Ama95}}.  

\begin{theorem}\label{thm:trace_semigroup}
  Let $A$ be the generator of a bounded analytic semigroup $(\Exp^{-tA})_{t\geq 0}$ in a Banach space $X$ such that $A\colon \D(A)\to X$ has a bounded inverse, let $p\in(1,\infty)$ and let $\D_A(\alpha,p) := (X,\D(A))_{\alpha,p}$ for $\alpha\in(0,1)$ and $\D_A(1,p) := \D(A)$.  

  Then, for every $\alpha\in(1/p,1]$, the trace operator
  \begin{align*}
   \gamma_t = \cdot|_{t=0} \colon  W^\alpha_p(\bbR_+;X)\cap L_p(\bbR_+;\D_A(\alpha,p)) \to \D_A(\alpha-1/p,p) 
  \end{align*}
  is a retraction, the operator
  \begin{align*}
    R_A \colon  u_0\mapsto \(t\mapsto \Exp^{-t A}u_0\), \quad \D_A(\alpha-1/p,p) \to W^\alpha_p(\bbR_+;X)\cap L_p(\bbR_+;\D_A(\alpha,p))
  \end{align*}
  is a co-retracton for $\gamma_t$ and the following embedding is continuous.
  \begin{align*}
    W^\alpha_p(\bbR_+;X)\cap L_p(\bbR_+;\D_A(\alpha,p)) \hookrightarrow BUC(\bbR_+;\D_A(\alpha-1/p,p)).
  \end{align*}
\end{theorem}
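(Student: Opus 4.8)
The plan is to prove the mapping properties of $\gamma_t$ and of $R_A$ separately; assertion (i) is then their conjunction (the identity $\gamma_t R_A=\mathrm{id}$ follows from strong continuity of the semigroup once the $BUC$-embedding is available), and the stated embedding is obtained along the way. Set $\bbE_\alpha:=W^\alpha_p(\bbR_+;X)\cap L_p(\bbR_+;\D_A(\alpha,p))$ and note $\alpha-1/p\in(0,1)$ since $p\in(1,\infty)$, $\alpha\in(1/p,1]$. Two standard facts will be used throughout. First, because $0\in\rho(A)$ and the semigroup is analytic it is exponentially stable, $\|\Exp^{-tA}\|_{L(X)}+\|tA\Exp^{-tA}\|_{L(X)}\lesssim \Exp^{-\delta t}$ for some $\delta>0$. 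Second, the semigroup description of the interpolation spaces (cf.\ \cite[Section 2.2]{Lun95}): for $\theta\in(0,1)$ one has $y\in\D_A(\theta,p)$ if and only if $r\mapsto r^{1-\theta}A\Exp^{-rA}y\in L_p(\bbR_+,dr/r;X)$, with equivalence of norms, while $\D_A(1,p)=\D(A)$.

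The first step is to show that $R_A$ maps $\D_A(\alpha-1/p,p)$ boundedly into $\bbE_\alpha$. Given $u_0\in\D_A(\alpha-1/p,p)$, put $u(t)=\Exp^{-tA}u_0$. The $L_p(\bbR_+;X)$-bound is immediate from exponential stability. For the $L_p(\bbR_+;\D_A(\alpha,p))$-part one inserts $u$ into the seminorm description, uses $A\Exp^{-rA}\Exp^{-tA}=A\Exp^{-(r+t)A}$ and Fubini; the substitution $\rho=r+t$ collapses the $(r,t)$-integral to a constant times $\int_0^\infty\|A\Exp^{-\rho A}u_0\|_X^p\,\rho^{(1-\alpha)p}\,d\rho$, which is, up to lower-order terms, the $\D_A(\alpha-1/p,p)$-seminorm of $u_0$; this is exactly where the loss of $1/p$ time-derivatives is generated, and for $\alpha=1$ the same computation gives the classical identity $\int_0^\infty\|A\Exp^{-\rho A}u_0\|_X^p\,d\rho\simeq\|u_0\|_{\D_A(1-1/p,p)}^p$. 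For the $W^\alpha_p(\bbR_+;X)$-part: when $\alpha=1$ one has $u'=-A\Exp^{-\cdot A}u_0$, already covered; when $\alpha\in(1/p,1)$ one uses the difference-quotient norm of $W^\alpha_p$, the identity $u(t+\tau)-u(t)=-\int_0^\tau A\Exp^{-(t+\sigma)A}u_0\,d\sigma$, Minkowski's inequality in $t$, and a weighted Hardy inequality, reducing again to $\int_0^\infty\|A\Exp^{-\rho A}u_0\|_X^p\,\rho^{(1-\alpha)p}\,d\rho$. Hence $R_A\in L\bigl(\D_A(\alpha-1/p,p),\bbE_\alpha\bigr)$, and since $\Exp^{-tA}u_0\to u_0$ in $X$ as $t\downarrow0$ one gets $\gamma_t R_A=\mathrm{id}$ as soon as continuity up to $t=0$ is established.

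The second step is that $\gamma_t$ is bounded from $\bbE_\alpha$ into $\D_A(\alpha-1/p,p)$ and that the $BUC$-embedding holds. Since $\alpha>1/p$, the embedding $W^\alpha_p(\bbR_+;X)\hookrightarrow BUC(\bbR_+;X)$ is valid, so $\gamma_t u=u(0)$ is a priori defined in $X$; by translation invariance of the $\bbE_\alpha$-norm it suffices to bound $\|u(0)\|_{\D_A(\alpha-1/p,p)}$ by $\|u\|_{\bbE_\alpha}$ uniformly. Here I would use reiteration: as $\alpha p>1$ and $\D_A(\alpha,p)=(X,\D(A))_{\alpha,p}$, one has $\D_A(\alpha-1/p,p)=(X,\D_A(\alpha,p))_{1-1/(\alpha p),p}$, so the bound is an instance of the abstract anisotropic trace estimate $W^\alpha_p(\bbR_+;X)\cap L_p(\bbR_+;Y)\hookrightarrow BUC(\bbR_+;(X,Y)_{1-1/(\alpha p),p})$ for a general couple $Y\hookrightarrow X$, applied with $Y=\D_A(\alpha,p)$ (for $\alpha=1$ read $Y=\D(A)$). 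This abstract estimate is the content of \cite[Lemma 11]{diB84} and \cite[Proposition III.4.10.3]{Ama95}; it is proved by the trace method of real interpolation, estimating the $K$-functional of $u(0)$ through the Steklov averages $v_r=r^{-1}\int_0^r u(s)\,ds$ via the splitting $u(0)=v_r+(u(0)-v_r)$, where $v_r$ is controlled in $Y$ by the $L_p(\bbR_+;Y)$-norm and $u(0)-v_r$ in $X$ by the $W^\alpha_p$-modulus. Uniformity in the base point yields $\bbE_\alpha\hookrightarrow BUC(\bbR_+;\D_A(\alpha-1/p,p))$, in particular boundedness of $\gamma_t$.

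Combining the two steps, $\gamma_t\colon\bbE_\alpha\to\D_A(\alpha-1/p,p)$ is bounded and surjective with bounded right inverse $R_A$, hence a retraction with co-retraction $R_A$, and the continuous embedding into $BUC$ was established in the second step. The main obstacle is that second step, namely that the trace of an \emph{arbitrary} element of $\bbE_\alpha$ already lies in $\D_A(\alpha-1/p,p)$ with the correct norm control: for $\alpha=1$ this is the classical maximal-regularity trace argument (write $u(0)=u(t)-\int_0^t u'$, apply $A\Exp^{-tA}$, then a Hardy inequality), but for fractional $\alpha$ the fundamental theorem of calculus is unavailable, and one must go either through the abstract trace method of interpolation or through a careful Hardy-type estimate built on the difference-quotient description of $W^\alpha_p$. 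The co-retraction direction, by contrast, is a soft computation once the semigroup description of $\D_A(\theta,p)$ is in hand.
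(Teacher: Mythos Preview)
The paper does not give its own proof of this theorem: it is stated as a known result ``derived from \cite[Lemma 11]{diB84}, \cite[Section 2.2.1]{Lun95}, \cite[Proposition III.4.10.3]{Ama95}'' and no proof environment follows. Your proposal is therefore not to be compared against a proof in the paper but against the cited literature, and in that respect it is accurate: you invoke precisely the same three ingredients --- the semigroup characterisation of $\D_A(\theta,p)$ from Lunardi, the abstract anisotropic trace embedding from DiBenedetto and Amann, and reiteration --- and your two-step organisation (bound $R_A$ first via the semigroup description, then bound $\gamma_t$ via the trace method and reiteration) is the standard way these pieces fit together. Your remark that the fractional case $\alpha<1$ is the genuine obstacle, requiring the trace method rather than the fundamental theorem of calculus, is also on point. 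In short, your sketch is a correct and somewhat more explicit unpacking of what the paper leaves to the references.
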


This theorem can be applied to the spaces $W^\alpha_p(\bbR_+;W^s_p(\Omega;E)) \cap L_p(\bbR_+;W^{s+2\alpha}_p(\Omega;E)$ for $\alpha\in(1/p,1]$, $s\in[0,\infty)$, provided that in the case $\alpha<1$ the number $s+2\alpha$ is not an integer and provided that $s+2\alpha\leq 2k$.  Here and in the following, we assume that $E$ is a Banach space of class $\mathcal{HT}$ and has property $(\alpha)$, where we refer to \cite{KuWe04} and \cite{KaSa12} for the definitions of such spaces and additional information.  For instance, any Hilbert space is of class $\mathcal{HT}$ with property $(\alpha)$ and these properties are inherited to closed subspaces and isomorphic spaces.  Moreover, the space $L_q(\Omega,\mathcal A,\mu;E)$ on a $\sigma$-finite measure space $(\Omega,\mathcal A,\mu)$ with $q\in(1,\infty)$ is of class $\mathcal{HT}$ and has property $(\alpha)$.  For $s\in(0,\infty)$ and $p$, $q\in(1,\infty)$, the Sobolev-Slobodecki\u{\i} spaces $W^s_p(\bbR^n;E))$, the Bessel potential spaces $H^s_p(\bbR^n;E)$ and the Besov spaces $B^s_{p,q}(\bbR^n;E)$ and are also of class $\mathcal{HT}$ with property $(\alpha)$.

Let us indicate how Theorem \ref{thm:trace_semigroup} can be applied.  Let first $\calE_\Omega$ be an extension operator from $\Omega$ to $\bbR^n$ which acts as a bounded linear operator $W^t_p(\Omega;E) \to W^t_p(\bbR^n;E)$ for all $t\in[0,2k]$.  Such extension operators are defined in \cite{AdFo03} for $t\in\N_0$ and their boundedness for $t\notin\N_0$ follows from real interpolation.  Then it remains to study $\calE_\Omega u$ in $\bbR_+\times\bbR^n$.  In this situation the operator $A=1-\Delta$ in $X=W^s_p(\bbR^n;E)$ with domain $W^{s+2}_p(\bbR^n;E)$ has the required properties.  Indeed, \cite[Section 5]{DHP03} covers the case $s=0$ and an abstract result of Dore \cite{Dor99} covers the case $s\in(0,2)\setminus\{1\}$.  The remaining cases follow by means of isomorphic mappings, interpolation and taking fractional powers.  Hence $\D_A(\alpha,p) = W^{s+2\alpha}_p(\bbR^n;E)$ and $\D_A(\alpha-1/p,p) = B^{s+2\alpha-2/p}_{p,p}(\bbR^n;E)$.  Then the temporal trace operator can be rewritten as $\gamma_t \colon u \mapsto ((\calE_\Omega u)|_{t=0})|_\Omega$ and acts as a bounded linear operator
\begin{align*}
  W^\alpha_p(\bbR_+;W^s_p(\Omega;E)) \cap L_p(\bbR_+;W^{s+2\alpha}_p(\Omega;E)) \to B^{s+2\alpha-2/p}_{p,p}(\Omega;E).
\end{align*}

For the boundary spaces $W^\alpha_p(\bbR_+;W^s_p(\Gamma;E)) \cap L_p(\bbR_+;W^{s+2\alpha}_p(\Gamma;E))$ we use a common retraction $r \colon  W^t_p(\bbR^{n-1};E)^N \to W^t_p(\Gamma;E)$ for all $t\in[0,2k]$ with some $N\in\N$.   A co-retraction for $r$ can be constructed by means of a partition of unity for $\Gamma$ and local parametrizations of $\Gamma$ over subsets of $\bbR^{n-1}$ as in the proof of Lemma \ref{lem:right_inverse_for_spatial_trace}.  Then the temporal trace operator can be rewritten as $\gamma_t \colon u\mapsto r((r^cu)|_{t=0})$ and maps
\begin{align*}
  W^\alpha_p(\bbR_+;W^s_p(\Gamma;E)) \cap L_p(\bbR_+;W^{s+2\alpha}_p(\Gamma;E)) \to B^{s+2\alpha-2/p}_{p,p}(\Gamma;E).
\end{align*}

In order to construct functions with prescribed initial values, we consider an operator $A\colon \D(A)\subset X\to X$ as in Theorem \ref{thm:trace_semigroup} and define the spaces
\begin{align*}
  \D_A(k+\alpha,p) := A^{-k}\D_A(\alpha,p) = (\D(A^k),\D(A^{k+1}))_{\alpha,p} \quad\text{ for }k\in\bbN_0, \,\alpha\in[0,1],\,p\in(1,\infty).
\end{align*}
Then Theorem \ref{thm:trace_semigroup} and the identity $\del_y\Exp^{-yA}=-A\Exp^{-yA}=\Exp^{-yA}A$ yield the following result.
\begin{corollary}\label{cor:trace_semigroup}
  Let $k\in\bbN_0$, $\alpha\in(1/p,1]$ and $p\in(1,\infty)$.  Then the operator
  \begin{align*}
    R_A \colon  u\mapsto \(t\mapsto \Exp^{-t A}u\), \quad \D_A(k+\alpha-1/p,p) \to W^{k+\alpha}_p(\bbR_+;X)\cap L_p(\bbR_+;\D_A(k+\alpha,p))
  \end{align*}
  is a bounded right-inverse for $\gamma_t$.  
\end{corollary}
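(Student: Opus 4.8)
The plan is to reduce the statement to Theorem~\ref{thm:trace_semigroup} by conjugating with the boundedly invertible operator $A^{-k}$, using that $\Exp^{-tA}$ commutes with $A^{-k}$ and the semigroup differentiation formula $\del_t\Exp^{-tA}=-A\Exp^{-tA}=\Exp^{-tA}(-A)$. First I would record the interpolation bookkeeping: since $A\colon\D(A)\to X$ has a bounded inverse, the definition $\D_A(k+\beta,p)=A^{-k}\D_A(\beta,p)$ recorded above says precisely that $A^{-k}$ maps $\D_A(\beta,p)=(X,\D(A))_{\beta,p}$ isomorphically onto $\D_A(k+\beta,p)=(\D(A^k),\D(A^{k+1}))_{\beta,p}$ for every $\beta\in[0,1]$. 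Applying this with $\beta=\alpha-1/p\in(0,1-1/p]$ and with $\beta=\alpha$, I would, given $u\in\D_A(k+\alpha-1/p,p)$, set $v:=A^ku$; then $u\in\D(A^k)$, $v\in\D_A(\alpha-1/p,p)$ with $\|v\|_{\D_A(\alpha-1/p,p)}\simeq\|u\|_{\D_A(k+\alpha-1/p,p)}$, and $A^{-k}\in L(\D_A(\alpha,p),\D_A(k+\alpha,p))$. By Theorem~\ref{thm:trace_semigroup}, $R_Av\colon t\mapsto\Exp^{-tA}v$ then belongs to $W^\alpha_p(\bbR_+;X)\cap L_p(\bbR_+;\D_A(\alpha,p))$ with norm $\lesssim\|v\|_{\D_A(\alpha-1/p,p)}$, lies in $BUC(\bbR_+;\D_A(\alpha-1/p,p))$, and satisfies $\gamma_tR_Av=v$.

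Next I would transfer this regularity to $R_Au$. Since $\Exp^{-tA}$ commutes with $A^{-k}$ we have $(R_Au)(t)=\Exp^{-tA}u=A^{-k}\Exp^{-tA}v=A^{-k}(R_Av)(t)$ for all $t\ge0$. Because $u\in\D(A^k)$, the differentiation formula gives, for $0\le j\le k$,
\[
  \del_t^j(R_Au)(t)=\Exp^{-tA}(-A)^ju=(-1)^jA^{j-k}(R_Av)(t).
\]
For $j<k$ the operator $A^{j-k}$ is bounded on $X$, so $\del_t^j(R_Au)\in L_p(\bbR_+;X)$, whereas for $j=k$ we obtain $\del_t^k(R_Au)=(-1)^kR_Av\in W^\alpha_p(\bbR_+;X)$. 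Invoking the characterisation of $W^{k+\alpha}_p(\bbR_+;X)$ as the space of functions in $W^k_p(\bbR_+;X)$ whose $k$-th derivative lies in $W^\alpha_p(\bbR_+;X)$ (for $\alpha=1$ this is simply $W^{k+1}_p(\bbR_+;X)$), this yields $R_Au\in W^{k+\alpha}_p(\bbR_+;X)$ with $\|R_Au\|_{W^{k+\alpha}_p(\bbR_+;X)}\lesssim\|R_Av\|_{W^\alpha_p(\bbR_+;X)}$. Moreover $R_Au=A^{-k}R_Av\in L_p(\bbR_+;\D_A(k+\alpha,p))$ with the corresponding bound, since $A^{-k}\in L(\D_A(\alpha,p),\D_A(k+\alpha,p))$.

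Finally I would chain these estimates with the bound from the first step to get boundedness of $R_A\colon\D_A(k+\alpha-1/p,p)\to W^{k+\alpha}_p(\bbR_+;X)\cap L_p(\bbR_+;\D_A(k+\alpha,p))$, and for the right-inverse identity I would apply $A^{-k}\in L(\D_A(\alpha-1/p,p),\D_A(k+\alpha-1/p,p))$ to the embedding $R_Av\in BUC(\bbR_+;\D_A(\alpha-1/p,p))$, which gives $R_Au=A^{-k}R_Av\in BUC(\bbR_+;\D_A(k+\alpha-1/p,p))$ and hence $\gamma_tR_Au=A^{-k}\gamma_tR_Av=A^{-k}v=u$. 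I do not expect any real obstacle here: the only points requiring care are the two pieces of bookkeeping just used, namely the identity $\D_A(k+\beta,p)=A^{-k}\D_A(\beta,p)$ and the elementary derivative characterisation of the Sobolev--Slobodecki\u{\i} space $W^{k+\alpha}_p(\bbR_+;X)$, while all the analytic substance is already contained in Theorem~\ref{thm:trace_semigroup} and the semigroup differentiation formula.
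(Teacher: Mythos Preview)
Your proof is correct and follows exactly the approach the paper intends: the paper's own justification is the single line ``Theorem~\ref{thm:trace_semigroup} and the identity $\del_y\Exp^{-yA}=-A\Exp^{-yA}=\Exp^{-yA}A$ yield the following result,'' and what you have written is precisely the natural expansion of that hint via the conjugation $R_Au=A^{-k}R_Av$ together with the interpolation identity $\D_A(k+\beta,p)=A^{-k}\D_A(\beta,p)$ recorded just before the corollary.
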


We next deal with higher order initial conditions.
\begin{lemma}\label{lem:temp-trace-retraction}
  Let $\gamma_t^j := (\del_t^j\cdot)|_{t=0}$ and let $l\in\N_0$, $m\in\N$ with $m\geq l+1$.  Then the operator
  \begin{align*}
    (\gamma_t^0,\gamma_t^1,\ldots,\gamma_t^l) \colon  W^m_p(\bbR_+;X) \cap L_p(\bbR_+;\D(A^m)) \to {\prod}_{j=0}^l \D_A(m-j-1/p,p)
  \end{align*}
  is a retraction.
\end{lemma}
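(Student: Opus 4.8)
The plan is to show that $(\gamma_t^0,\ldots,\gamma_t^l)$ is bounded by means of the mixed derivative theorem, and then to exhibit an explicit bounded right‑inverse built from polynomial‑weighted orbits of the analytic semigroup. Throughout I use two standing consequences of the hypotheses in Theorem \ref{thm:trace_semigroup}: since $0\in\rho(A)$ and $\sigma(A)$ lies in a proper subsector of the open right half‑plane, $\|\Exp^{-tA}\|_{L(X,X)}\le M\Exp^{-\delta t}$ for some $\delta>0$, and analyticity gives the smoothing bounds $\|A^k\Exp^{-tA}\|_{L(X,X)}\lesssim t^{-k}\Exp^{-\delta t/2}$ for $k\in\N$ (with a uniform bound for $k=0$). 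For boundedness, fix $u\in W^m_p(\bbR_+;X)\cap L_p(\bbR_+;\D(A^m))$ and $0\le j\le l$; then $m-j\ge m-l\ge1$. Clearly $\del_t^j u\in W^{m-j}_p(\bbR_+;X)$, and the operator‑valued mixed derivative theorem (available in the present $\calHT$‑framework) yields $A^{m-j}\del_t^j u\in L_p(\bbR_+;X)$, so $\del_t^j u\in W^{m-j}_p(\bbR_+;X)\cap L_p(\bbR_+;\D(A^{m-j}))$ with norm $\lesssim\|u\|$. Corollary \ref{cor:trace_semigroup} applied with $k=m-j-1$ and $\alpha=1$ then gives $\gamma_t^j u=\gamma_t(\del_t^j u)\in\D_A(m-j-1/p,p)$ with the required estimate.

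For the right‑inverse, the Leibniz rule together with $\del_t^{i-k}\Exp^{-tA}=(-A)^{i-k}\Exp^{-tA}$ and $\del_t^k(t^j/j!)|_{t=0}=1$ if $k=j$, $=0$ otherwise, shows that $\bigl(\del_t^i(t^j\Exp^{-tA}y/j!)\bigr)|_{t=0}$ equals $\binom{i}{j}(-A)^{i-j}y$ when $i\ge j$ and vanishes when $i<j$. Since $\D_A(m-j-1/p,p)=A^{-(m-j-1)}\D_A(1-1/p,p)$, each $(-A)^{i-j}$ restricts to an isomorphism $\D_A(m-j-1/p,p)\to\D_A(m-i-1/p,p)$; hence the lower‑triangular system $x_i=\sum_{j=0}^i\binom{i}{j}(-A)^{i-j}y_j$, $i=0,\ldots,l$, is uniquely solvable for $(y_j)_{j=0}^l$, and $(x_j)_j\mapsto(y_j)_j$ is a bounded isomorphism of $\prod_{j=0}^l\D_A(m-j-1/p,p)$. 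Setting $R(x_0,\ldots,x_l)(t):=\sum_{j=0}^l t^j\Exp^{-tA}y_j/j!$, the identities above give $\gamma_t^i\bigl(R(x_0,\ldots,x_l)\bigr)=\sum_{j\le i}\binom{i}{j}(-A)^{i-j}y_j=x_i$ for $i=0,\ldots,l$, so $R$ inverts $(\gamma_t^0,\ldots,\gamma_t^l)$ at the level of the trace identities. (Equivalently, one can run an induction on $l$ based only on Corollary \ref{cor:trace_semigroup}, peeling off $\Exp^{-tA}x_0$ and using an antiderivative — but then one must carry the explicit exponential decay of the lift along the induction, which the explicit $R$ makes manifest.)

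It remains to check that $R$ maps boundedly into $W^m_p(\bbR_+;X)\cap L_p(\bbR_+;\D(A^m))$, and by linearity it suffices to bound one summand $v_j(t)=t^j\Exp^{-tA}y_j/j!$ with $0\le j\le l\le m-1$. Writing $y_j=A^{-(m-j-1)}\tilde y_j$ with $\tilde y_j\in\D_A(1-1/p,p)$ and $\|\tilde y_j\|_{\D_A(1-1/p,p)}\lesssim\|y_j\|_{\D_A(m-j-1/p,p)}$, the Leibniz rule shows that $\del_t^r v_j$ ($0\le r\le m$) is a finite combination of terms $t^{j-i}A^q\Exp^{-tA}\tilde y_j$ with $0\le i\le\min(r,j)$ and $q=(r-i)-(m-j-1)$, while $A^m v_j=t^j A^{j+1}\Exp^{-tA}\tilde y_j/j!$. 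When $q\ge1$ one splits $A^q\Exp^{-tA}\tilde y_j=A^{q-1}\Exp^{-(t/2)A}\bigl(A\Exp^{-(t/2)A}\tilde y_j\bigr)$; the smoothing bound and a short bookkeeping of exponents ($(j-i)-(q-1)=m-r\ge0$) give $t^{j-i}\|A^{q-1}\Exp^{-(t/2)A}\|_{L(X,X)}\lesssim t^{m-r}\Exp^{-\delta t/4}\lesssim1$, so the term is dominated by $\|A\Exp^{-(t/2)A}\tilde y_j\|_X$, which lies in $L_p(\bbR_+)$ with norm $\lesssim\|\tilde y_j\|_{\D_A(1-1/p,p)}$ by Theorem \ref{thm:trace_semigroup}. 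When $q\le0$, $A^q\in L(X,X)$ and the term is dominated by $t^{j-i}\Exp^{-\delta t}\|\tilde y_j\|_X\in L_p(\bbR_+)$. Summing the finitely many contributions yields $v_j\in W^m_p(\bbR_+;X)\cap L_p(\bbR_+;\D(A^m))$ with norm $\lesssim\|y_j\|_{\D_A(m-j-1/p,p)}$; hence $R$ is bounded and $(\gamma_t^0,\ldots,\gamma_t^l)$ is a retraction.

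The main obstacle is this last step: verifying that each polynomial‑weighted orbit $t^j\Exp^{-tA}y_j$ sits in the anisotropic space $W^m_p(\bbR_+;X)\cap L_p(\bbR_+;\D(A^m))$, which amounts to balancing the $j$ time‑derivatives charged by the factor $t^j$ against the $j$ spatial orders supplied by $y_j\in\D_A(m-j-1/p,p)$, uniformly down to $t=0$; the interpolation identity $\D_A(m-j-1/p,p)=A^{-(m-j-1)}\D_A(1-1/p,p)$ and the $L_p$‑in‑time control of $A\Exp^{-\cdot A}\tilde y_j$ from Theorem \ref{thm:trace_semigroup} are precisely what makes this work. A minor additional point is justifying the operator‑valued mixed derivative theorem invoked for boundedness within the stated generality.
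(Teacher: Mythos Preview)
Your proof is correct, but your co-retraction differs from the paper's.  The paper builds the right-inverse from \emph{rescaled} semigroup orbits: for each $j$ it sets
\[
  (S_j^A x)(t) = \sum_{i=0}^l c_{ij}\, \Exp^{-t(1+i)A} A^{-j} x,
\]
where the coefficients $(c_{ij})_i$ solve the Vandermonde system $\sum_i c_{ij}(-(1+i))^m = \delta_{mj}$ for $m\in\{0,\ldots,l\}$, and then $S^A(x_0,\ldots,x_l)=\sum_j S_j^A x_j$.  Since each building block $\Exp^{-t(1+i)A} A^{-j} x$ is a pure semigroup orbit applied to an element of $\D_A(m-1/p,p)$, its membership in $W^m_p(\R_+;X)\cap L_p(\R_+;\D(A^m))$ follows \emph{directly} from Corollary~\ref{cor:trace_semigroup} (the operator $(1+i)A$ satisfies the same hypotheses as $A$ and has the same interpolation spaces).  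This sidesteps entirely what you identify as ``the main obstacle'': the analysis of the polynomial-weighted orbits $t^j\Exp^{-tA}y_j$ in the anisotropic space.  Your estimates for these orbits are correct, but the Vandermonde construction makes them unnecessary.  Both constructions are classical; the paper's is shorter here precisely because Corollary~\ref{cor:trace_semigroup} has already done the work.

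On boundedness: the paper does not prove it in the lemma either, relying on the references cited for Theorem~\ref{thm:trace_semigroup} (in particular Amann's trace theory for interpolation scales, which does not require BIP).  Your invocation of the mixed derivative theorem is a valid alternative in the applications where $A$ has bounded imaginary powers, and your caveat about its generality is well taken; for the abstract statement as written, the Amann reference is the safer justification.
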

\begin{proof}
  For $j\in\{0,1,\ldots,l\}$ and $x\in X$ we define
  \begin{align*}
    (S^A_j x)(t,\cdot) := {\sum}_{i=0}^l c_{ij} \Exp^{- t(1+i) A} A^{-j}x \quad\text{ for } t \geq 0,
  \end{align*}
  where, for each $j$, the $l+1$ numbers $c_{ij}$ ($i\in\{0,1,\ldots,l\}$) solve the linear system
  \begin{align*}
    {\sum}_{i=0}^l c_{ij} (-(1+i))^m = \delta_{mj} \quad\text{ for }m \in\{0,1,\ldots,l\}.
  \end{align*}
  By using Vandermonde's matrix
  \begin{align*}
    V = \begin{pmatrix} 1 & 1 & \cdots & 1 \\ 1 & 2 & \cdots & 1+k \\ \vdots & \vdots & & \vdots \\  1^k & 2^k & \cdots & (1+k)^k \end{pmatrix} \quad\text{ with } \quad \det V = (-1)^{\frac{k(k+1)}2}\prod_{j=1}^k j! \neq 0,
  \end{align*}
  the numbers $c_{ij}$ are given by $(c_{0j},\ldots,c_{lj})^\sfT := V^{-1} e_j^\sfT$.  Hence
  \begin{align*}
    (\del^m_t S_j^A x)(0)  = {\sum}_{i=0}^l c_{ij} (-(1+i))^m x = \delta_{mj} x \quad\text{ for } m \in\{0,1,\ldots,l\}.
  \end{align*}
  From Corollary \ref{cor:trace_semigroup} we infer that $S_j^A$ acts as a bounded linear operator
  \begin{align*}
    S_j^A \colon  \D_A(m-j-1/p,p) \to W^m_p(\bbR_+;X) \cap L_p(\bbR_+;\D(A^m)) \quad\text{ for } m\in\bbN, \, m \geq j+1.
  \end{align*}
  Therefore the desired co-retraction is given by
  \begin{align*}
    S^A(x_0,x_1,\ldots,x_l) &:= {\sum}_{j=0}^l S^A_j x_j. \qedhere 
  \end{align*}
\end{proof}

\begin{theorem}[Mixed derivative embeddings]\label{thm:mixed_deri}
  Let $n\in\bbN$, $p\in(1,\infty)$, $t,s\in[0,\infty)$, $\tau,\sigma\in(0,\infty)$, $\theta\in(0,1)$, $E\in\mathcal{HT}$, $J=\bbR$ or $J = (0,T)$ for $T\in(0,\infty]$ and let $\Omega$ be the whole space $\bbR^n$ or a bounded domain with smooth boundary or a compact smooth hypersurface of $\bbR^n$.  

  Then the following embeddings are continuous.
  \begin{subequations}\label{eq:mixed_deri}      
    \begin{align}
      H^{t+\tau}_p(J;H^s_p(\Omega;E)) \cap H^t_p(J;H^{s+\sigma}_p(\Omega;E)) &\hookrightarrow H^{t+\theta\tau}_p(J;H^{s+(1-\theta)\sigma}_p(\Omega;E)) \label{eq:mixed_deri_H},\\
      B^{t+\tau}_{p,p}(J;H^s_p(\Omega;E)) \cap H^t_p(J;B^{s+\sigma}_{p,p}(\Omega;E)) &\hookrightarrow B^{t+\theta\tau}_{p,p}(J;H^{s+(1-\theta)\sigma}_p(\Omega;E)) \label{eq:mixed_deri_BH-BH},\\
      B^{t+\tau}_{p,p}(J;H^s_p(\Omega;E)) \cap H^t_p(J;B^{s+\sigma}_{p,p}(\Omega;E)) &\hookrightarrow H_p^{t+\theta\tau}(J;B_{p,p}^{s+(1-\theta)\sigma}(\Omega;E)) \label{eq:mixed_deri_BH-HB}.
    \end{align}
  \end{subequations}
\end{theorem}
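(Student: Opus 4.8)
The plan is to reduce both the time interval and the spatial domain to the whole space, to settle the Bessel potential embedding \eqref{eq:mixed_deri_H} by the operator-theoretic mixed derivative theorem, and then to derive the two Besov versions \eqref{eq:mixed_deri_BH-BH} and \eqref{eq:mixed_deri_BH-HB} from \eqref{eq:mixed_deri_H} by real interpolation. \emph{Step 1 (reduction to the model case).} First I would remove the domains. In the space variable a Stein-type total extension operator for a bounded smooth domain, or a localization-and-charts construction for a compact smooth hypersurface as in the proof of Lemma \ref{lem:right_inverse_for_spatial_trace}, yields a common bounded extension $W^r_p(\Omega;E)\to W^r_p(\bbR^n;E)$, $H^r_p(\Omega;E)\to H^r_p(\bbR^n;E)$, $B^r_{p,p}(\Omega;E)\to B^r_{p,p}(\bbR^n;E)$ on all scales occurring in \eqref{eq:mixed_deri}, together with the bounded restriction. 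In the time variable a reflection/Stein extension does the same for $J=(0,T)$ with $T\in(0,\infty]$ in the scales $H^r_p(J;\cdot)$ and $B^r_{p,p}(J;\cdot)$; since it acts only in $t$ it respects the anisotropic structure. Composing extensions with restrictions it suffices to prove \eqref{eq:mixed_deri} for $J=\bbR$ and $\Omega=\bbR^n$.

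\emph{Step 2 (the Bessel potential case).} Put $X:=L_p(\bbR^{1+n};E)$, which is again of class $\calHT$ (and has property $(\alpha)$), and let $\Lambda_t:=(1-\partial_t^2)^{1/2}$ and $\Lambda_x:=(1-\Delta_x)^{1/2}$ be the Fourier multipliers on $X$ with symbols $\langle\xi\rangle$ and $\langle\eta\rangle$. These are invertible sectorial operators of angle $0$ with bounded $H^\infty$-calculus on $X$, their resolvents commute, and $H^a_p(\bbR;H^b_p(\bbR^n;E))=\mathcal{D}(\Lambda_t^a\Lambda_x^b)=\mathcal{D}(\Lambda_t^a)\cap\mathcal{D}(\Lambda_x^b)$ with equivalent norms. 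Writing $v=\Lambda_t^t\Lambda_x^s u$, the embedding \eqref{eq:mixed_deri_H} is equivalent to the mixed derivative estimate
\begin{equation*}
\|\Lambda_t^{\theta\tau}\Lambda_x^{(1-\theta)\sigma}v\|_X \lesssim \|\Lambda_t^{\tau}v\|_X + \|\Lambda_x^{\sigma}v\|_X , \qquad v\in\mathcal{D}(\Lambda_t^{\tau})\cap\mathcal{D}(\Lambda_x^{\sigma}),
\end{equation*}
which is exactly the mixed derivative theorem for the commuting sectorial operators $\Lambda_t^{\tau}$ and $\Lambda_x^{\sigma}$ with bounded imaginary powers on the UMD space $X$ (Dore--Venni, Kalton--Weis). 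Its symbol-level root is the weighted arithmetic-geometric mean inequality $\langle\xi\rangle^{\theta\tau}\langle\eta\rangle^{(1-\theta)\sigma}\le\theta\langle\xi\rangle^{\tau}+(1-\theta)\langle\eta\rangle^{\sigma}\le\langle\xi\rangle^{\tau}+\langle\eta\rangle^{\sigma}$, i.e.\ $\langle\xi\rangle^{t+\theta\tau}\langle\eta\rangle^{s+(1-\theta)\sigma}\le\langle\xi\rangle^{t+\tau}\langle\eta\rangle^{s}+\langle\xi\rangle^{t}\langle\eta\rangle^{s+\sigma}$.

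\emph{Step 3 (the Besov cases and the main obstacle).} I would deduce \eqref{eq:mixed_deri_BH-BH} and \eqref{eq:mixed_deri_BH-HB} from \eqref{eq:mixed_deri_H} by real interpolation, using the standard identifications $B^{\gamma}_{p,p}(\bbR;Z)=(H^{\gamma_0}_p(\bbR;Z),H^{\gamma_1}_p(\bbR;Z))_{\eta,p}$ for $\gamma_0<\gamma<\gamma_1$, $\gamma=(1-\eta)\gamma_0+\eta\gamma_1$ (and its analogue in the space variable), together with the fact that $(\,\cdot\,,\,\cdot\,)_{\eta,p}$ commutes with $L_p(\bbR;\cdot)$ and with $H^t_p(\bbR;\cdot)$. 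Concretely, for \eqref{eq:mixed_deri_BH-BH} one interpolates with exponent $\theta$ the trivial embedding $H^t_p(\bbR;H^{s+\sigma}_p)\hookrightarrow H^t_p(\bbR;H^{s+(1-\theta)\sigma}_p)$ against the instance $H^{t+\tau/\theta}_p(\bbR;H^{s}_p)\cap H^t_p(\bbR;H^{s+\sigma}_p)\hookrightarrow H^{t+\tau}_p(\bbR;H^{s+(1-\theta)\sigma}_p)$ of \eqref{eq:mixed_deri_H}; the interpolated target couple equals $B^{t+\theta\tau}_{p,p}(\bbR;H^{s+(1-\theta)\sigma}_p)$, and it remains to see that $B^{t+\tau}_{p,p}(\bbR;H^s_p)\cap H^t_p(\bbR;B^{s+\sigma}_{p,p})$ embeds into the interpolated domain couple; for \eqref{eq:mixed_deri_BH-HB} one argues symmetrically by interpolating in the space smoothness. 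This last step is the delicate one: real interpolation does not commute with intersection in general, so one must prove an intersection/interpolation lemma identifying $(X_0\cap C,X_1\cap C)_{\theta,p}$ with $(X_0,X_1)_{\theta,p}\cap C$ for the relevant $X_0,X_1,C$. With the endpoints chosen above the couple is ordered ($X_1\hookrightarrow X_0$ and $C\hookrightarrow X_0$), which reduces the matter to interpolation of a space with one of its subspaces and to the elementary embeddings $B^r_{p,p}\hookrightarrow H^r_p\hookrightarrow B^r_{p,p}$ (according as $p\le 2$ or $p\ge 2$). Carrying out this bookkeeping cleanly — or, in the alternative route via a dyadic Littlewood--Paley decomposition in the relevant frequency variable plus uniform-in-$j$ block versions of the Step 2 multiplier estimate and an $\ell^p$-summation, making the two mixed Besov norms match on the two sides — is the part that needs genuine care.
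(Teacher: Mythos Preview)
Your Steps 1 and 2 are essentially the paper's argument: reduce to $\bbR\times\bbR^n$ by extension/retraction, then apply Sobolevski\u{\i}'s mixed derivative theorem to the commuting operators $(1-\del_t^2)^{\tau/2}$ and $(1-\Delta_n)^{\sigma/2}$, which have bounded $\calH^\infty$-calculus on the UMD ground space. This gives \eqref{eq:mixed_deri_H}.

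Your Step 3 has a genuine gap, which you yourself flag but do not close. Interpolating an embedding of the form $Y_0\cap Z \hookrightarrow W_1$ against $Y_0\hookrightarrow W_0$ gives only $(Y_0,Y_0\cap Z)_{\theta,p}\hookrightarrow (W_0,W_1)_{\theta,p}$, and there is no general identification of the left side with $(Y_0,Z)_{\theta,p}\cap Y_0$ or anything like it. Your appeal to the ``elementary embeddings $B^r_{p,p}\hookrightarrow H^r_p\hookrightarrow B^r_{p,p}$'' cannot rescue this: for fixed $p\neq 2$ only one of those inclusions holds, so you cannot trade freely between $H$ and $B$ on both sides. The alternative Littlewood--Paley route you mention would work but is a separate proof, not a bookkeeping detail.

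The paper sidesteps this entirely by a different choice of the two endpoints to be interpolated. It perturbs \emph{both} parameters simultaneously, considering $Z_\pm := H^{t+\tau\pm\eps\tau}_p(H^s_p)\cap H^t_p(H^{s+\sigma\pm\eps\sigma}_p)$ for small $\eps>0$, and then applies \eqref{eq:mixed_deri_H} to each $Z_\pm$ with $\theta_\pm$ chosen so that the target spatial regularity is always $s+(1-\theta)\sigma$. The key observation is that $Z_\pm$ is the domain of a power of the single operator $L=(1-\del_t^2)^{(1+\eps)\tau/2}+(1-\Delta_n)^{(1+\eps)\sigma/2}$ in $H^t_p(H^s_p)$; hence $(Z_-,Z_+)_{1/2,p}$ is computed by reiteration as $\D_L(1/(1+\eps),p)=B^{t+\tau}_{p,p}(H^s_p)\cap H^t_p(B^{s+\sigma}_{p,p})$, exactly the left side of \eqref{eq:mixed_deri_BH-BH}. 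No intersection/interpolation lemma is needed because the intersection \emph{is} a domain in a single scale.
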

\begin{proof}
  We adapt the proof of \cite[Proposition 3.2]{MeSc12}.  It is sufficient to consider the case $J\times\Omega = \bbR\times\bbR^n$ since the other spaces are retracts of corresponding spaces over $\R\times\R^n$.  

  In the ground space $X=H^t_p(H^s_p) := H^t_p(\R;H^s_p(\R^n;E))$ we consider the operators
  \begin{align*}
    \begin{aligned}
      A &= (1-\del_t^2)^{t/2}, & \D(A) &= H^{t+\tau}_p(\R;H^s_p(\bbR^n;E)),\\
      B &= (1-\Delta_n)^{s/2}, & \D(B) &= H^t_p(\R;H^{s+\sigma}_p(\R^n;E)).        
    \end{aligned}
  \end{align*}
  Here $\Delta_n$ denotes the Laplacian in $\bbR^n$ with respect to the spatial variable.  It follows from \cite[Theorem 5.5]{DHP03} (see also \cite[Lemma 3.1]{MeSc12}) that the operator
  \begin{align*}
    J_r := (1-\Delta_m)^{r/2} \colon  H^{r+\rho}_p(\R^m;F) \to H^r_p(\R^m;F)
  \end{align*}
  is invertible and has a bounded $\calH^\infty$ functional calculus and thus bounded imaginary powers in $H^r_p(\R^m;F)$ for all $m\in\{1,n\}$, $r,\rho\in[0,\infty)$ and all Banach spaces $F$ of class $\calHT$.  The latter property implies that its fractional powers $J_r^\theta$ ($\theta\in(0,1)$) have the domains $\D(J_r^\theta) = [H^r_p(\R^m;F),H^{r+\rho}_p(\R^m;F)]_\theta = H^{r+\theta\rho}_p(\R^m;F)$ by \cite[Theorem 2.5]{DHP03} and complex interpolation.  

  By choosing $(m,r,\rho,F)=(1,t,\tau,H^s_p(\bbR^n;E))$ or $(m,r,\rho,F) = (n,s,\sigma,H^t_p(\R;E))$ and employing Fubini's theorem, we see that $A\colon  \D(A)\to X$ and $B \colon  \D(B)\to X$ are invertible and have bounded $\calH^\infty$ functional calculi.  Sobolevski\u{\i}'s mixed derivative theorem \cite[Theorem 6]{Sob75} implies that $\D(A) \cap \D(B) \hookrightarrow \D(A^\theta B^{1-\theta}) \cap \D(B^{1-\theta}A^\theta)$ which proves \eqref{eq:mixed_deri_H}.  

  For proving \eqref{eq:mixed_deri_BH-BH} we apply \eqref{eq:mixed_deri_H} and real interpolation to the space
  \begin{align*}
    H^{t+\tau\pm\eps\tau}_p(H^s_p) \cap H^t_p(H^{s+\sigma\pm\eps\sigma}_p),
  \end{align*}
  for sufficiently small $\eps>0$.  For $\tau_\pm := (1\pm\eps)\tau$ and $\sigma_\pm := (1\pm\eps)\sigma$ and $\theta_\pm\in(0,1)$, we obtain
  \begin{align*}
    H^{t+\tau\pm\eps\tau}_p(H^s_p) \cap H^t_p(H^{s+\sigma\pm\eps\sigma}_p) =
    H^{t+\tau_\pm}_p(H^s_p) \cap H^t_p(H^{s+\sigma_\pm}_p) \hookrightarrow H^{t+\theta_\pm\tau_\pm}_p(H^{s+(1-\theta_\pm)\sigma_\pm}_p).
  \end{align*}
  We choose $\theta_\pm$ such that $(1-\theta_\pm)\sigma_\pm = (1-\theta)\sigma$, that is, $\theta_\pm := (\theta\pm\eps)/(1\pm\eps)$.  Then $\theta_\pm\eps_\pm = \theta\tau \pm \eps\tau$ and it remains to apply the real interpolation functor $(\cdot,\cdot)_{1/2,p}$ to
  \begin{align*}
    Z_\pm := H^{t+\tau\pm\eps\tau}_p(H^s_p) \cap H^t_p(H^{s+\sigma\pm\eps\sigma}_p) \hookrightarrow H^{t+\theta\tau\pm\eps\tau}_p(H^{s+(1-\theta)\sigma}_p).
  \end{align*}
  Indeed, interpolation of the right-hand side yields
  \begin{align*}
    (Z_-,Z_+)_{1/2,p} \hookrightarrow \(H^{t+\theta\tau-\eps\tau}_p(H^{s+(1-\theta)\sigma}_p),H^{t+\theta\tau+\eps\tau}_p(H^{s+(1-\theta)\sigma}_p)\)_{1/2,p} = B^{t+\theta\tau}_{p,p}(H^{s+(1-\theta)\sigma}_p).
  \end{align*}
  For an interpolation of the left-hand side we write $Z_+ = \D(L)$ and $Z_-=\D(L^{(1-\eps)/(1+\eps)})$ where the operator $L = (1-\del_t^2)^{(1+\eps)\tau/2} + (1-\Delta_n)^{(1+\eps)\sigma/2}$ is considered in the ground space $Z_0 := H^t_p(H^s_p)$.  Then the reiteration theorem (\cite[Remark 1.2.16]{Lun95}) yields
  \begin{align*}
    (Z_-,Z_+)_{1/2,p} &= (\D(L^{(1-\eps)/(1+\eps)}), \D(L))_{1/2,p} \\
    &= \D_L\(1/2+(1-\eps)/(2+2\eps),p\) = B^{t+\tau}_{p,p}(H^s_p) \cap H^t_p(B^{s+\sigma}_{p,p}).
  \end{align*}
  Hence \eqref{eq:mixed_deri_BH-BH} is proved.  The proof of \eqref{eq:mixed_deri_BH-HB} is similar and therefore omitted.  
\end{proof}

\section{Higher regularity for the heat equation}
\label{app:heathigher}

We study the regularity of solutions of the heat problem
\begin{align}\label{eq:ap_heat_problem}
  \left\{
    \begin{aligned}
      (\del_t+\mu_B-\Delta)u &= f &&\text{ in }J\times\Omega,\\
      \gamma_B u &= g &&\text{ on }J\times\Gamma,\\
      u|_{t=0} &= u_0 &&\text{ in }\Omega.     
    \end{aligned}\right.
\end{align}
Here $J$ is a bounded interval $(0,T)$ or the half line $(0,\infty)$ and $\Omega$ is a bounded domain in $\R^n$, $n\in\N$, with smooth boundary $\Gamma$.  For $B\in\{D,N\}$, let $\mu_B$ be a real number and let
\begin{align*}
  \gamma_D := \cdot|_\Gamma, \quad \gamma_N := (\del_\nu\cdot)|_\Gamma = \nu\cdot(\nabla\cdot)|_\Gamma, \quad \gamma_t^j := (\del_t^j \cdot)|_{t=0}, \, \gamma_t := \gamma_t^0
\end{align*}
denote the Dirichlet, the Neumann, and the temporal trace operators, respectively.  Again we let $\lambda_0^D>0$ denote the smallest eigenvalue of $-\Delta_D$ and $\lambda_1^N>0$ denote the smallest non-zero eigenvalue of $-\Delta_N$.  We will prove the following regularity result.  

\begin{theorem}\label{thm:higher_regularity_for_the_heat_equation}
  Let $B\in\{D,N\}$, $j_D=0$, $j_N=1$, $\mu_D\in(-\lambda_0^D,\infty)$, $\mu_N\in(0,\infty)$, $l\in\bbN_0$, $k\in\bbN$ and $p\in(1,\infty)$ such that $j_B/2+3/2p\neq 1$.  Then problem \eqref{eq:ap_heat_problem} has a unique solution
  \begin{align}\label{eq:150322_1}
    u &\in \bbE^{l,k} := W^{l+k}_p(J;L_p(\Omega)) \cap W^l_p(J;W^{2k}_p(\Omega)), 
  \end{align}
  if and only if the data $(f,g,u_0)$ satisfy the regularity conditions
  \begin{subequations}\label{eq:150326}
    \begin{align}
      f \in \bbE^{l,k-1} &= W^{l+k-1}_p(J;L_p(\Omega)) \cap W^l_p(J;W^{2k-2}_p(\Omega)), \\
      g \in \gamma_B \bbE^{l,k} &= W^{l+k-j_B/2-1/2p}_p(J;L_p(\Gamma)) \cap W^l_p(J;W^{2k-j_B-1/p}_p(\Gamma)),\\
      u_0 \in \gamma_t \bbE^{l,k} &= \left\{
        \begin{aligned}
          &W^{2k}_p(\Omega) &&\text{ if }l\geq 1,\\
          &W^{2k-2/p}_p(\Omega)&&\text{ if }l=0,
        \end{aligned}\right.
    \end{align}
  \end{subequations}
  and the compatibility conditions
  \begin{subequations}\label{eq:150323}
      \begin{align}
        &u_j := \gamma_t^{j-1}f + (\Delta-\mu_B)u_{j-1} \in \left\{
          \begin{aligned}
            &W^{2k}_p(\Omega) && \text{ for } j \in \bbN\cap[1,l-1],\\
            &W^{2(l+k-j)-2/p}_p(\Omega) &&\text{ for } j \in\bbN \cap [l,l+k-1],       
          \end{aligned}\right.\label{eq:150323_2}\\          
        &\gamma_t^jg = \gamma_B u_j \quad\text{ for } j \in\bbN_0 , \, j\leq l+k-j_B/2-3/2p\label{eq:150323_3}.
      \end{align}
  \end{subequations}
\end{theorem}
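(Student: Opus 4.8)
The plan is to prove both implications by reducing to the base case $l=0$, $k=1$, which is essentially Lemma \ref{lem:heatinhom} for $B=D$ and Lemma \ref{lem:heatNeumann} for $B=N$; the only differences are the shift $\mu_B$ (harmless, since the hypotheses on $\mu_B$ put the spectrum of $\mu_B-\Delta_B$ in a right half-plane) and the finite-interval version (obtained by multiplying with $\Exp^{-\lambda t}$). In particular, uniqueness is immediate: $\bbE^{l,k}\hookrightarrow W^1_p(J;L_p(\Omega))\cap L_p(J;W^2_p(\Omega))$ by definition, so any solution of \eqref{eq:ap_heat_problem} with $f=g=u_0=0$ lies in the uniqueness class of the base case and hence vanishes.

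For the necessity of \eqref{eq:150326} and \eqref{eq:150323} I would apply the trace theorems of Section \ref{sec:prerequisites} to a solution $u\in\bbE^{l,k}$. Writing $f=(\del_t+\mu_B-\Delta)u$ and using the mixed derivative embeddings \eqref{emb:mixed}/Theorem \ref{thm:mixed_deri} to see $u\in W^{l+1}_p(J;W^{2k-2}_p(\Omega))\cap W^{l+k-1}_p(J;W^2_p(\Omega))$ yields $f\in\bbE^{l,k-1}$. The spatial trace \eqref{trace:spatial} (with its parameters $(k,l)$ chosen as our $(l,k)$) gives $g=\gamma_Bu\in\gamma_B\bbE^{l,k}$; for $l\geq1$ the embedding $W^1_p(J)\hookrightarrow BUC(J)$ gives $u_0\in W^{2k}_p(\Omega)$, and for $l=0$ the temporal trace \eqref{trace:temporal} (after a mixed derivative embedding) gives $u_0\in B^{2k-2/p}_{p,p}(\Omega)=W^{2k-2/p}_p(\Omega)$. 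For the compatibility conditions, one checks inductively that $\del_t^ju|_{t=0}=u_j$ with $u_j$ as in \eqref{eq:150323_2}: differentiating the equation gives $\del_t^ju=\gamma_t^{j-1}f+(\Delta-\mu_B)\del_t^{j-1}u$ pointwise in $t$, and the asserted regularity of $u_j$ follows from $BUC$-embeddings for $j\leq l-1$ and from \eqref{trace:temporal} applied to $\del_t^ju\in W^1_p(J;W^{2(l+k-j)-2}_p(\Omega))\cap L_p(J;W^{2(l+k-j)}_p(\Omega))$ for $l\leq j\leq l+k-1$. Then $\gamma_t^jg=\gamma_B\gamma_t^ju=\gamma_Bu_j$, which is meaningful precisely in the range $j\leq l+k-j_B/2-3/2p$ of \eqref{eq:150323_3}; the exclusion $j_B/2+3/2p\neq1$ (i.e.\ $p\neq3/2$ for $B=D$, $p\neq3$ for $B=N$) rules out the borderline order at which the Besov trace space of $g$ fails to coincide with a Sobolev space.

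For sufficiency I would first reduce to homogeneous boundary and initial data. Using a co-retraction for the iterated temporal trace $(\gamma_t^0,\dots,\gamma_t^{l+k-1})$ supplied (after an evident re-indexing) by Lemma \ref{lem:temp-trace-retraction}, a co-retraction for $\gamma_B$ built from a partition of unity and local charts as in the appendix, and the corner compatibility \eqref{eq:150323_3}, I would construct a corrector $u_\ast\in\bbE^{l,k}$ with $\gamma_Bu_\ast=g$ and $\gamma_t^ju_\ast=u_j$ for $0\leq j\leq l+k-1$ --- note that the $u_j$ here range exactly over the trace spaces in \eqref{eq:150323_2}, which is why those are the correct spaces. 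Then $v:=u-u_\ast$ must solve \eqref{eq:ap_heat_problem} with $f$ replaced by $\tilde f:=f-(\del_t+\mu_B-\Delta)u_\ast\in\bbE^{l,k-1}$, $\gamma_Bv=0$ and $\gamma_t^jv=0$ for $0\leq j\leq l+k-1$, so that all compatibility quantities of the reduced problem vanish automatically. This corner construction --- prescribing compatible spatial and temporal traces simultaneously on the scale $\bbE^{l,k}$ --- is the technical heart of the proof and the step I expect to be the \emph{main obstacle}; it is exactly here that the hypothesis $j_B/2+3/2p\neq1$ and the precise $j$-range in \eqref{eq:150323_3} enter.

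It remains to solve the reduced problem by bootstrapping from the base case. Since $\tilde f\in\bbE^{l,k-1}\subset L_p(J;L_p(\Omega))$ and the data of $v$ vanish, Lemma \ref{lem:heatinhom}/\ref{lem:heatNeumann} gives $v\in W^1_p(J;L_p(\Omega))\cap L_p(J;W^2_p(\Omega))$. Next, a time-regularity bootstrap: for $0\leq i\leq l+k-1$ the function $\del_t^iv$ solves the heat problem with right-hand side $\del_t^i\tilde f\in L_p(J;L_p(\Omega))$, homogeneous boundary data, and vanishing initial value; since $\mu_B-\Delta_B$ has maximal $L_p$-regularity on $J$ (its spectrum lies in a right half-plane, cf.\ \cite{DHP03,Dor93} and the spectral facts of Section \ref{sec:prerequisites} and Appendix \ref{app:neumann}), this gives $\del_t^iv\in W^1_p(J;L_p(\Omega))\cap L_p(J;\D(\Delta_B))$, hence $v\in W^{l+k}_p(J;L_p(\Omega))\cap W^{l+k-1}_p(J;W^2_p(\Omega))$. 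Finally, a spatial bootstrap: writing $-\Delta v+\mu_Bv=\del_tv+2\mu_Bv-\tilde f=:F$ with $\gamma_Bv=0$, one shows by induction on $m\in\{0,\dots,k\}$ that $v\in W^{l+k-j}_p(J;W^{2j}_p(\Omega))$ for $0\leq j\leq m$; in the step $m\to m+1$ one applies $\del_t^i$ for $i\leq l+k-1-m$ to the elliptic equation, estimates $\del_t^iF\in L_p(J;W^{2m}_p(\Omega))$ using the inductive hypothesis, $\tilde f\in\bbE^{l,k-1}$ and Theorem \ref{thm:mixed_deri}, and invokes standard elliptic $L_p$-estimates for $\mu_B-\Delta_B$ with homogeneous boundary condition to get $\del_t^iv\in L_p(J;W^{2m+2}_p(\Omega))$. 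The case $j=k$, combined with $v\in W^{l+k}_p(J;L_p(\Omega))$, yields $v\in\bbE^{l,k}$, and hence $u=v+u_\ast\in\bbE^{l,k}$ solves \eqref{eq:ap_heat_problem}.
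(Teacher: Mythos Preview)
Your proposal is correct and follows the same overall architecture as the paper: necessity via the trace results of Appendix~\ref{sec:trac-mixed-deriv}, uniqueness by reduction to the base case $l=0$, $k=1$, and existence by first lifting the initial and boundary data and then solving a problem with homogeneous side conditions.  The genuine difference lies in how the homogeneous problem is solved.

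The paper writes $u=u^1+u^2+u^3$.  Here $u^1$ carries only the initial data $u_j$ (via the co-retraction of Lemma~\ref{lem:temp-trace-retraction}), but $u^2$ is constructed to match the \emph{entire hierarchy} of boundary traces $\gamma_B\Delta^j u^2=g_j-\gamma_B\Delta^j u^1$ for $0\le j\le k-1$, using the co-retraction $\calB_{l,k}^c$ of Lemma~\ref{lem:right_inverse_for_spatial_trace} for all normal derivatives up to order $2k-1$.  This forces $f^3=f-f^1-f^2$ into the constrained scale $\0W^l_p(J;X_B^{k-1})$ with $X_B^{k-1}=\D(\Delta_B^{k-1})$, so that $u^3$ can be read off directly from the isomorphism of Lemma~\ref{lem:interior_higher_regularity}, which is nothing but the base maximal regularity commuted with the powers $(\mu_B-\Delta_B)^{k-1}$ and $(\eps+\del_t)^l$.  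In other words, the paper hides the bootstrap inside Lemma~\ref{lem:interior_higher_regularity} at the cost of having to build the full higher-order boundary co-retraction.

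Your route instead imposes only the single boundary condition $\gamma_Bu_\ast=g$ on the corrector and then recovers the missing regularity of $v=u-u_\ast$ by an explicit time-then-space bootstrap (maximal regularity for the time direction, classical elliptic $L_p$-regularity for the space direction).  This trades the construction of $\calB_{l,k}^c$ for a direct use of elliptic regularity, which is arguably more elementary.  Two small points to make the argument airtight: first, your expression for $F$ should read $F=\tilde f-\del_tv$ (a sign/typo); second, the time bootstrap needs the observation that $\gamma_t^j\tilde f=0$ for $0\le j\le l+k-2$, which follows from $\gamma_t^j u_\ast=u_j$ together with the very definition $u_{j+1}=\gamma_t^jf+(\Delta-\mu_B)u_j$, so that each $\del_t^iv$ indeed has vanishing initial value and the inductive application of maximal regularity is justified.
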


\begin{remark}
  \begin{enumerate}[label={(\roman*)}, leftmargin=*]
  \item The space $\bbE^{0,1}$ ($l=0$, $k=1$) is the standard parabolic solution space.
  \item If $l=0$ and $J\times\Omega$ is the half space $\bbR_+\times\bbR^n$ or the wedge $\bbR_+\times\bbR^n_+$, then $\bbE^{0,k}$ is the anisotropic space $H^{2k/\boldsymbol\nu}_p(J\times\Omega)$ with weight $\boldsymbol\nu=(2,1,\ldots,1)$ in the sense of \cite{Ama09}.  This fact will be used in the construction of functions with prescribed boundary values.
  \item We exclude the case $j_B/2+3/2p=1$ in order to avoid the more complicated trace spaces $\gamma_DW^{2/3}_{3/2}(\Omega)$ and $\gamma_N W^{4/3}_{3}(\Omega)$.
  \item The additional regularity conditions \eqref{eq:150323_2} follow from the non-triangular structure of the space $\bbE^{l,k}$ in the case $l\geq 1$ and are derived in Subsection \ref{ssec:comp-cond}.  For $j\geq l+1$, formula \eqref{eq:150323_2} does not contain additional regularity conditions and should be merely understood as the definition of the functions $u_j$, which appear in \eqref{eq:150323_3}.
  \item Every solution satisfies the higher order boundary conditions
    \begin{align}\label{eq:150324}
      \gamma_B \Delta^{j+1} u = (\del_t+\mu_B)g_j - \gamma_B\Delta^j f =: g_{j+1}\quad\text{ for } j  \in\bbN_0\cap[0,k-2], \quad\text{ with } g_0:=g.
    \end{align}
    With the temporal trace theorem and $u_i = \gamma_t^i u$ we obtain
    \begin{align}\label{eq:150421}
      \gamma_B\Delta^j u_i = \gamma_t^i g_j \quad\text{ for } j \in \bbN_0 \cap [0,k-1], \, i\in\bbN_0 \cap [0,l+k-j-j_B/2-3/2p].
    \end{align}
    These equations are no additional regularity or compatibility conditions but follow from \eqref{eq:150323}, \eqref{eq:150324}, by induction over $j\in\bbN_0$.  Indeed, suppose that $\gamma_B\Delta^j u_i = \gamma_t^i g_j$ for all $i$ and some $j$.  Then \eqref{eq:150324}, the induction hypothesis, and \eqref{eq:150323} yield
    \begin{align*}
      \gamma_t^i g_{j+1} &= \gamma_t^i \( (\del_t+\mu_B)g_j - \gamma_B\Delta^j f\) = \gamma_t^{i+1} g_j + \mu_B \gamma_t^i g_j - \gamma_B \gamma_t^i \Delta^j f\\
      &= \gamma_B\Delta^j u_{i+1} + \mu_B \gamma_B \Delta^j u_i - \gamma_B \gamma_t^i \Delta^j f \\
      &= \gamma_B \Delta^j\(\gamma_t^i f + (\Delta-\mu_B) u_i\) + \mu_B\gamma_B\Delta^j u_i - \gamma_B\gamma_t^i\Delta^j f= \gamma_B\Delta^{j+1} u_i.
    \end{align*}
  \item \label{Ableitung_Lemma_2_3} In the case $l=1$, $k=2$, $B=D$, the compatibility conditions read as
    \begin{align*}
      u_1 &:= f|_{t=0} + (\Delta-\mu_D)u_0 \in W^{2k}_p(\Omega),\\
      u_2 &:= \del_tf|_{t=0} + (\Delta-\mu_D)u_1 \in W^{2k-2/p}_p(\Omega),\\
      g|_{t=0} &= u_0|_{\Gamma},\\
      \del_tg|_{t=0} &= f|_{t=0,\Gamma}+((\Delta-\mu_D)u_0)|_\Gamma,\\
      \del_t^2g|_{t=0} &= \del_tf|_{t=0,\Gamma}+((\Delta-\mu_D)u_1)|_\Gamma \quad\text{ if } p>3/2.
    \end{align*}
  \item \label{Kompatibilitaeten_fur_Neumann} The corresponding result for $B=N$ has the compatibility conditions
    \begin{align*}
      u_1 &:= f|_{t=0} + (\Delta-\mu_N)u_0 \in W^{2k}_p(\Omega),\\
      u_2 &:= \del_tf|_{t=0} + (\Delta-\mu_N)u_1 \in W^{2k-2/p}_p(\Omega),\\
      g|_{t=0} &= \del_\nu u_0|_{\Gamma},\\
      \del_tg|_{t=0} &= \del_\nu f|_{t=0,\Gamma}+\del_\nu((\Delta-\mu_N)u_0)|_\Gamma,\\
      \del_t^2g|_{t=0} &= \del_\nu\del_tf|_{t=0,\Gamma}+\del_\nu((\Delta-\mu_N)u_1)|_\Gamma \quad\text{ if } p>3.
    \end{align*}    
  \end{enumerate}
\end{remark}

Aiming at stability for Neumann boundary conditions, we will also prove the following result, where we consider the subspace $L_{p,0}(\Omega) := \{f\in L_p(\Omega) : \int_\Omega f(x)dx=0\}$.  
\begin{corollary}\label{cor:higher_regularity_for_the_heat_equation_Neumann_stability}
  Let $B=N$, $\mu_N\in(-\lambda_1^N,\infty)$, $l\in\bbN_0$, $k\in\bbN$, $p\in(1,\infty)$, $p\neq 3$.  Then problem \eqref{eq:ap_heat_problem} has a unique solution
  \begin{align*}
    u \in \bbE^{l,k}_0 := \bbE^{l,k} \cap L_p(J;L_{p,0}(\Omega)),
  \end{align*}
  if and only if the data $(f,g,u_0)$ satisfy the regularity conditions \eqref{eq:150326} and the compatibility conditions \eqref{eq:150323} and
  \begin{align}\label{eq:150403_1}
    \int_\Omega u_0(x) \, dx = 0, \quad \int_\Omega f(t,x) \, dx + \int_\Gamma g(t,x) \, dS(x) = 0 \text{ for }t\in J.
  \end{align}
\end{corollary}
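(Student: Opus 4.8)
The plan is to obtain Corollary~\ref{cor:higher_regularity_for_the_heat_equation_Neumann_stability} from Theorem~\ref{thm:higher_regularity_for_the_heat_equation} by running the whole argument in the closed subspace $L_{p,0}(\Omega)$ instead of $L_p(\Omega)$, with $-\Delta_N+\mu_N$ replaced everywhere by $-\Delta_{N,0}+\mu_N$. The key observation is that, by Theorem~\ref{thm:spectrum_Neumann_Laplacian}, $\sigma(-\Delta_{N,0})\subset[\lambda_1^N,\infty)$, so $-\Delta_{N,0}+\mu_N$ has spectrum in $[\lambda_1^N+\mu_N,\infty)\subset(0,\infty)$ exactly when $\mu_N>-\lambda_1^N$; together with a bounded $\calH^\infty$-calculus inherited from $-\Delta_N$ via \cite{DHP03}, \cite{Dor93} exactly as before, this is precisely the ``invertible generator'' property that the hypothesis $\mu_N>0$ supplies in the unrestricted theorem. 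The additional compatibility condition \eqref{eq:150403_1} is what certifies that the data genuinely belong to this $L_{p,0}$-framework.

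For necessity I would argue as follows. If $u\in\bbE^{l,k}_0$ solves \eqref{eq:ap_heat_problem}, then $u\in\bbE^{l,k}$, so \eqref{eq:150326} and \eqref{eq:150323} follow exactly as in the necessity part of the proof of Theorem~\ref{thm:higher_regularity_for_the_heat_equation}, which uses only the trace theorems and the identities $u_j=\gamma_t^j u$ and is thus insensitive to the sign of $\mu_N$. For \eqref{eq:150403_1}, membership $u(t,\cdot)\in L_{p,0}(\Omega)$ gives $\int_\Omega u_0\,dx=0$, while integrating the equation over $\Omega$ and using $\int_\Omega\Delta u\,dx=\int_\Gamma\partial_\nu u\,dS=\int_\Gamma g\,dS$ shows that $m(t):=\int_\Omega u(t,\cdot)\,dx$ solves $m'+\mu_N m=\int_\Omega f(t,\cdot)\,dx+\int_\Gamma g(t,\cdot)\,dS$; since $m\equiv 0$ this yields \eqref{eq:150403_1}.

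For sufficiency, assume \eqref{eq:150326}, \eqref{eq:150323}, \eqref{eq:150403_1}. First I would split off the spatially constant part, $u=v+w$ with $w(t):=|\Omega|^{-1}\int_\Omega u(t,\cdot)\,dx$ and $v(t,\cdot)\in L_{p,0}(\Omega)$: integrating the equation as above forces $w'+\mu_N w=|\Omega|^{-1}\bigl(\int_\Omega f+\int_\Gamma g\bigr)$ with $w(0)=|\Omega|^{-1}\int_\Omega u_0$, and by \eqref{eq:150403_1} both vanish, so $w\equiv 0$. It then remains to solve $(\partial_t+\mu_N-\Delta_{N,0})v=f$, $\partial_\nu v=g$, $v(0)=u_0$ in $\bbE^{l,k}_0$, which \eqref{eq:150403_1} makes consistent with $v\in L_p(J;L_{p,0}(\Omega))$. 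When $\mu_N>0$ this is quickest via Theorem~\ref{thm:higher_regularity_for_the_heat_equation} directly (it produces $u\in\bbE^{l,k}$, and the mean $m$ then solves $m'+\mu_N m=0$, $m(0)=0$, hence $u\in\bbE^{l,k}_0$). For the full range $\mu_N\in(-\lambda_1^N,\infty)$ I would instead reproduce the construction in the proof of Theorem~\ref{thm:higher_regularity_for_the_heat_equation} inside $L_{p,0}(\Omega)$: lift the inhomogeneous boundary and initial data to zero with the same extension operators, normalized by subtracting their spatial means so as to stay in the zero-mean class; solve the resulting homogeneous Cauchy problem via maximal $L_p$-regularity of $-\Delta_{N,0}+\mu_N$ on $J$; and bootstrap the regularity up to $\bbE^{l,k}_0$ using the isomorphisms $\D((\Delta_{N,0})^{j+1})\to\D((\Delta_{N,0})^j)$ inherited from $-\Delta_N$. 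Uniqueness in $\bbE^{l,k}_0$ follows from uniqueness in $\bbE^{l,k}$, the difference of two solutions solving the homogeneous problem.

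I expect the main obstacle to be the bookkeeping in this reduction: checking that the standard lifts of $g$ and $u_0$ can be normalized to have vanishing spatial mean without spoiling the compatibility conditions \eqref{eq:150323} at $t=0$, and that the $\mu_N$-dependent auxiliary functions $u_j$ retain their asserted regularity in the restricted spaces. An alternative that avoids re-running the appendix proof is a perturbation argument: with $\hat\mu>0$ write $\partial_t+\mu_N-\Delta=(\partial_t+\hat\mu-\Delta)-(\hat\mu-\mu_N)$ and let $\mathcal S$ be the solution operator of Theorem~\ref{thm:higher_regularity_for_the_heat_equation} for parameter $\hat\mu$; the problem becomes $\bigl(I-(\hat\mu-\mu_N)\mathcal S(\cdot,0,0)\bigr)u=\mathcal S(f,g,u_0)$, and by the mean-ODE argument $\mathcal S(f,g,u_0)\in\bbE^{l,k}_0$ while $\mathcal S(\cdot,0,0)$ preserves $\bbE^{l,k}_0$ and restricts there to $(\partial_t+\hat\mu-\Delta_{N,0})^{-1}$, so $I-(\hat\mu-\mu_N)\mathcal S(\cdot,0,0)$ restricts to an operator corresponding to $-\Delta_{N,0}+\mu_N$, invertible on $\bbE^{l,k}_0$ precisely when $\mu_N>-\lambda_1^N$.
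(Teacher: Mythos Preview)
Your proposal is correct and follows essentially the same approach as the paper: the paper also establishes maximal regularity of $\mu_N-\Delta_{N,0}$ in $L_{p,0}(\Omega)$ via Theorem~\ref{thm:spectrum_Neumann_Laplacian} and \cite{Dor93}, then modifies the construction $u=u^1+u^2+u^3$ from the proof of Theorem~\ref{thm:higher_regularity_for_the_heat_equation} by replacing each $u^i$ ($i=1,2$) with $u^i-\bar u^i$, observing that $\del_\nu\bar u^i=0$ and $\bar u^i(0)=\bar u_0=0$, so that $\bar f^3=0$ and $u^3$ is obtained from the zero-mean isomorphism. Your anticipated ``main obstacle'' is exactly the one-line verification the paper makes; your mean-ODE argument for necessity of \eqref{eq:150403_1} and your perturbation alternative are not in the paper but are sound additions.
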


Next, we study the original heat problem
\begin{align}\label{eq:ap_heat_problem_without_shift}
  \left\{
    \begin{aligned}
      (\del_t-\Delta)u &= f &&\text{ in }\bbR_+\times\Omega,\\
      \gamma_B u &= g &&\text{ on }\bbR_+\times\Gamma,\\
      u|_{t=0} &= u_0 &&\text{ in }\Omega.     
    \end{aligned}\right.
\end{align}

\begin{corollary}\label{cor:higher_regularity_for_the_heat_equation_exponential_weight}
  \emph{(i)}\quad Let $B\in\{D,N\}$, $\mu_D\in(-\lambda_0^D,\infty)$, $\mu_N\in(0,\infty)$, $l\in\bbN_0$, $k\in\bbN$, $p\in(1,\infty)$ such that $j_B/2+3/2p\neq 1$.  Then problem \eqref{eq:ap_heat_problem_without_shift} has a unique solution $u\in \Exp^{\mu_B}\bbE^{l,k}$ if and only if the data $(f,g,u_0)$ satisfy the regularity conditions
  \begin{align}\label{eq_150326_1}
    (f,g,u_0) &\in \Exp^{\mu_B}\bbE^{l,k-1} \times \Exp^{\mu_B} \gamma_B\bbE^{l,k} \times \left\{
      \begin{aligned}
        &W^{2k}_p(\Omega) &&\text{ if }l\geq 1,\\
        &W^{2k-2/p}_p(\Omega) &&\text{ if }l=0,
      \end{aligned}\right.
  \end{align}
  and the compatibility conditions
  \begin{subequations}\label{eq:150518}
    \begin{align}
      &u_j := \gamma_t^{j-1}f + \Delta u_{j-1} \in \left\{
        \begin{aligned}
          &W^{2k}_p(\Omega) && \text{ for } j\in\bbN\cap[1,l-1],\\
          &W^{2(l+k-j)-2/p}_p(\Omega) &&\text{ for } j\in\bbN\cap[l,l+k-1],       
        \end{aligned}\right.\label{eq:150518_2}\\
      &\gamma_t^jg = \gamma_B u_j \quad\text{ for } j \in\bbN_0 \cap[0,l+k-j_B/2-3/2p]\label{eq:150518_3}.
    \end{align}
  \end{subequations}
    
  \emph{(ii)}\quad Let $B=N$, $\mu_N\in(-\lambda_1^N,\infty)$.  Then problem \eqref{eq:ap_heat_problem_without_shift} has a unique solution $u\in \Exp^{\mu_N}\bbE^{l,k}_0$ if and only if the data $(f,g,u_0)$ satisfy the regularity conditions \eqref{eq_150326_1} and the compatibility conditions \eqref{eq:150518}, \eqref{eq:150403_1}.
\end{corollary}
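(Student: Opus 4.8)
The plan is to strip off the exponential weight by the standard substitution and then reduce both parts to the already-proven Theorem~\ref{thm:higher_regularity_for_the_heat_equation} and Corollary~\ref{cor:higher_regularity_for_the_heat_equation_Neumann_stability}. Writing $\mu=\mu_B$, I would associate to a function $u$ on $\R_+\times\Omega$ the function $v=\Exp^{-\mu t}u$; then $u\in\Exp^{\mu}\bbE^{l,k}$ if and only if $v\in\bbE^{l,k}$, and likewise $u\in\Exp^{\mu}\bbE^{l,k}_0$ if and only if $v\in\bbE^{l,k}_0$, using that $\bbE^{l,k}_0=\bbE^{l,k}\cap L_p(\R_+;L_{p,0}(\Omega))$ and that multiplication by the positive scalar $\Exp^{-\mu t}$ preserves vanishing mean value. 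Since $\del_tv=-\mu v+\Exp^{-\mu t}\del_tu$, the function $u$ solves \eqref{eq:ap_heat_problem_without_shift} if and only if $v$ solves \eqref{eq:ap_heat_problem} with the same $B$, shift $\mu_B=\mu$, and data $\tilde f=\Exp^{-\mu t}f$, $\tilde g=\Exp^{-\mu t}g$, $\tilde u_0=u_0$. Because $\gamma_B$ and evaluation at $t=0$ commute with multiplication by $\Exp^{-\mu t}$, the regularity conditions \eqref{eq_150326_1} for $(f,g,u_0)$ translate verbatim into \eqref{eq:150326} for $(\tilde f,\tilde g,u_0)$, and \eqref{eq:150403_1} is invariant under the substitution.

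The hard part will be to check that the compatibility conditions \eqref{eq:150518} correspond exactly to \eqref{eq:150323}. Let $u_j$ be the functions of \eqref{eq:150518_2} built from $(f,g,u_0)$ and let $\tilde u_j$ be those of \eqref{eq:150323_2} built from $(\tilde f,\tilde g,u_0)$ with shift $\mu$. I would prove by induction on $j$ that $\tilde u_j=\sum_{i=0}^j\binom{j}{i}(-\mu)^{j-i}u_i$, the case $j=0$ being $\tilde u_0=u_0$; in the inductive step one inserts the Leibniz expansion $\gamma_t^{j-1}\tilde f=\sum_{i=0}^{j-1}\binom{j-1}{i}(-\mu)^{j-1-i}\gamma_t^if$ and the identity $\gamma_t^if=u_{i+1}-\Delta u_i$ into $\tilde u_j=\gamma_t^{j-1}\tilde f+(\Delta-\mu)\tilde u_{j-1}$, whereupon the two $\Delta u_i$-terms cancel and Pascal's rule delivers the claim (the inverse relation being $u_j=\sum_{i=0}^j\binom{j}{i}\mu^{j-i}\tilde u_i$). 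Since this is an invertible triangular transformation involving no spatial derivatives of the $u_i$ and $\tilde u_i$, and since the target spaces are nested, $W^{2k}_p(\Omega)\hookrightarrow W^{2(l+k-j)-2/p}_p(\Omega)$ for $j\geq l$ and $W^{2(l+k-i)-2/p}_p(\Omega)\hookrightarrow W^{2(l+k-j)-2/p}_p(\Omega)$ for $i\leq j$, the regularity requirements on the $u_j$ in \eqref{eq:150518_2} hold precisely when those on the $\tilde u_j$ in \eqref{eq:150323_2} do. Applying $\gamma_B$ to the identity and using $\gamma_t^j\tilde g=\sum_{i=0}^j\binom{j}{i}(-\mu)^{j-i}\gamma_t^ig$, the same triangular invertibility converts \eqref{eq:150518_3} into \eqref{eq:150323_3}; the ranges of admissible $j$ agree in the two formulations.

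Finally I would conclude. For (i), since $\mu_D\in(-\lambda_0^D,\infty)$, $\mu_N\in(0,\infty)$ and $j_B/2+3/2p\neq1$, Theorem~\ref{thm:higher_regularity_for_the_heat_equation} gives a unique $v\in\bbE^{l,k}$ solving \eqref{eq:ap_heat_problem} for the transformed data under exactly the transformed conditions, so $u=\Exp^{\mu t}v\in\Exp^{\mu}\bbE^{l,k}$ is the unique solution of \eqref{eq:ap_heat_problem_without_shift}; for (ii), since $\mu_N\in(-\lambda_1^N,\infty)$, the same reasoning with Corollary~\ref{cor:higher_regularity_for_the_heat_equation_Neumann_stability} in place of the theorem produces a unique $v\in\bbE^{l,k}_0$ and hence a unique $u\in\Exp^{\mu_N}\bbE^{l,k}_0$. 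Uniqueness transfers since $u\mapsto\Exp^{-\mu t}u$ is a bijection of the relevant solution spaces. Apart from this, the only step needing care — and the main obstacle — is the binomial bookkeeping above together with the verification that the nesting of the $u_j$-spaces makes the transformed conditions genuinely equivalent, not merely implications in one direction.
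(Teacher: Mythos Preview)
Your proposal is correct and uses the same exponential substitution as the paper's own proof, reducing to Theorem~\ref{thm:higher_regularity_for_the_heat_equation} and Corollary~\ref{cor:higher_regularity_for_the_heat_equation_Neumann_stability}. The paper's argument is considerably terser---it records the identity $\Exp^{\mu_B t}(\del_t+\mu_B-\Delta)u=(\del_t-\Delta)\Exp^{\mu_B t}u$ and then simply invokes those two results---whereas you have made explicit the triangular binomial relation between the $u_j$ and $\tilde u_j$ and the nesting of the target spaces that justifies the equivalence of \eqref{eq:150518} and \eqref{eq:150323}, a verification the paper leaves to the reader.
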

\begin{proof}
  In problem \eqref{eq:ap_heat_problem} we multiply $f$, $g$ with $\Exp^{\mu_B t}$, so that
  \begin{align*}
    \Exp^{\mu_Bt}f = \Exp^{\mu_Bt}(\del_t+\mu_B-\Delta)u = (\del_t-\Delta)\Exp^{\mu_Bt}u, \quad \Exp^{\mu_Bt}g=\gamma_B\Exp^{\mu_Bt}u.
  \end{align*}
  This shows that $\Exp^{\mu_Bt}u$ solves \eqref{eq:ap_heat_problem_without_shift} for $(\Exp^{\mu_Bt}f,\Exp^{\mu_Bt}g,u_0)$ if and only if $u$ solves the shifted problem \eqref{eq:ap_heat_problem} for $(f,g,u_0)$.  Hence Theorem \ref{thm:higher_regularity_for_the_heat_equation} and Corollary \ref{cor:higher_regularity_for_the_heat_equation_Neumann_stability} yield the assertions.
\end{proof}

\subsection{Compatibility conditions}
\label{ssec:comp-cond}

By means of the results from Section \ref{sec:trac-mixed-deriv} it is not difficult to verify that the regularity conditions \eqref{eq:150326} are indeed necessary for $u\in\bbE^{l,k}$.  Let us now derive the remaining compatibility conditions.

First, any function $u\in\bbE^{l,k}$ satisfies the initial regularity conditions
\begin{align}\label{eq:ap_initial_regularity}
  \gamma_t^j u \in \left\{
    \begin{aligned}
      &W^{2k}_p(\Omega) &&\text{ for } j\in\bbN_0\cap[0,l-1],\\
      &W^{2l+2k-2j-2/p}_p(\Omega) &&\text{ for } j\in\bbN_0\cap[l,l+k-1].
    \end{aligned}\right.
\end{align}
If $u$ solves \eqref{eq:ap_heat_problem} with data $(f,g,u_0)$, then an application of $\del_t^{j-1}$ to the heat equation yields
\begin{align*}
   \del_t^j u = \del_t^{j-1}f + (\Delta-\mu_B) \del_t^{j-1}u.
\end{align*}
In particular, the initial values $u_j := \del_t^j u|_{t=0}$ are given in terms of $f$ and $u_0$ by
\begin{align*}
  u_j = {\sum}_{i=0}^{j-1}(\Delta-\mu_B)^{j-1-i}\gamma_t^if + (\Delta-\mu_B)^{j-1} u_0 \quad\text{ for } j\in\bbN\cap[1,l+k-1].
\end{align*}
Then \eqref{eq:ap_initial_regularity} implies that the data $(f,u_0)$ and $u_j$ must satisfy
\begin{align*}
  u_j = \gamma_t^{j-1} f + (\Delta-\mu_B) u_{j-1} \in \left\{
    \begin{aligned}
      &W^{2k}_p(\Omega) &&\text{ for } j\in\bbN\cap[1,l-1],\\
      &W^{2(l+k-j)-2/p}_p(\Omega) &&\text{ for } j\in\bbN\cap[l,l+k-1].
    \end{aligned}\right.
\end{align*}
For $j\in[1,l]$ this is indeed an additional condition, since $f$ merely satisfies
\begin{align*}
  \gamma_t^{j-1} f \in \left\{
    \begin{aligned}
      &W^{2k-2}_p(\Omega) &&\text{ for } j\in\bbN_0\cap[0,l],\\
      &W^{2l+2k-2j-2/p}_p(\Omega) &&\text{ for } j \in\bbN\cap[l+1,l+k-1].
    \end{aligned}\right.
\end{align*}
The conditions for $l+1\leq j\leq l+k-1$ then follow from the regularity of $f$, $u_0$, \ldots, $u_l$ and could therefore be omitted in \eqref{eq:150323_2}, but we keep them there as a definition of $u_{l+1}$, \ldots, $u_{l+k-1}$.  Indeed, these functions still admit traces on $\Gamma$.  By differentiating the boundary condition $\gamma_B u = g$ with respect to time, we obtain
\begin{align*}
  \gamma_t^j g = \gamma_B u_j \in \left\{
    \begin{aligned}
      &W^{2k-j_B-1/p}_p(\Gamma)&&\text{ for } j\in\bbN_0 \cap [0,l-1],\\
      &W^{2(l+k-j)-j_B-3/p}_p(\Gamma)&&\text{ for } j\in\bbN_0 \cap[l,l+k-j_B/2-3/2p].
    \end{aligned}
  \right.
\end{align*}
This shows that \eqref{eq:150323_3} is a necessary condition.

We conclude that the necessity part of Theorem \ref{thm:higher_regularity_for_the_heat_equation} is true, that is, if problem \eqref{eq:ap_heat_problem} has a solution $u\in\bbE^{l,k}$ with data $(f,g,u_0)$, then \eqref{eq:150326} and \eqref{eq:150323} are satisfied.  We next prepare the proof of the existence part.

\subsection{Interior regularity and initial conditions}
\label{ssec:Interior_regularity_conditions}

From \cite[Theorem 8.2]{DHP03} we deduce that for $B\in\{D,N\}$ and $E\in\calHT$ there exists $\mu_B\geq 0$ such that the realization
\begin{align*}
  \mu_B-\Delta_B \text{ with domain } \D(\Delta_B) &= \{u\in W^2_p(\Omega;E) : \gamma_B u = 0\} \text{ in } L_p(\Omega;E),
\end{align*}
has maximal regularity of type $L_p(\R_+;L_p(\Omega;E))$.  Thus the operator
\begin{align*}
  \mu_B+\del_t-\Delta_B \colon  \0 W^1_p(\R_+;L_p(\Omega;E)) \cap L_p(\R_+;\D(\Delta_B)) \to L_p(\R_+;L_p(\Omega;E))
\end{align*}
is invertible for $B\in\{D,N\}$.  Here and in the following, the space $\0 W^s_p(\bbR_+;E)$ is the closure of $C^\infty_c(\bbR_+;E)$ in $W^s_p(\bbR_+;E)$.  For $k<s-1/p<k+1$ with $k\in\bbN_0$, they consist precisely of those functions with vanishing initial traces $\del_t^j u(0)=0$ for $0\leq j\leq k$, see \cite[Theorem 4.7.1]{Ama09}.  By using \cite[Theorem 2.4]{Dor93} and a spectral theoretic argument as in \cite{MeWi11} we may even allow for $\mu_D\in(-\lambda_0^D,\infty)$, $\mu_N\in(0,\infty)$.

In order to obtain higher regularity results we consider the spaces
\begin{align*}
  X^k_B := (\mu_B-\Delta_B)^{-k} L_p(\Omega;E), \quad \norm{u}_{X^k_B} := \norm{(\mu_B-\Delta_B)^k u}_{L_p(\Omega;E)} \quad\text{ for }k\in\N_0.
\end{align*}
These spaces can be easily characterized by
\begin{align*}
  X_B^k &= \{u\in W^{2k}_p(\Omega;E) : \gamma_B \Delta^j u = 0 \text{ for } 0\leq j\leq k-1\}.
\end{align*}
By commuting the operator $\mu_B+\del_t-\Delta_B$ with $(\mu_B-\Delta_B)^k$ it follows that $\mu_B-\Delta_B$ has maximal regularity of type $L_p(\R_+;X_B^k)$ for every $B\in\{D,N\}$, $k\in\N_0$, that is,
\begin{align*}
  \mu_B+\del_t-\Delta \colon  \0 W^1_p(\bbR_+;X_B^k) \cap L_p(\bbR_+;X_B^{k+1}) \to L_p(\bbR_+;X_B^k)
\end{align*}
is a topological linear isomorphism.  Moreover, the map
\begin{align*}
  \eps+\del_t \colon  \0 W^{l+1}_p(\R_+;E) \to \0 W^l_p(\R_+;E)
\end{align*}
is a topological linear isomorphism for every $\eps>0$ and every $l\in\N_0$, see e.~g.\ \cite{MeSc12}.  Hence, by commuting $\mu_B+\del_t-\Delta_B$ with $\eps+\del_t$, we obtain the following result.
\begin{lemma}\label{lem:interior_higher_regularity}
  Let $\mu_D\in(-\lambda_0^D,\infty)$, $\mu_N\in(0,\infty)$, $l\in\N_0$, $k\in\N_0$, $B\in\{D,N\}$.  Then the map
  \begin{align*}
    \mu_B+\del_t-\Delta_B &\colon  \0 W^{l+1}_p(\R_+;X_B^k) \cap \0 W^l_p(\R_+;X_B^{k+1}) \to \0 W^l_p(\R_+;X_B^k)
  \end{align*}
  is a topological linear isomorphism.  
\end{lemma}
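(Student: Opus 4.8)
The plan is to obtain the statement for general $l$ from the special case $l=0$ by conjugating the operator $T:=\mu_B+\del_t-\Delta_B$ with a power of the purely temporal operator $S_\eps:=\eps+\del_t$, for an arbitrary fixed $\eps>0$. The case $l=0$ is already available: it is precisely the maximal $L_p$-regularity of $\mu_B-\Delta_B$ on $L_p(\R_+;X_B^k)$ recorded just above (valid for $\mu_D\in(-\lambda_0^D,\infty)$ and $\mu_N\in(0,\infty)$), i.e.\ $T_0\colon \0 W^1_p(\R_+;X_B^k)\cap L_p(\R_+;X_B^{k+1})\to L_p(\R_+;X_B^k)$ is a topological linear isomorphism. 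The only structural fact to be exploited is that $S_\eps$ acts only on the time variable while $\Delta_B$ acts only on the space variable, so that $S_\eps$, its inverse $S_\eps^{-1}$ (the bounded convolution $g\mapsto[t\mapsto\int_0^t\Exp^{-\eps(t-s)}g(s)\,ds]$) and $T$ commute pairwise on every space occurring below.

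First I would record the mapping properties of $S_\eps^{\pm l}$. Iterating the isomorphism $\eps+\del_t\colon\0 W^{m+1}_p(\R_+;Y)\to\0 W^m_p(\R_+;Y)$ over $m$ gives, for every Banach space $Y$ and every $m\in\N_0$, that $S_\eps^l\colon\0 W^{m+l}_p(\R_+;Y)\to\0 W^m_p(\R_+;Y)$ is a topological linear isomorphism, with inverse $S_\eps^{-l}$. Taking $(m,Y)=(1,X_B^k)$ and $(m,Y)=(0,X_B^{k+1})$, and using $X_B^{k+1}\hookrightarrow X_B^k$ to make $S_\eps^{-l}$ an unambiguously defined operator, one deduces that
\[
  S_\eps^{-l}\colon \0 W^1_p(\R_+;X_B^k)\cap L_p(\R_+;X_B^{k+1})\ \longrightarrow\ \0 W^{l+1}_p(\R_+;X_B^k)\cap\0 W^l_p(\R_+;X_B^{k+1})
\]
and $S_\eps^{-l}\colon L_p(\R_+;X_B^k)\to\0 W^l_p(\R_+;X_B^k)$ are topological linear isomorphisms. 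It is also a routine bookkeeping exercise that $T$ itself maps $\0 W^{l+1}_p(\R_+;X_B^k)\cap\0 W^l_p(\R_+;X_B^{k+1})$ boundedly into $\0 W^l_p(\R_+;X_B^k)$, using that $\del_t\colon\0 W^{l+1}_p\to\0 W^l_p$ and $\Delta_B\colon X_B^{k+1}\to X_B^k$ are bounded.

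The proof then concludes with the intertwining identity $T=S_\eps^{-l}\circ T_0\circ S_\eps^l$, understood as an equality of operators from $\0 W^{l+1}_p(\R_+;X_B^k)\cap\0 W^l_p(\R_+;X_B^{k+1})$ onto $\0 W^l_p(\R_+;X_B^k)$, which follows from the commutation of $S_\eps$ with $T$. Since the right-hand side is a composition of three topological linear isomorphisms, so is $T$, and its inverse is $S_\eps^{-l}\circ T_0^{-1}\circ S_\eps^l$. I do not anticipate a genuine obstacle: the only point requiring care is to check that $S_\eps^{\pm l}$ genuinely commutes with $\del_t-\Delta_B$ and preserves both the vanishing temporal traces encoded in the spaces $\0 W^m_p$ and the homogeneous boundary conditions encoded in the spaces $X_B^k$, so that the conjugation really takes place between the asserted spaces and not merely on a dense subspace.
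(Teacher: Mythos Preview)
Your proposal is correct and follows essentially the same approach as the paper: the text preceding the lemma already establishes the $l=0$ case (maximal regularity of $\mu_B-\Delta_B$ in $L_p(\R_+;X_B^k)$, obtained by commuting with $(\mu_B-\Delta_B)^k$) and then states that the general $l$ follows ``by commuting $\mu_B+\del_t-\Delta_B$ with $\eps+\del_t$,'' which is precisely your conjugation argument with $S_\eps^l$. Your write-up simply makes explicit the bookkeeping that the paper leaves to the reader.
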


We next comment on function spaces for the initial data.  From \cite[Section 4.9]{Ama09} we derive the characterization
\begin{align*}
  (X_B^k,X_B^{k+1})_{1-1/p,p} &= \{u\in W^{2k+2-2/p}_p(\Omega;E): \gamma_B\Delta^ju = 0 \text{ for }0\leq j\leq k-j_B/2-3/2p\}.
\end{align*}
Then the temporal trace operator
\begin{align*}
  \gamma_t \colon W^1_p(\R_+;X_B^k) \cap L_p(\R_+;X_B^{k+1}) \to (X_B^k,X_B^{k+1})_{1-1/p,p}
\end{align*}
is a bounded and surjective and therefore
\begin{align*}
  \(\mu_B+\del_t-\Delta_B,\gamma_t\) \colon W^1_p(\R_+;X_B^k) \cap L_p(\R_+;X_B^{k+1}) \to L_p(\R_+;X_B^k) \times (X_B^k,X_B^{k+1})_{1-1/p,p}
\end{align*}
is also a topological linear isomorphism for $B\in\{D,N\}$, $k\in\N_0$.

\subsection{Boundary conditions}
\label{ssec:boundary_conditions}




We will use the following result for constructing a function with prescribed boundary conditions \eqref{eq:150324}.

\begin{lemma}\label{lem:right_inverse_for_spatial_trace}
  Let $l\in\bbN_0$, $k\in\bbN$, $p\in(1,\infty)$, let $\gamma_\nu^j := (\del_\nu^j\cdot)|_\Gamma$ in the sense of traces and let
  \begin{align*}
    \0\bbG^{l,m/2}(\bbR_+\times\Gamma) := \0W^{l+m/2-1/2p}_p(\bbR_+;L_p(\Gamma;E)) \cap \0 W^l_p(\bbR_+;W^{m-1/p}_p(\Gamma;E)) \quad\text{ for }m\in\bbN.
  \end{align*}
  Then $\gamma_\nu^j \colon \0\bbE^{l,k}(\bbR_+\times\Omega) \to \0\bbG^{l,k-j/2}(\bbR_+\times\Gamma)$ is a retraction and the operator
  \begin{align*}
    \calB_{l,k} := (\gamma_\nu^0=\gamma_D,\gamma_\nu^1=\gamma_N,\gamma_\nu^2,\ldots,\gamma_\nu^{2k-1}) \colon \0\bbE^{l,k}(\bbR_+\times\Omega) \to {\prod}_{j=0}^{2k-1} \0\bbG^{l,k-j/2} (\bbR_+\times\Gamma)
  \end{align*}
  is a retraction.
\end{lemma}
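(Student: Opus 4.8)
\emph{Overall strategy.} The assertion has two parts: each $\gamma_\nu^j$ is bounded $\0\bbE^{l,k}(\bbR_+\times\Omega)\to\0\bbG^{l,k-j/2}(\bbR_+\times\Gamma)$, and the stack $\calB_{l,k}=(\gamma_\nu^0,\ldots,\gamma_\nu^{2k-1})$ has a common bounded right inverse; granting both, the claim that $\gamma_\nu^j$ alone is a retraction is immediate. Observe first that $\gamma_\nu^0=\cdot|_\Gamma=\gamma_D$ and $\gamma_\nu^1=\del_\nu\cdot|_\Gamma=\gamma_N$, and that for $j\in\{0,1\}$ the target $\0\bbG^{l,k-j/2}$ is precisely the spatial trace space of \eqref{eq:150326} with $j_B=j$. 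The plan is to obtain both parts at once by reducing, via an exponential shift in time and a localisation of $\Gamma$, to the anisotropic trace theorem on the model configuration $\bbR_+\times\bbR^n_+$.

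\emph{Reduction to $l=0$ and to a half-space.} Since the normal trace operators act only in the spatial variables, they commute with the temporal multiplier $\eps+\del_t$; as $(\eps+\del_t)^l$ is a topological isomorphism of $\0W^{l+m}_p(\bbR_+;E)$ onto $\0W^m_p(\bbR_+;E)$ for every $\eps>0$ and $m\in\bbN_0$ (Subsection \ref{ssec:Interior_regularity_conditions}), applying it componentwise gives isomorphisms $\0\bbE^{l,k}\cong\0\bbE^{0,k}$ and $\0\bbG^{l,k-j/2}\cong\0\bbG^{0,k-j/2}$ intertwining the $\gamma_\nu^j$, so it suffices to treat $l=0$. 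Next, choose a finite atlas of the bounded smooth domain $\Omega$, a subordinate partition of unity, and local $C^\infty$-diffeomorphisms flattening $\Gamma$ onto pieces of $\{x_n=0\}$ and $\Omega$ onto $\{x_n>0\}$; the spaces $\0\bbE^{0,k}$ and $\0\bbG^{0,m/2}$ are invariant under such transformations (acting trivially in $t$), so everything reduces to constructing a right inverse on $\bbR_+\times\bbR^n_+$. In the flattened coordinates the outer normal equals $-e_n$, whence $\del_\nu^j=(-1)^j\del_{x_n}^j+(\text{terms of order}<j\text{ in }x_n)$; reading $\calB_{0,k}$ against the pure normal-derivative stack $(\gamma\del_{x_n}^j)_{j=0}^{2k-1}$ then produces a lower triangular operator matrix in $j$ with diagonal entries $\pm\mathrm{id}$ and bounded strictly lower triangular part (mapping boundary spaces of higher regularity into those of lower regularity), hence an isomorphism of $\prod_{j=0}^{2k-1}\0\bbG^{0,k-j/2}$ by a Neumann series. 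So it remains to show that $(\gamma\del_{x_n}^j)_{j=0}^{2k-1}$ is a retraction on $\bbR_+\times\bbR^n_+$.

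\emph{The model case.} On $\bbR_+\times\bbR^n_+$ the space $\0\bbE^{0,k}$ is the anisotropic Bessel potential space of order $2k$ with parabolic weight $\boldsymbol\nu=(2,1,\ldots,1)$ (cf.\ the remark after Theorem \ref{thm:higher_regularity_for_the_heat_equation}), and $\{x_n=0\}$ is transversal with $x_n$ of weight one. Amann's anisotropic trace theory \cite{Ama09} then shows that $(\gamma\del_{x_n}^j)_{j=0}^{2k-1}$ is a retraction onto $\prod_{j=0}^{2k-1}B^{(2k-j-1/p)/\boldsymbol\nu}_{p,p}(\bbR_+\times\bbR^{n-1})$, and unravelling the anisotropic Besov norm (halving the order in the weight-two variable $t$) identifies $B^{(2k-j-1/p)/\boldsymbol\nu}_{p,p}(\bbR_+\times\bbR^{n-1})$ with $\0W^{k-j/2-1/2p}_p(\bbR_+;L_p(\bbR^{n-1}))\cap L_p(\bbR_+;W^{2k-j-1/p}_p(\bbR^{n-1}))=\0\bbG^{0,k-j/2}(\bbR_+\times\bbR^{n-1})$. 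Equivalently, one may exhibit a co-retraction by hand: with a cutoff $\chi$ equal to $1$ near $x_n=0$ and an invertible operator $\Lambda$ in the $(t,x')$-variables generating a bounded analytic semigroup $(\Exp^{-x_n\Lambda})_{x_n\geq0}$ adapted to the anisotropic scaling (built from $(1-\Delta_{x'})^{1/2}$ and $(1-\del_t^2)^{1/4}$, so that $\Lambda$ has a bounded $\mathcal H^\infty$-calculus and decreases anisotropic smoothness by one unit), the function $u(t,x',x_n)=\chi(x_n)\sum_{j=0}^{2k-1}\tfrac{x_n^j}{j!}(\Exp^{-x_n\Lambda}h_j)(t,x')$ satisfies $\del_{x_n}^i u|_{x_n=0}=h_i+\sum_{m<i}\binom{i}{m}(-\Lambda)^{i-m}h_m$, a unipotent lower triangular perturbation of the identity on $\prod_j\0\bbG^{0,k-j/2}$, while $u\in\0\bbE^{0,k}$ follows from the semigroup estimates in the spirit of Theorem \ref{thm:trace_semigroup} and Corollary \ref{cor:trace_semigroup} (with $x_n$ in the role of time and $\D_\Lambda(\cdot,p)$ identified with the tangential anisotropic Sobolev scale); precomposing with the inverse of the unipotent perturbation yields the sought co-retraction. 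This is the $x_n$-analogue of Lemma \ref{lem:temp-trace-retraction}.

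\emph{The main obstacle.} The genuinely non-formal ingredient is the sharp anisotropic trace identity in the model case: a function whose $j$-th normal trace lies \emph{precisely} in $\0\bbG^{0,k-j/2}$ for every $j=0,\ldots,2k-1$ belongs \emph{precisely} to $\0\bbE^{0,k}$, with no loss in either direction. This amounts to reproducing the intersection structure of $\bbE^{0,k}$ while the time variable carries weight two and the normal space variable weight one, which is exactly what Amann's anisotropic trace theorem, respectively the $\mathcal H^\infty$-calculus estimates for $\Lambda$ together with the mixed derivative theorem (Theorem \ref{thm:mixed_deri}), provide; we invoke this rather than reprove it. Finally, no exceptional restriction of the type $j_B/2+3/2p\neq1$ appearing in Theorem \ref{thm:higher_regularity_for_the_heat_equation} is needed here, since the $\0$-spaces carry vanishing temporal traces and the borderline temporal trace spaces therefore never intervene.
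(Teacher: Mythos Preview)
Your proof is correct and follows the same three-step architecture as the paper: reduce to $l=0$ via the isomorphism $(\eps+\del_t)^l$, localize to the half-space, and invoke Amann's anisotropic trace theorem \cite[Theorem 4.11.6]{Ama09} for the model problem $\bbR_+\times\bbR^n_+$.

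The one point where you and the paper differ is in how the discrepancy between $\del_\nu^j$ and $(-\del_{x_n})^j$ after flattening is handled. You use generic flattening diffeomorphisms and then absorb the difference into a lower-triangular perturbation of the identity on $\prod_j\0\bbG^{0,k-j/2}$. This is legitimate, but your claim that the diagonal entries are $\pm\mathrm{id}$ is only correct if the push-forward of $\nu$ is exactly $-e_n$ along the flattened boundary; for a generic graph-type flattening the diagonal entry is multiplication by a smooth nonvanishing function, which is still invertible but deserves a word. The paper instead avoids any correction by constructing the charts $\Theta_j$ explicitly from the tubular-neighbourhood map, arranging that $-\del_n\Theta_j(x',0)=\nu_\Gamma(\Theta_j(x',0))$ and $\del_n^m\Theta_j(x',0)=0$ for $m\geq 2$, and choosing the cutoffs $\varphi_j$ so that $\del_\nu^m\varphi_j=0$ near $\Gamma$; under these charts $\del_\nu^j$ becomes exactly $(-\del_{x_n})^j$ on the boundary and $\calB_{l,k}\calB_{l,k}^c=\mathrm{id}$ holds on the nose. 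Your alternative semigroup construction of the half-space co-retraction via $\Exp^{-x_n\Lambda}$ is a nice explicit variant that the paper does not spell out (it simply cites Amann), and your final remark about the absence of the $j_B/2+3/2p\neq 1$ restriction is apt.
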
  
\begin{proof}
  In the case $\Omega=\bbR^n_+$, $l=0$ we infer from \cite[Theorem 4.11.6]{Ama09} that
  \begin{align*}
    \overline\calB_{0,k} := ((-1)^j\gamma_y^j)_{j=0}^{2k-1} \colon \0\bbE^{0,k}(\bbR_+\times\bbR^n_+) \to {\prod}_{j=0}^{2k-1}\0\bbG^{0,k-j/2}(\bbR_+\times\bbR^{n-1})
  \end{align*}
  is a retraction.  
  Let $\overline\calB_{0,k}^c$ denote a co-retraction for $\overline\calB_{0,k}$.

  In the case $\Omega=\bbR^n_+$, $l\in\bbN_0$ we use the fact that $(\epsilon+\del_t)^j \colon  \0W^{s+j}_p(\R_+;F) \to \0W^s_p(\R_+;F)$ is invertible for every $\epsilon>0$, $j\in\bbN$, $s\in[0,\infty)$ and every Banach space $F$ of class $\calHT$.  Therefore a co-retraction is given by $\calB^c_{l,k} = (\epsilon+\del_t)^{-l}\overline\calB^c_{0,k}(\epsilon+\del_t)^l$.

  For bounded smooth domains we define such operators by a localization technique.  It is well-known (see e.~g.\ \cite[Section 14.6]{GiTr01}, \cite{PrSi13}) that the tubular neighborhood map
  \begin{align*}
    X \colon  (x,t)\mapsto x+t\nu_\Gamma(x), \quad \Gamma\times(-R,R) \to B_R(\Gamma) := \{x\in\bbR^n : \operatorname{dist}(x,\Gamma) < R\}
  \end{align*}
  is a homeomorphism for some $R>0$.  Let $\{U_j : j\in I\}$ be a finite open covering of $\Gamma$ in $\bbR^n$ and let $\{\varphi_j:j\in I\} \subset C^\infty_c(\Gamma)$ be a partition of unity subordinate to $\{U_j\cap\Gamma:j\in I\}$.  Then there exists $r\in(0,R)$ such that $B_r(\Gamma)$ is covered by $\{U_j:j\in I\}$.  For given $\chi\in C^\infty_c((-r,r))$ with $0\leq\chi\leq 1$ and $\chi(t)=1$ for $\abs t\leq r/2$ we extend $\varphi_j$ to $\bbR^n$ by means of $\varphi_j(X(x,t)) := \varphi_j(x)\chi(t)$ for $(x,t)\in\Gamma\times(-r,r)$ so that $\supp\varphi_j\subset U_j$ and $\del_\nu^m\varphi_j=0$ near $\Gamma$ for all $m\geq 1$.

  In addition, let $U_j = B_r(x^{(j)})$ with $x^{(j)}\in\Gamma$ for some $r\in(0,R)$ and choose rigid transformations $\Xi_j \colon  x\mapsto x^{(j)}+Q_jx$ with $Q_j$ orthogonal such that $Q_j(-e_n) = \nu_\Gamma(x^{(j)})$.  There exist $\omega_j\in C_c^\infty(\bbR^{n-1})$ with $\omega_j(0)=\abs{\nabla\omega_j(0)} = 0$ such that for $\theta_j(x',x_n) := (x',x_n + \omega_j(x'))$ we have $U_j\cap\Omega = U_j \cap \Xi_j(\theta_j(\bbR^n_+))$ and thus $U_j\cap\Gamma = U_j\cap \Xi_j(\theta_j(\Gamma_0))$ with $\Gamma_0 := \bbR^{n-1} \times \{0\}$.  
Let us construct smooth diffeomorphisms $\Theta_j$ of $\R^n$ such that $U_j\cap\Omega = U_j \cap \Theta_j(\bbR^n_+)$ and $U_j\cap \Gamma = U_j \cap \Theta_j(\bbR^{n-1}\times\{0\})$.  Given $r\in(0,R/2)$, $\psi\in C_c^\infty(B_{2r}(0))$ with $\psi = 1$ on $B_r(0)$, let 
  \begin{align*}
    \Theta_j(x) = \left\{
      \begin{aligned}
        & \psi(x) \left[ \Xi_j(\theta_j(x',0)) - x_n \nu_\Gamma (\Xi_j(\theta_j(x',0))) \right] + (1-\psi(x)) \, \Xi_j(x) &\text{ for } \abs x &\leq 2r,\\
        & \Xi_j(x)&\text{ for } \abs x&\geq 2r.
      \end{aligned}\right.
  \end{align*}
  If $r\in(0,R/2)$ is sufficiently small, then $\Theta_j$ is a diffeomorphism since $\del_x\Theta_j(x) \to Q_j$   
  as $r\to 0$, uniformly on $\bbR^n$.  
  Moreover, $\Theta_j$ has the asserted properties and satisfies $-\del_n\Theta_j(x',0) = \nu_\Gamma(\Theta_j(x',0))$ and $\del_n^m\Theta_j(x',0) = 0$ for all $m\geq 2$ and $x'\in B_r(0)$.

  Choose smooth cut-off functions $\psi_j\in C^\infty_c(\Theta_j^{-1}(U_j))$ with $\psi_j=1$ on $\Theta_j^{-1}(\supp\varphi_j)$ and define the multiplication operator $M_j \colon  u \mapsto \psi_j u$.  With the pull-back $\Theta_j^* \colon  u \mapsto u\circ\Theta_j$ and the push-forward $\Theta_{j*} \colon  u\mapsto u\circ\Theta_j^{-1}$ we define a co-retraction for $\calB_{l,k}$ by
  \begin{align*}
    \calB^c_{l,k} g := {\sum}_{j\in I} \Theta_{j*} M_j\overline \calB^c_{l,k} \Theta_j^*(\varphi_j g) \quad\text{ for } g \in {\prod}_{j=0}^{2k-1}\0\bbG^{l,k-j/2}(\bbR_+\times\Gamma).
  \end{align*}

  By means of the chain rule, Hölder's inequality and the mixed derivative embeddings, it can be shown that the linear operators $g\mapsto \varphi_j g$, $\Theta_j^*$, $M_j$ and $\Theta_{j*}$ act continuously in the relevant spaces and the properties of $\Theta_j$ and $\varphi_j$ with respect to the normal direction imply that indeed $\calB_{l,k}\calB^c_{l,k}g = g $.  This concludes the proof of Lemma \ref{lem:right_inverse_for_spatial_trace}.
\end{proof}

\subsection{Proof of Theorem \ref{thm:higher_regularity_for_the_heat_equation}}
\label{proof:thm_higher_regularity_for_the_heat_equation}

We have already discussed the necessity of the regularity conditions and the compatibility conditions on $(f,g,u_0)$.  It remains to prove the uniqueness and existence of a solution $u\in\bbE^{l,k}$ for given data $(f,g,u_0)$ subject to these conditions.

In order to prove \emph{uniqueness}, it suffices to consider the most general case $l=0$, $k=1$, where $\bbE^{l,k} = W^1_p(J;L_p(\Omega)) \cap L_p(J;W^2_p(\Omega))$ and $(f,g,u_0)=0$.  If further $\mu_0$ is sufficiently large, then the general result of \cite{DHP03} implies that $\mu_0-\Delta$ has maximal regularity of type $L_p(\bbR_+;L_p(\Omega))$ and this yields $u=0$ in case $\mu_B\geq\mu_0$.  

Next, we employ spectral theory to cover the case $\mu_B\in(-\lambda_0^B,\infty)$, where $\lambda_0^B=\lambda_0(-\Delta_B)\geq 0$ denotes the smallest eigenvalue of $-\Delta_B$.  It is well known that, since $\D(\Delta_B)$ is compactly embedded into $L_p(\Omega)$, the spectrum of $\Delta_B$ is discrete and consists only of eigenvalues with finite multiplicity.  As for Lemma \ref{lem:A-4}, we see that its eigenfunctions belong to
\begin{align*}
  \D(\Delta_B^m) = \left\{ u \in W^{2m}_p(\Omega) : \gamma_B(\Delta^j u)|_\Gamma = 0 \text{ on }\Gamma\text{ for }j\leq m-1\right\}
\end{align*}
for every $m\in\N$ and hence belong to $W^2_p(\Omega)$ and an integration by parts implies that the spectrum of $\Delta_B$ is contained in $(-\infty,-\lambda_0^B]$.  A result of Dore \cite[Theorem 2.4]{Dor93} implies that $\mu_B-\Delta_B$ has maximal regularity of type $L_p(\bbR_+;L_p(\Omega))$ for each $\mu_B\in(-\lambda_0^B,\infty)$ and this ensures uniqueness.

\emph{Existence.}  We construct a solution $u = u^1 + u^2 + u^3\in\bbE^{l,k}$ such that
\begin{align*}
  \begin{aligned}
    (\del_t+\mu_B-\Delta)u^1 &=: f^1, &&& \gamma_t^i u^1 &= u_i,\\
    (\del_t+\mu_B-\Delta)u^2 &=: f^2, & \gamma_B\Delta^ju^2 &= g_j-\gamma_B\Delta^ju^1, & \gamma_t^i u^2 &= 0,\\   
    (\del_t+\mu_B-\Delta)u^3 &= f-f^2-f^1, & \gamma_B\Delta^ju^3 &= 0, & \gamma_t^iu^3 &= 0,
  \end{aligned}
\end{align*}
for all $i$, $j$ with $0\leq i \leq l+k-1$ and $0\leq j\leq k-1$.  This means that we first construct $u^1\in\bbE^{l,k}$ with prescribed initial data $u_i$.   Then we construct $u^2$ with prescribed boundary data $g_j-\gamma_B\Delta^j u^1$ and we finally we construct $u^3$ with prescribed interior data $f-f^2-f^1$.  Here the functions $u_i$ and $g_j$ are defined according to \eqref{eq:150323_2} and \eqref{eq:150324} by
\begin{alignat}{3}
  u_i &:= \gamma_t^{i-1}f + (\Delta-\mu_B)u_{i-1} & \text{ for } i&\in\bbN\cap[1,l+k-1],&&\\
  g_j &:= -\gamma_B \Delta^{j-1} f + (\del_t+\mu_B) g_{j-1} & \text{ for } j&\in\bbN\cap[1,k-1], & g_0 &:= g.
\end{alignat}

\emph{Construction of $u^1$.}  
Let $r^c_\Omega$ be a common co-retraction for the restriction $r_\Omega \colon  W^t_p(\bbR^n) \to W^t_p(\Omega)$ for all $t\in[0,2k]$ (cf.\ \cite[Theorem 5.22]{AdFo03}).  With the co-retraction $S^A$ for the operator $(\gamma_t^0,\ldots,\gamma_t^{l+k-1})$ from Lemma \ref{lem:temp-trace-retraction} we define
\begin{align*}
  u^1 &:= r_\Omega S^I(r^c_\Omega u_0,r^c_\Omega u_1,\ldots,r^c_\Omega u_{l-1},0,\ldots,0) \\
  &\quad+ r_\Omega S^{I-\Delta}(0,\ldots,0,r^c_\Omega u_l,\ldots,r^c_\Omega u_{l+k-1}).
\end{align*}
Here we consider the identity operator $I\colon \D(I)\to X$ with $X=\D(I) = W^{2k}_p(\bbR^n)$ so that the first summand of $u^1$ belongs to $(t\mapsto\Exp^{-t})BUC^\infty(\bbR_+;W^{2k}_p(\Omega)) \hookrightarrow \bbE^{l,k}$.  In the second summand, we consider the operator $I-\Delta \colon  \D(\Delta)\to X$ in $X=L_p(\bbR^n)$ with domain $\D(\Delta)=W^2_p(\bbR^n)$ so that $r_\Omega^c u_l \in \D_{\Delta}(k-1/p,p)$ and thus $S^{I-\Delta}(0,\ldots,0,r_\Omega^c u_l,\ldots,r_\Omega^c u_{l+k-1}) \in W^{l+k}_p(\bbR_+;L_p(\bbR^n)) \cap L_p(\bbR_+;W^{2l+2k}_p(\bbR^n))$.  Therefore $u^1$ belongs to $\bbE^{l,k}$, satisfies $\gamma_t^iu^1 = u_i$ and depends continuously on the data $(f,u_0)$ with respect to the norms induced by the regularity conditions on $(f,u_0)$ and the compatibility condition \eqref{eq:150323_2}.  

\emph{Construction of $u^2$.}  From \eqref{eq:150421} it follows that $\gamma_t^ig_j-\gamma_B\Delta^j u_i = 0$ for all $i,j\in\bbN_0$ with $j\leq k-1$, $i\leq l+k-j-1$.  Thus $g_j-\gamma_B\Delta^j u^1$ belongs to $\gamma_B\0\bbE^{l,k-j}$.  Near $\Gamma$ we can split the Laplacian into $\Delta = \Delta_\Gamma + H_\Gamma \del_\nu + \del_\nu^2$ with the Laplace-Beltrami operator $\Delta_\Gamma = \operatorname{div}_\Gamma \nabla_\Gamma$ and some $H_\Gamma\in C^\infty(\Gamma)$.  The operator $\Delta_\Gamma$ commutes with $\del_\nu$ since it only depends on tangential derivatives.  Therefore the normal traces $h_j := (\del_\nu^j u^2)|_\Gamma$ ($j\in\{0,\ldots,2k-1\}$) of the desired solution $u^2\in\0\bbE^{l,k}$ are uniquely determined by requiring that $h_{2j+j_B+1} = 0$ and
\begin{align*}
  \gamma_B(\Delta_\Gamma+H_\Gamma\del_\nu+\del_\nu^2)^j u^2 = g_j-\gamma_B\Delta^j u^1 \quad\text{ for } 0\leq j\leq k-1.
\end{align*}
With the co-retraction $\calB_{l,k}^c$ from Lemma \ref{lem:right_inverse_for_spatial_trace} we define $u^2 := \calB_{l,k}^c(h_0,\ldots,h_{2k-1})$ which depends linearly and continuously on $(f,g,u_0)$ with respect to the norms induced by \eqref{eq:150326} and \eqref{eq:150323}.

\emph{Construction of $u^3$.}  Finally, we shall construct a function
\begin{align*}
  u^3 \in \0W^{l+k}_p(\bbR_+;L_p(\Omega)) \cap \0W^l_p(\bbR_+;\D(\Delta_B^k)),
\end{align*}
which solves the equation $(\del_t+\mu_B-\Delta)u^3 = f^3 := f-f^1-f^2$, where $f^m := (\del_t+\mu_B-\Delta)u^m$ for $m\in\{1,2\}$.  The compatibility conditions yield $\gamma_B\Delta^jf = 0$ for $0\leq j\leq k-2$ and thus
\begin{align*}
  f^3 \in \0W^{l+k-1}_p(J;L_p(\Omega)) \cap \0W^l_p(J;\D(\Delta_B^{k-1})).
\end{align*}
Hence $u^3 = (\del_t+\mu_B-\Delta)^{-1}f^3$ is well-defined by Lemma \ref{lem:interior_higher_regularity} and the map $f^3 \mapsto u^3$ is continuous.  The proof of Theorem \ref{thm:higher_regularity_for_the_heat_equation} is complete.

\begin{proof}[Proof of Corollary \ref{cor:higher_regularity_for_the_heat_equation_Neumann_stability}]\label{proof:cor_higher_regularity_for_the_heat_equation_Neumann_stability}
  With the same arguments as above and Theorem \ref{thm:spectrum_Neumann_Laplacian} we see that $\mu_N-\Delta_N$ has maximal regularity of type $L_p(\R_+;L_{p,0}(\Omega))$.  Hence problem \eqref{eq:ap_heat_problem} has at most one solution within the space $\bbE^{l,k}_0$.  Analogously as for Lemma \ref{lem:interior_higher_regularity}, we conclude that
  \begin{align*}
    \del_t+\mu_N-\Delta_N &\colon  \0W^{l+k}_p(\bbR_+;L_{p,0}(\Omega)) \cap \0W^l_p(\bbR_+;\D(\Delta_N^k)) \\
    &\quad \to \0W^{l+k-1}_p(\bbR_+;L_{p,0}(\Omega)) \cap \0W^l_p(\bbR_+;\D(\Delta_N^{k-1}))
  \end{align*}
  is a topological linear isomorphism.  For the proof of existence, we modify the above construction of the solution $u = u^1+u^2+u^3$.  For $i\in\{1,2\}$ we may replace $u^i$ by $u^i-\bar u^i \in \bbE^{l,k}_0$, since $\del_\nu\bar u^i = 0$ and $\bar u^i(0) = \bar u_0 = 0$.  Then we obtain $\bar f^3(t) = \bar f(t) + \abs\Gamma\abs\Omega^{-1}\bar g(t) = 0$.  Hence $u^3 := (\del_t+\mu_N-\Delta)^{-1}f^3$ is well-defined in $\bbE^{l,k}_0$.
\end{proof}

\bigskip

\textbf{Acknowledgements.} R.B. gratefully acknowledges the funding of his research by the Austrian Science Fund (FWF): P24970 as well as the
support of the Karl Popper Kolleg "Modeling-Simulation-Optimization"\ funded by the Alpen-Adria-Universit\"at Klagenfurt and by the Carinthian 
Economic Promotion Fund (KWF).

\bibliography{BrMe14_Lit_edit}
\bibliographystyle{alpha}

\end{document}